\documentclass[reqno]{amsart} 
\usepackage{amsmath,amsthm,amssymb,amsfonts}
\usepackage{mathrsfs}
\usepackage{longtable}
\numberwithin{equation}{section}
\usepackage{color}
\usepackage{enumerate}
\usepackage{multirow} 
\newtheoremstyle{mystyle}
{}
{}
{\normalfont}
{ }
{\bfseries}
{}
{10pt}
{ }
\theoremstyle{mystyle}
\newtheorem{theorem}{Theorem}
\newtheorem{proposition}{Proposition}
\newtheorem{lemma}{Lemma}
\newtheorem{remark}{Remark}
\newtheorem{corollary}{Corollary}

\DeclareMathOperator*{\argmax}{arg\,max}
\usepackage{mathtools}
\usepackage[pdftex]{graphicx}
\usepackage{color} 
\def\diag{\mathop{\rm diag}\nolimits}

\def\tr{\mathop{\rm tr}\nolimits}

\DeclareMathOperator*{\argmin}{arg\,min}
\DeclareMathOperator*{\argsup}{arg\,sup}
\allowdisplaybreaks[4]
\usepackage[foot]{amsaddr}
\usepackage[top=2.5cm, bottom=2.5cm, left=2.5cm, right=2.5cm]{geometry}
\usepackage[hang,small,bf]{caption}
\usepackage[subrefformat=parens]{subcaption}
\usepackage{bm}
\usepackage{comment}
\usepackage{enumitem}
\large
\title[clustering for diffusion processes with constant volatility coefficients]{Model-based clustering of diffusion processes with constant volatility coefficients using high-frequency data}
\author[S. Kusano]{Shogo Kusano $^{1}$}
\author[M. Uchida]{Masayuki Uchida $^{2,3,4}$}
\address{$^{1}$ Faculty of Advanced Science and Technology, Kumamoto University, Kumamoto, Japan}
\address{$^{2}$ Graduate School of Engineering Science, The University of Osaka, Toyonaka, Japan}
\address{$^{3}$ Center for Mathematical Modeling and Data Science (MMDS), The University of Osaka, Toyonaka, Japan}
\address{$^{4}$ Graduate School of Mathematical Sciences, The University of Tokyo, Meguro, Japan and CREST, Japan Science and Technology Agency, Tokyo, Japan}
\begin{document}
\begin{abstract} 
\fontsize{9pt}{13pt}\selectfont
This paper considers model-based clustering of diffusion processes using high-frequency data. We formulate a finite mixture model for diffusion processes and develop a clustering method based on their constant diffusion coefficients. Using a quasi-likelihood, we propose estimators of the number of clusters, the diffusion coefficients and the cluster labels, and establish their consistency and the asymptotic normality of the diffusion coefficient estimators under high-frequency asymptotics with the number of observed processes fixed. Simulation studies are conducted to investigate the finite-sample performance of the proposed method.
\end{abstract}
\keywords{model-based clustering; diffusion processes; finite mixture models; high-frequency data}
\maketitle

\section{Introduction} 
\fontsize{10.5pt}{16pt}\selectfont
\fontsize{11pt}{18pt}\selectfont
\raggedbottom
\setlength{\abovedisplayskip}{10pt}
\setlength{\belowdisplayskip}{10pt}
Time-series clustering is an unsupervised learning method for grouping time series into clusters and has been applied across a wide range of domains. Time-series clustering methods can be classified into three main categories: raw-data-based, feature-based, and model-based approaches; see, e.g., Liao \cite{Liao(2005)} and Aghabozorgi et al. \cite{Aghabozorgi(2015)}. In raw-data-based approaches, time series are clustered directly by using their observed values.
This is often done by applying hierarchical clustering or $k$-medoids using a suitable time-series distance measure. In feature-based approaches, time series are clustered by using extracted features, such as autocorrelation coefficients, spectral entropy and Fourier or wavelet coefficients. In model-based approaches, time series are represented by statistical models, and clustering is often performed by using likelihoods or estimated model parameters; see, e.g., Piccolo \cite{Piccolo(1990)} and Kalpakis et al. \cite{Kalpakis(2001)} for 
autoregressive integrated moving-average (ARIMA) models, Ramoni et al. \cite{Ramoni(2000)} for Markov chain models, and Li and Biswas \cite{Li(1999)} and Panuccio et al. \cite{Panuccio(2002)} for hidden Markov models. Although model-based approaches require statistical models to be specified in advance, they can reveal cluster structures that raw-data-based or feature-based approaches may fail to capture.

In model-based time-series clustering, mixtures of autoregressive moving-average (ARMA) models have been extensively studied. Xiong and Yeung \cite{Xiong(2002),Xiong(2004)} proposed a 
model-based clustering method using a finite mixture of ARMA models and 
employed the EM algorithm to estimate the model parameters. Although determining the number of clusters is important in practice, it is often prespecified or selected using heuristic validation criteria. In contrast, Xiong and Yeung selected the number of clusters using the Bayesian information criterion (BIC). This demonstrates one of the major advantages of likelihood-based mixture modeling. Recently, Ren and Barnett \cite{Ren(2022)} proposed an autoregressive mixture clustering method using a Wishart mixture likelihood and established the consistency of the estimators of the autoregressive parameters and cluster labels. Notably, they established these asymptotic results when the number of time series is fixed and the length of each time series tends to infinity. This implies that their theory can be applied even when only a small number of time series are available due to technical or budgetary constraints.

In recent decades, it has become easier to obtain high-frequency data, such as stock price data and lifelog data, due to technological advances. Such data can be regarded as discrete observations from continuous-time stochastic processes such as diffusion processes. This has motivated extensive research on statistical inference for continuous-time stochastic processes based on high-frequency data; see, e.g., Yoshida \cite{Yoshida(1992)}, Genon-Catalot and Jacod \cite{Genon(1993)}, Kessler \cite{kessler(1997)}, Uchida and Yoshida \cite{Uchi-Yoshi(2012)} and references therein. In contrast, only a few studies have investigated clustering of continuous-time stochastic processes. De Gregorio and Iacus \cite{De Gregorio(2010)} proposed a
distance-based clustering method for discretely observed diffusion
processes based on the quadratic distance between the corresponding
estimated Markov operators. They applied hierarchical clustering based on the proposed distance to both simulated and real financial data.
Their theoretical framework assumes a fixed sampling interval and
therefore does not directly cover the high-frequency asymptotic
regime considered in this paper. Delattre et al. \cite{Delattre(2016)} developed a model-based approach in which random effects in the drift of stochastic differential equations follow a Gaussian mixture distribution. Under continuous observation and assuming that the number of mixture components is known, they established the consistency of the  maximum likelihood estimators of the model parameters as the number of observed processes tends to infinity. In the context of functional data analysis, Cuesta-Albertos and Dutta \cite{Cuesta-Albertos(2023)} studied a model-based clustering method for Gaussian processes. When the true number of clusters is two, they established the consistency of the estimators of the number of clusters and the cluster labels as the number of observed processes tends to infinity. For studies on clustering within a single observed path, see, e.g., Boyko et al. \cite{Boyko(2022)}.

As mentioned above, few studies have considered model-based clustering of diffusion processes using high-frequency data. To the best of our knowledge, no asymptotically justified method has been developed for estimating the number of clusters, the diffusion coefficients and the cluster labels when the number of observed processes is fixed. To address this gap, we formulate a finite mixture model for diffusion processes and develop a clustering method based on their constant diffusion coefficients. Using a quasi-likelihood, we construct estimators of the number of clusters, the diffusion coefficients and the cluster labels, and establish their consistency under high-frequency asymptotics with the number of observed processes fixed. We also establish the asymptotic normality of the diffusion coefficient estimators. The proposed method may identify groups characterized by machine operating conditions or fault severity in vibration data, risk levels in financial data, and sleep quality in lifelog data. 

The remainder of this paper is organized as follows. In Section \ref{notation}, we introduce the model, notation and assumptions. In Section \ref{main}, we propose a quasi-likelihood-based clustering method and establish its asymptotic properties. In Section \ref{simulation}, we conduct simulation studies to investigate the finite-sample performance of the proposed method. Section \ref{proof} contains the proofs of the theoretical results presented in Section \ref{main}.
\section{Model, notation and assumptions}\label{notation}
We first introduce a finite mixture model for diffusion processes. Let $(\Omega,\mathcal{F},(\mathcal{F}_t)_{t\geq 0},\mathbb{P})$ be a filtered probability space satisfying the usual conditions. We fix an upper bound $M\in\mathbb{N}$ on the number of clusters, which means that we consider models with $1,\ldots,M$ clusters. For a given number of clusters $G\in\{1,\ldots,M\}$, we define the model as follows.  Let $\{X^{(G)}_t\}_{t\geq 0}$ be an observed process belonging to one of the $G$ clusters. We introduce an $\mathcal F_0$-measurable latent random variable $Z^{(G)}$ to represent the cluster membership of the observed process and assume that
\begin{align*}
    \mathbb{P}\bigl(Z^{(G)}=g\bigr)=\pi^{(G)}_g>0
\end{align*}
for $g=1,\ldots,G$, and 
\begin{align*}
    \sum_{g=1}^G\pi^{(G)}_g=1,
\end{align*}
where $\pi^{(G)}_g$ is an unknown mixing proportion.
In addition, we suppose that for each $g=1,\ldots,G$, the conditional law of $\{X^{(G)}_t\}_{t\geq 0}$ given $Z^{(G)}=g$ is specified as the law of the diffusion process satisfying
\begin{align}
    dY^{(G)}_{g,t}=
    a^{(G)}_g(Y^{(G)}_{g,t})dt+\sigma^{(G)}_g dW_t,\quad Y^{(G)}_{g,0}=\xi^{(G)}_{g},\label{X}
\end{align}
where $a^{(G)}_g:\mathbb{R}\to\mathbb{R}$ is an unknown Borel measurable function, $\sigma^{(G)}_g>0$ is an unknown constant diffusion coefficient, $\xi^{(G)}_{g}$ is an $\mathcal{F}_0$-measurable real-valued random variable, and $\{W_t\}_{t\geq 0}$ is a one-dimensional standard $(\mathcal F_t)_{t\geq 0}$-Wiener process. To avoid the label-switching problem, we assume that
\begin{align*}
    \frac{1}{d}<\sigma^{(G)}_1<\sigma^{(G)}_2<\cdots<\sigma^{(G)}_G<d
\end{align*}
for some constant $d>1$. Set
\begin{align*}
    {\bm{\pi}}^{(G)}&=
    \bigl(\pi^{(G)}_1,\pi^{(G)}_2,\ldots,\pi^{(G)}_G\bigr)^{\top}
    \in\Pi^{(G)}=
    \Biggl\{{\bm{\pi}}^{(G)}\in (0,1]^G\ \bigg|\ 
    \sum_{g=1}^G\pi^{(G)}_g=1\Biggr\}
\end{align*}
and
\begin{align*}
    {\bm{\sigma}}^{(G)}&=
    \bigl(\sigma^{(G)}_1,\sigma^{(G)}_2,\ldots,\sigma^{(G)}_G
    \bigr)^{\top}
    \in\Sigma^{(G)}=
    \biggl\{{\bm{\sigma}}^{(G)}\in(1/d,d)^G\ \Big|\ 
    \sigma^{(G)}_1<\sigma^{(G)}_2<\cdots<\sigma^{(G)}_G\biggr\},
\end{align*}
where $\top$ denotes transposition. $P_{{\bm{\pi}}^{(G)}}$ and $P_{\sigma^{(G)}_g}$ denote the distribution of $Z^{(G)}$ and the law of the diffusion process in \eqref{X}, respectively. For a fixed $J\in\mathbb{N}$, we consider $J$ independent copies 
\begin{align*}
    \Bigl(Z^{(G)}_j,\{X^{(G)}_{j,t}\}_{t\geq 0}\Bigr)\quad (j=1,\ldots,J)
\end{align*}
of 
\begin{align*} 
    \Bigl(Z^{(G)},\{X_t^{(G)}\}_{t\geq0}\Bigr),
\end{align*}
which means that we observe $J$ independent time series whose cluster labels are unobserved. More specifically, for each $j=1,\ldots,J$, we suppose that the $\mathcal{F}_0$-measurable latent random variable $Z^{(G)}_j$ satisfies
\begin{align*}
    Z^{(G)}_j\sim P_{\bm{\pi}^{(G)}}
\end{align*}
and for each $g=1,\ldots,G$, given $Z_j^{(G)}=g$, $\{X^{(G)}_{j,t}\}_{t\geq 0}$ satisfies
\begin{align*}
    dX^{(G)}_{j,t}=
    a^{(G)}_g(X^{(G)}_{j,t})dt+\sigma^{(G)}_g dW_{j,t},\quad X^{(G)}_{j,0}=\xi^{(G)}_{j,g},
\end{align*}
where $\xi^{(G)}_{j,g}$ is an $\mathcal F_0$-measurable real-valued random variable, and $\{W_{j,t}\}_{t\geq 0}$ is a one-dimensional standard $(\mathcal F_t)_{t\geq 0}$-Wiener process. In what follows, we fix $G^*\in\{1,\ldots,M\}$ as the true number of clusters and fix ${\bm{\pi}}^{*}\in\Pi^{(G^*)}$ and ${\bm{\sigma}}^{*}\in\Sigma^{(G^*)}$ as the true values of ${\bm{\pi}}^{(G^*)}$ and ${\bm{\sigma}}^{(G^*)}$, respectively. For each $j=1,\ldots,J$, we fix $z_j\in\{1,\ldots,G^*\}$ as the unobserved cluster label of the $j$-th observed process, and define the conditional probability measure by
\begin{align*}
    {\rm{P}}_{\bm{z}}(A)
    =\mathbb{P}\Bigl(A\, \big|\, Z^{(G^*)}_1=z_1,Z^{(G^*)}_2=z_2,\ldots,Z^{(G^*)}_J=z_J\Bigr)
\end{align*}
for $A\in\mathcal{F}$, where ${\bm{z}}=(z_1,z_2,\ldots,z_J)^{\top}$. ${\rm{E}}_{\bm{z}}$ denotes the expectation under ${\rm{P}}_{\bm{z}}$. For each $j=1,\ldots,J$, we note that
\begin{align*}
    \bigl\{X^{(G^*)}_{j,t}\bigr\}_{t\geq 0}\sim P_{\sigma^*_{z_j}}
\end{align*}
under ${\rm{P}}_{\bm{z}}$. For notational simplicity, we write $\{X^{(G^*)}_{j,t}\}_{t\geq 0}$ as $\{X_{j,t}\}_{t\geq 0}$ for $j=1,\ldots,J$. For each $j=1,\ldots,J$, we assume that only the discrete observations
\begin{align*}
    \mathbb{X}_{j,n}=\bigl\{X_{j,t_i^{n}}\bigr\}_{i=0}^{n}
\end{align*}
are available, where $t_i^{n}=ih_{n}$, $h_{n}=T/n$, and $T>0$ is fixed. Furthermore, we suppose that $G^*\leq J$ and at least one observed process belongs to each cluster, i.e.,
\begin{align*}
    S_g=\Bigl\{j\in\{1,\ldots,J\}\ \big|\ z_j=g\Bigr\}\neq\emptyset
\end{align*}
for each $g=1,\ldots,G^*$. The aim of the clustering problem is to estimate the true number of clusters $G^*$, the true diffusion parameter vector ${\bm{\sigma}}^*$ and the vector of true cluster labels $\bm{z}$ based on the discrete observations under ${\rm P}_{\bm{z}}$. In this paper, we treat ${\bm{\pi}}^*$ as a nuisance parameter and do not estimate it. This point is discussed further in Section \ref{main}.
\begin{remark}
To simplify the theoretical analysis, we assume that the processes
within each cluster have a common drift function and that all processes are observed synchronously. Moreover, we suppose that the diffusion coefficients are constant. Extending the proposed method to state-dependent diffusion coefficients is nontrivial and is left for future work.
\end{remark}
\begin{remark}
For ergodic diffusion processes observed over a long time horizon, clustering based on both drift and diffusion parameters may also be possible under suitable regularity conditions. We leave this extension for future work.
\end{remark}
Next, we define the notation used throughout the paper. For a vector $v$, $|v|$ is defined by
\begin{align*}
    |v|=\sqrt{\tr(vv^\top)}.
\end{align*}
For a matrix $M$, $M_{ij}$ denotes its $(i,j)$-th entry, and we define
\begin{align*}
    |M|=\sqrt{\tr(MM^\top)}.
\end{align*}
If a matrix $M$ is positive definite, we write $M>0$. For $a_1,\ldots,a_k\in\mathbb{R}$, $\diag[a_1,\ldots,a_k]$
denotes the $k\times k$ diagonal matrix whose $i$-th diagonal entry is $a_i$. $\mathbb{I}_{k}$ denotes the identity matrix of size $k$. The symbols $\stackrel{p}{\longrightarrow}$ and 
$\stackrel{d}{\longrightarrow}$ denote convergence in probability and convergence in distribution, respectively. The $d$-dimensional normal distribution with mean 
$\mu\in\mathbb{R}^d$ and covariance matrix $\Sigma$ is denoted by $N_d(\mu,\Sigma)$. Set 
\begin{align*}
    \partial_{{\bm{\sigma}}}=\frac{\partial}{\partial{\bm{\sigma}}},\quad \partial^2_{{\bm{\sigma}}}=\frac{\partial^2}{\partial{\bm{\sigma}}\partial{\bm{\sigma}}^{\top}}.
\end{align*}
Let ${\bm{0}}_N$ denote the $N$-dimensional zero vector and $O_{M,N}$ the $M\times N$ zero matrix. For ${\bm{a}}\in\mathbb{R}^M$, when $N=0$,  we interpret the $(M+N)$-dimensional vector 
\begin{align*}
    \begin{pmatrix}
        {\bm{a}}\\
        {\bm{0}}_N
    \end{pmatrix}
\end{align*}
as ${\bm{a}}$. Likewise, for $A\in\mathbb{R}^{M\times M}$, when $N$=0, we interpret the $(M+N)\times (M+N)$-dimensional block matrix
\begin{align*}
    \begin{pmatrix}
        A & O_{M,N}\\
        O_{N,M} & O_{N,N}
    \end{pmatrix}
\end{align*}
as $A$. For a finite set $S$, $|S|$ denotes the number of elements in $S$. Finally, we state the assumptions. 
\begin{enumerate}
\vspace{3mm}
\item[{[\bf{A1}]}] For each $g=1,\ldots,G^*$, there exists a constant $C_g>0$ such that
\begin{align*}
    \bigl|a^{(G^*)}_g(x)-a^{(G^*)}_g(y)\bigr|\leq C_g|x-y|
\end{align*}
for all $x,y\in\mathbb{R}$. \vspace{3mm}
\item[{[\bf A2]}]\ For each $j=1,\ldots,J$,
\begin{align*} 
    {\rm{E}}_{{\bm{z}}}\Bigl[\bigl|\xi^{(G^*)}_{j,z_j}\bigr|^L\Bigr]<\infty
\end{align*}
for every $L>0$.
\end{enumerate}
\begin{remark}
For each $j=1,\ldots,J$, since 
\begin{align*}
    X_{j,0}=\xi^{(G^*)}_{j,z_j}
\end{align*}
under ${\rm{P}}_{{\bm{z}}}$, {\bf{[A1]}} and {\bf{[A2]}} imply
\begin{align*}
    {\rm{E}}_{{\bm{z}}}\Bigl[\bigl|X_{j,t}\bigr|^{2L}\Bigr]
    \leq C_{L}e^{C_{L}t}
\end{align*}
for every $L\geq 1$, where $C_L>0$ is some constant that depends on $L$; see, e.g., Theorem 1.6 in Kutoyants \cite{Kutoyants(2004)}. Consequently, since $T$ is fixed, for each $j=1,\ldots,J$, we obtain
\begin{align*}
     \sup_{t\in[0,T]}{\rm{E}}_{{\bm{z}}}\Bigl[\bigl|X_{j,t}\bigr|^{2L}\Bigr]
     <\infty
\end{align*}
for every $L>0$.
\end{remark}
\section{Main results}\label{main}
Using the Euler--Maruyama approximation, for each $G=1,\ldots,M$ and a given ${\bm{\pi}}^{(G)}\in\Pi^{(G)}$, we define a quasi-likelihood for ${\bm{\sigma}}^{(G)}\in\overline{\Sigma}^{(G)}$ by
\begin{align}
    {\bf{L}}^{(G)}_{n}({\bm{\sigma}}^{(G)};{\bm{\pi}}^{(G)})=\prod_{j=1}^J\sum_{g=1}^{G}\pi^{(G)}_{g}\bar{\bf{L}}_{j,n}(\sigma^{(G)}_{g}),\label{quasi}
\end{align}
where for $j=1,\ldots,J$,
\begin{align*}
    \bar{\bf L}_{j,n}(\sigma)
    =\prod_{i=1}^n
    \frac{1}{\sqrt{2\pi h_{n}\sigma^2}}
    \exp\Biggl\{
    -\frac{\bigl(X_{j,t_i^n}-X_{j,t_{i-1}^n}\bigr)^2}
    {2h_{n}\sigma^2}\Biggr\}
\end{align*}
for $\sigma>0$, and the closure of the parameter space $\Sigma^{(G)}$ is given by
\begin{align*}
    \overline{\Sigma}^{(G)}=
    \Bigl\{
    {\bm{\sigma}}^{(G)}\in[1/d,d]^G
    \ \Big|\
    \sigma^{(G)}_{1}\leq\sigma^{(G)}_{2}\leq\cdots\leq\sigma^{(G)}_{G}
    \Bigr\}
\end{align*}
for $G=1,\ldots,M$. Details of the quasi-likelihood are given in Appendix \ref{likelihood}. Let
\begin{align*}
    {\bm{\pi}}_M=\begin{bmatrix}
        {\bm{\pi}}^{(1)}\\
        {\bm{\pi}}^{(2)}\\
        \vdots\\
        {\bm{\pi}}^{(M)}
    \end{bmatrix}\in\Pi_M=
    \Pi^{(1)}\times\Pi^{(2)}\times\cdots
    \times\Pi^{(M)}
\end{align*}
and
\begin{align*}
    {\bf{H}}^{(G)}_{n}({\bm{\sigma}}^{(G)};{\bm{\pi}}^{(G)})=\log\Bigl\{(2\pi h_n)^{nJ/2}{\bf{L}}^{(G)}_{n}({\bm{\sigma}}^{(G)};{\bm{\pi}}^{(G)})\Bigr\}.
\end{align*}
Note that it can be rewritten as follows:
\begin{align*}
   {\bf{H}}^{(G)}_{n}({\bm{\sigma}}^{(G)};{\bm{\pi}}^{(G)})&=\log\Biggl(
    \prod_{j=1}^J
    \sum_{g=1}^{G}\pi^{(G)}_g(2\pi h_n)^{n/2}
    \bar{\bf L}_{j,n}(\sigma^{(G)}_g)
    \Biggr)\\
    &=\sum_{j=1}^J\log\Biggl(
    \sum_{g=1}^{G}
    \pi^{(G)}_g\exp\Bigl\{\bar{\bf H}_{j,n}(\sigma^{(G)}_g)\Bigr\}
    \Biggr),
\end{align*}
where for $j=1,\ldots,J$,
\begin{align*}
    \bar{\bf H}_{j,n}(\sigma)
    &=-\frac{n}{2}\log\sigma^2-\sum_{i=1}^n\frac{\bigl(X_{j,t_i^n}-X_{j,t_{i-1}^n}\bigr)^2}{2h_n\sigma^2}
\end{align*}
for $\sigma>0$. In addition, we define
\begin{align*}
    \mathbb{H}_{\bm{z}}^{(G)}({\bm{\sigma}}^{(G)})=\sum_{j=1}^J\max_{g\in\{1,\ldots,G\}}\bar{\mathbb H}_{z_j}(\sigma^{(G)}_g)
\end{align*}
for ${\bm{\sigma}}^{(G)}\in\overline{\Sigma}^{(G)}$, where for each $k=1,\ldots,G^*$,
\begin{align*}
    \bar{\mathbb{H}}_{k}(\sigma)=-\frac{1}{2}\log\sigma^2
    -\frac{\sigma_{k}^{*2}}{2\sigma^2}
\end{align*}
for $\sigma>0$. Note that for each $G=1,\ldots,M$, $n^{-1}{\bf{H}}^{(G)}_{n}$ converges to $\mathbb{H}_{\bm{z}}^{(G)}$ uniformly in ${\bm{\sigma}}^{(G)}\in\overline{\Sigma}^{(G)}$ in probability as $n\longrightarrow\infty$; see Proposition \ref{unifprop}.

First of all, we consider the estimation of the number of clusters. An estimator of the number of clusters is defined by
\begin{align}
    \hat{G}_n({\bm{\pi}}_M)\in\argmin_{G\in\{1,\ldots,M\}}{\bf{F}}_n(G;{\bm{\pi}}^{(G)}),\label{criterion}
\end{align}
where
\begin{align*}
    {\bf{F}}_n(G;{\bm{\pi}}^{(G)})=-2\sup_{{\bm{\sigma}}^{(G)}\in\overline{\Sigma}^{(G)}}{\bf{H}}^{(G)}_{n}({\bm{\sigma}}^{(G)};{\bm{\pi}}^{(G)})+P^{(G)}_{n},
\end{align*}
and $P^{(G)}_{n}$ is a deterministic penalty term depending on $n$ and $G$. The following condition is imposed on the penalty terms.
\begin{enumerate}
    \vspace{3mm}
    \item[{\bf{[B]}}] 
    \begin{enumerate}
        \item For each $G=1,\ldots,M$, $P^{(G)}_{n}=o(n)$ as $n\longrightarrow\infty$.
        \vspace{4mm}
        \item For any $K,L\in\{1,\ldots,M\}$ such that $K<L$, 
        \begin{align*}
            P^{(K)}_{n}<P^{(L)}_{n}
        \end{align*}
        for all sufficiently large $n$ and 
        \begin{align*}
            \frac{1}{P^{(L)}_{n}-P^{(K)}_{n}}=o(1)
        \end{align*}
        as $n\longrightarrow\infty$.
    \end{enumerate}
    \vspace{3mm}
\end{enumerate}
\begin{remark}
The BIC-type penalty 
\begin{align*}
    P_n^{(G)}=G\log n
\end{align*}
satisfies {\bf{[B]}}. Indeed, for each $G=1,\ldots,M$, we have
\begin{align*}
    \frac{P_n^{(G)}}{n}=G\times\frac{\log n}{n}\longrightarrow 0
\end{align*}
as $n\longrightarrow\infty$. Thus, {\bf{[B]}} (a) holds. Moreover, for any $K,L\in\{1,\ldots,M\}$ such that $K<L$,
\begin{align*}
    P^{(L)}_{n}-P^{(K)}_{n}=(L-K)\log n>0
\end{align*}
for $n\geq 2$ and
\begin{align*}
    \frac{1}{P^{(L)}_{n}-P^{(K)}_{n}}=\frac{1}{(L-K)\log n}=o(1)
\end{align*}
as $n\longrightarrow\infty$. Hence, ${\bf{[B]}}$ (b) also holds.
\end{remark}
\begin{remark}
When $P_n^{(G)}=G\log n$, the selection criterion in (\ref{criterion}) has a form similar to the quasi-BIC (QBIC); see, e.g., Eguchi and Masuda \cite{Eguchi(2018)}. Strictly speaking, however, this criterion should not be regarded as a QBIC. The reason is that the model is non-regular when $G>G^*$, and hence the usual mathematical justification for QBIC is not directly available. Indeed, when $G>G^*$, the asymptotic covariance matrix of the estimator may be degenerate, so that standard arguments are not directly applicable. In particular, it is difficult to obtain a justification of the type given in Theorem 3.7 of Eguchi and Masuda \cite{Eguchi(2018)}. This issue is not specific to our model, but is common to mixture models in general. We refer to Nguyen \cite{Nguyen(2024)} and Nguyen and Nguyen \cite{Nguyen(2026)} for further details on the problem.
\end{remark}
We obtain the following consistency result for the estimator of the number of clusters.
\begin{theorem}\label{Gconsis}
Suppose that ${[\bf{A1}]}$, ${[\bf{A2}]}$ and ${[\bf{B}]}$ hold. Then, for every ${\bm{\pi}}_M\in\Pi_M$,
\begin{align*}
    {\rm{P}}_{\bm{z}}\biggl(\hat{G}_n({\bm{\pi}}_M)=G^*\biggr)
    \longrightarrow 1
\end{align*}
as $n\longrightarrow\infty$.
\end{theorem}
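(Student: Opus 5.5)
We have to show that, for every $G\neq G^*$, ${\rm P}_{\bm z}\bigl({\bf F}_n(G;\bm\pi^{(G)})>{\bf F}_n(G^*;\bm\pi^{(G^*)})\bigr)\to1$. Since $M$ is finite, a union bound then gives the theorem. We treat the underfitting case $G<G^*$ and the overfitting case $G>G^*$ separately.

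\emph{Underfitting, $G<G^*$.} By Proposition \ref{unifprop}, $n^{-1}{\bf H}^{(G)}_n\to\mathbb H^{(G)}_{\bm z}$ uniformly on the compact set $\overline\Sigma^{(G)}$ in probability. Hence
\begin{align*}
n^{-1}\sup_{\overline\Sigma^{(G)}}{\bf H}^{(G)}_n\stackrel{p}{\longrightarrow}\sup_{\overline\Sigma^{(G)}}\mathbb H^{(G)}_{\bm z}.
\end{align*}
Each $\bar{\mathbb H}_k$ is uniquely maximized at $\sigma=\sigma^*_k$, so $\sup_{\overline\Sigma^{(G^*)}}\mathbb H^{(G^*)}_{\bm z}=\sum_j\bar{\mathbb H}_{z_j}(\sigma^*_{z_j})$, and this value is attained at $\bm\sigma^*$.

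For $G<G^*$, any $\bm\sigma^{(G)}$ has at most $G$ distinct values. Since every $S_g$ is nonempty, some $j$ has $\sigma^*_{z_j}\notin\{\sigma^{(G)}_1,\ldots,\sigma^{(G)}_G\}$, and for that $j$ the term $\max_g\bar{\mathbb H}_{z_j}(\sigma^{(G)}_g)$ falls strictly below $\bar{\mathbb H}_{z_j}(\sigma^*_{z_j})$. The supremum over $\overline\Sigma^{(G)}$ is attained by continuity and compactness. Therefore
\begin{align*}
\sup\mathbb H^{(G)}_{\bm z}<\sup\mathbb H^{(G^*)}_{\bm z}
\end{align*}
with a strictly positive gap $\delta$. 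It follows that $n^{-1}\{{\bf F}_n(G)-{\bf F}_n(G^*)\}\to2\delta>0$ in probability, because $P^{(G)}_n/n\to0$ by [B](a).

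\emph{Overfitting, $G>G^*$.} Here the penalty must do the work, so we need
\begin{align*}
\sup_{\overline\Sigma^{(G)}}{\bf H}^{(G)}_n-\sup_{\overline\Sigma^{(G^*)}}{\bf H}^{(G^*)}_n=O_p(1),
\end{align*}
after which [B](b) gives $P^{(G)}_n-P^{(G^*)}_n\to\infty$ and the claim follows. We derive this from two sandwich bounds, using $\max_g a_g+\log\min_g\pi_g\le\log\sum_g\pi_g e^{a_g}\le\max_g a_g$.

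The upper bound is
\begin{align*}
\sup{\bf H}^{(G)}_n\le\sum_j\sup_{\sigma\in[1/d,d]}\bar{\bf H}_{j,n}(\sigma)=\sum_j\bar{\bf H}_{j,n}(\hat\sigma_j),
\end{align*}
where $\hat\sigma_j^2=(nh_n)^{-1}\sum_i(\Delta X_{j,i})^2$, truncated to $[1/d^2,d^2]$.

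The lower bound comes from evaluating ${\bf H}^{(G^*)}_n$ at the point $\tilde{\bm\sigma}$ with $\tilde\sigma_k$ equal to a pooled realized volatility over $S_k$. This gives
\begin{align*}
\sup{\bf H}^{(G^*)}_n\ge\sum_j\bar{\bf H}_{j,n}(\tilde\sigma_{z_j})+J\log\min_k\pi^{(G^*)}_k.
\end{align*}
With probability tending to one $\tilde\sigma_k\in(1/d,d)$ and is ordered, so $\tilde{\bm\sigma}$ is admissible.

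It then remains to show $\bar{\bf H}_{j,n}(\hat\sigma_j)-\bar{\bf H}_{j,n}(\tilde\sigma_{z_j})=O_p(1)$. Since $\hat\sigma_j$ maximizes the concave-in-$\sigma^{-2}$ function $\bar{\bf H}_{j,n}$, a second-order Taylor expansion bounds this difference by $Cn(\hat\sigma_j^2-\tilde\sigma_{z_j}^2)^2$. Both $\hat\sigma_j^2$ and $\tilde\sigma^2_{z_j}$ are $\sqrt n$-consistent for $\sigma^{*2}_{z_j}$: the Euler increments $\Delta X=\int a\,dt+\sigma^*\Delta W$ have drift contributions of order $h_n$, and [A1], [A2] together with the moment bounds of the earlier remark give
\begin{align*}
n^{-1}\sum_i h_n^{-1}(\Delta X_i)^2-\sigma^{*2}=O_p(n^{-1/2}).
\end{align*}
Hence $n(\hat\sigma_j^2-\tilde\sigma_{z_j}^2)^2=O_p(1)$.

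The main obstacle is this overfitting step. The uniform limit of Proposition \ref{unifprop} only yields $o_p(n)$ differences, which is too weak against an $o(n)$ penalty. The required $O_p(1)$ bound comes from the $\sqrt n$-rate of realized volatility combined with the sandwich bounds on the log-mixture, which sidestep the non-regularity when $G>G^*$.
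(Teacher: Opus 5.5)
Your proposal is correct and follows essentially the same route as the paper. For underfitting, both use uniform convergence of $n^{-1}{\bf H}^{(G)}_n$ together with a strict identifiability gap (Corollary \ref{supGprob}, Proposition \ref{Hlemma}); for overfitting, both use the log-mixture sandwich, the upper bound $\sum_j\bar{\bf H}_{j,n}(\hat\Sigma_{j,n})$, and a Taylor expansion combined with the $\sqrt n$-rate of realized volatility (Lemma \ref{HbarOp1}, Proposition \ref{supOp}). The differences are cosmetic. The paper evaluates the lower bound at the deterministic admissible point ${\bm\sigma}^*$ (padded to $\tilde{\bm\sigma}^{(G)*}$ because it proves a two-sided bound for all $K,L\ge G^*$), which avoids your with-probability-tending-to-one admissibility check for the pooled volatility. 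Your one-sided comparison of $G$ against $G^*$, on the other hand, suffices on its own and needs no padding.
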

\noindent Next, we consider the estimation of the diffusion parameter.
For simplicity, hereafter we may write $\hat{G}_n({\bm{\pi}}_M)$ as $\hat{G}_n$. 
We consider the following estimator obtained by plugging $\hat{G}_n$ into $G$ in the quasi-likelihood:
\begin{align*}
    {\hat{\bm{\sigma}}}_{n}({\bm{\pi}}_M)
    \in\argsup_{{\bm{\sigma}}^{(\hat{G}_n)}\in\overline{\Sigma}^{(\hat{G}_n)}}{\bf{L}}^{(\hat{G}_n)}_{n}({\bm{\sigma}}^{(\hat{G}_n)};{\bm{\pi}}^{(\hat{G}_n)}).
\end{align*}
Set 
\begin{align*}
    I({\bm{\sigma}}^{(G^*)})=-\diag\biggl[|S_1|\bar{\mathbb{H}}''_{1}(\sigma^{(G^*)}_1),\ldots,|S_{G^*}|\bar{\mathbb{H}}''_{G^*}(\sigma^{(G^*)}_{G^*})\biggr]
\end{align*}
for ${\bm{\sigma}}^{(G^*)}\in\Sigma^{(G^*)}$. We obtain the following asymptotic result for the plug-in estimator.
\begin{theorem}\label{consis}
Suppose that ${[\bf{A1}]}$, ${[\bf{A2}]}$ and ${[\bf{B}]}$ hold. Then, for every ${\bm{\pi}}_M\in\Pi_M$,
\begin{align*}
    \begin{bmatrix}
    \hat{\bm{\sigma}}_n({\bm{\pi}}_M)\\
    {\bm{0}}_{M-\hat{G}_n({\bm{\pi}}_M)}
    \end{bmatrix}\xlongrightarrow{p}\begin{bmatrix}
    {\bm{\sigma}}^*\\
    {\bm{0}}_{M-G^*}
    \end{bmatrix}
\end{align*}
and
\begin{align*}
    &\sqrt{n}\Biggl(\,
    \begin{bmatrix}
    \hat{\bm{\sigma}}_n({\bm{\pi}}_M)\\
    {\bm{0}}_{M-\hat{G}_n({\bm{\pi}}_M)}
    \end{bmatrix}
    -\begin{bmatrix}
    {\bm{\sigma}}^*\\
    {\bm{0}}_{M-G^*}
    \end{bmatrix}\, \Biggr)\xlongrightarrow{d}N_{M}\left({\bm{0}}_M,\, \begin{bmatrix}
        I({\bm{\sigma}}^*)^{-1} & O_{G^*,M-G^*}\\
        O_{M-G^*,G^*} & O_{M-G^*,M-G^*}
    \end{bmatrix}\right)
\end{align*}
under ${\rm{P}}_{\bm{z}}$ as $n\longrightarrow\infty$.
\end{theorem}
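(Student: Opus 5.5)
By Theorem \ref{Gconsis}, ${\rm P}_{\bm z}(\hat G_n=G^*)\to1$. On the event $\{\hat G_n=G^*\}$ the padded vector equals $(\hat{\bm\sigma}^{(G^*)}_n,{\bm 0}_{M-G^*})$, where $\hat{\bm\sigma}^{(G^*)}_n$ is a maximizer of ${\bf H}^{(G^*)}_n(\cdot;{\bm\pi}^{(G^*)})$ over $\overline{\Sigma}^{(G^*)}$. Differences on an event of vanishing probability do not affect convergence in probability or in distribution, even after multiplication by $\sqrt n$. It therefore suffices to prove both statements for $\hat{\bm\sigma}^{(G^*)}_n$ with $G=G^*$ fixed.

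\textbf{Consistency.} Proposition \ref{unifprop} gives $\sup_{\overline\Sigma^{(G^*)}}|n^{-1}{\bf H}^{(G^*)}_n-\mathbb H^{(G^*)}_{\bm z}|\to0$ in probability. For each $k$, $\bar{\mathbb H}_k$ is uniquely maximized at $\sigma_k^*$. Hence $\mathbb H^{(G^*)}_{\bm z}({\bm\sigma})\le\sum_j\bar{\mathbb H}_{z_j}(\sigma^*_{z_j})$, with equality iff for every $k$ some coordinate of ${\bm\sigma}$ equals $\sigma^*_k$. Since there are $G^*$ distinct values and $G^*$ coordinates, and the coordinates are ordered, this forces ${\bm\sigma}={\bm\sigma}^*$. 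Here we use that every $S_k\neq\emptyset$. So ${\bm\sigma}^*$ is the unique maximizer of the continuous function $\mathbb H^{(G^*)}_{\bm z}$ on the compact set $\overline\Sigma^{(G^*)}$. The standard argmax argument then yields $\hat{\bm\sigma}^{(G^*)}_n\to{\bm\sigma}^*$ in probability.

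\textbf{Asymptotic normality.} Since ${\bm\sigma}^*$ lies in the interior of $\overline\Sigma^{(G^*)}$, with high probability $\partial_{\bm\sigma}{\bf H}^{(G^*)}_n(\hat{\bm\sigma})=0$. A Taylor expansion gives
\begin{align*}
\sqrt n(\hat{\bm\sigma}-{\bm\sigma}^*)=\Bigl(-\tfrac1n\int_0^1\partial^2_{\bm\sigma}{\bf H}^{(G^*)}_n({\bm\sigma}^*+u(\hat{\bm\sigma}-{\bm\sigma}^*))du\Bigr)^{-1}\tfrac1{\sqrt n}\partial_{\bm\sigma}{\bf H}^{(G^*)}_n({\bm\sigma}^*).
\end{align*}
Write $w_{jg}=\pi_g e^{\bar{\bf H}_{j,n}(\sigma_g)}/\sum_{l}\pi_le^{\bar{\bf H}_{j,n}(\sigma_l)}$ for the posterior weights. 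Then
\begin{align*}
\partial_{\sigma_g}{\bf H}_n=\sum_j w_{jg}\bar{\bf H}'_{j,n}(\sigma_g),
\end{align*}
and the Hessian is built from $w_{jg}\bar{\bf H}''_{j,n}$ plus terms $w_{jg}(\delta_{gl}-w_{jl})\bar{\bf H}'_{j,n}(\sigma_g)\bar{\bf H}'_{j,n}(\sigma_l)$.

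The key step is to show that $w_{jg}$ is exponentially close to $1_{\{z_j=g\}}$, uniformly over a shrinking neighbourhood of ${\bm\sigma}^*$. For $g\ne z_j$, $n^{-1}(\bar{\bf H}_{j,n}(\sigma_g)-\bar{\bf H}_{j,n}(\sigma_{z_j}))\to\bar{\mathbb H}_{z_j}(\sigma^*_g)-\bar{\mathbb H}_{z_j}(\sigma^*_{z_j})<0$, so $w_{jg}\le Ce^{-cn}$ with probability tending to one. Since $\bar{\bf H}'_{j,n}=O_p(n)$, the cross terms are $O_p(n^2e^{-cn})=o_p(n)$ in the Hessian and $o_p(\sqrt n)$ in the score.

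The score then reduces to $n^{-1/2}\sum_{j\in S_g}\bar{\bf H}'_{j,n}(\sigma^*_g)$, where
\begin{align*}
\bar{\bf H}'_{j,n}(\sigma)=-\frac n\sigma+\sum_i\frac{(\Delta_iX_j)^2}{h_n\sigma^3}.
\end{align*}
At $\sigma^*_g$ this equals $\sigma_g^{*-1}\sum_i\bigl((\Delta_iX_j)^2/(h_n\sigma^{*2}_g)-1\bigr)$. Using [A1], [A2] and moment bounds, the drift contributions are $o_p(\sqrt n)$ because $T$ is fixed. The remaining Brownian part is a sum of i.i.d.\ centered $\chi^2_1-1$ terms, independent across $j$. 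The CLT gives the limit $N(0,2|S_g|/\sigma_g^{*2})$, independently over $g$. Since $-\bar{\mathbb H}''_g(\sigma^*_g)=2/\sigma^{*2}_g$, this limiting covariance is exactly $I({\bm\sigma}^*)$.

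For the Hessian, Proposition \ref{unifprop}-type uniform convergence of $n^{-1}\bar{\bf H}''_{j,n}$, together with consistency, gives $-n^{-1}\partial^2_{\bm\sigma}{\bf H}_n\to I({\bm\sigma}^*)$ in probability. Slutsky's lemma then yields $N_{G^*}(0,I({\bm\sigma}^*)^{-1})$, and padding with zeros gives the degenerate block form in the statement.

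The main obstacle is the uniform exponential control of the off-label weights $w_{jg}$ along the random segment between $\hat{\bm\sigma}$ and ${\bm\sigma}^*$. I would handle it by fixing a small neighbourhood $U$ of ${\bm\sigma}^*$ in which the coordinates remain separated, bounding $\sup_{U}$ of the log-likelihood differences using the uniform convergence, and intersecting with the event $\{\hat{\bm\sigma}\in U\}$, whose probability tends to one.
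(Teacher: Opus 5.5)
Your proposal is correct and follows essentially the same route as the paper. You reduce to the event $\{\hat G_n=G^*\}$ via Theorem~\ref{Gconsis}, prove consistency of the known-$G^*$ maximizer from uniform convergence plus identifiability of ${\bm{\sigma}}^*$ (Propositions~\ref{unifprop} and \ref{consisprop}), and obtain asymptotic normality from a Taylor expansion in which the off-label posterior weights are exponentially small uniformly on a fixed neighbourhood of ${\bm{\sigma}}^*$ (Lemmas~\ref{infsuplemma}--\ref{sigmaunif} and Proposition~\ref{asymprop}).

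The step that needs more care than you indicate is the negligibility of the drift in $\bar{\bf H}'_{j,n}(\sigma^*_{z_j})$. Absolute moment bounds only show that the cross term $h_n^{-1}\sum_{i=1}^n (W_{j,t_i^n}-W_{j,t_{i-1}^n})\int_{t_{i-1}^n}^{t_i^n}a^{(G^*)}_{z_j}(X_{j,s})\,ds$ is $O_p(\sqrt n)$, which is not enough after dividing by $\sqrt n$. You must use its martingale structure: its leading part is $\sum_{i=1}^n a^{(G^*)}_{z_j}(X_{j,t_{i-1}^n})(W_{j,t_i^n}-W_{j,t_{i-1}^n})=O_p(1)$. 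The paper gets this through the conditional-moment expansions and the martingale CLT in Lemma~\ref{Sigmalemma}.
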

\begin{remark}
While $\mathbb{H}_{\bm{z}}^{(G^*)}$ may not be differentiable everywhere on $\overline{\Sigma}^{(G^*)}$, it is twice continuously differentiable in a neighborhood of ${\bm{\sigma}}^*$. Noting that 
\begin{align*}
    \bar{\mathbb{H}}''_{g}(\sigma)=\frac{1}{\sigma^2}-\frac{3\sigma_{g}^{*2}}{\sigma^4}
\end{align*}
for $\sigma>0$, it is shown that
\begin{align*}
    -\partial^2_{\bm{\sigma}^{(G^*)}}\mathbb{H}_{\bm{z}}^{(G^*)}({\bm{\sigma}}^*)=
    I({\bm{\sigma}}^*)=2\diag\Biggl[\frac{|S_1|}{\sigma_1^{*2}},\ldots,\frac{|S_{G^*}|}{\sigma_{G^*}^{*2}}\Biggr].
\end{align*}
Since $S_g\neq\emptyset$ for each $g=1,\ldots,G^*$, $I({\bm{\sigma}}^*)$ is positive definite.
\end{remark}
\noindent Finally, we consider the estimation of the cluster labels. For each $j=1,\ldots,J$, we define
\begin{align*}
    \mathbb{Q}_{z_j}(g)=\left\{
    \begin{array}{ll}
        1 & (g=z_j),\\
        0 & (g\neq z_j)
    \end{array}
    \right.
\end{align*}
for $g=1,\ldots,G^*$. Furthermore, for each $j=1,\ldots,J$ and $G=1,\ldots,M$ and for every ${\bm{\sigma}}^{(G)}\in\overline{\Sigma}^{(G)}$ and ${\bm{\pi}}^{(G)}\in\Pi^{(G)}$, we set
\begin{align*}
    {\bf{Q}}^{(G)}_{j,n}(g;{\bm{\sigma}}^{(G)},{\bm{\pi}}^{(G)})=\frac{\pi^{(G)}_{g}\exp\Bigl\{\bar{\bf{H}}_{j,n}(\sigma^{(G)}_g)\Bigr\}
    }{\displaystyle\sum_{k=1}^{G}\pi^{(G)}_{k}\exp\Bigl\{\bar{\bf{H}}_{j,n}(\sigma^{(G)}_k)\Bigr\}}
\end{align*}
for $g=1,\ldots,G$. For each $j=1,\ldots,J$, since $\mathbb{Q}_{z_j}(g)$ is uniquely maximized at $g=z_j$, we define an estimator of the cluster label $z_j$ by
\begin{align*}
    \hat{Z}_{j,n}({\bm{\pi}}_M)
    \in\argmax_{g\in\{1,\ldots,\hat{G}_n\}}{\bf{Q}}^{(\hat{G}_n)}_{j,n}(g;\hat{\bm{\sigma}}_n({\bm\pi}_M),{\bm{\pi}}^{(\hat{G}_n)}).
\end{align*}
If the argmax contains more than one index, we select the smallest index. The following consistency result is obtained.
\begin{theorem}\label{zconsis}
Suppose that ${[\bf{A1}]}$, ${[\bf{A2}]}$ and ${[\bf{B}]}$ hold. Then, for every ${\bm{\pi}}_M\in\Pi_M$ and each $j=1,\ldots,J$,
\begin{align*}
    {\rm{P}}_{\bm{z}}\biggl(\hat{Z}_{j,n}({\bm{\pi}}_M)=z_j\biggr)\longrightarrow 1
\end{align*}
as $n\longrightarrow\infty$.
\end{theorem}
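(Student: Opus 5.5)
We prove Theorem \ref{zconsis} by working on the event $\{\hat G_n=G^*\}$, whose probability tends to one by Theorem \ref{Gconsis}. On this event we show that the posterior weight ${\bf Q}^{(G^*)}_{j,n}(z_j;\hat{\bm\sigma}_n,{\bm\pi}^{(G^*)})$ tends to one, so that it is strictly larger than $1/2$, and hence larger than every other weight, with probability tending to one. The estimator $\hat Z_{j,n}$ is then $z_j$, whatever the tie-breaking rule.

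Fix $j$ and $g\neq z_j$ with $g\in\{1,\ldots,G^*\}$. We write the ratio
\begin{align*}
\frac{{\bf Q}^{(G^*)}_{j,n}(g;\hat{\bm\sigma}_n,{\bm\pi}^{(G^*)})}{{\bf Q}^{(G^*)}_{j,n}(z_j;\hat{\bm\sigma}_n,{\bm\pi}^{(G^*)})}
=\frac{\pi^{(G^*)}_g}{\pi^{(G^*)}_{z_j}}\exp\Bigl\{n\Bigl(n^{-1}\bar{\bf H}_{j,n}(\hat\sigma_{n,g})-n^{-1}\bar{\bf H}_{j,n}(\hat\sigma_{n,z_j})\Bigr)\Bigr\}.
\end{align*}

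First, we use the single-process version of the uniform convergence in Proposition \ref{unifprop}: $\sup_{\sigma\in[1/d,d]}|n^{-1}\bar{\bf H}_{j,n}(\sigma)-\bar{\mathbb H}_{z_j}(\sigma)|\to0$ in probability. This follows from $n^{-1}\sum_i (\Delta X_{j,i})^2/h_n\to\sigma^{*2}_{z_j}$ in probability, which is a standard consequence of [A1] and [A2], because $\bar{\bf H}_{j,n}/n$ is an affine function of this realized volatility, with coefficients bounded on $[1/d,d]$.

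Second, Theorem \ref{consis} gives $\hat\sigma_{n,k}\to\sigma^*_k$ in probability for each $k$ on $\{\hat G_n=G^*\}$. By the continuity of $\bar{\mathbb H}_{z_j}$,
\begin{align*}
n^{-1}\bar{\bf H}_{j,n}(\hat\sigma_{n,g})-n^{-1}\bar{\bf H}_{j,n}(\hat\sigma_{n,z_j})\xrightarrow{p}\bar{\mathbb H}_{z_j}(\sigma^*_g)-\bar{\mathbb H}_{z_j}(\sigma^*_{z_j})=:-c_g .
\end{align*}
The function $\bar{\mathbb H}_{z_j}$ is uniquely maximized at $\sigma=\sigma^*_{z_j}$, since $\bar{\mathbb H}'_{z_j}(\sigma)=-1/\sigma+\sigma^{*2}_{z_j}/\sigma^3$. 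Because the true parameters are strictly ordered, $\sigma^*_g\neq\sigma^*_{z_j}$, and therefore $c_g>0$.

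It follows that, with probability tending to one, the bracket above is at most $-c_g/2$. The ratio is then bounded by $(\pi_g/\pi_{z_j})e^{-nc_g/2}\to0$, since ${\bm\pi}$ is fixed with positive entries. Summing over the finitely many $g\neq z_j$ shows that ${\bf Q}^{(G^*)}_{j,n}(z_j;\cdot)\to1$ in probability. We combine this with ${\rm P}_{\bm z}(\hat G_n\neq G^*)\to0$ to conclude.

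The step that needs the most care is the second one: passing from the consistency of $\hat{\bm\sigma}_n$, which is only stated together with the random $\hat G_n$ through the padded-vector formulation, to the componentwise convergence $\hat\sigma_{n,k}\to\sigma^*_k$ on $\{\hat G_n=G^*\}$. We handle this by noting that on this event the padded vector equals $(\hat{\bm\sigma}_n^\top,{\bm 0}^\top_{M-G^*})^\top$, so the first $G^*$ coordinates converge in probability. The rest of the argument only requires the gap $c_g>0$, which is uniform over the finitely many indices involved.
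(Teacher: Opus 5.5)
Your proof is correct and follows essentially the paper's route: restrict to $\{\hat G_n=G^*\}$ via Theorem \ref{Gconsis}, then combine the uniform convergence of $n^{-1}\bar{\bf H}_{j,n}$ (Lemma \ref{Hbarlemma}), consistency of the diffusion estimator and the strict gap $\bar{\mathbb H}_{z_j}(\sigma^*_{z_j})>\bar{\mathbb H}_{z_j}(\sigma^*_g)$ to drive the weight of $z_j$ to one exponentially fast. The only difference is packaging: the paper proves a neighborhood-uniform bound $\sup_{{\bm\sigma}\in B^*(\rho)}{\bf Q}^{(G^*)}_{j,n}(g;{\bm\sigma},{\bm\pi}^{(G^*)})\le K_1e^{-K_2n}$ for the known-$G^*$ estimator (Lemmas \ref{elemma}, \ref{Qsuplemma}, \ref{Qlemma} and Proposition \ref{Zstarconsis}) and then transfers it to $\hat Z_{j,n}$ on $\{\hat G_n=G^*\}$, whereas you plug the estimator from Theorem \ref{consis} directly into Lemma \ref{Hbarlemma}.
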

The above results imply that the choice of ${\bm{\pi}}_M$ does not asymptotically affect the estimation of the true number of clusters $G^*$, the true diffusion parameter vector ${\bm{\sigma}}^*$ and the vector of true cluster labels ${\bm{z}}$. Hence, if we are not interested in estimating ${\bm{\pi}}^*$, we may simply set, for example, 
\begin{align*}
    \pi^{(G)}_g=\frac{1}{G}\quad(g=1,\ldots,G)
\end{align*}
for each $G=1,\ldots,M$. Note that in general,  
${\bm{\pi}}^*$ cannot be consistently estimated when $J$ is fixed. Therefore, we define 
\begin{align*}
    \check{G}_n\in \argmin_{G\in\{1,\ldots,M\}}\Bigl\{-2\sup_{{\bm{\sigma}}^{(G)}\in\overline{\Sigma}^{(G)}}{\bf{H}}^{(G)}_{n}({\bm{\sigma}}^{(G)})+P^{(G)}_{n}\Bigr\}
\end{align*}
and 
\begin{align*}
    \check{\bm{\sigma}}_n\in
    \argsup_{{\bm{\sigma}}^{(\check{G}_n)}\in\overline{\Sigma}^{(\check{G}_n)}}
    {\bf L}^{(\check{G}_n)}_n({\bm{\sigma}}^{(\check{G}_n)}),
\end{align*}
where
\begin{align*}
    {\bf L}^{(G)}_n({\bm{\sigma}}^{(G)})=\prod_{j=1}^J\frac{1}{G}\sum_{g=1}^{G}\bar{\bf L}_{j,n}(\sigma^{(G)}_g)
\end{align*}
and
\begin{align*}
    {\bf{H}}^{(G)}_n({\bm{\sigma}}^{(G)})
    &=\sum_{j=1}^J\log\Biggl(\frac{1}{G}\sum_{g=1}^{G}\exp\Bigl\{\bar{\bf{H}}_{j,n}(\sigma^{(G)}_g)\Bigr\}\Biggr).
\end{align*}
For each $j=1,\ldots,J$, the estimator of $z_{j}$ is defined by
\begin{align*}
    \check{Z}_{j,n}
    \in\argmax_{g\in\{1,\ldots,\check{G}_n\}}{\bf{Q}}^{(\check{G}_n)}_{j,n}(g;\check{\bm{\sigma}}_n),
\end{align*}
where 
\begin{align*}
    {\bf{Q}}^{(G)}_{j,n}(g;{\bm{\sigma}}^{(G)})=\frac{\exp\Bigl\{\bar{\bf{H}}_{j,n}(\sigma^{(G)}_g)\Bigr\}
    }{\displaystyle\sum_{k=1}^{G}\exp\Bigl\{\bar{\bf{H}}_{j,n}(\sigma^{(G)}_k)\Bigr\}}
\end{align*}
for $g=1,\ldots,G$. Theorems \ref{Gconsis}--\ref{zconsis} can be simplified as follows.
\begin{corollary}\label{Gconsis2}
Suppose that ${[\bf{A1}]}$, ${[\bf{A2}]}$ and ${[\bf{B}]}$ hold. Then, 
\begin{align*}
    {\rm{P}}_{\bm{z}}\biggl(\check{G}_n=G^*\biggr)\longrightarrow 1
\end{align*}
as $n\longrightarrow\infty$.
\end{corollary}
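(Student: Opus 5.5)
The corollary is the special case of Theorem \ref{Gconsis} with uniform mixing proportions. I would define ${\bm{\pi}}_M^{\circ}\in\Pi_M$ by setting $\pi^{(G)}_g=1/G$ for $g=1,\ldots,G$ and $G=1,\ldots,M$. Each ${\bm{\pi}}^{(G)}$ then has positive entries summing to one, so ${\bm{\pi}}^{(G)}\in\Pi^{(G)}$ and hence ${\bm{\pi}}_M^{\circ}\in\Pi_M$.

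Substituting these proportions into (\ref{quasi}) gives ${\bf{L}}^{(G)}_{n}({\bm{\sigma}}^{(G)};{\bm{\pi}}^{(G)})={\bf L}^{(G)}_n({\bm{\sigma}}^{(G)})$. Likewise ${\bf{H}}^{(G)}_{n}({\bm{\sigma}}^{(G)};{\bm{\pi}}^{(G)})={\bf{H}}^{(G)}_n({\bm{\sigma}}^{(G)})$, identically in ${\bm{\sigma}}^{(G)}\in\overline{\Sigma}^{(G)}$ and for every $G$. It follows that ${\bf F}_n(G;{\bm{\pi}}^{(G)})$ coincides with the criterion minimized in the definition of $\check{G}_n$. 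Therefore $\check{G}_n$ is an element of $\argmin_{G}{\bf F}_n(G;{\bm{\pi}}^{(G)})$ for ${\bm{\pi}}_M={\bm{\pi}}_M^{\circ}$.

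The only point needing care is the case of ties, where the argmin may contain more than one element. I would handle it by noting that the proof of Theorem \ref{Gconsis} should apply to any measurable selection from the argmin. That proof presumably shows ${\rm P}_{\bm z}\bigl({\bf F}_n(G^*;{\bm{\pi}}^{(G^*)})<{\bf F}_n(G;{\bm{\pi}}^{(G)})\ \text{for all } G\neq G^*\bigr)\to1$. On that event the argmin is the singleton $\{G^*\}$, so every selection equals $G^*$. Equivalently, I can take $\hat G_n({\bm{\pi}}_M^{\circ}):=\check G_n$ as the selection and apply Theorem \ref{Gconsis} directly, which gives ${\rm P}_{\bm z}(\check G_n=G^*)\to1$.

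There is no real obstacle here. The argument only needs the identification of the two criteria and the remark that Theorem \ref{Gconsis} holds for an arbitrary choice from the argmin.
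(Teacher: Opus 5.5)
Your proposal is correct and follows the paper's route. The paper gives no separate proof; the corollary is Theorem \ref{Gconsis} specialized to $\pi^{(G)}_g=1/G$, under which ${\bf H}^{(G)}_n(\bm{\sigma}^{(G)};\bm{\pi}^{(G)})={\bf H}^{(G)}_n(\bm{\sigma}^{(G)})$ and the two selection criteria coincide. Your handling of ties is also sound: the proof of Theorem \ref{Gconsis} bounds $\{\hat G_n\neq G^*\}$ by the union over $G\neq G^*$ of the events $\{{\bf F}_n(G^*;\bm{\pi}^{(G^*)})\geq{\bf F}_n(G;\bm{\pi}^{(G)})\}$, and this bound holds for any selection from the argmin, including $\check G_n$.
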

\begin{corollary}\label{consis2}
Suppose that ${[\bf{A1}]}$, ${[\bf{A2}]}$ and ${[\bf{B}]}$ hold. Then, 
\begin{align*}
    \begin{bmatrix}
    \check{\bm{\sigma}}_n\\
    {\bm{0}}_{M-\check{G}_n}
    \end{bmatrix}\xlongrightarrow{p}\begin{bmatrix}
    {\bm{\sigma}}^*\\
    {\bm{0}}_{M-G^*}
    \end{bmatrix}
\end{align*}
and
\begin{align*}
    &\sqrt{n}\Biggl(\, \begin{bmatrix}
    \check{\bm{\sigma}}_n\\
    {\bm{0}}_{M-\check{G}_n}
    \end{bmatrix}-\begin{bmatrix}
    {\bm{\sigma}}^*\\
    {\bm{0}}_{M-G^*}
    \end{bmatrix}\, \Biggr)\xlongrightarrow{d}N_{M}\left({\bm{0}}_M,\, \begin{bmatrix}
    I({\bm{\sigma}}^*)^{-1} & O_{G^*,M-G^*}\\
    O_{M-G^*,G^*} & O_{M-G^*,M-G^*}
    \end{bmatrix}\right)
\end{align*}
under ${\rm{P}}_{\bm{z}}$ as $n\longrightarrow\infty$.
\end{corollary}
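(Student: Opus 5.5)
Corollary \ref{consis2} is Theorem \ref{consis} evaluated at one particular mixing vector. I will take ${\bm{\pi}}_M^{\circ}\in\Pi_M$ with $\pi^{(G)}_g=1/G$ for $g=1,\ldots,G$ and $G=1,\ldots,M$. Each $\pi^{(G)}_g$ lies in $(0,1]$ and the entries sum to one, so ${\bm{\pi}}_M^{\circ}$ is an admissible element of $\Pi_M$. By the definitions in \eqref{quasi} and of ${\bf H}^{(G)}_n$, we have identically in ${\bm{\sigma}}^{(G)}$ and $\omega$
\begin{align*}
{\bf L}^{(G)}_n({\bm{\sigma}}^{(G)})={\bf L}^{(G)}_n({\bm{\sigma}}^{(G)};{\bm{\pi}}^{(G)\circ}),\qquad {\bf H}^{(G)}_n({\bm{\sigma}}^{(G)})={\bf H}^{(G)}_n({\bm{\sigma}}^{(G)};{\bm{\pi}}^{(G)\circ}).
\end{align*}
The quantities minimised over $G$ in the definitions of $\check G_n$ and $\hat G_n({\bm{\pi}}^{\circ}_M)$ are therefore the same function of $G$. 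The same holds for the objectives defining $\check{\bm{\sigma}}_n$ and $\hat{\bm{\sigma}}_n({\bm{\pi}}^\circ_M)$.

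The one point to handle is measurable selection when an argmin or argsup is not unique. The paper writes ``$\in$'' in these definitions, so Theorem \ref{consis} should be read as holding for any (measurable) selection from the argmin and argsup. The selections $\check G_n$ and $\check{\bm{\sigma}}_n$ are then admissible versions of $\hat G_n({\bm{\pi}}^\circ_M)$ and $\hat{\bm{\sigma}}_n({\bm{\pi}}^\circ_M)$. Formally, I define $\hat G_n({\bm{\pi}}^\circ_M):=\check G_n$ and $\hat{\bm{\sigma}}_n({\bm{\pi}}^\circ_M):=\check{\bm{\sigma}}_n$; this is legitimate because $\check{\bm{\sigma}}_n$ maximises ${\bf L}^{(\check G_n)}_n(\cdot\,;{\bm{\pi}}^{(\check G_n)\circ})$ over $\overline{\Sigma}^{(\check G_n)}$.

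With these identifications, I invoke Theorem \ref{consis} at ${\bm{\pi}}_M={\bm{\pi}}_M^\circ$ under [A1], [A2] and [B]. This gives directly the convergence in probability of the zero-padded vector $(\check{\bm{\sigma}}_n^\top,{\bm 0}_{M-\check G_n}^\top)^\top$ and its $\sqrt n$-asymptotic normality with the stated degenerate covariance. The same argument also gives Corollary \ref{Gconsis2} from Theorem \ref{Gconsis}.

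I do not expect a genuine obstacle, since the entire content is the verification that ${\bm{\pi}}^\circ_M\in\Pi_M$ together with the above identification of objectives. The only step needing care is the selection issue. If the proof of Theorem \ref{consis} were tied to one specific measurable selection, I would instead rerun its argument directly. That argument uses only the argsup property, the uniform convergence of Proposition \ref{unifprop}, Theorem \ref{Gconsis} on the event $\{\hat G_n=G^*\}$, and a Taylor expansion of the score near ${\bm{\sigma}}^*$, so it applies verbatim to any maximiser.
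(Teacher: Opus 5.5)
Your proposal is correct and follows the paper's route: the paper presents Corollary \ref{consis2} as the specialization of Theorem \ref{consis} to the uniform weights $\pi^{(G)}_g=1/G$. Under these weights ${\bf L}^{(G)}_n$ and ${\bf H}^{(G)}_n$ coincide with their ${\bm{\pi}}$-indexed versions, so $\check G_n$ and $\check{\bm{\sigma}}_n$ are admissible choices of $\hat G_n({\bm{\pi}}_M)$ and $\hat{\bm{\sigma}}_n({\bm{\pi}}_M)$. Your remark on selections is also consistent with the paper, whose proofs of Theorems \ref{Gconsis}--\ref{consis} and Propositions \ref{consisprop}--\ref{asymprop} use only the argmin/argsup property and therefore apply to any selected minimizer or maximizer.
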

\begin{corollary}\label{zconsis2}
Suppose that ${[\bf{A1}]}$, ${[\bf{A2}]}$ and ${[\bf{B}]}$ hold. Then, for each $j=1,\ldots,J$,
\begin{align*}
    {\rm{P}}_{\bm{z}}\biggl(\check{Z}_{j,n}=z_j\biggr)\longrightarrow 1
\end{align*}
as $n\longrightarrow\infty$.
\end{corollary}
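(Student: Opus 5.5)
The plan is to reduce Corollary \ref{zconsis2} to Theorem \ref{zconsis} by a single choice of mixing proportions. Take $\bm{\pi}_M^{\circ}\in\Pi_M$ with $\pi^{(G)}_g=1/G$ for all $g=1,\ldots,G$ and $G=1,\ldots,M$. This vector lies in $\Pi_M$, since every entry is in $(0,1]$ and each block sums to one. With this choice the objects in the corollary coincide with those in Theorem \ref{zconsis}:
\begin{align*}
    {\bf H}^{(G)}_n(\bm{\sigma}^{(G)})={\bf H}^{(G)}_n(\bm{\sigma}^{(G)};\bm{\pi}^{\circ(G)}),\qquad
    {\bf L}^{(G)}_n(\bm{\sigma}^{(G)})={\bf L}^{(G)}_n(\bm{\sigma}^{(G)};\bm{\pi}^{\circ(G)}),
\end{align*}
and
\begin{align*}
    {\bf Q}^{(G)}_{j,n}(g;\bm{\sigma}^{(G)})={\bf Q}^{(G)}_{j,n}(g;\bm{\sigma}^{(G)},\bm{\pi}^{\circ(G)}).
\end{align*}
The last identity holds because the common factor $1/G$ cancels between the numerator and the denominator.

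It follows that the criterion defining $\check G_n$ is exactly ${\bf F}_n(G;\bm{\pi}^{\circ(G)})$. The maximization defining $\check{\bm\sigma}_n$ is exactly the one defining $\hat{\bm\sigma}_n(\bm{\pi}^{\circ}_M)$, and the rule defining $\check Z_{j,n}$ is exactly the one defining $\hat Z_{j,n}(\bm{\pi}^{\circ}_M)$. The tie-breaking rule for $\check Z_{j,n}$ is taken to be the same one, namely the smallest index.

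The only delicate point is that argmin and argsup may fail to be unique. The statement is therefore read as holding for any measurable selection. I would observe that the proofs of Theorems \ref{Gconsis}--\ref{zconsis} apply to an arbitrary selection from the respective argmin and argsup sets. Hence the selections $(\check G_n,\check{\bm\sigma}_n)$ are admissible versions of $(\hat G_n(\bm{\pi}^{\circ}_M),\hat{\bm\sigma}_n(\bm{\pi}^{\circ}_M))$. Given this, $\check Z_{j,n}=\hat Z_{j,n}(\bm{\pi}^{\circ}_M)$ holds identically.

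Theorem \ref{zconsis}, applied with $\bm{\pi}_M=\bm{\pi}^{\circ}_M$, then gives ${\rm P}_{\bm z}(\check Z_{j,n}=z_j)\to1$ for each $j=1,\ldots,J$. No further obstacle is expected beyond this identification of estimators.
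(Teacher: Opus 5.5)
Your proposal is correct and matches the paper's argument: Corollary \ref{zconsis2} is Theorem \ref{zconsis} specialized to the uniform weights $\pi^{(G)}_g=1/G$, under which ${\bf H}^{(G)}_n$, ${\bf L}^{(G)}_n$ and ${\bf Q}^{(G)}_{j,n}$ coincide with their $\bm{\pi}$-indexed versions. Your remark about non-unique maximizers is consistent with the paper's proofs, which work for arbitrary selections; the tie-breaking convention is also immaterial, because the argmax is the singleton $\{z_j\}$ on an event whose probability tends to one.
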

\begin{remark}
When $G^*$ is known, results analogous to Theorems \ref{consis}--\ref{zconsis} and Corollaries \ref{consis2}--\ref{zconsis2}
can be obtained; see Propositions \ref{consisprop}, \ref{asymprop} and \ref{Zstarconsis}. Therefore, the proposed method can be applied whether $G^*$ is known or unknown.
\end{remark}
\section{Numerical studies}\label{simulation}
\subsection{True model}
We consider the finite mixture model introduced in Section 2 with
$G^*=4$ and $J=20$. For $g=1,\ldots,4$, the drift functions are given by $a_g^{(G^*)}(x)=-2x$ and the initial values are set to $\xi^{(G^*)}_{g}=0$. The true mixing proportion vector is $\bm{\pi}^*=(0.3,0.2,0.3,0.2)^\top$, and the true diffusion coefficients are specified in the following two simulation settings:\vspace{2mm}
\begin{center}
\begin{tabular}{@{}l@{\quad}l@{\quad}l@{}}
    \textbf{Simulation 1:}
    & $\bm{\sigma}^*=(0.8,1.2,1.8,2.7)^\top$\\[4pt]

    \textbf{Simulation 2:}
    & $\bm{\sigma}^*=(1.0,1.2,1.4,1.6)^\top$.
\end{tabular}
\end{center}\vspace{2mm}
The $J$ cluster labels were generated only once and fixed as 
\begin{align*}
    \bm z=\bigl(2, 2, 1, 4, 4, 3, 2, 3, 1, 1, 4, 3, 3, 1, 1, 2, 4, 3, 3, 3\bigr)^\top
\end{align*}
throughout all replications in both simulation settings. The corresponding true clusters are
\begin{align*}
    S_1&=\bigl\{3,9,10,14,15\bigr\},\quad S_2=\bigl\{1,2,7,16\bigr\},\\
    S_3&=\bigl\{6,8,12,13,18,19,20\bigr\},\quad S_4=\bigl\{4,5,11,17\bigr\}.
\end{align*}
\subsection{Simulation settings}
Numerical optimization was performed by using the \texttt{optim()}
function in R with the BFGS algorithm. The sample sizes were set to
$n=10^2,\,10^3,\,10^4,\,10^5,\, 10^6$. For each simulation setting and each sample size, we conducted 10,000 Monte Carlo replications with $T=1$. The maximum number of clusters was set to $M=8$. 
We set the initial value to the true parameter value or an optimal parameter value. To select the number of clusters, we used the BIC-type penalty $P_n^{(G)}=G\log n$.
\subsection{Simulation results}
Tables \ref{1G} and \ref{2G} show the number of times 
$\check{G}_n=G$ for each $G=1,\ldots,8$ in Simulations 1 and 2, respectively. The true number of clusters $G^*=4$ was selected most frequently for sufficiently large $n$, and the correct selection rate approaches $1$ as the sample size increases, which is consistent with Corollary \ref{Gconsis2}. A high rate of correct selection was achieved at smaller sample sizes in Simulation 1 than in Simulation 2. This suggests that larger sample sizes are required to correctly
identify the number of clusters when the clusters are less separated. Tables \ref{1est} and \ref{2est} show the sample means and sample standard deviations (S.D.s) of the components of $\check{\bm{\sigma}}_n$ in Simulations 1 and 2, respectively. Figures \ref{1F} and \ref{2F} show histograms, normal Q--Q plots and empirical distribution functions of the components of $\sqrt{n}(\check{\bm{\sigma}}_{n}-{\bm{\sigma}}^{*})$ when $n=10^6$ in Simulations 1 and 2, respectively. These results are consistent with Corollary \ref{consis2}. Tables \ref{1Z} and \ref{2Z} show the 
proportions of replications in which $\check{Z}_{j,n}=z_j$ for each $j=1,\ldots,20$ in Simulations 1 and 2, respectively. The correct classification rates tend to increase as the sample size increases. These results are consistent with Corollary \ref{zconsis2}. Note that the results in Tables \ref{1est}, \ref{1Z}, \ref{2est} and \ref{2Z} and Figures \ref{1F} and \ref{2F} are based only on replications in which $\check{G}_n=G^*$.
\newpage
\ \vspace{0.5cm}
\begin{longtable}{c|cccccccc}
    & $G=1$ & $G=2$ & $G=3$ & $G=4$ & $G=5$ & $G=6$ & $G=7$ & $G=8$\\ \hline\hline
    $n=10^2$ & 0 & 0 & 0 & \textbf{9,780} & 218 & 2 & 0 & 0 \\
    $n=10^3$ & 0 & 0 & 0 & \textbf{9,943} & 56 & 1 & 0 & 0 \\
    $n=10^4$ & 0 & 0 & 0 & \textbf{9,979} & 21 & 0 & 0 & 0 \\
    $n=10^5$ & 0 & 0 & 0 & \textbf{9,996} & 4 & 0 & 0 & 0 \\
    $n=10^6$ & 0 & 0 & 0 & \textbf{10,000} & 0 & 0 & 0 & 0\\
    \caption{The number of times $\check{G}_n=G$ for each $G=1,\ldots,8$ in Simulation 1.}\label{1G}
\end{longtable}\
\begin{longtable}{c|cccc}
    & $\check{{\bm{\sigma}}}_n^{(1)}(0.8)$ & $\check{{\bm{\sigma}}}_n^{(2)}(1.2)$ & $\check{{\bm{\sigma}}}_n^{(3)}(1.8)$ & $\check{{\bm{\sigma}}}_n^{(4)}(2.7)$\\ \hline\hline
    $n=10^2$ & 0.8026 (0.0258) & 1.2059 (0.0460) & 1.8066 (0.0489) & 2.7073 (0.0979)\\
    $n=10^3$ & 0.8003 (0.0081) & 1.2003 (0.0136) & 1.8005 (0.0150) & 2.7011 (0.0303)\\
    $n=10^4$ & 0.8000 (0.0025) & 1.2000 (0.0042) & 1.8001 (0.0048) & 2.7000 (0.0096)\\
    $n=10^5$ & 0.8000 (0.0008) & 1.2000 (0.0013) & 1.8000 (0.0015) & 2.7000 (0.0030)\\
    $n=10^6$ & 0.8000 (0.0003) & 1.2000 (0.0004) & 1.8000 (0.0005) & 2.7000 (0.0010)\\
    \caption{Sample means and sample standard deviations (S.D.s) of the components of $\check{{\bm{\sigma}}}_n$ in Simulation 1.}\label{1est}
\end{longtable}\ 
\begin{longtable}{c|cccccccccc}
    & $z_1$ & $z_2$ & $z_3$ & $z_4$ & $z_5$ & $z_6$ & $z_7$ & $z_8$ & $z_9$ & $z_{10}$\\ \hline\hline
    $n=10^2$ & 1.0000 & 1.0000 & 1.0000 & 1.0000 & 1.0000 & 1.0000 & 1.0000 & 1.0000 & 1.0000 & 1.0000\\
    $n=10^3$ & 1.0000 & 1.0000 & 1.0000 & 1.0000 & 1.0000 & 1.0000 & 1.0000 & 1.0000 & 1.0000 & 1.0000\\
    $n=10^4$ & 1.0000 & 1.0000 & 1.0000 & 1.0000 & 1.0000 & 1.0000 & 1.0000 & 1.0000 & 1.0000 & 1.0000\\
    $n=10^5$ & 1.0000 & 1.0000 & 1.0000 & 1.0000 & 1.0000 & 1.0000 & 1.0000 & 1.0000 & 1.0000 & 1.0000\\
    $n=10^6$ & 1.0000 & 1.0000 & 1.0000 & 1.0000 & 1.0000 & 1.0000 & 1.0000 & 1.0000 & 1.0000 & 1.0000\\
    & $z_{11}$ & $z_{12}$ & $z_{13}$ & $z_{14}$ & $z_{15}$ & $z_{16}$ & $z_{17}$ & $z_{18}$ & $z_{19}$ & $z_{20}$\\ \hline\hline
    $n=10^2$ & 1.0000 & 1.0000 & 1.0000 & 1.0000 & 1.0000 & 1.0000 & 1.0000 & 1.0000 & 1.0000 & 1.0000\\
    $n=10^3$ & 1.0000 & 1.0000 & 1.0000 & 1.0000 & 1.0000 & 1.0000 & 1.0000 & 1.0000 & 1.0000 & 1.0000\\
    $n=10^4$ & 1.0000 & 1.0000 & 1.0000 & 1.0000 & 1.0000 & 1.0000 & 1.0000 & 1.0000 & 1.0000 & 1.0000\\
    $n=10^5$ & 1.0000 & 1.0000 & 1.0000 & 1.0000 & 1.0000 & 1.0000 & 1.0000 & 1.0000 & 1.0000 & 1.0000\\
    $n=10^6$ & 1.0000 & 1.0000 & 1.0000 & 1.0000 & 1.0000 & 1.0000 & 1.0000 & 1.0000 & 1.0000 & 1.0000\\
    \caption{Proportions of replications in which $\check{Z}_{j,n}=z_j$ for each $j=1,\ldots,20$ in Simulation 1.}\label{1Z}
\end{longtable}
\newpage
\ \vspace{20mm}\\
\begin{figure}[h]
    \centering
    \includegraphics[width=0.30\columnwidth]{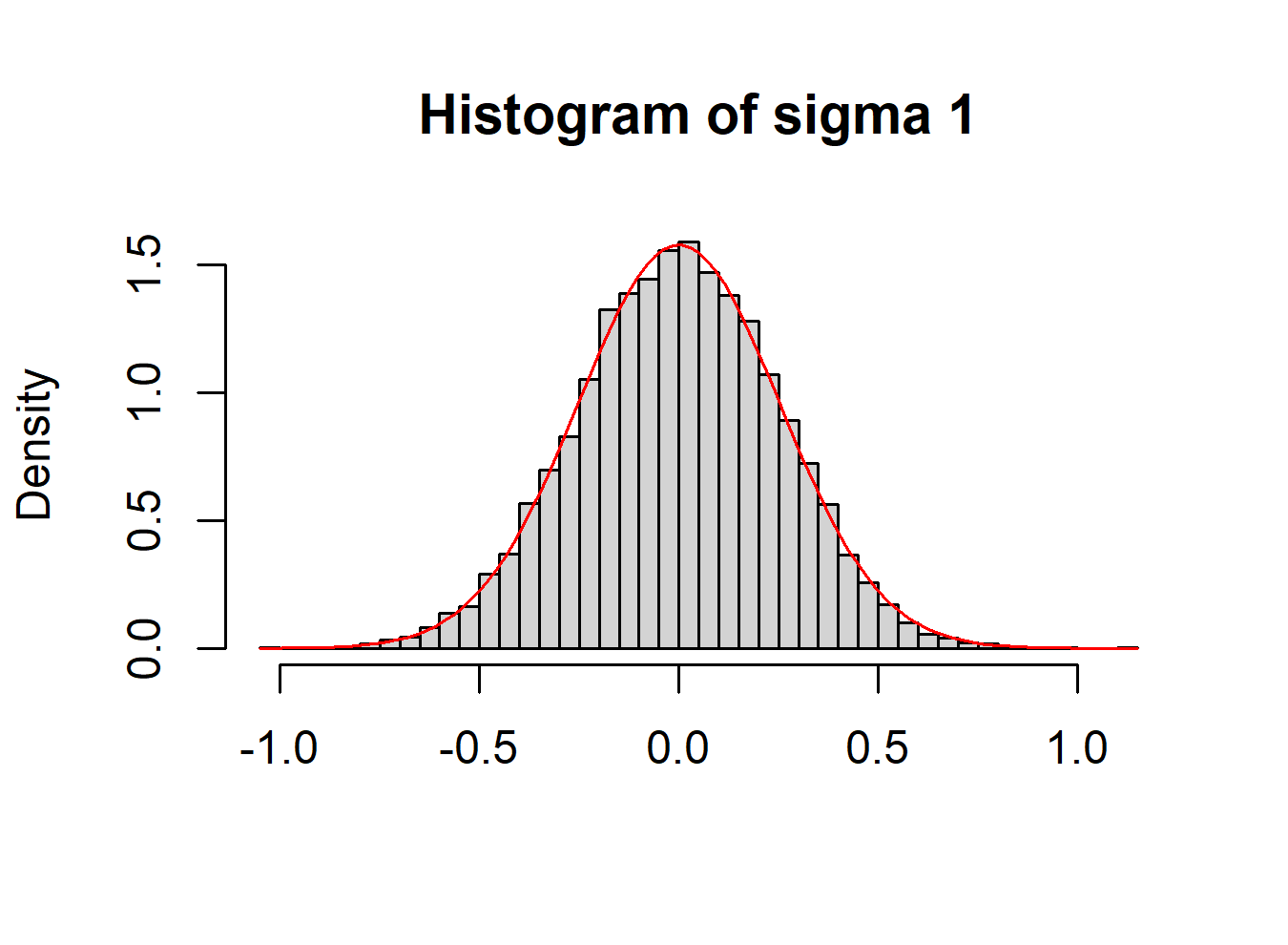}
    \includegraphics[width=0.30\columnwidth]{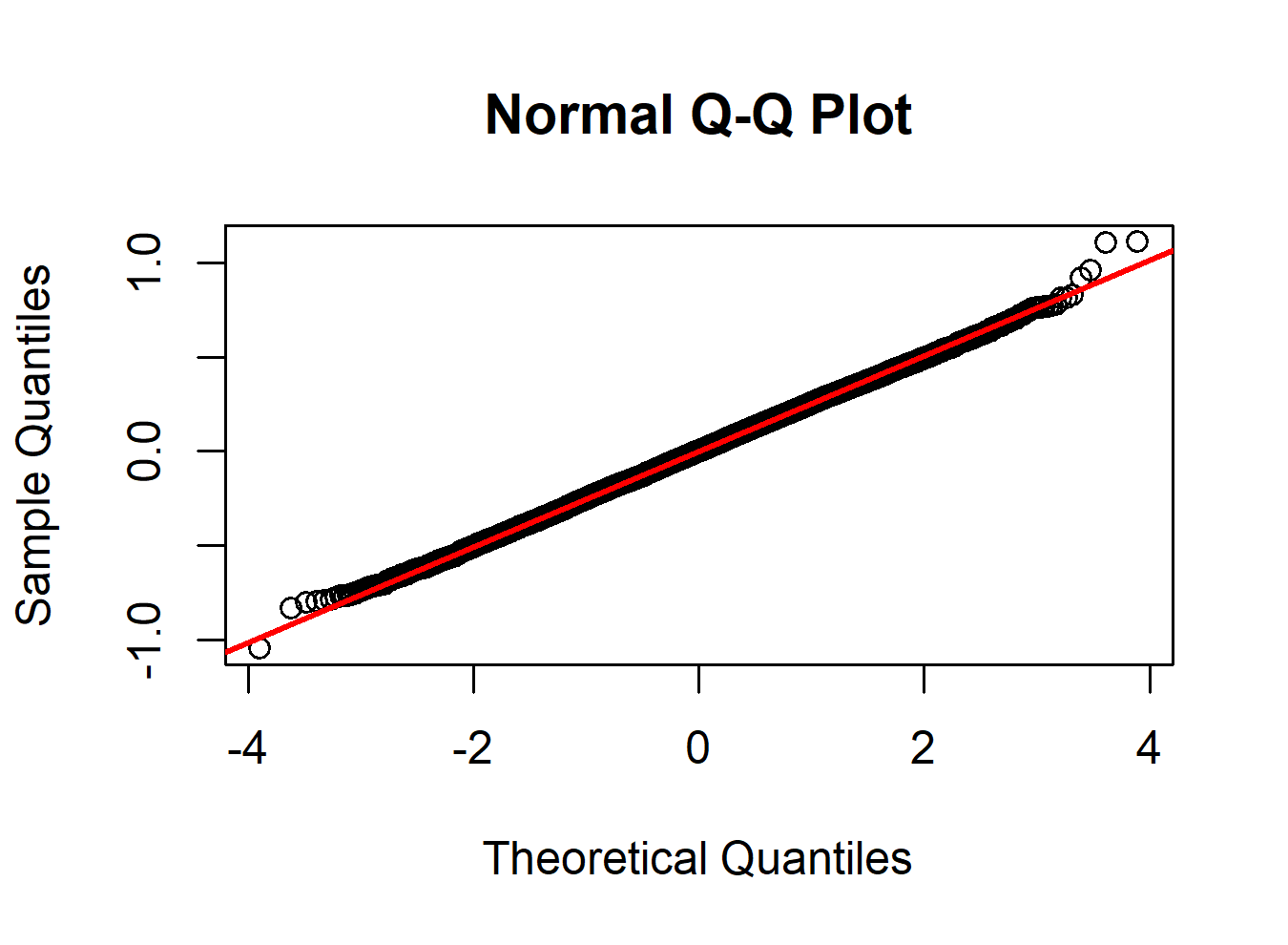}
    \includegraphics[width=0.30\columnwidth]{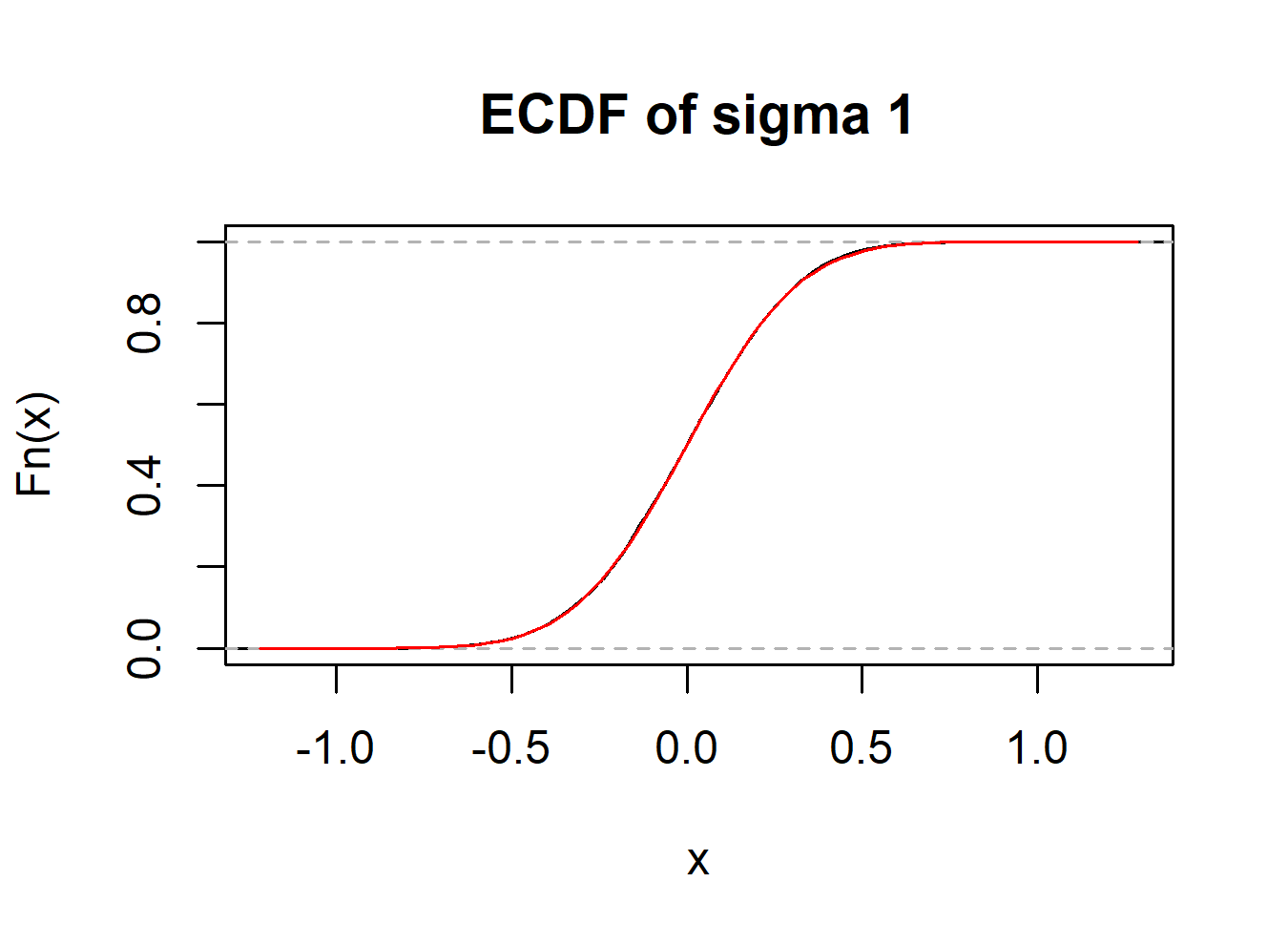}\vspace{2mm}\\
    \includegraphics[width=0.30\columnwidth]{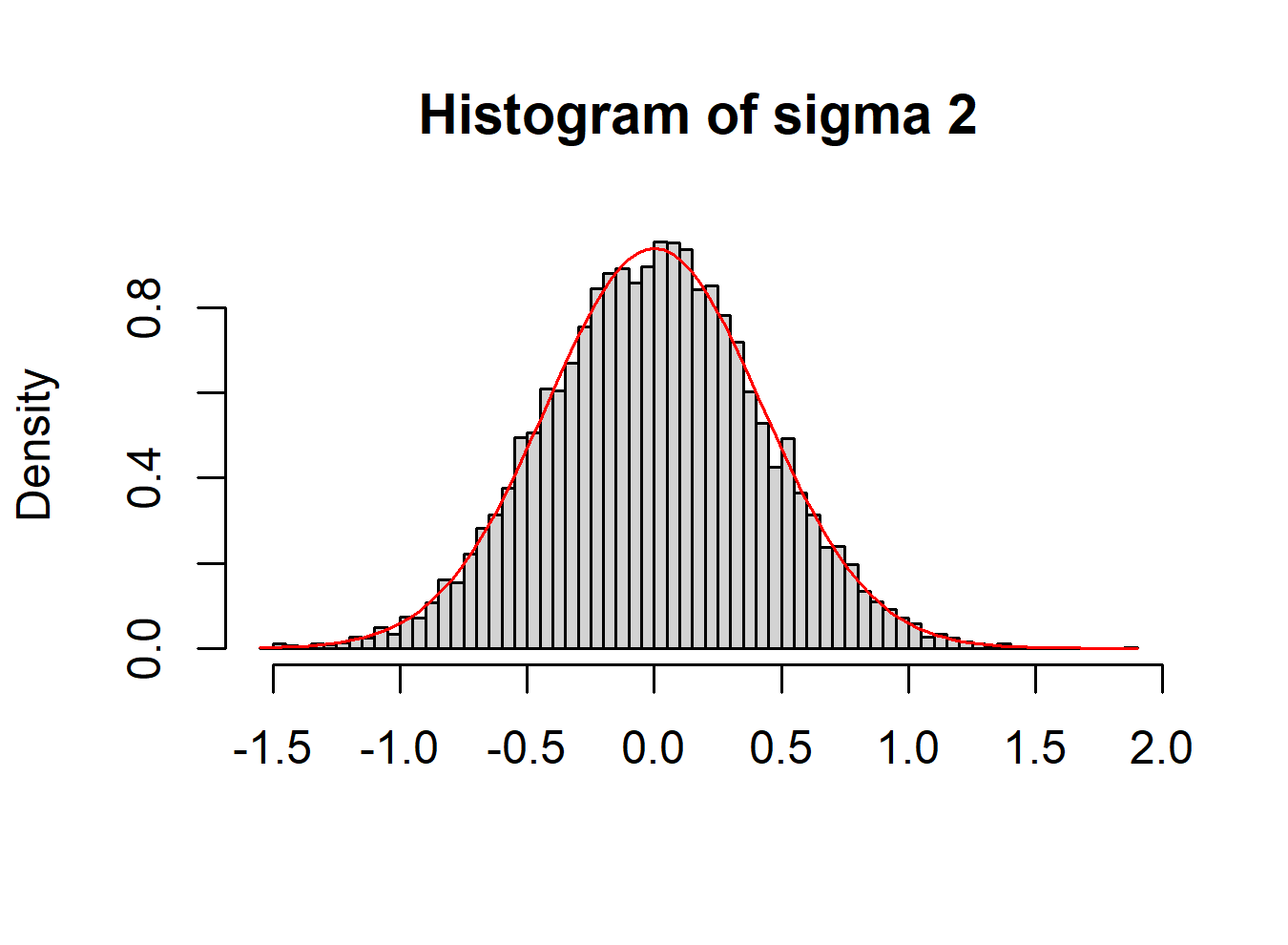}
    \includegraphics[width=0.30\columnwidth]{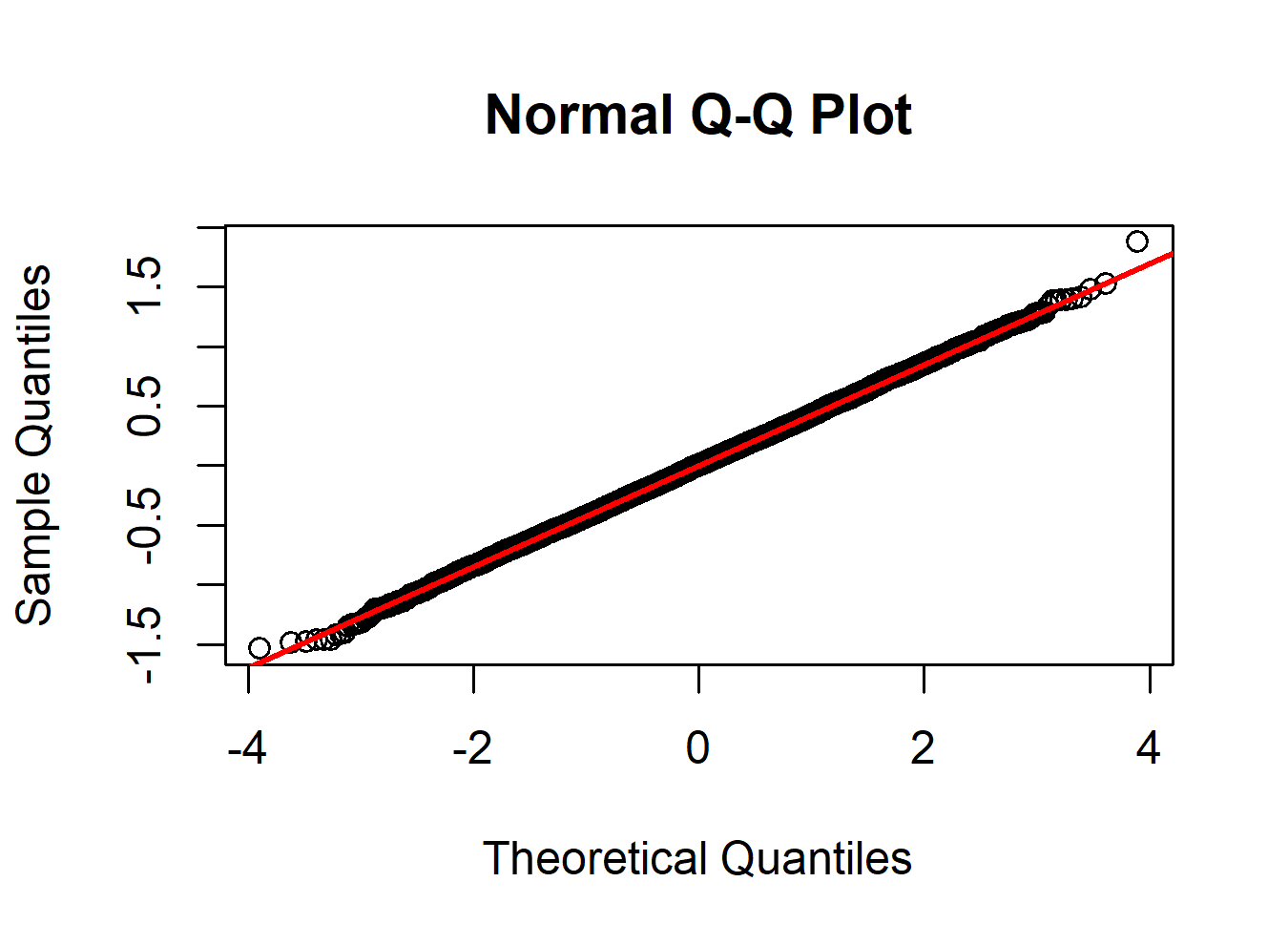}
    \includegraphics[width=0.30\columnwidth]{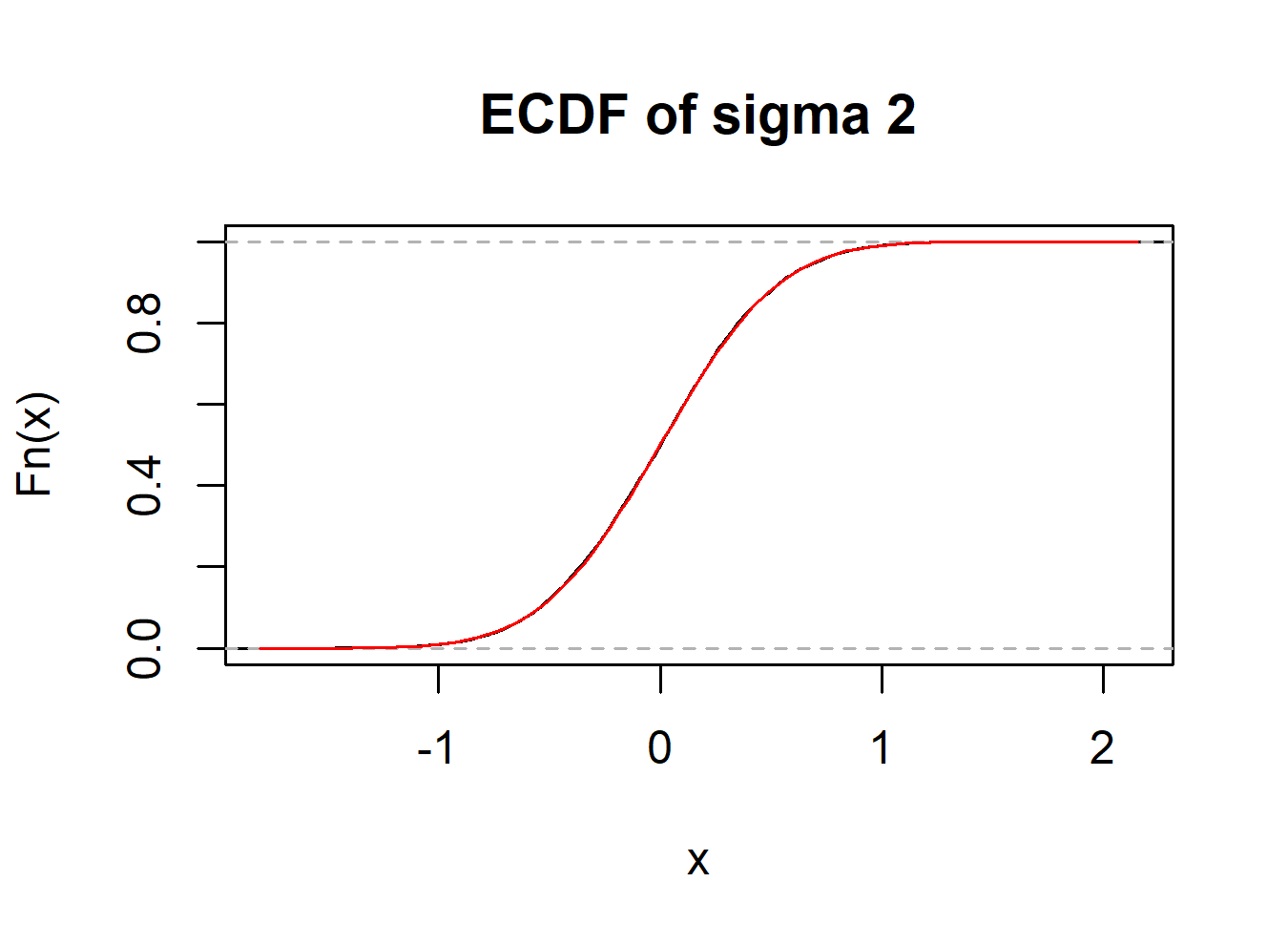}\vspace{2mm}\\
    \includegraphics[width=0.30\columnwidth]{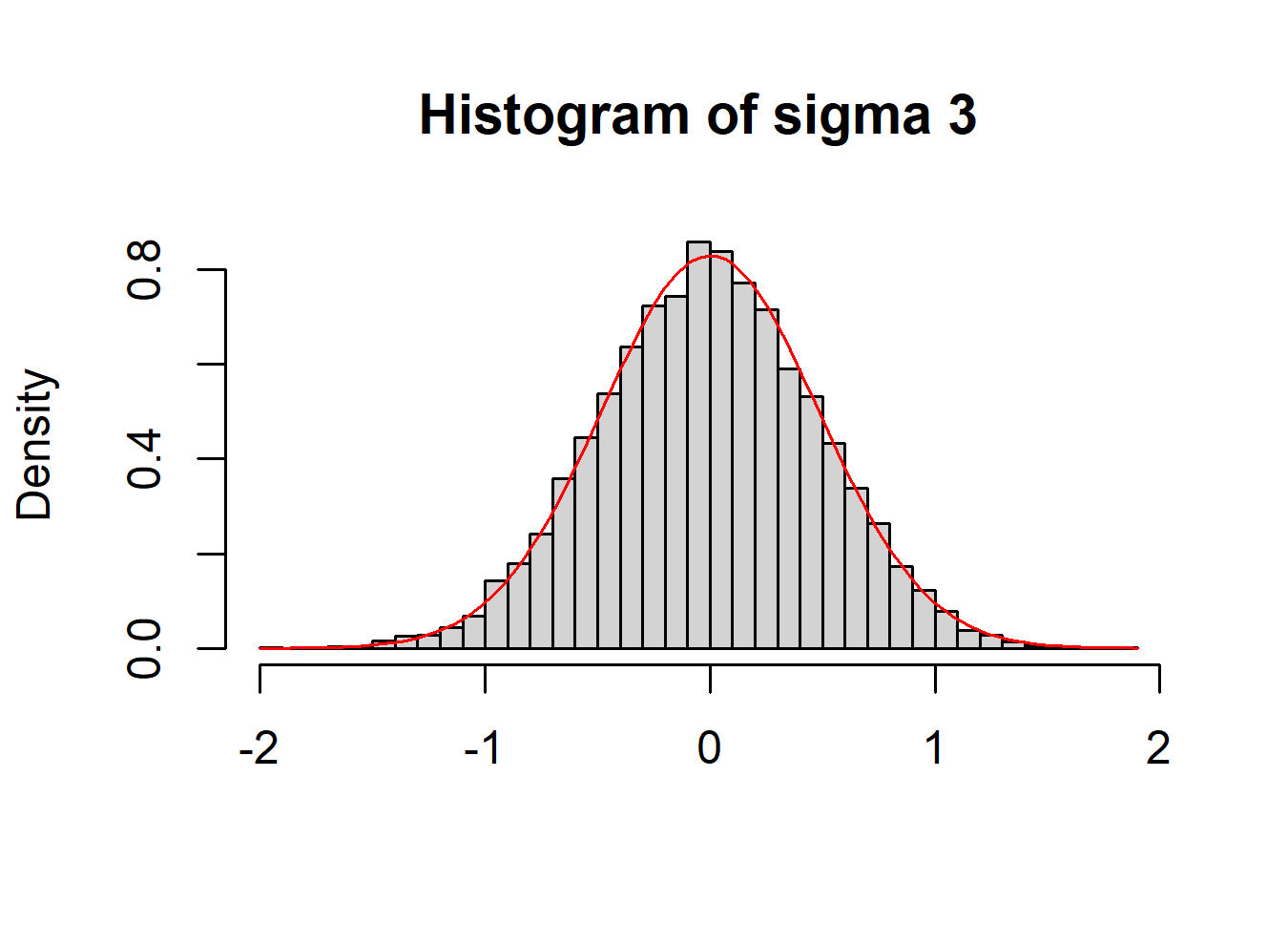}
    \includegraphics[width=0.30\columnwidth]{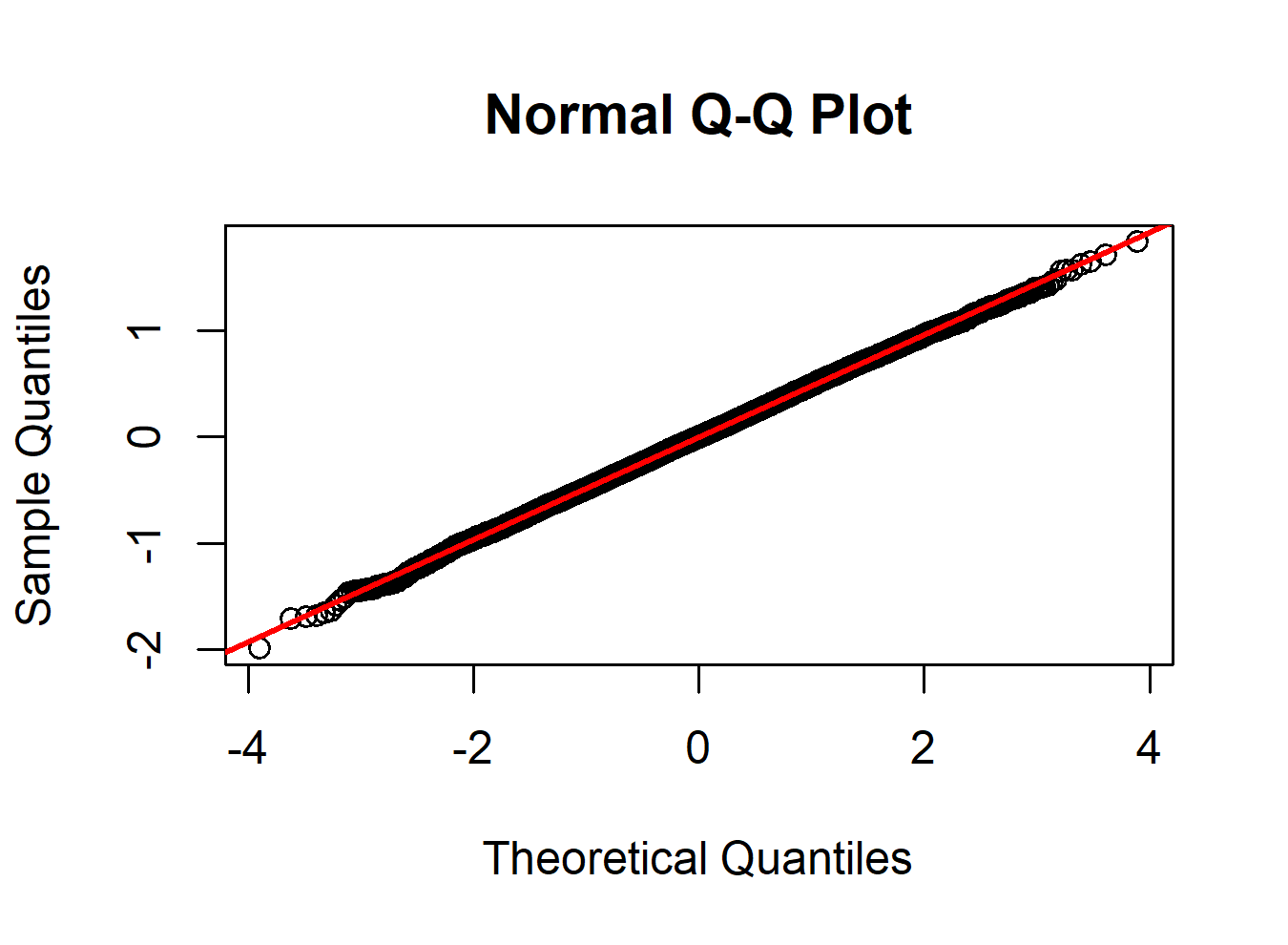}
    \includegraphics[width=0.30\columnwidth]{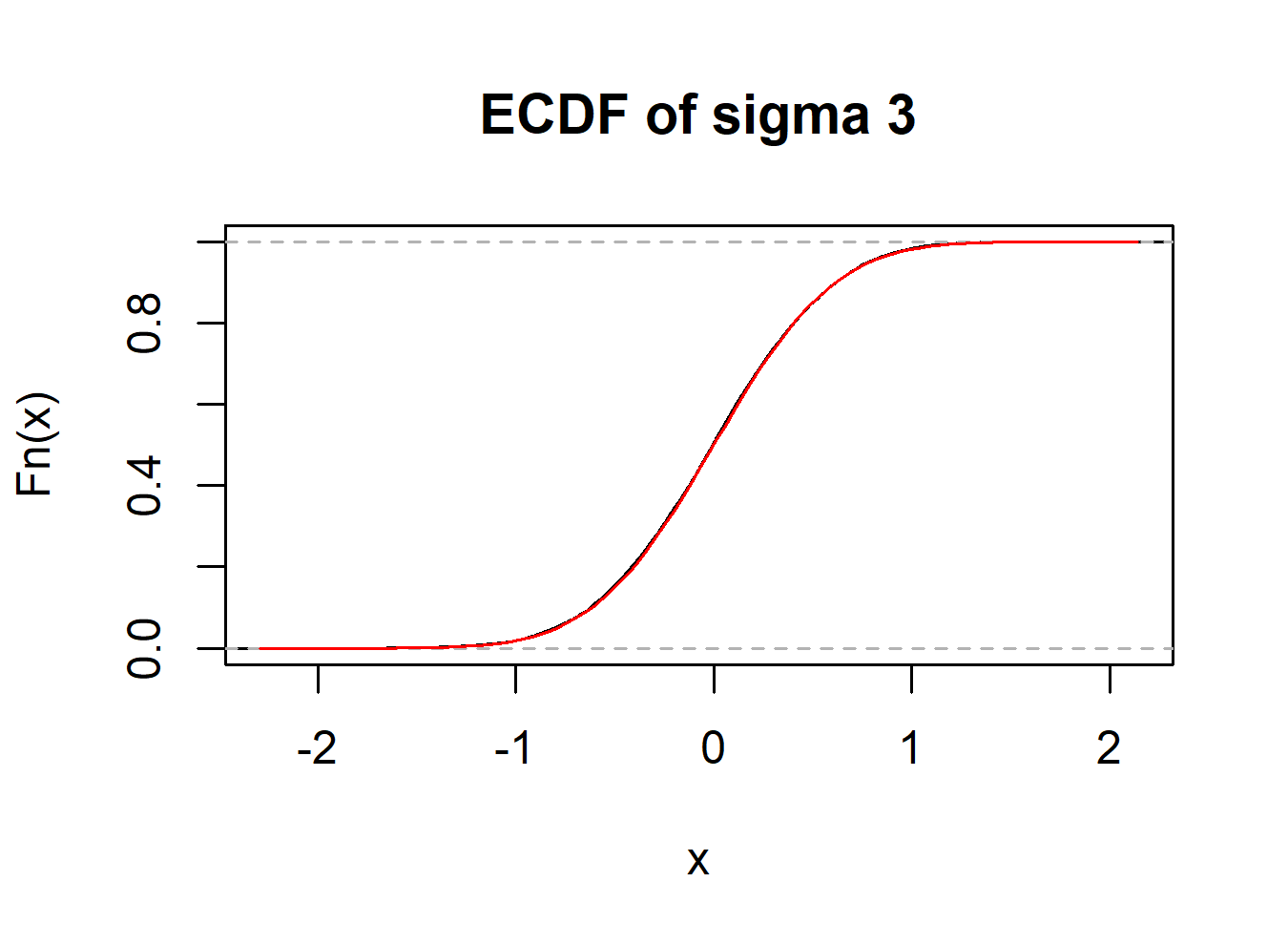}\vspace{2mm}\\
    \includegraphics[width=0.30\columnwidth]{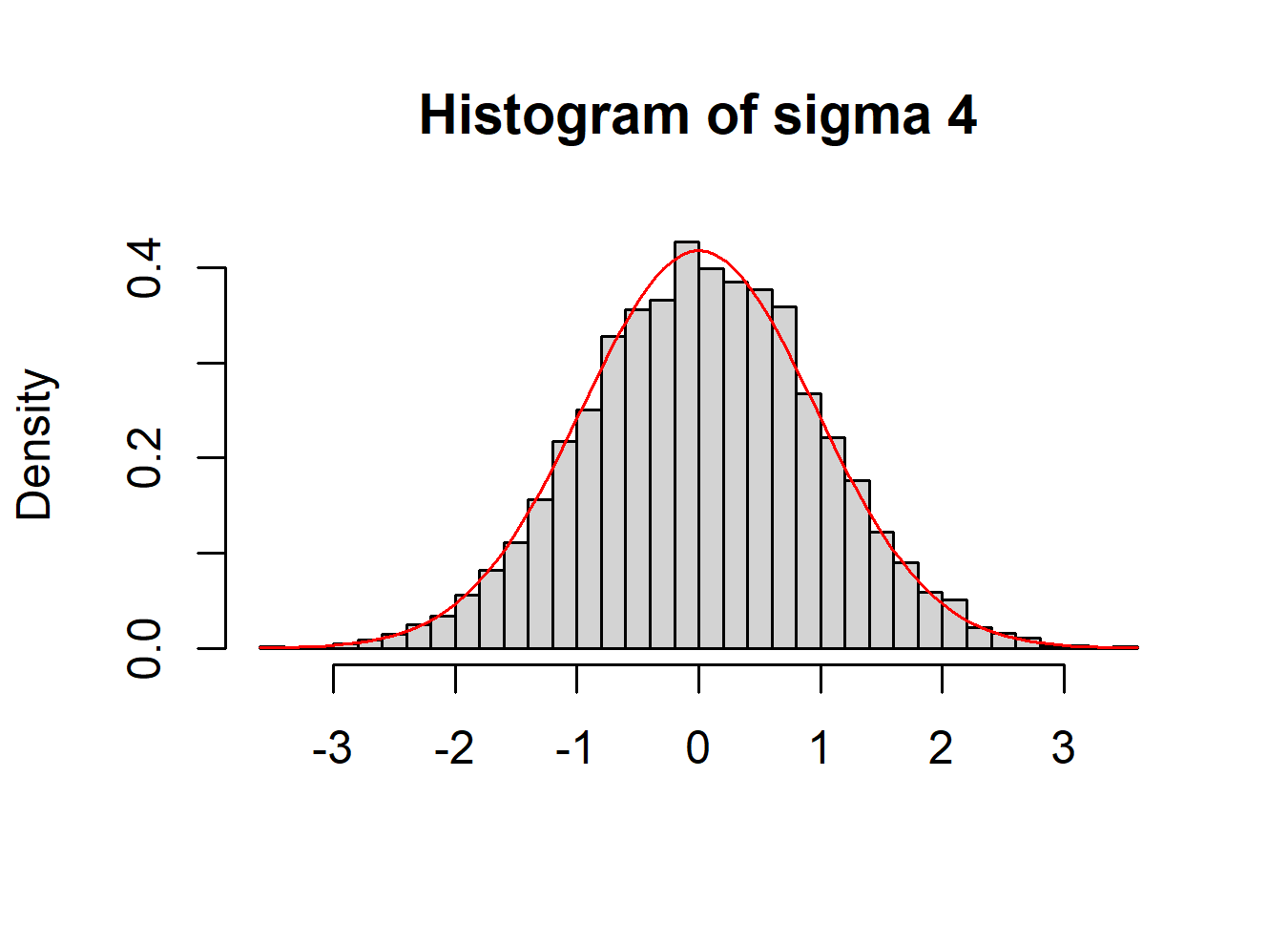}
    \includegraphics[width=0.30\columnwidth]{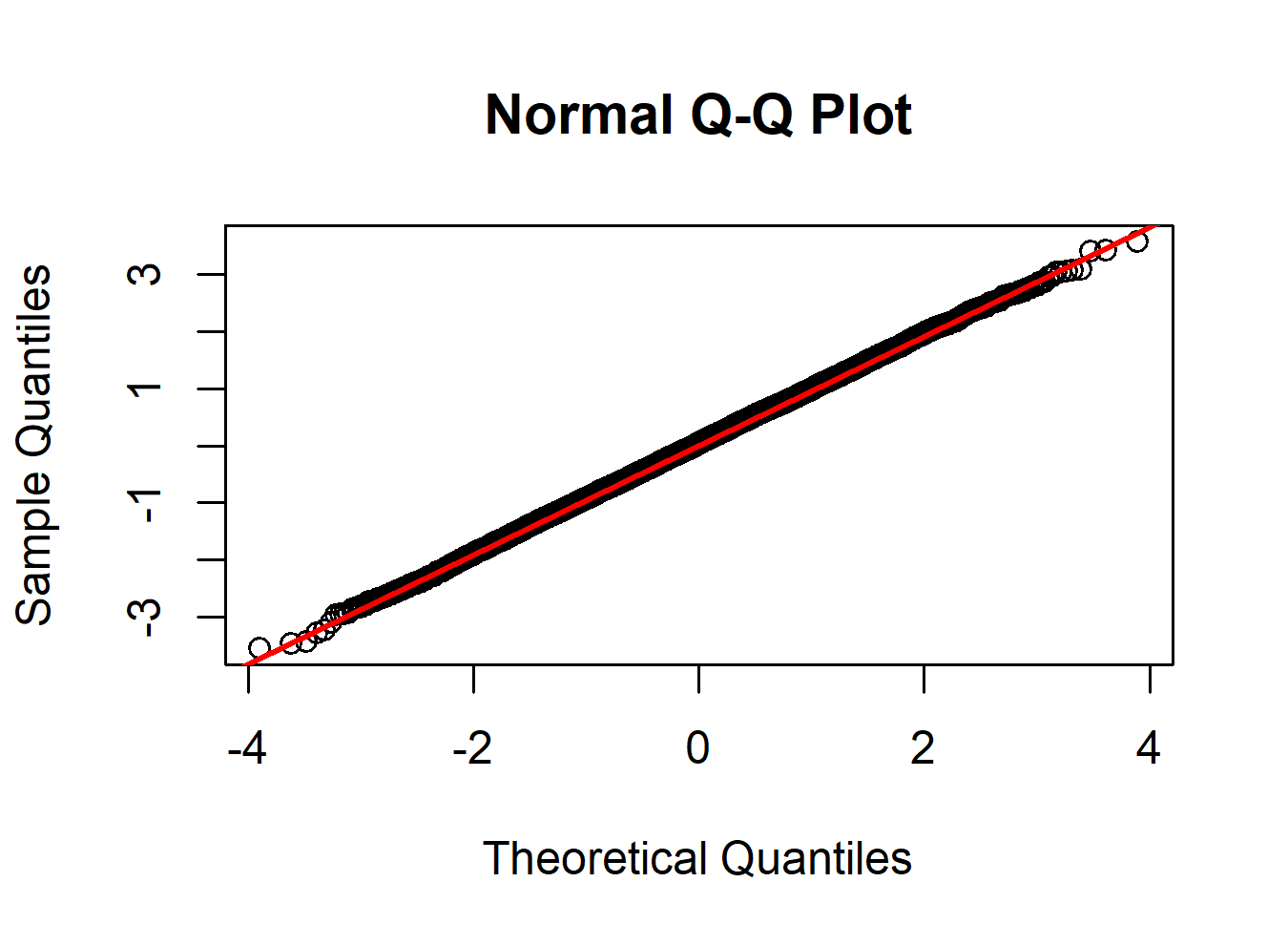}
    \includegraphics[width=0.30\columnwidth]{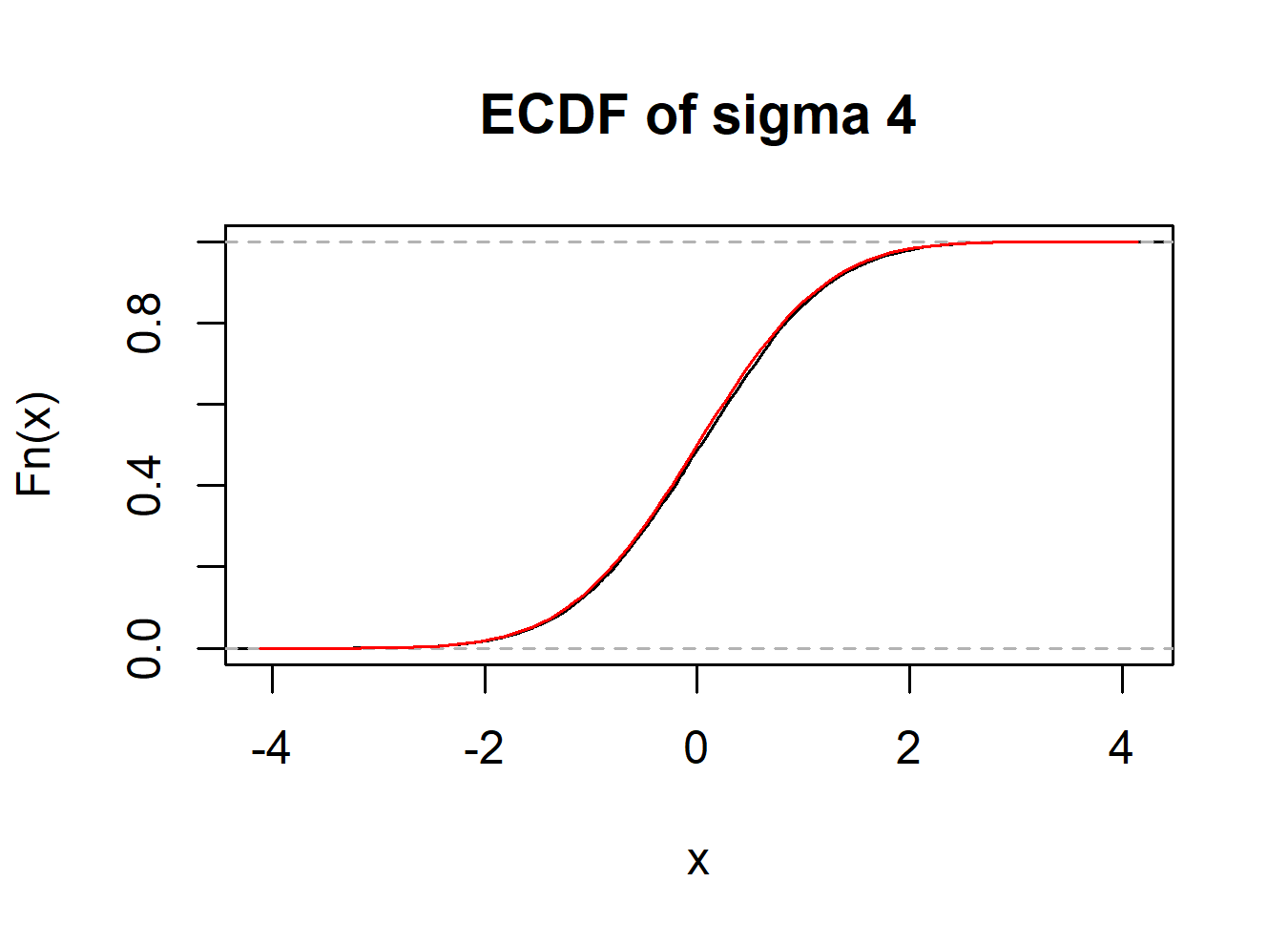}
    \caption{Histogram (left), Q-Q plot (middle) and empirical distribution function (right) of the components of $\sqrt{n}(\check{\bm{\sigma}}_{n}-{\bm{\sigma}}^{*})$ when $n=10^6$ in Simulation 1. The red lines are theoretical curves.}\label{1F}
\end{figure}
\newpage
\ \vspace{0.5mm}
\begin{longtable}{c|cccccccc}
    & $G=1$ & $G=2$ & $G=3$ & $G=4$ & $G=5$ & $G=6$ & $G=7$ & $G=8$\\ \hline\hline
    $n=10^2$ & 0 & 3,672 & 6,217 & \textbf{110} & 1 & 0 & 0 & 0 \\
    $n=10^3$ & 0 & 0 & 0 & \textbf{9,947} & 52 & 1 & 0 & 0 \\
    $n=10^4$ & 0 & 0 & 0 & \textbf{9,979} & 21 & 0 & 0 & 0 \\
    $n=10^5$ & 0 & 0 & 0 & \textbf{9,996} & 4 & 0 & 0 & 0 \\
    $n=10^6$ & 0 & 0 & 0 & \textbf{10,000} & 0 & 0 & 0 & 0\\
    \caption{The number of times $\check{G}_n=G$ for each $G=1,\ldots,8$ in Simulation 2.}\label{2G}
\end{longtable}\
\begin{longtable}{c|cccc}
    & $\check{{\bm{\sigma}}}_n^{(1)}(1.0)$ & $\check{{\bm{\sigma}}}_n^{(2)}(1.2)$ & $\check{{\bm{\sigma}}}_n^{(3)}(1.4)$ & $\check{{\bm{\sigma}}}_n^{(4)}(1.6)$\\ \hline\hline
    $n=10^2$ & 0.9509 (0.0472) & 1.2298 (0.0794) & 1.3870 (0.0542) & 1.7329 (0.1106)\\
    $n=10^3$ & 1.0004 (0.0101) & 1.2004 (0.0138) & 1.4003 (0.0117) & 1.6005 (0.0184)\\
    $n=10^4$ & 1.0001 (0.0032) & 1.2000 (0.0042) & 1.4001 (0.0037) & 1.6000 (0.0057)\\
    $n=10^5$ & 1.0000 (0.0010) & 1.2000 (0.0013) & 1.4000 (0.0012) & 1.6000 (0.0018)\\
    $n=10^6$ & 1.0000 (0.0003) & 1.2000 (0.0004) & 1.4000 (0.0004) & 1.6000 (0.0006) \\
    \caption{Sample means and sample standard deviations (S.D.s) of the components of $\check{{\bm{\sigma}}}_n$ in Simulation 2.}\label{2est}
\end{longtable}\ 
\begin{longtable}{c|cccccccccc}
    & $z_1$ & $z_2$ & $z_3$ & $z_4$ & $z_5$ & $z_6$ & $z_7$ & $z_8$ & $z_9$ & $z_{10}$\\ \hline\hline
    $n=10^2$ & 0.8727 & 0.8636 & 0.8273 & 0.6727 & 0.7000 & 0.7818 & 0.8000 & 0.7273 & 0.8273 & 0.8727\\
    $n=10^3$ & 0.9997 & 0.9998 & 1.0000 & 0.9984 & 0.9976 & 0.9973 & 0.9998 & 0.9965 & 1.0000 & 0.9999 \\
    $n=10^4$ & 1.0000 & 1.0000 & 1.0000 & 1.0000 & 1.0000 & 1.0000 & 1.0000 & 1.0000 & 1.0000 & 1.0000\\
    $n=10^5$ & 1.0000 & 1.0000 & 1.0000 & 1.0000 & 1.0000 & 1.0000 & 1.0000 & 1.0000 & 1.0000 & 1.0000\\
    $n=10^6$ & 1.0000 & 1.0000 & 1.0000 & 1.0000 & 1.0000 & 1.0000 & 1.0000 & 1.0000 & 1.0000 & 1.0000\\
    & $z_{11}$ & $z_{12}$ & $z_{13}$ & $z_{14}$ & $z_{15}$ & $z_{16}$ & $z_{17}$ & $z_{18}$ & $z_{19}$ & $z_{20}$\\ \hline\hline
    $n=10^2$ & 0.7091 &0.7545 &0.7273 &0.7727 &0.7818 &0.8545 &0.7455 &0.7182 &0.7909 &0.7545\\
    $n=10^3$ & 0.9975 & 0.9967 & 0.9971 & 1.0000 & 0.9999 & 1.0000 & 0.9983 & 0.9971 & 0.9956 & 0.9969\\
    $n=10^4$ & 1.0000 & 1.0000 & 1.0000 & 1.0000 & 1.0000 & 1.0000 & 1.0000 & 1.0000 & 1.0000 & 1.0000\\
    $n=10^5$ & 1.0000 & 1.0000 & 1.0000 & 1.0000 & 1.0000 & 1.0000 & 1.0000 & 1.0000 & 1.0000 & 1.0000\\
    $n=10^6$ & 1.0000 & 1.0000 & 1.0000 & 1.0000 & 1.0000 & 1.0000 & 1.0000 & 1.0000 & 1.0000 & 1.0000\\
    \caption{Proportions of replications in which $\check{Z}_{j,n}=z_j$ for each $j=1,\ldots,20$ in Simulation 2.}\label{2Z}
\end{longtable}
\newpage
\ \vspace{20mm}\\
\begin{figure}[h]
    \centering
    \includegraphics[width=0.3\columnwidth]{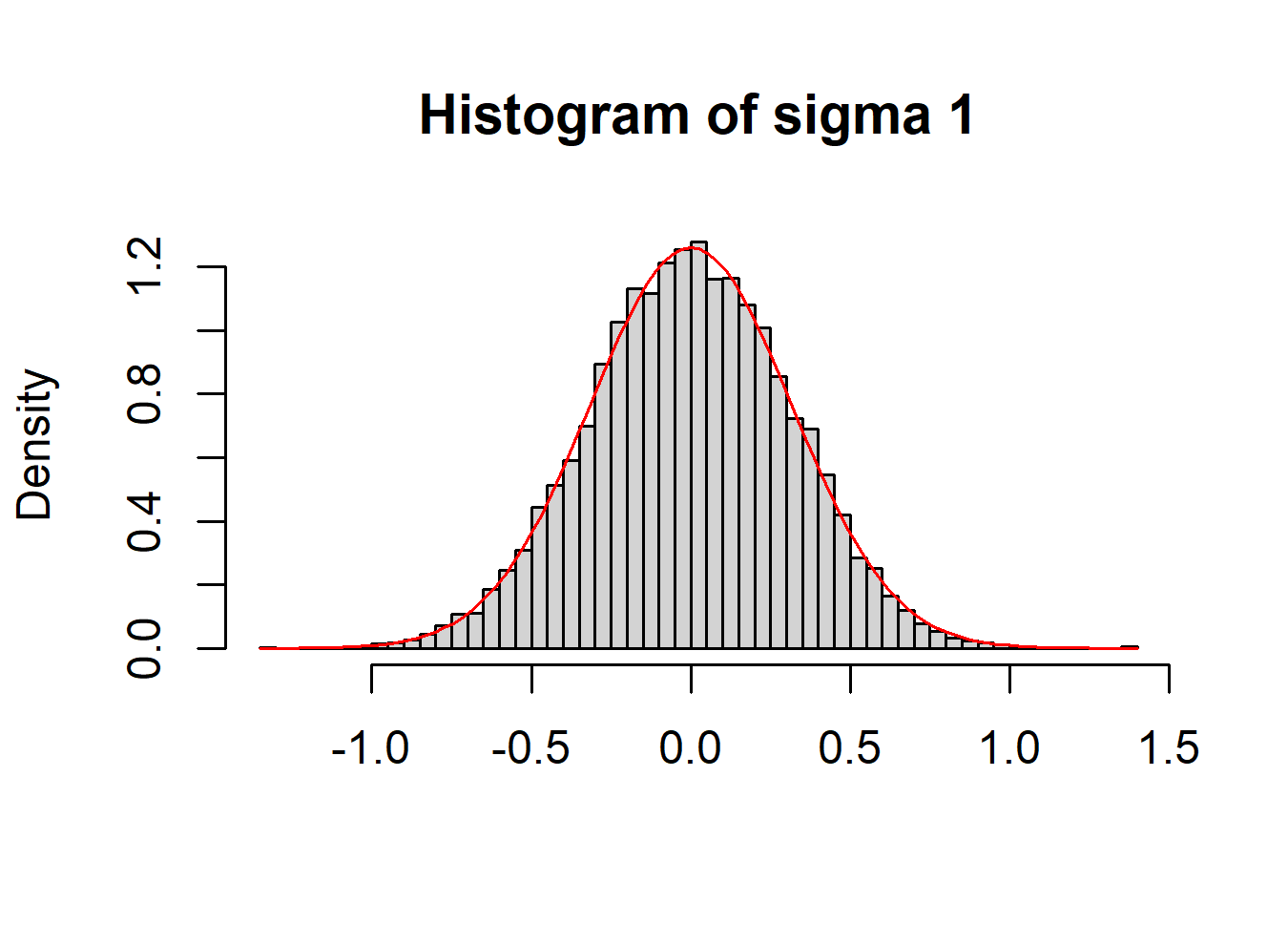}
    \includegraphics[width=0.3\columnwidth]{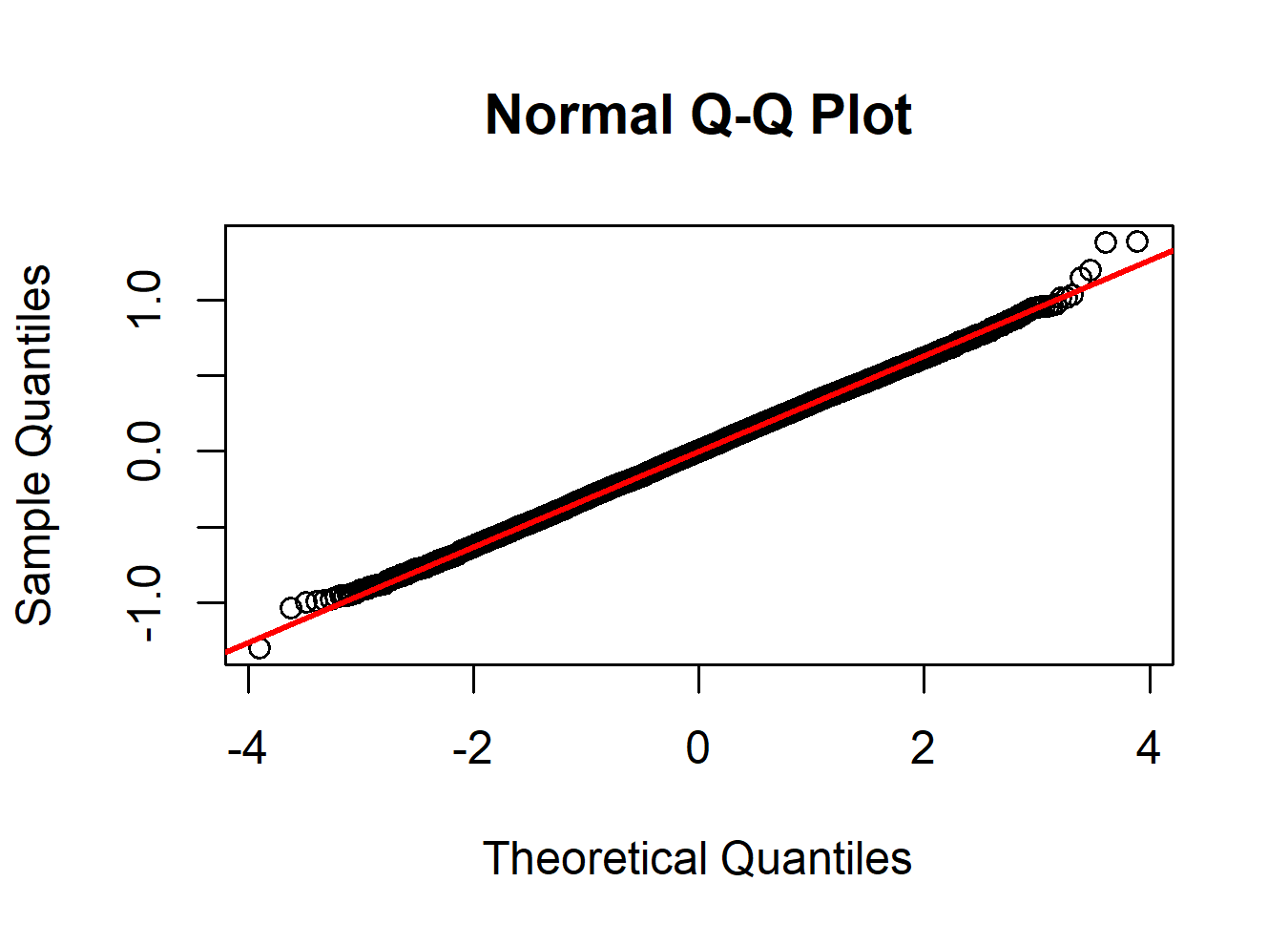}
    \includegraphics[width=0.3\columnwidth]{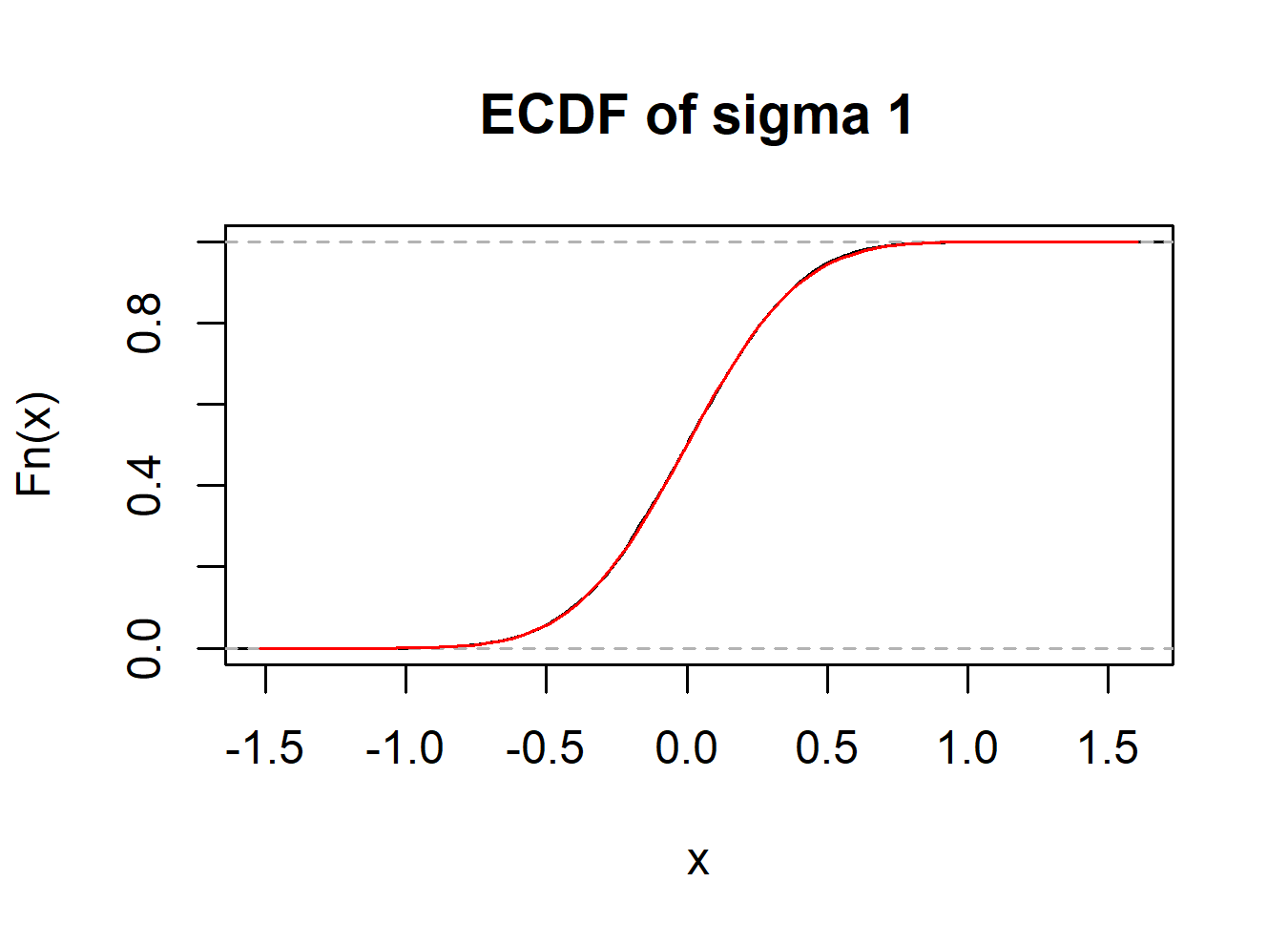}\vspace{2mm} \\
    \includegraphics[width=0.3\columnwidth]{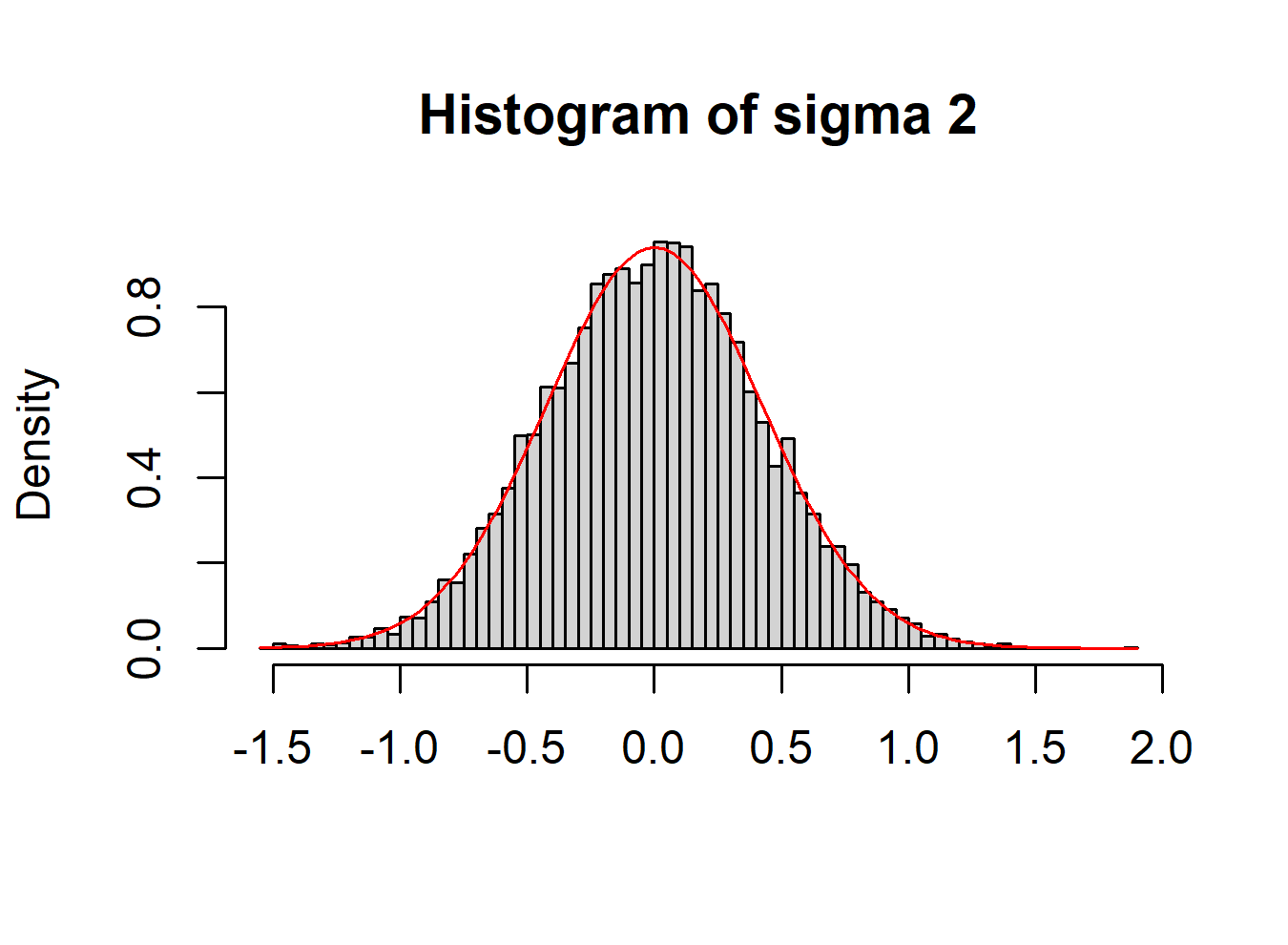}
    \includegraphics[width=0.3\columnwidth]{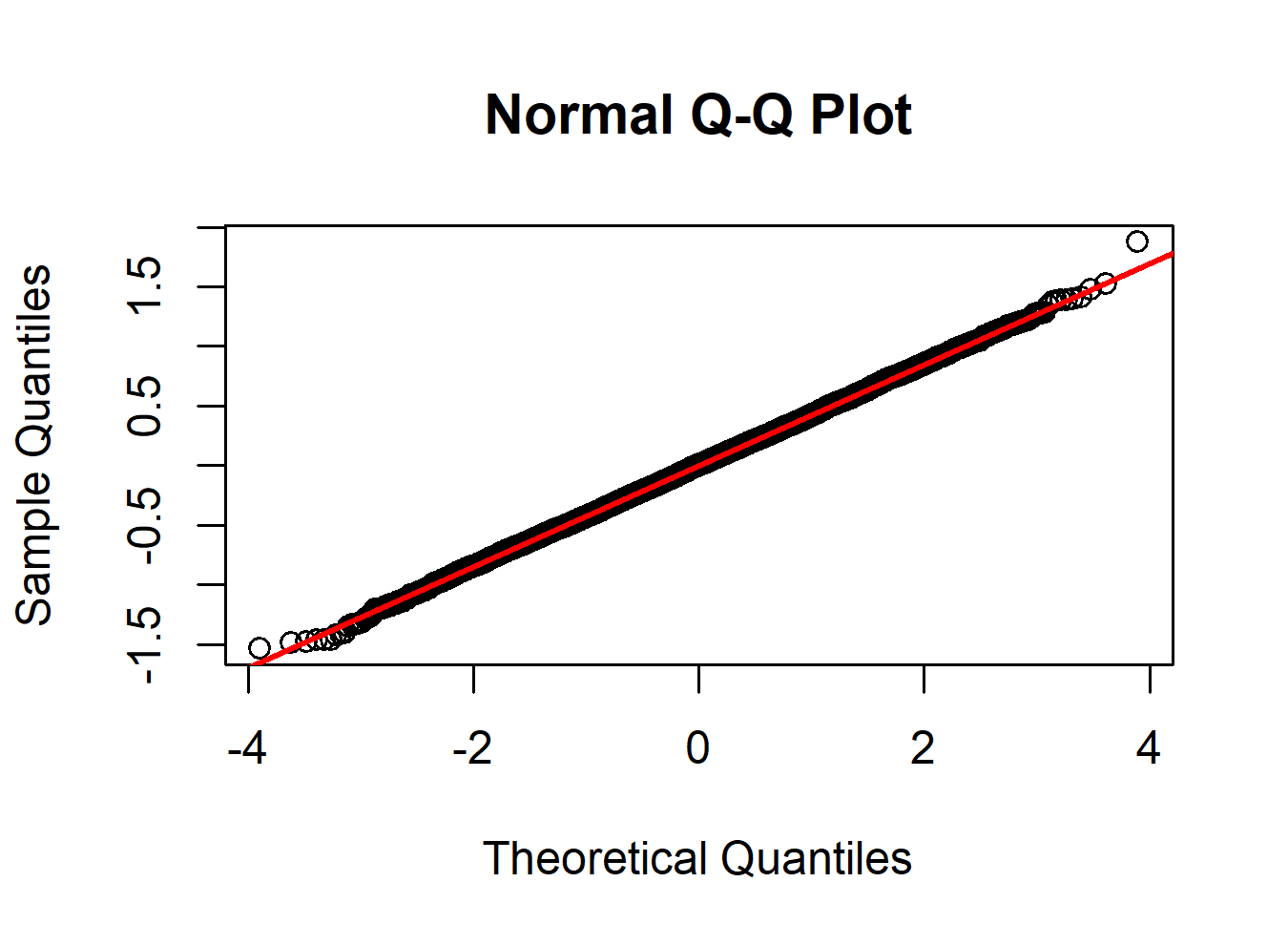}
    \includegraphics[width=0.3\columnwidth]{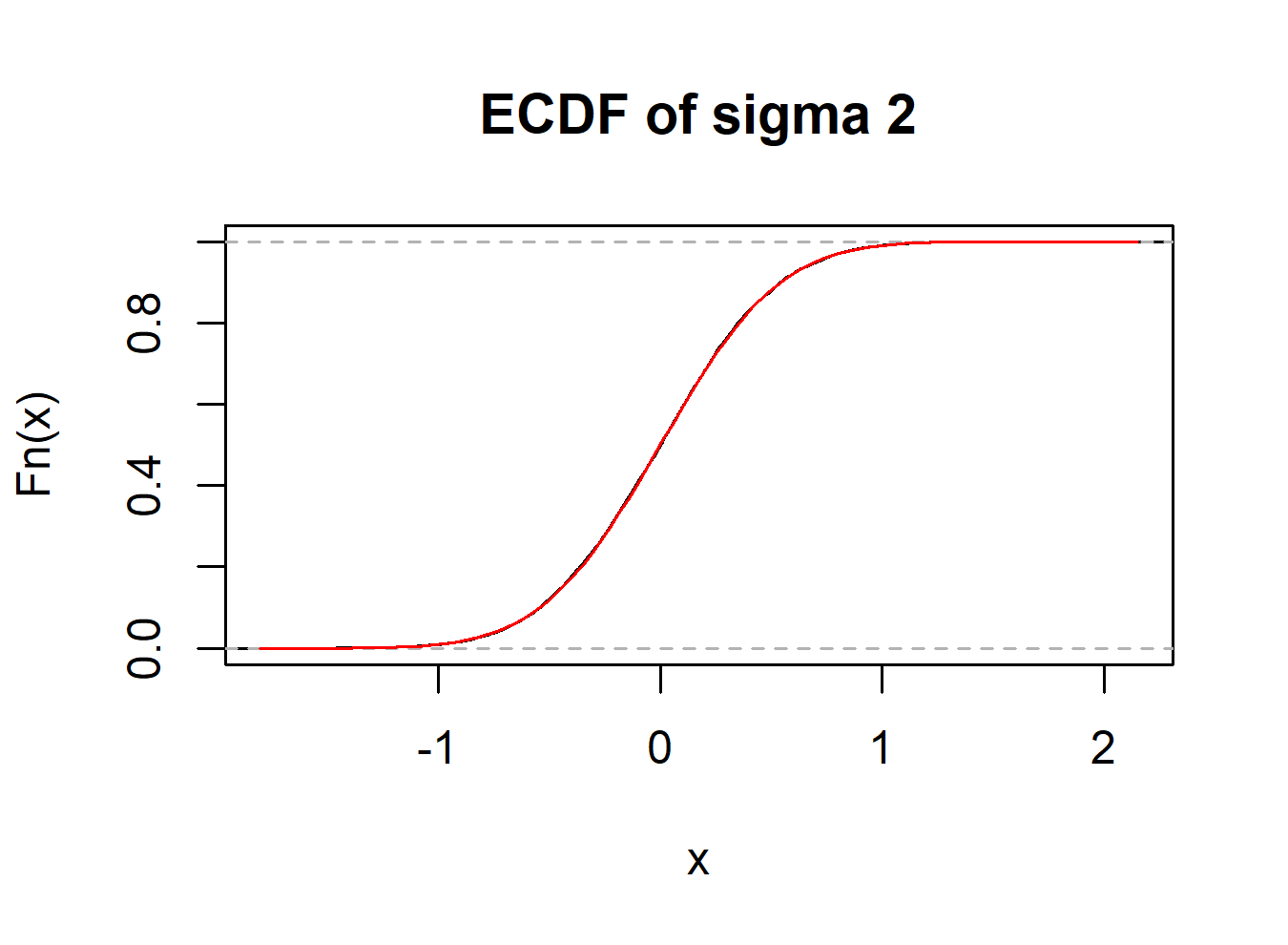}\vspace{2mm} \\
    \includegraphics[width=0.3\columnwidth]{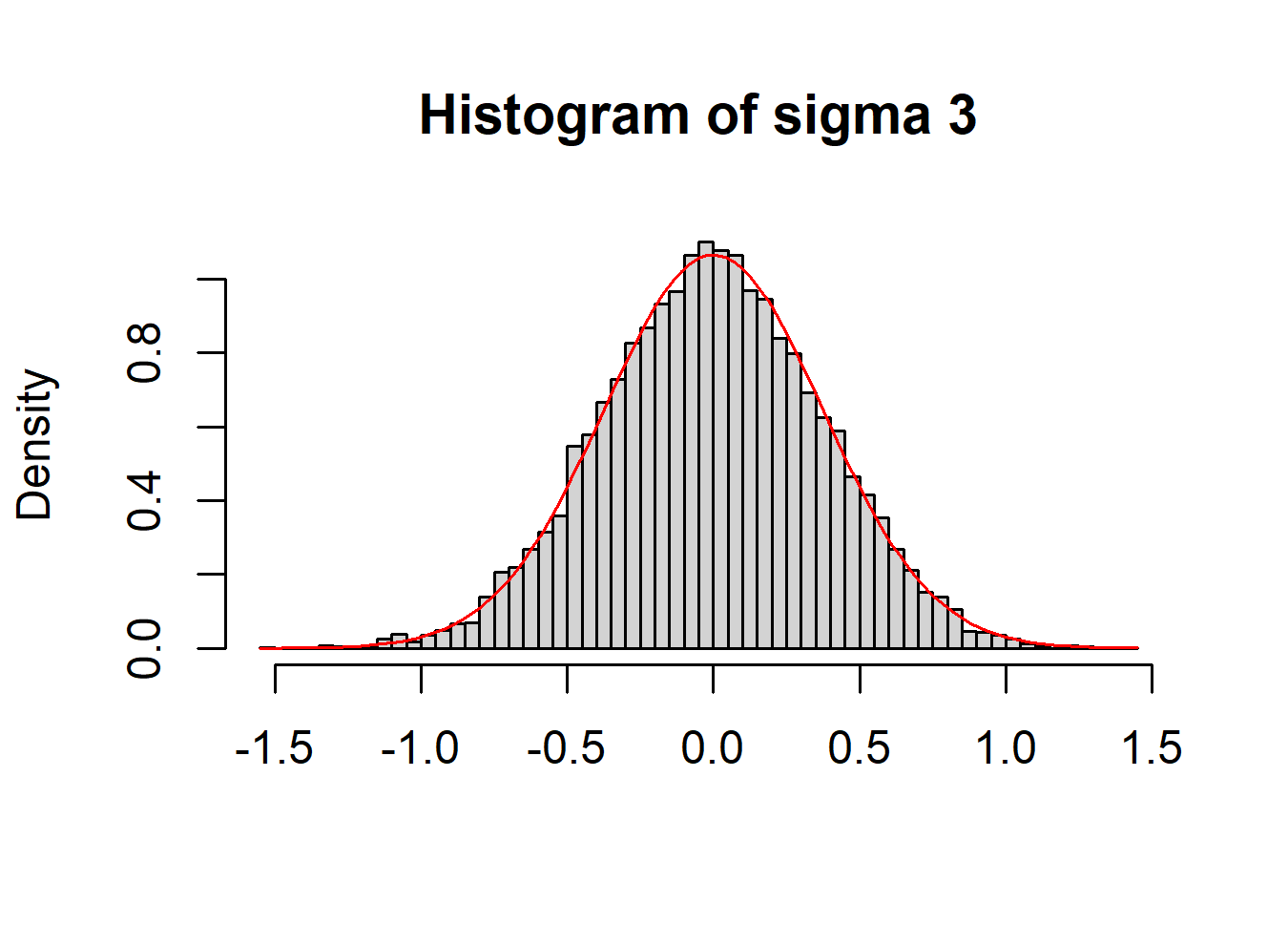}
    \includegraphics[width=0.3\columnwidth]{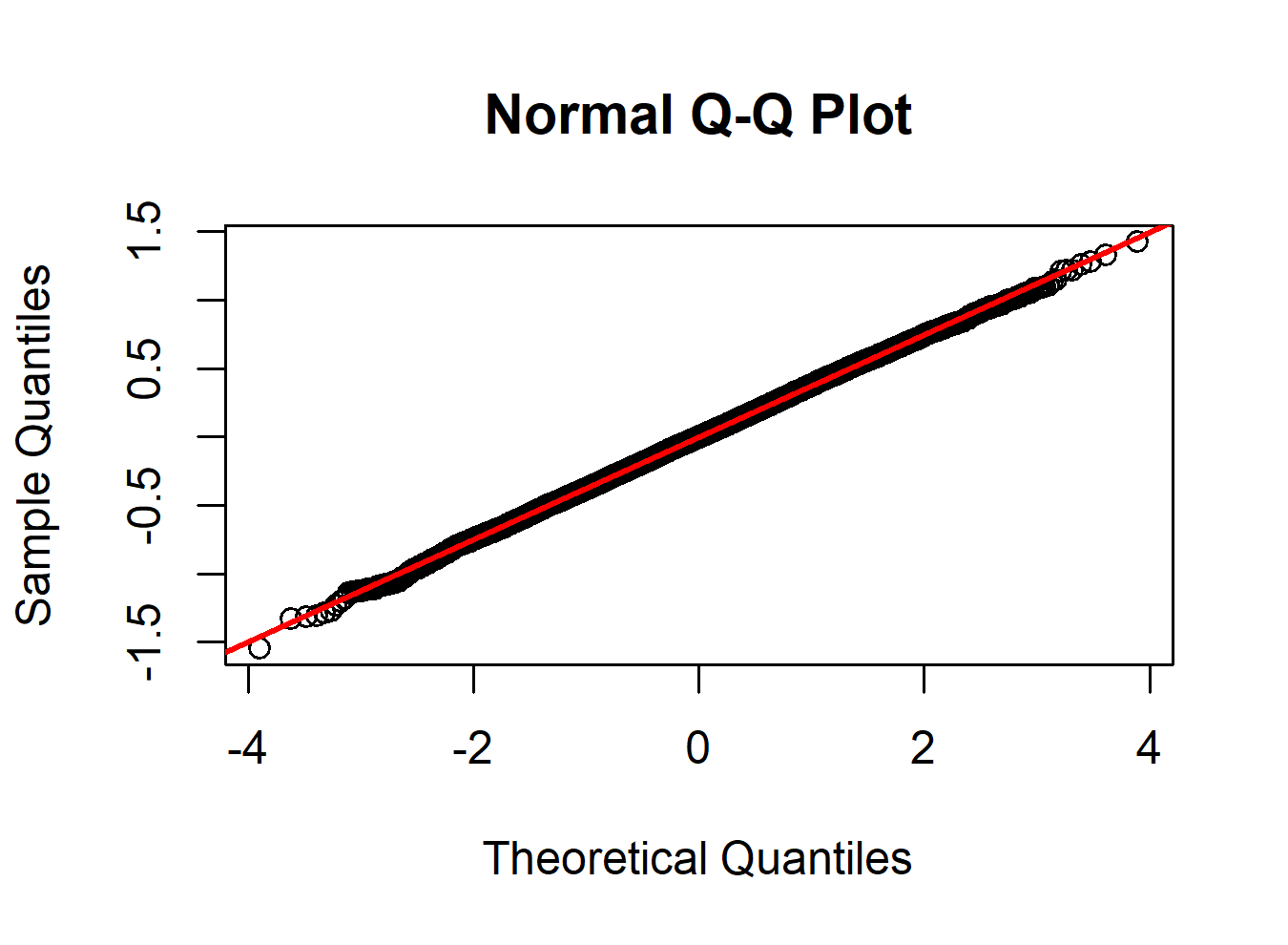}
    \includegraphics[width=0.3\columnwidth]{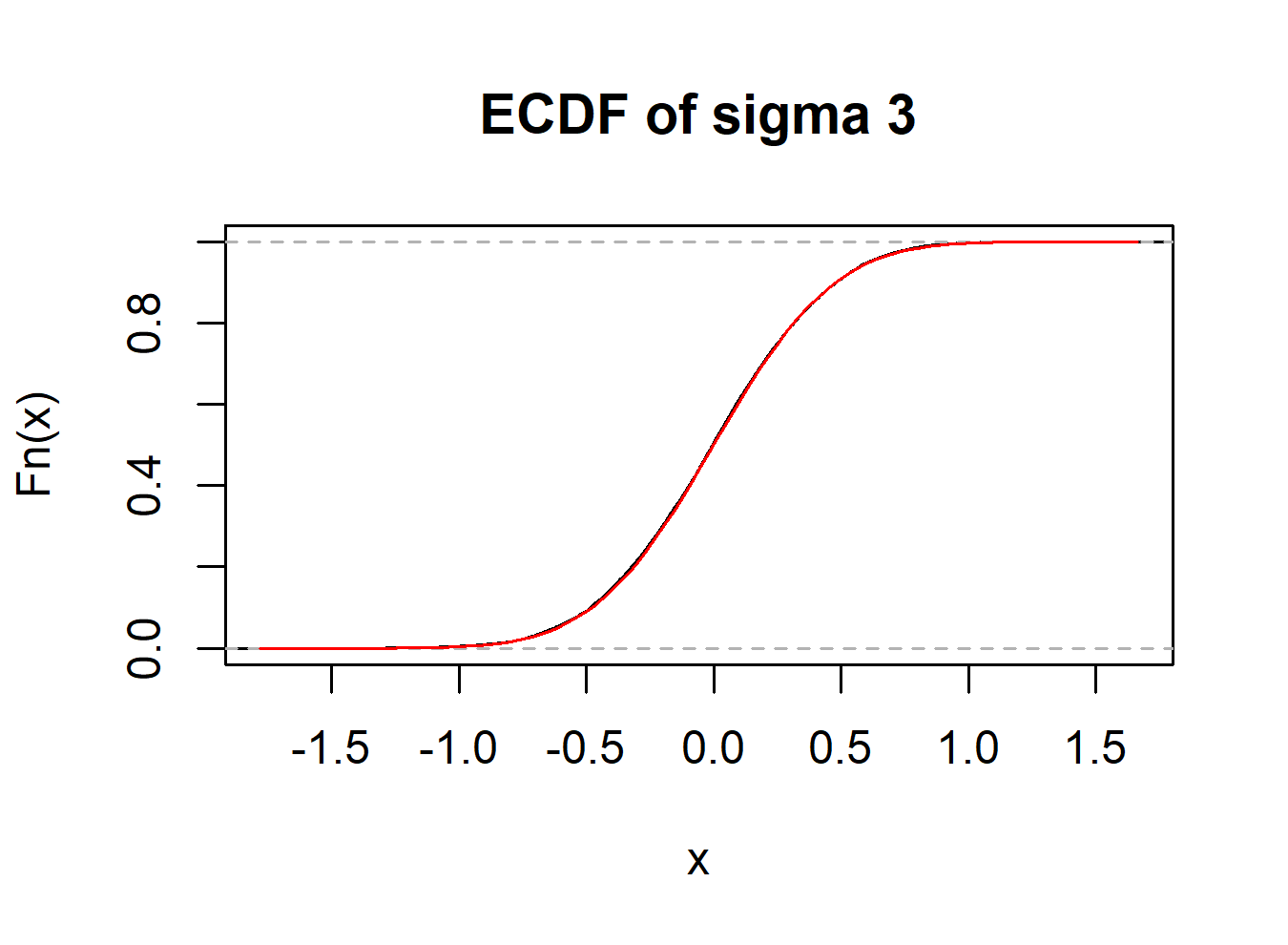}\vspace{2mm} \\
    \includegraphics[width=0.3\columnwidth]{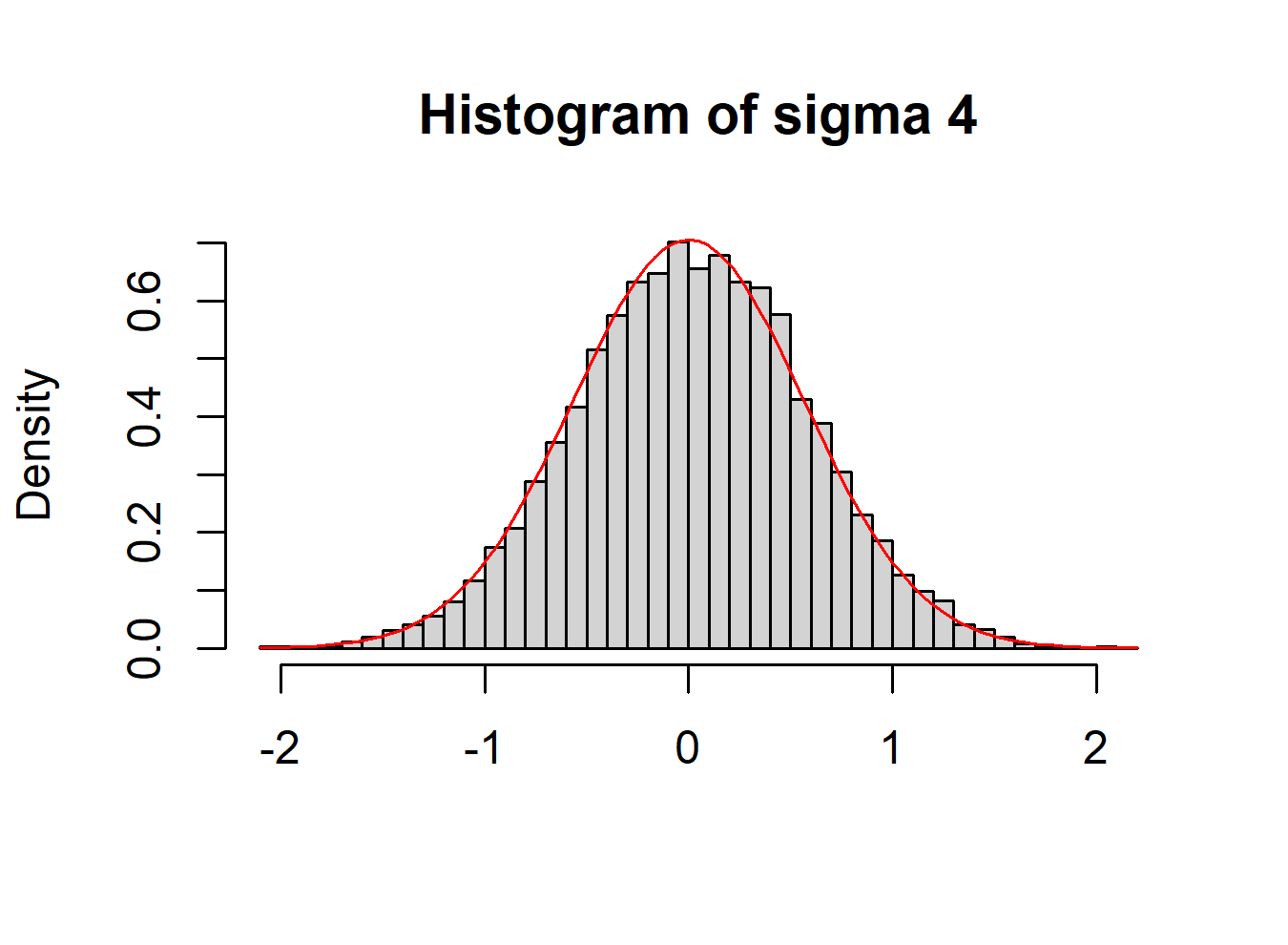}
    \includegraphics[width=0.3\columnwidth]{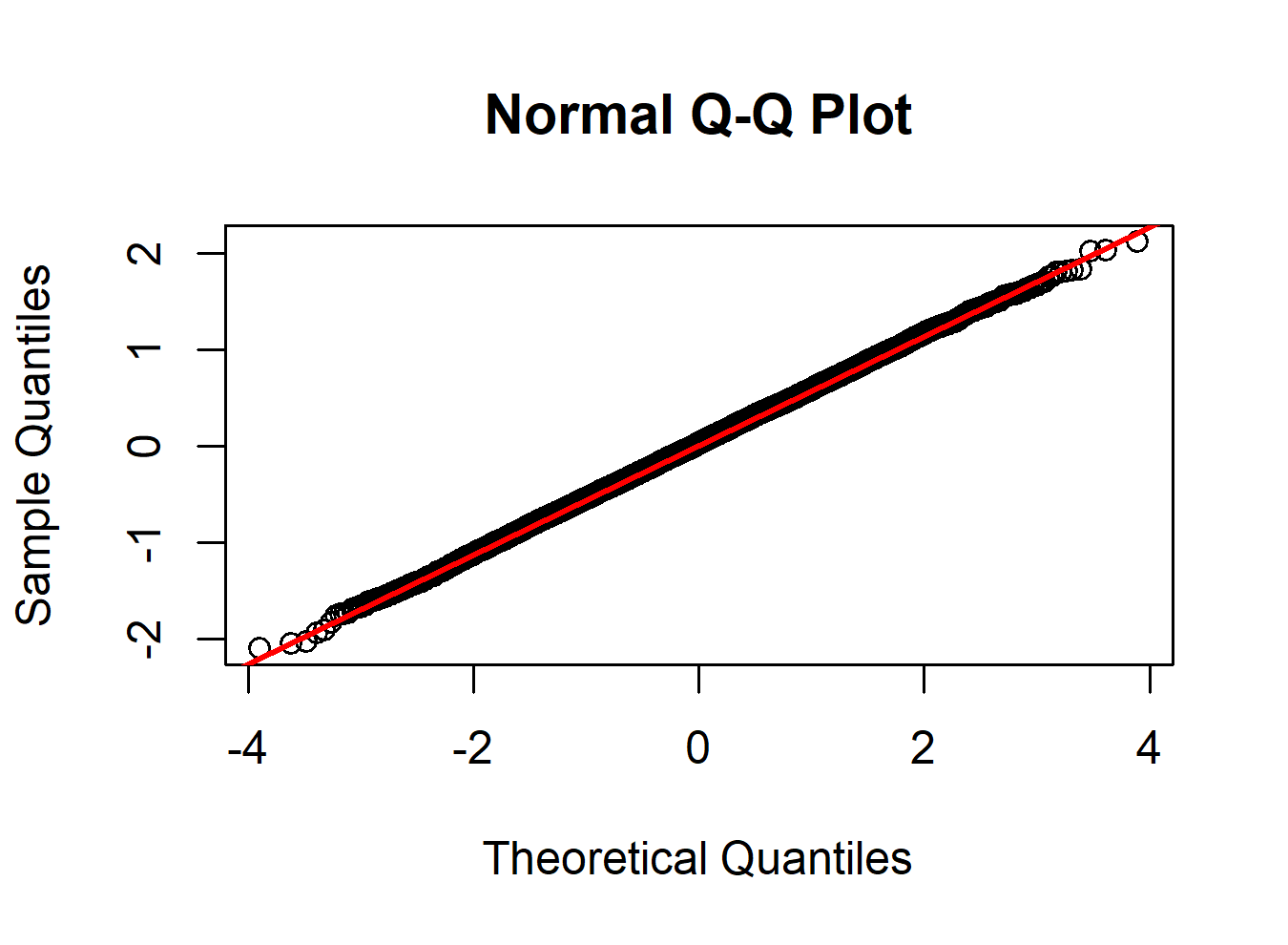}
    \includegraphics[width=0.3\columnwidth]{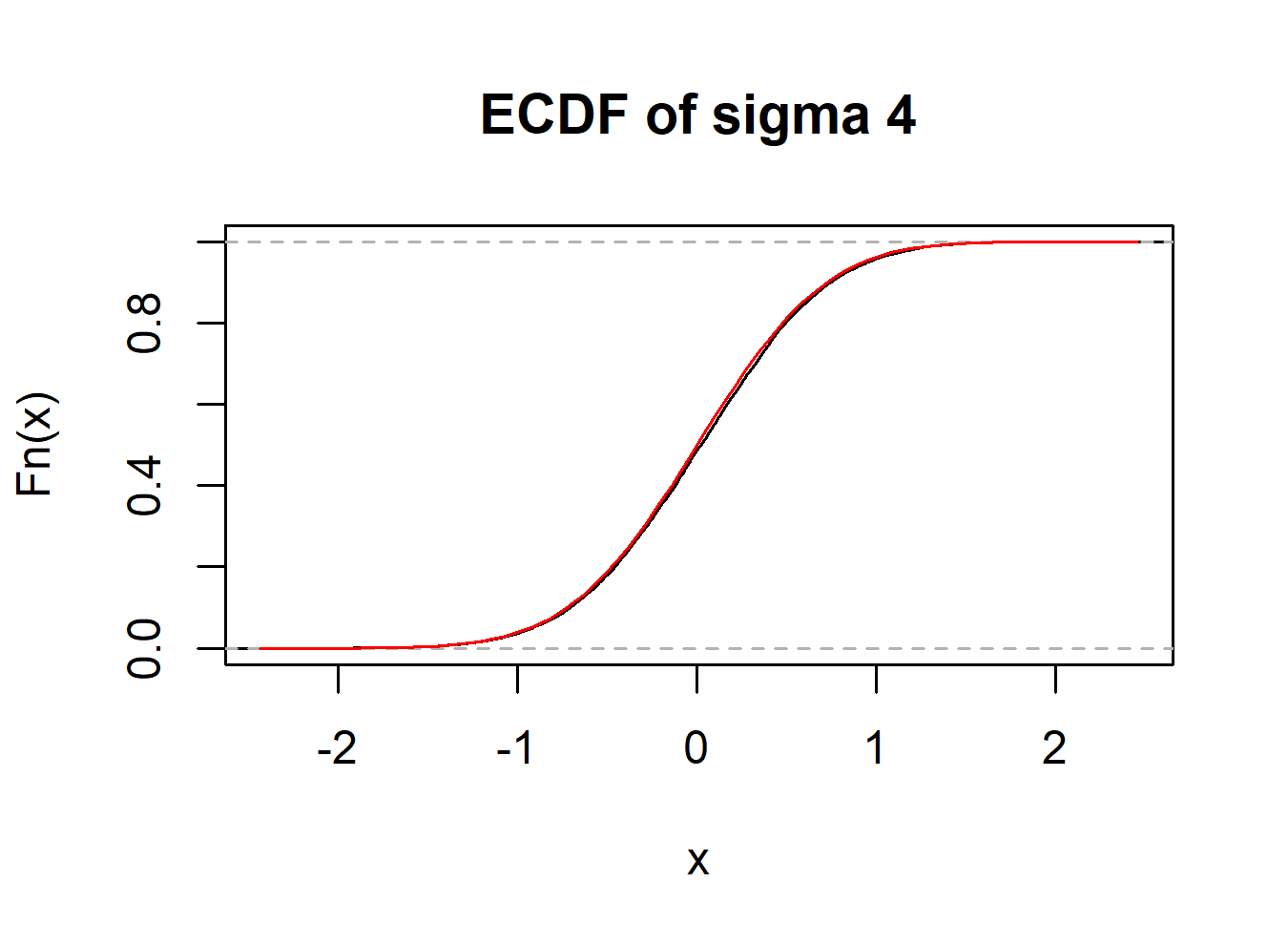}
    \caption{Histogram (left), Q-Q plot (middle) and empirical distribution function (right) of the components of $\sqrt{n}(\check{\bm{\sigma}}_{n}-{\bm{\sigma}}^{*})$ when $n=10^6$ in Simulation 2. The red lines are theoretical curves.}\label{2F}
\end{figure}
\newpage
\section{Proofs}\label{proof}
\setlength{\abovedisplayskip}{10pt}
\setlength{\belowdisplayskip}{10pt}
Set 
\begin{align*}
    \hat{\Sigma}_{j,n}=\sqrt{\frac{1}{nh_n}\sum_{i=1}^n (X_{j,t_i^n}-X_{j,t_{i-1}^n})^2}
\end{align*}
for $j=1,\ldots,J$. For each $j=1,\ldots,J$, we note that
\begin{align*}
    \bar{\bf H}_{j,n}(\sigma)=-\frac{n}{2}\log\sigma^2
    -\frac{n\hat{\Sigma}^2_{j,n}}{2\sigma^2}
\end{align*}
for $\sigma>0$.
\begin{lemma}\label{Sigmalemma}
Suppose that ${[\bf{A1}]}$ and ${[\bf{A2}]}$ hold. Then, under ${\rm{P}}_{\bm{z}}$, for each $j=1,\ldots,J$,
\begin{align}
    \hat{\Sigma}^2_{j,n}\xlongrightarrow{p}\sigma^{*2}_{z_j}
    \label{Sigmacons}
\end{align}
and
\begin{align}
    \sqrt{n}\Bigl(\hat{\Sigma}^2_{j,n}-\sigma^{*2}_{z_j}\Bigr)\xlongrightarrow{d}N\Bigl(0,2\sigma_{z_j}^{*4}\Bigr)\label{Sigmaasym}
\end{align}
as $n\longrightarrow\infty$.
\end{lemma}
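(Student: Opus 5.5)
Under ${\rm P}_{\bm z}$ the $j$-th path solves $dX_{j,t}=a(X_{j,t})dt+\sigma^* dW_{j,t}$ with $a=a^{(G^*)}_{z_j}$ and $\sigma^*=\sigma^*_{z_j}$. The plan is to compare each increment with its pure Brownian part. Write
\begin{align*}
\Delta_i X_j=X_{j,t_i^n}-X_{j,t_{i-1}^n}=\sigma^*\Delta_iW_j+D_i,\qquad D_i=\int_{t_{i-1}^n}^{t_i^n}a(X_{j,s})\,ds .
\end{align*}
Expanding the square gives
\begin{align*}
\hat\Sigma^2_{j,n}-\sigma^{*2}=\sigma^{*2}\Bigl(\frac{1}{nh_n}\sum_{i=1}^n(\Delta_iW_j)^2-1\Bigr)+\frac{2\sigma^*}{nh_n}\sum_{i=1}^n\Delta_iW_jD_i+\frac{1}{nh_n}\sum_{i=1}^nD_i^2.
\end{align*}
It suffices to show that the first term, multiplied by $\sqrt n$, converges to $N(0,2\sigma^{*4})$, and that the last two terms are $o_p(n^{-1/2})$. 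The consistency statement \eqref{Sigmacons} then follows at once, since $\sqrt n\,(\hat\Sigma^2_{j,n}-\sigma^{*2})=O_p(1)$.

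For the leading term, the variables $\xi_i=(\Delta_iW_j)^2/h_n$ are i.i.d.\ $\chi^2_1$ with mean $1$ and variance $2$. Since $nh_n=T$, the term equals $\sigma^{*2}n^{-1}\sum_i(\xi_i-1)$. The classical CLT gives $\sqrt n\, n^{-1}\sum_i(\xi_i-1)\to N(0,2)$, so the leading term contributes $N(0,2\sigma^{*4})$, as required.

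For the remainders, I would use the linear growth of $a$, which follows from [A1], together with the moment bound $\sup_{t\le T}{\rm E}_{\bm z}|X_{j,t}|^{2L}<\infty$ from the Remark after [A2]. These give ${\rm E}_{\bm z}[D_i^2]\le h_n\int_{t_{i-1}^n}^{t_i^n}{\rm E}_{\bm z}\bigl[a(X_{j,s})^2\bigr]ds\le Ch_n^2$. Hence
\begin{align*}
{\rm E}_{\bm z}\Bigl[\frac{1}{nh_n}\sum_{i=1}^n D_i^2\Bigr]\le \frac{C n h_n^2}{T}=O(n^{-1}),
\end{align*}
which is $o(n^{-1/2})$. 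For the cross term, Cauchy--Schwarz gives ${\rm E}|\Delta_iW_jD_i|\le (h_n\cdot Ch_n^2)^{1/2}=Ch_n^{3/2}$. Multiplying by $\sqrt n/(nh_n)$ and summing over $i$ gives order $\sqrt n\,h_n^{3/2}\cdot n/T=O(n^{0}\cdot n^{-1/2}\cdot\ldots)$; more precisely, $\sqrt n\cdot n h_n^{3/2}/(nh_n)=\sqrt n\,h_n^{1/2}=\sqrt{T}$, which is only $O(1)$.

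This cross term is the main obstacle, because the crude $L^1$ bound is not enough. I would refine it by writing $D_i=h_na(X_{j,t_{i-1}^n})+R_i$ with $R_i=\int_{t_{i-1}^n}^{t_i^n}\bigl(a(X_{j,s})-a(X_{j,t_{i-1}^n})\bigr)ds$.

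For the first piece, the summands $\Delta_iW_j\,h_na(X_{j,t_{i-1}^n})$ are martingale differences, because $a(X_{j,t_{i-1}^n})$ is $\mathcal F_{t_{i-1}^n}$-measurable and $\Delta_iW_j$ is independent of that $\sigma$-field. The second moment of their sum is therefore $\sum_i h_n^3\,{\rm E}_{\bm z}\bigl[a(X_{j,t_{i-1}^n})^2\bigr]=O(nh_n^3)$. After the scaling $\sqrt n/(nh_n)$, the $L^2$ norm of this piece is $O(\sqrt n\cdot\sqrt n\,h_n^{3/2}/(nh_n))=O(h_n^{1/2})\to0$.

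For the second piece, the Lipschitz condition [A1] and the estimate ${\rm E}_{\bm z}|X_{j,s}-X_{j,t_{i-1}^n}|^2\le Ch_n$ give ${\rm E}_{\bm z}[R_i^2]\le Ch_n^3$. Cauchy--Schwarz then gives ${\rm E}|\Delta_iW_jR_i|\le Ch_n^2$. After summing and scaling, this piece is $O(\sqrt n\, n h_n^2/(nh_n))=O(\sqrt n\,h_n)=O(n^{-1/2})\to0$.

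Combining the three pieces with Slutsky's lemma yields \eqref{Sigmaasym}, and \eqref{Sigmacons} follows as noted above.
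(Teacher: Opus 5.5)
Your argument is correct, but it takes a different route from the paper.

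\textbf{The paper's route.} The paper centers each squared increment at its conditional expectation, writing $\hat{\Sigma}^2_{j,n}-\sigma^{*2}_{z_j}=M_{j,n}+R_{j,n}$, where $M_{j,n}$ is a martingale sum and $R_{j,n}$ a predictable bias. It controls $R_{j,n}$ through the expansion ${\rm E}_{\bm z}[(\Delta_i^nX_j)^2\,|\,\mathcal{F}_{t_{i-1}^n}]=h_n\sigma^{*2}_{z_j}+R(h_n^2,\sigma_{z_j},X_{j,t_{i-1}^n})$. The limit then comes from the martingale CLT of Hall and Heyde: convergence of the conditional variance is checked via the conditional fourth-moment expansion, and the Lindeberg condition via conditional eighth moments of the increments.

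\textbf{Your route.} You split each increment into its exact Brownian part $\sigma^*\Delta_iW_j$ and the drift integral $D_i$. The noise is additive, and $\Delta_iW_j$ is independent of $\mathcal{F}_0$, so its law is unchanged under ${\rm P}_{\bm z}$. Hence the leading term is an average of exactly i.i.d.\ $\chi^2_1$ variables, and the classical CLT suffices. The remainders then need only unconditional $L^1$ and $L^2$ bounds. You correctly identified the cross term as the one place where the crude Cauchy--Schwarz bound is only $O(1)$. Your fix is the right one: you isolate the martingale piece $h_na(X_{j,t_{i-1}^n})\Delta_iW_j$, of $L^2$ order $h_n^{1/2}$ after scaling, leaving a residual of order $\sqrt n\,h_n$.

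\textbf{What each buys.} Your route is shorter and needs less. It avoids the martingale CLT and all higher conditional moment computations. It also uses only $\sup_{t\le T}{\rm E}_{\bm z}|X_{j,t}|^2<\infty$, i.e.\ a second moment of the initial value rather than all moments as in \textbf{[A2]}. The paper's route does not rely on the noise increment being exactly $\sigma^*\Delta_iW_j$. It is the standard argument that survives when the diffusion coefficient is state dependent; there the leading term is no longer i.i.d.\ and a martingale CLT is unavoidable. Its conditional moment lemmas are also reusable.

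One step you leave implicit is ${\rm E}_{\bm z}|X_{j,s}-X_{j,t_{i-1}^n}|^2\le Ch_n$ uniformly in $i$. It follows immediately from the linear growth of $a$ and the uniform second-moment bound, so there is no gap there.
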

\begin{proof}
    The proof is based on moment estimates and the martingale central limit theorem. See Appendix \ref{Lemma} for details.
\end{proof}
\begin{lemma}\label{Hbarlemma}
Suppose that ${[\bf{A1}]}$ and ${[\bf{A2}]}$ hold. Then, under ${\rm{P}}_{\bm{z}}$, for each $j=1,\ldots,J$,
\begin{align*}
    \sup_{\sigma\in[1/d,d]}\biggl|\frac{1}{n}\bar{\bf H}_{j,n}(\sigma)-\bar{\mathbb H}_{z_j}(\sigma)\biggr|&\xlongrightarrow{p}0,\\
    \sup_{\sigma\in[1/d,d]}\biggl|\frac{1}{n}\bar{\bf H}'_{j,n}(\sigma)-\bar{\mathbb H}'_{z_j}(\sigma)\biggr|&\xlongrightarrow{p}0
\end{align*}
and
\begin{align*}
    \sup_{\sigma\in[1/d,d]}\biggl|\frac{1}{n}\bar{\bf H}^{\prime\prime}_{j,n}(\sigma)-\bar{\mathbb H}^{\prime\prime}_{z_j}(\sigma)\biggr|\xlongrightarrow{p}0
\end{align*}
as $n\longrightarrow\infty$.
\end{lemma}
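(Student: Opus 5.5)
The plan is to use the explicit closed form of $\bar{\bf H}_{j,n}$ in terms of the statistic $\hat{\Sigma}^2_{j,n}$. This reduces all three uniform convergences to the scalar consistency \eqref{Sigmacons} of Lemma \ref{Sigmalemma}, multiplied by deterministic functions of $\sigma$ that are bounded on the compact interval $[1/d,d]$.

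First, from
\begin{align*}
    \frac{1}{n}\bar{\bf H}_{j,n}(\sigma)=-\frac{1}{2}\log\sigma^2-\frac{\hat{\Sigma}^2_{j,n}}{2\sigma^2}
\end{align*}
and the definition of $\bar{\mathbb H}_{z_j}$, the difference is
\begin{align*}
    \frac{1}{n}\bar{\bf H}_{j,n}(\sigma)-\bar{\mathbb H}_{z_j}(\sigma)=-\frac{\hat{\Sigma}^2_{j,n}-\sigma^{*2}_{z_j}}{2\sigma^2}.
\end{align*}
The log terms cancel exactly, so there is no remainder to control.

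Second, differentiate in $\sigma$. Since the dependence on $\sigma$ is explicit and smooth on $(0,\infty)$, we get
\begin{align*}
    \frac{1}{n}\bar{\bf H}'_{j,n}(\sigma)-\bar{\mathbb H}'_{z_j}(\sigma)&=\frac{\hat{\Sigma}^2_{j,n}-\sigma^{*2}_{z_j}}{\sigma^3},\\
    \frac{1}{n}\bar{\bf H}''_{j,n}(\sigma)-\bar{\mathbb H}''_{z_j}(\sigma)&=-\frac{3\bigl(\hat{\Sigma}^2_{j,n}-\sigma^{*2}_{z_j}\bigr)}{\sigma^4}.
\end{align*}
This uses $\bar{\mathbb H}'_k(\sigma)=-1/\sigma+\sigma_k^{*2}/\sigma^3$ and the formula for $\bar{\mathbb H}''_k$ stated in the remark after Theorem \ref{consis}.

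Third, take the supremum over $\sigma\in[1/d,d]$. Each of the three suprema is bounded by $C_d\,|\hat{\Sigma}^2_{j,n}-\sigma^{*2}_{z_j}|$, with $C_d=\max\{d^2/2,\,d^3,\,3d^4\}$. By \eqref{Sigmacons} of Lemma \ref{Sigmalemma}, this bound tends to zero in probability under ${\rm P}_{\bm z}$. Measurability of the suprema is immediate, since each supremum equals an explicit measurable function of $\hat{\Sigma}^2_{j,n}$.

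There is no real obstacle here; the only substantive input is Lemma \ref{Sigmalemma}, which is assumed. The one point to check is that the factorization is exact, i.e.\ that the $\sigma$-dependence separates from the data. This holds because the diffusion coefficient is constant, so the uniformity follows from the compactness of $[1/d,d]$ and its separation from zero.
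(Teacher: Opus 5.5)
Your proposal is correct and follows the paper's proof. Both write each of the three differences as an explicit multiple ($-1/(2\sigma^2)$, $1/\sigma^3$ and $-3/\sigma^4$ respectively) of $\hat{\Sigma}^2_{j,n}-\sigma^{*2}_{z_j}$, bound the suprema over $[1/d,d]$ by constants times $|\hat{\Sigma}^2_{j,n}-\sigma^{*2}_{z_j}|$, and conclude from \eqref{Sigmacons} of Lemma \ref{Sigmalemma}.
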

\begin{proof}
For each $j=1,\ldots,J$, since 
\begin{align*}
    \frac{1}{n}\bar{\bf H}_{j,n}(\sigma)-\bar{\mathbb H}_{z_j}(\sigma)&=-\frac{1}{2\sigma^2}\Bigl(\hat{\Sigma}^2_{j,n}-\sigma_{z_j}^{*2}\Bigr),\\
     \frac{1}{n}\bar{\bf H}'_{j,n}(\sigma)-\bar{\mathbb H}'_{z_j}(\sigma)&=\frac{1}{\sigma^3}\Bigl(\hat{\Sigma}^2_{j,n}-\sigma_{z_j}^{*2}\Bigr)
\end{align*}
and 
\begin{align*}
    \frac{1}{n}\bar{\bf H}^{\prime\prime}_{j,n}(\sigma)-\bar{\mathbb H}^{\prime\prime}_{z_j}(\sigma)=-\frac{3}{\sigma^4}\Bigl(\hat{\Sigma}^2_{j,n}-\sigma_{z_j}^{*2}\Bigr)
\end{align*}
for every $\sigma>0$, it holds from Lemma \ref{Sigmalemma} that
\begin{align*}
    \sup_{\sigma\in[1/d,d]}\biggl|\frac{1}{n}\bar{\bf H}_{j,n}(\sigma)-\bar{\mathbb H}_{z_j}(\sigma)\biggr|&\leq\frac{1}{2}\Biggl(\sup_{\sigma\in[1/d,d]}\frac{1}{\sigma^2}\Biggr) \Bigl|\hat{\Sigma}^2_{j,n}-\sigma_{z_j}^{*2}\Bigr|\xlongrightarrow{p}0,\\
    \sup_{\sigma\in[1/d,d]}\biggl|\frac{1}{n}\bar{\bf H}'_{j,n}(\sigma)-\bar{\mathbb H}'_{z_j}(\sigma)\biggr|&\leq\Biggl(\sup_{\sigma\in[1/d,d]}\frac{1}{\sigma^3}\Biggr)\Bigl|\hat{\Sigma}^2_{j,n}-\sigma_{z_j}^{*2}\Bigr|\xlongrightarrow{p}0
\end{align*}
and 
\begin{align*}
    \sup_{\sigma\in[1/d,d]}\biggl|\frac{1}{n}\bar{\bf H}^{\prime\prime}_{j,n}(\sigma)-\bar{\mathbb H}^{\prime\prime}_{z_j}(\sigma)\biggr|\leq 3\Biggl(\sup_{\sigma\in[1/d,d]}\frac{1}{\sigma^4}\Biggr) \Bigl|\hat{\Sigma}^2_{j,n}-\sigma_{z_j}^{*2}\Bigr|\xlongrightarrow{p}0\ 
\end{align*}
under ${\rm{P}}_{\bm{z}}$, which completes the proof.
\end{proof}
\begin{proposition}\label{unifprop}
Suppose that ${[\bf{A1}]}$ and ${[\bf{A2}]}$ hold. Then, under ${\rm{P}}_{\bm{z}}$, for each $G=1,\ldots,M$ and every ${\bm{\pi}}^{(G)}\in\Pi^{(G)}$,
\begin{align*}
    \sup_{{\bm{\sigma}}^{(G)}\in\overline{\Sigma}^{(G)}}\biggl|\frac{1}{n}{\bf{H}}^{(G)}_{n}({\bm{\sigma}}^{(G)};{\bm{\pi}}^{(G)})-\mathbb{H}_{\bm{z}}^{(G)}({\bm{\sigma}}^{(G)})\biggr|\xlongrightarrow{p}0
\end{align*}
as $n\longrightarrow\infty$.
\end{proposition}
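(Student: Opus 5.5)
The plan is to compare the log-sum-exp in ${\bf H}^{(G)}_n$ with a maximum, and then apply Lemma \ref{Hbarlemma} to each component. For fixed $j$ and $\bm\sigma^{(G)}\in\overline{\Sigma}^{(G)}$, set $m_{j,n}(\bm\sigma^{(G)})=\max_{g\in\{1,\ldots,G\}}\bar{\bf H}_{j,n}(\sigma^{(G)}_g)$. Since every $\pi^{(G)}_g\in(0,1]$ and $\sum_g\pi^{(G)}_g=1$, we have
\begin{align*}
    \log\Bigl(\min_{g}\pi^{(G)}_g\Bigr)+m_{j,n}(\bm\sigma^{(G)})\leq\log\Biggl(\sum_{g=1}^G\pi^{(G)}_g\exp\Bigl\{\bar{\bf H}_{j,n}(\sigma^{(G)}_g)\Bigr\}\Biggr)\leq m_{j,n}(\bm\sigma^{(G)}).
\end{align*}
The lower bound keeps only the maximizing term, and the upper bound replaces each exponential by $e^{m_{j,n}}$. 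Summing over $j$ and dividing by $n$ gives, uniformly in $\bm\sigma^{(G)}$,
\begin{align*}
    \biggl|\frac{1}{n}{\bf H}^{(G)}_n(\bm\sigma^{(G)};\bm\pi^{(G)})-\sum_{j=1}^J\frac{1}{n}m_{j,n}(\bm\sigma^{(G)})\biggr|\leq\frac{J}{n}\Bigl|\log\min_g\pi^{(G)}_g\Bigr|.
\end{align*}
This bound is deterministic and tends to $0$, because $\bm\pi^{(G)}$ is fixed with strictly positive entries and $J$ is fixed.

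Next we compare $n^{-1}m_{j,n}$ with $\max_g\bar{\mathbb H}_{z_j}(\sigma^{(G)}_g)$. We use the elementary inequality $|\max_g a_g-\max_g b_g|\leq\max_g|a_g-b_g|$. Each $\sigma^{(G)}_g$ lies in $[1/d,d]$ whenever $\bm\sigma^{(G)}\in\overline{\Sigma}^{(G)}$, so
\begin{align*}
    \sup_{\bm\sigma^{(G)}\in\overline{\Sigma}^{(G)}}\biggl|\frac{1}{n}m_{j,n}(\bm\sigma^{(G)})-\max_{g}\bar{\mathbb H}_{z_j}(\sigma^{(G)}_g)\biggr|\leq\sup_{\sigma\in[1/d,d]}\biggl|\frac{1}{n}\bar{\bf H}_{j,n}(\sigma)-\bar{\mathbb H}_{z_j}(\sigma)\biggr|.
\end{align*}
The right-hand side tends to $0$ in probability under ${\rm P}_{\bm z}$ by the first assertion of Lemma \ref{Hbarlemma}.

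Summing over the finitely many indices $j=1,\ldots,J$ and using the triangle inequality with the deterministic bound from the first step gives the claim. No step is a genuine obstacle. The only point needing care is that the lower bound uses $\min_g\pi^{(G)}_g>0$, which holds by the definition of $\Pi^{(G)}$. The first step also replaces the log-sum-exp by a maximum, which is non-smooth, but the argument needs no smoothness because it only uses the Lipschitz property of the maximum.
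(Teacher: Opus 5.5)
Your proposal is correct and follows the paper's proof essentially step for step. The paper also sandwiches each log-sum-exp term between $\log\bigl(\min_g\pi^{(G)}_g\bigr)+\max_g\bar{\bf H}_{j,n}(\sigma^{(G)}_g)$ and $\max_g\bar{\bf H}_{j,n}(\sigma^{(G)}_g)$, then uses $|\max_g a_g-\max_g b_g|\leq\max_g|a_g-b_g|$, and finally sums over $j$ and applies the first assertion of Lemma~\ref{Hbarlemma} together with the deterministic $O(1/n)$ term.
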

\begin{proof}
Fix $G\in\{1,\ldots,M\}$ and ${\bm{\pi}}^{(G)}\in\Pi^{(G)}$ arbitrarily. Recall that
\begin{align*}
   \frac{1}{n}{\bf{H}}^{(G)}_{n}({\bm{\sigma}}^{(G)};{\bm{\pi}}^{(G)}) &=\sum_{j=1}^J\frac{1}{n}\log\Biggl(\sum_{g=1}^{G}\pi^{(G)}_g\exp\Bigl\{\bar{\bf H}_{j,n}(\sigma^{(G)}_g)\Bigr\}\Biggr)
\end{align*}
and 
\begin{align*}
    \mathbb{H}_{\bm{z}}^{(G)}({\bm{\sigma}}^{(G)})=\sum_{j=1}^J\max_{g\in\{1,\ldots,G\}}\bar{\mathbb H}_{z_j}(\sigma^{(G)}_g).
\end{align*}
For every ${\bm{\sigma}}^{(G)}\in\overline{\Sigma}^{(G)}$ and each $j=1,\ldots,J$, we have
\begin{align*}
    \sum_{g=1}^{G}\pi^{(G)}_g\exp\Bigl\{\bar{\bf H}_{j,n}(\sigma^{(G)}_g)\Bigr\}\geq
    c^{(G)}\exp\biggl\{\max_{g\in\{1,\ldots,G\}}\bar{\bf H}_{j,n}(\sigma^{(G)}_g)\biggr\},
\end{align*}
which yields 
\begin{align*}
    \log\Biggl(\sum_{g=1}^{G}\pi^{(G)}_g\exp\Bigl\{\bar{\bf H}_{j,n}(\sigma^{(G)}_g)\Bigr\}\Biggr)
    &\geq\log c^{(G)}+\max_{g\in\{1,\ldots,G\}}\bar{\bf H}_{j,n}(\sigma^{(G)}_g),
\end{align*}
where 
\begin{align*}
    c^{(G)}=\min_{g\in\{1,\ldots,G\}}\pi^{(G)}_g>0.
\end{align*}
On the other hand, for every ${\bm{\sigma}}^{(G)}\in\overline{\Sigma}^{(G)}$ and each $j=1,\ldots,J$, since 
\begin{align*}
   \sum_{g=1}^{G}\pi^{(G)}_g\exp\Bigl\{\bar{\bf H}_{j,n}(\sigma^{(G)}_g)\Bigr\}
   &\leq\exp\biggl\{
   \max_{g\in\{1,\ldots,G\}}
   \bar{\bf H}_{j,n}(\sigma^{(G)}_g)
   \biggr\}\sum_{g=1}^{G}\pi^{(G)}_g\\
   &=\exp\biggl\{\max_{g\in\{1,\ldots,G\}}\bar{\bf H}_{j,n}(\sigma^{(G)}_g)\biggr\},
\end{align*}
it holds that
\begin{align*}
    \log\Biggl(\sum_{g=1}^{G}\pi^{(G)}_g\exp\Bigl\{\bar{\bf H}_{j,n}(\sigma^{(G)}_g)\Bigr\}\Biggr)\leq\max_{g\in\{1,\ldots,G\}}\bar{\bf H}_{j,n}(\sigma^{(G)}_g).
\end{align*}
Consequently, for every ${\bm{\sigma}}^{(G)}\in\overline{\Sigma}^{(G)}$ and each $j=1,\ldots,J$, we obtain
\begin{align*}
    &\frac{1}{n}\log c^{(G)}+\max_{g\in\{1,\ldots,G\}}\biggl\{
    \frac{1}{n}\bar{\bf H}_{j,n}(\sigma^{(G)}_g)\biggr\}\\
    &\quad\leq\frac{1}{n}\log\Biggl(\sum_{g=1}^{G}\pi^{(G)}_g\exp\Bigl\{\bar{\bf H}_{j,n}(\sigma^{(G)}_g)\Bigr\}\Biggr)\leq
    \max_{g\in\{1,\ldots,G\}}\biggl\{
    \frac{1}{n}\bar{\bf H}_{j,n}(\sigma^{(G)}_g)\biggr\},
\end{align*}
and hence
\begin{align*}
    &\quad\ \Biggl|\frac{1}{n}\log\Biggl(\sum_{g=1}^{G}\pi^{(G)}_g\exp\Bigl\{\bar{\bf H}_{j,n}(\sigma^{(G)}_g)\Bigr\}\Biggr)-\max_{g\in\{1,\ldots,G\}}
    \bar{\mathbb H}_{z_j}(\sigma^{(G)}_g)\Biggr|\\
    &\leq\biggl|\max_{g\in\{1,\ldots,G\}}\biggl\{
    \frac{1}{n}\bar{\bf H}_{j,n}(\sigma^{(G)}_g)\biggr\}-\max_{g\in\{1,\ldots,G\}}
    \bar{\mathbb H}_{z_j}(\sigma^{(G)}_g)\biggr|+\frac{1}{n}\bigl|\log c^{(G)}\bigr|\\
    &\leq\max_{g\in\{1,\ldots,G\}}\biggl|
    \frac{1}{n}\bar{\bf H}_{j,n}(\sigma^{(G)}_g)-\bar{\mathbb H}_{z_j}(\sigma^{(G)}_g)\biggr|+\frac{1}{n}\bigl|\log c^{(G)}\bigr|\\
    &\leq\sup_{\sigma\in[1/d,d]}\biggl|
    \frac{1}{n}\bar{\bf H}_{j,n}(\sigma)-\bar{\mathbb H}_{z_j}(\sigma)\biggr|+\frac{1}{n}\bigl|\log c^{(G)}\bigr|.
\end{align*}
Therefore, Lemma \ref{Hbarlemma} implies 
\begin{align*}
    &\quad\ \sup_{{\bm{\sigma}}^{(G)}\in\overline{\Sigma}^{(G)}}\biggl|\frac{1}{n}{\bf{H}}^{(G)}_{n}({\bm{\sigma}}^{(G)};{\bm{\pi}}^{(G)})-\mathbb{H}_{\bm{z}}^{(G)}({\bm{\sigma}}^{(G)})\biggr|\\
    &\leq\sum_{j=1}^J\sup_{{\bm{\sigma}}^{(G)}\in\overline{\Sigma}^{(G)}}
    \Biggl|\frac{1}{n}\log\Biggl(\sum_{g=1}^{G}\pi^{(G)}_g\exp\Bigl\{\bar{\bf H}_{j,n}(\sigma^{(G)}_g)\Bigr\}\Biggr)-\max_{g\in\{1,\ldots,G\}}
    \bar{\mathbb H}_{z_j}(\sigma^{(G)}_g)\Biggr|\\
    &\leq \sum_{j=1}^J\sup_{\sigma\in[1/d,d]}\biggl|
    \frac{1}{n}\bar{\bf H}_{j,n}(\sigma)-\bar{\mathbb H}_{z_j}(\sigma)\biggr|+\frac{J}{n}\bigl|\log c^{(G)}\bigr|\xlongrightarrow{p}0
\end{align*}
under ${\rm{P}}_{\bm{z}}$, which completes the proof.
\end{proof}
\begin{corollary}\label{supGprob}
Suppose that ${[\bf{A1}]}$ and ${[\bf{A2}]}$ hold. Then, under ${\rm{P}}_{\bm{z}}$, for each $G=1,\ldots,M$ and every ${\bm{\pi}}^{(G)}\in\Pi^{(G)}$,
\begin{align*}
    \sup_{{\bm{\sigma}}^{(G)}\in\overline{\Sigma}^{(G)}}\frac{1}{n}{\bf{H}}^{(G)}_{n}({\bm{\sigma}}^{(G)};{\bm{\pi}}^{(G)})\xlongrightarrow{p}\sup_{{\bm{\sigma}}^{(G)}\in\overline{\Sigma}^{(G)}}\mathbb{H}_{\bm{z}}^{(G)}({\bm{\sigma}}^{(G)})
\end{align*}
as $n\longrightarrow\infty$.
\end{corollary}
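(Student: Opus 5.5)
The plan is to deduce Corollary \ref{supGprob} directly from the uniform convergence in Proposition \ref{unifprop}, using the elementary fact that taking a supremum is a 1-Lipschitz operation with respect to the sup-norm. Fix $G\in\{1,\ldots,M\}$ and ${\bm{\pi}}^{(G)}\in\Pi^{(G)}$.

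First I would check that both suprema are finite, so that their difference makes sense. For the limit, $\bar{\mathbb H}_k$ is continuous on $[1/d,d]$, hence bounded there. Therefore $\mathbb{H}_{\bm{z}}^{(G)}$ is bounded on $\overline{\Sigma}^{(G)}$. For the random function, $\bar{\bf H}_{j,n}(\sigma)\le -\frac{n}{2}\log\sigma^2\le n\log d$ on $[1/d,d]$. By the upper bound in the proof of Proposition \ref{unifprop}, ${\bf H}^{(G)}_n\le \sum_j\max_g\bar{\bf H}_{j,n}(\sigma^{(G)}_g)$, so $n^{-1}{\bf H}^{(G)}_n$ is bounded above. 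It is also finite at any fixed point. Hence both suprema are real numbers.

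Next, for bounded functions $f,g$ on a common set $A$, one has $|\sup_A f-\sup_A g|\le\sup_A|f-g|$. To see this, for every $x\in A$ we have $f(x)\le g(x)+\sup_A|f-g|\le\sup_A g+\sup_A|f-g|$. Taking the supremum over $x$ and then exchanging the roles of $f$ and $g$ gives the inequality. Applying it with $f=n^{-1}{\bf H}^{(G)}_n(\cdot;{\bm{\pi}}^{(G)})$, $g=\mathbb{H}^{(G)}_{\bm z}$ and $A=\overline{\Sigma}^{(G)}$ yields
\begin{align*}
\biggl|\sup_{\overline{\Sigma}^{(G)}}\frac{1}{n}{\bf H}^{(G)}_n-\sup_{\overline{\Sigma}^{(G)}}\mathbb{H}^{(G)}_{\bm z}\biggr|\le \sup_{\overline{\Sigma}^{(G)}}\biggl|\frac{1}{n}{\bf H}^{(G)}_n-\mathbb{H}^{(G)}_{\bm z}\biggr|.
\end{align*}
By Proposition \ref{unifprop}, the right-hand side tends to $0$ in probability under ${\rm P}_{\bm z}$, which proves the corollary.

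There is no real obstacle here. The only point needing care is measurability of the supremum, and this is handled because ${\bf H}^{(G)}_n$ is continuous in ${\bm{\sigma}}^{(G)}$. The supremum over the compact set $\overline{\Sigma}^{(G)}$ can therefore be taken over a countable dense subset, so it is a random variable.
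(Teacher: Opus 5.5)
Your proposal is correct and is essentially the paper's own argument: the paper bounds the difference of the two suprema by $\sup_{\overline{\Sigma}^{(G)}}\bigl|n^{-1}{\bf H}^{(G)}_n-\mathbb{H}^{(G)}_{\bm z}\bigr|$ and then applies Proposition \ref{unifprop}. Your remarks on the finiteness of both suprema and on measurability via continuity are valid details that the paper leaves implicit.
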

\begin{proof}
Fix $G\in\{1,\ldots,M\}$ and ${\bm{\pi}}^{(G)}\in\Pi^{(G)}$ arbitrarily. By Proposition \ref{unifprop}, we have
\begin{align*}
    &\quad\ \Biggl|\sup_{{\bm{\sigma}}^{(G)}\in\overline{\Sigma}^{(G)}}\frac{1}{n}{\bf{H}}^{(G)}_{n}({\bm{\sigma}}^{(G)};{\bm{\pi}}^{(G)})-\sup_{{\bm{\sigma}}^{(G)}\in\overline{\Sigma}^{(G)}}\mathbb{H}_{\bm{z}}^{(G)}({\bm{\sigma}}^{(G)})\Biggr|\\
    &\leq \sup_{{\bm{\sigma}}^{(G)}\in\overline{\Sigma}^{(G)}}\biggl|\frac{1}{n}{\bf{H}}^{(G)}_{n}({\bm{\sigma}}^{(G)};{\bm{\pi}}^{(G)})-\mathbb{H}_{\bm{z}}^{(G)}({\bm{\sigma}}^{(G)})\biggr|\xlongrightarrow{p}0
\end{align*}
under ${\rm{P}}_{\bm{z}}$, which completes the proof.
\end{proof}
\begin{lemma}\label{HbarOp1}
Suppose that ${[\bf{A1}]}$ and ${[\bf{A2}]}$ hold. Then, under ${\rm{P}}_{\bm{z}}$, for each $j=1,\ldots,J$,
\begin{align*}
    {\bar{\bf{H}}}_{j,n}(\hat{\Sigma}_{j,n})-{\bar{\bf{H}}}_{j,n}(\sigma^{*}_{z_j})=O_p(1)
\end{align*}
as $n\longrightarrow\infty$.
\end{lemma}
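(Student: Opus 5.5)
The plan is to compute the difference explicitly, because $\bar{\bf H}_{j,n}$ depends on the data only through $\hat{\Sigma}^2_{j,n}$. Using
\begin{align*}
\bar{\bf H}_{j,n}(\sigma)=-\frac{n}{2}\log\sigma^2-\frac{n\hat{\Sigma}^2_{j,n}}{2\sigma^2},
\end{align*}
we get $\bar{\bf H}_{j,n}(\hat{\Sigma}_{j,n})=-\frac{n}{2}\log\hat{\Sigma}^2_{j,n}-\frac{n}{2}$ on the event $\{\hat{\Sigma}_{j,n}>0\}$. By \eqref{Sigmacons} and $\sigma^*_{z_j}>0$, this event has probability tending to one. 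Write $x_n=\hat{\Sigma}^2_{j,n}/\sigma^{*2}_{z_j}$. Then
\begin{align*}
\bar{\bf H}_{j,n}(\hat{\Sigma}_{j,n})-\bar{\bf H}_{j,n}(\sigma^*_{z_j})
=\frac{n}{2}\Bigl\{x_n-1-\log x_n\Bigr\}.
\end{align*}
This is nonnegative, since $x-1-\log x\ge 0$.

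Next we use the elementary Taylor bound for $\phi(x)=x-1-\log x$. We have $\phi(1)=\phi'(1)=0$ and $\phi''(x)=1/x^2$. Hence, for $x\ge 1/2$,
\begin{align*}
0\le\phi(x)\le 2(x-1)^2.
\end{align*}
On the event $\{x_n\ge 1/2\}$, whose probability tends to one by \eqref{Sigmacons}, this gives
\begin{align*}
0\le \frac{n}{2}\phi(x_n)\le n(x_n-1)^2=\frac{1}{\sigma^{*4}_{z_j}}\Bigl\{\sqrt{n}\bigl(\hat{\Sigma}^2_{j,n}-\sigma^{*2}_{z_j}\bigr)\Bigr\}^2.
\end{align*}

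Finally, \eqref{Sigmaasym} shows that $\sqrt{n}(\hat{\Sigma}^2_{j,n}-\sigma^{*2}_{z_j})$ converges in distribution, so it is $O_p(1)$. Its square is therefore $O_p(1)$ as well. Combining this with the bound above, and noting that the exceptional events have vanishing probability, gives the claim under ${\rm P}_{\bm z}$.

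The only mild subtlety is that $\hat{\Sigma}_{j,n}$ may lie outside $[1/d,d]$ or equal $0$. Both are handled by restricting to events of probability tending to one, so the real input is just Lemma \ref{Sigmalemma}.
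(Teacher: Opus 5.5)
Your proof is correct, and it takes a more elementary route than the paper's. The paper Taylor-expands $\bar{\bf H}_{j,n}$ in $\sigma$ around its maximizer $\hat{\Sigma}_{j,n}$. It removes the linear term via $\bar{\bf H}'_{j,n}(\hat{\Sigma}_{j,n})=0$ and bounds the remainder by $\frac12\sup_{\sigma\in[1/d,d]}|n^{-1}\bar{\bf H}''_{j,n}(\sigma)|\,\{\sqrt{n}(\hat{\Sigma}_{j,n}-\sigma^*_{z_j})\}^2$. That bound needs three ingredients:
\begin{itemize}
\item the uniform second-derivative control of Lemma \ref{Hbarlemma};
\item the delta method, to pass from $\hat{\Sigma}^2_{j,n}$ to $\hat{\Sigma}_{j,n}$;
\item the event $A_{j,n}=\{1/d\le\hat{\Sigma}_{j,n}\le d\}$, so that the whole segment between $\hat{\Sigma}_{j,n}$ and $\sigma^*_{z_j}$ lies where that control holds.
\end{itemize}

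You instead use the closed form to reduce the difference exactly to $\frac{n}{2}\phi(x_n)$ with $\phi(x)=x-1-\log x$. After that you only need $\phi(x)\le 2(x-1)^2$ for $x\ge 1/2$, consistency \eqref{Sigmacons}, and tightness from \eqref{Sigmaasym}. In particular, the restriction to $[1/d,d]$ that you flag as a subtlety plays no role in your argument; only the event $\{x_n\ge 1/2\}$ matters.

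Your version is shorter and gives nonnegativity for free. Since $\phi(x)=\frac12(x-1)^2+o((x-1)^2)$, it even identifies the limit law $\frac12\chi^2_1$. The paper's argument, by contrast, uses only the maximizer property, root-$n$ consistency of the maximizer and uniform second-derivative bounds, not the explicit form of $\bar{\bf H}_{j,n}$. So it is the template that would survive for contrasts without a closed-form maximizer, such as the state-dependent diffusion coefficients the paper mentions as future work.
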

\begin{proof}
Fix $j\in\{1,\ldots,J\}$ arbitrarily. By Taylor's theorem, we have 
\begin{align*} 
    {\bar{\bf{H}}}_{j,n}(\sigma^{*}_{z_j}) 
    &={\bar{\bf{H}}}_{j,n}(\hat{\Sigma}_{j,n})+{\bar{\bf{H}}}^{\prime}_{j,n}(\hat{\Sigma}_{j,n})\bigl(\sigma^{*}_{z_j}-\hat{\Sigma}_{j,n}\bigr)+\bar{\bf{R}}_{j,n},
\end{align*} 
where 
\begin{align*}
    \bar{\bf{R}}_{j,n}=\left(\int_0^1(1-\lambda){\bar{\bf{H}}}^{\prime\prime}_{j,n}\bigl(
    \hat{\Sigma}_{j,n}+\lambda(\sigma^{*}_{z_j}-\hat{\Sigma}_{j,n})\bigr)d\lambda\right)\bigl(\sigma^{*}_{z_j}-\hat{\Sigma}_{j,n}\bigr)^2.
\end{align*}
Since ${\bar{\bf{H}}}_{j,n}(\sigma)$ attains its maximum at $\sigma=\hat{\Sigma}_{j,n}$, we have
\begin{align*}
    {\bar{\bf{H}}}^{\prime}_{j,n}(\hat{\Sigma}_{j,n})=0,
\end{align*}
so that it suffices to show that 
\begin{align}
    \bar{\bf{R}}_{j,n}=O_p(1) \label{Rjn}
\end{align}
under ${\rm{P}}_{\bm{z}}$. Noting that
\begin{align*}
    \sup_{\sigma\in[1/d,d]}\biggl|\frac{1}{n}{\bar{\bf{H}}}^{\prime\prime}_{j,n}(\sigma)\biggr|\leq\sup_{\sigma\in[1/d,d]}\biggl|\frac{1}{n}{\bar{\bf{H}}}^{\prime\prime}_{j,n}(\sigma)-\bar{\mathbb{H}}^{\prime\prime}_{z_j}(\sigma)\biggr|+\sup_{\sigma\in[1/d,d]}\Bigl|\bar{\mathbb{H}}^{\prime\prime}_{z_j}(\sigma)\Bigr|,
\end{align*}
we see from Lemma \ref{Hbarlemma} that
\begin{align*}
    \sup_{\sigma\in[1/d,d]}\biggl|\frac{1}{n}{\bar{\bf{H}}}^{\prime\prime}_{j,n}(\sigma)\biggr|=O_p(1)
\end{align*}
under ${\rm{P}}_{\bm{z}}$. Set
\begin{align*}
    \varepsilon_j=\min\biggl\{ \sigma^*_{z_j}-\frac{1}{d},\, d-\sigma^*_{z_j} \biggr\},\quad A_{j,n}=\biggl\{\frac{1}{d}\leq\hat{\Sigma}_{j,n}\leq d\biggr\}.
\end{align*}
Since $1/d<\sigma^*_{z_j}<d$, we note that $\varepsilon_j>0$. On 
\begin{align*}
    \Bigl\{\bigl|\hat{\Sigma}_{j,n}-\sigma_{z_j}^*\bigr|\leq \varepsilon_j\Bigr\},
\end{align*}
we have 
\begin{align*} 
    \frac{1}{d}\leq \sigma_{z_j}^*-\varepsilon_j\leq \hat{\Sigma}_{j,n}\leq \sigma_{z_j}^*+\varepsilon_j\leq d,
\end{align*}
which yields
\begin{align*}
    \Bigl\{\bigl|\hat{\Sigma}_{j,n}-\sigma_{z_j}^*\bigr|\leq \varepsilon_j\Bigr\}\subset A_{j,n}.
\end{align*}
Consequently, it holds from Lemma \ref{Sigmalemma} that
\begin{align} 
    {\rm{P}}_{\bm{z}}\Bigl(A_{j,n}\Bigr)\longrightarrow 1\label{Ajn}
\end{align} 
as $n\longrightarrow\infty$. On $A_{j,n}$, for every $\lambda\in[0,1]$, we have 
\begin{align*}
    \frac{1}{d}\leq\hat{\Sigma}_{j,n}+\lambda\bigl({\sigma}^*_{z_j}-\hat{\Sigma}_{j,n}\bigr)\leq d
\end{align*} 
and
\begin{align*} 
\begin{split} 
    \bigl|\bar{\bf{R}}_{j,n}\bigr| 
    &\leq \int_0^1(1-\lambda)\biggl|\frac{1}{n}{\bar{\bf{H}}}^{\prime\prime}_{j,n} \bigl(
    \hat{\Sigma}_{j,n}+\lambda({\sigma}^*_{z_j}-\hat{\Sigma}_{j,n})\bigr) \biggr|d\lambda\, n\bigl(\sigma^{*}_{z_j}-\hat{\Sigma}_{j,n}\bigr)^2\\ 
    &\leq\frac{1}{2}\sup_{\sigma\in[1/d,d]}\, \biggl|\frac{1}{n}{\bar{\bf{H}}}^{\prime\prime}_{j,n}(\sigma)\biggr|\Bigl\{\sqrt{n}\bigl(\hat{\Sigma}_{j,n}-\sigma^*_{z_j}\bigr)\Bigr\}^2. 
\end{split} 
\end{align*} 
Moreover, by Lemma \ref{Sigmalemma} and the delta method,
\begin{align*}
    \sqrt{n}\bigl(\hat{\Sigma}_{j,n}-\sigma^*_{z_j}\bigr)=O_p(1),
\end{align*}
so that it follows from Lemma \ref{Hbarlemma} that
\begin{align} 
    \frac{1}{2}\sup_{\sigma\in[1/d,d]}\, \biggl|\frac{1}{n}{\bar{\bf{H}}}^{\prime\prime}_{j,n}(\sigma)\biggr|\Bigl\{\sqrt{n}\bigl(\hat{\Sigma}_{j,n}-\sigma^*_{z_j}\bigr)\Bigr\}^2=O_p(1) \label{supeq}
\end{align} 
under ${\rm{P}}_{\bm{z}}$. 
Since for every $K>0$, 
\begin{align*} 
    &\quad\ {\rm{P}}_{\bm{z}}\biggl(\bigl|\bar{\bf{R}}_{j,n}\bigr|>K\biggr)\\
    &={\rm{P}}_{\bm{z}}\biggl(\Bigl\{\bigl|\bar{\bf{R}}_{j,n}\bigr|>K\Bigr\}\cap A_{j,n}\biggr)+{\rm{P}}_{\bm{z}}\biggl(\Bigl\{\bigl|\bar{\bf{R}}_{j,n}\bigr|>K\Bigr\}\cap A_{j,n}^c\biggr)\\ 
    &\leq {\rm{P}}_{\bm{z}}\Biggl(\frac{1}{2}\sup_{\sigma\in[1/d,d]}\, \biggl|\frac{1}{n}{\bar{\bf{H}}}^{\prime\prime}_{j,n}(\sigma)\biggr|\Bigl\{\sqrt{n}\bigl(\hat{\Sigma}_{j,n}-\sigma^*_{z_j}\bigr)\Bigr\}^2>K\Biggr)+{\rm{P}}_{\bm{z}}\Bigl(A_{j,n}^c\Bigr),
\end{align*}
it holds from \eqref{Ajn} and \eqref{supeq} that
\begin{align*} 
    \lim_{K\to\infty} \limsup_{n\to\infty} {\rm{P}}_{\bm{z}}\Bigl(\bigl|\bar{\bf{R}}_{j,n}\bigr|>K\Bigr)=0,
\end{align*} 
which yields \eqref{Rjn}. This completes the proof.
\end{proof}
\begin{proposition}\label{supOp}
Suppose that ${[\bf{A1}]}$ and ${[\bf{A2}]}$ hold. Then, under ${\rm{P}}_{\bm{z}}$, for every $K,L\in\{G^*,\ldots,M\}$, ${\bm{\pi}}^{(K)}\in\Pi^{(K)}$ and ${\bm{\pi}}^{(L)}\in\Pi^{(L)}$,
\begin{align*}
    \sup_{{\bm{\sigma}}^{(K)}\in\overline{\Sigma}^{(K)}}{\bf{H}}^{(K)}_{n}({\bm{\sigma}}^{(K)};{\bm{\pi}}^{(K)})-\sup_{{\bm{\sigma}}^{(L)}\in\overline{\Sigma}^{(L)}}{\bf{H}}^{(L)}_{n}({\bm{\sigma}}^{(L)};{\bm{\pi}}^{(L)})=O_p(1)
\end{align*}
as $n\longrightarrow\infty$.
\end{proposition}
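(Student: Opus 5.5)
Writing $\mathcal{S}^{(G)}_n=\sup_{\overline{\Sigma}^{(G)}}{\bf H}^{(G)}_n(\cdot;{\bm\pi}^{(G)})$, the plan is to trap $\mathcal{S}^{(G)}_n$ for every $G\ge G^*$ between two random quantities that do not depend on $G$ and differ by $O_p(1)$. The natural reference quantity is
\begin{align*}
    \mathcal{M}_n=\sum_{j=1}^J\bar{\bf H}_{j,n}(\sigma^*_{z_j}).
\end{align*}
The sandwich inequality from the proof of Proposition \ref{unifprop} (without dividing by $n$) gives, for every ${\bm\sigma}^{(G)}$,
\begin{align*}
    J\log c^{(G)}+\sum_{j=1}^J\max_{g}\bar{\bf H}_{j,n}(\sigma^{(G)}_g)\leq {\bf H}^{(G)}_n({\bm\sigma}^{(G)};{\bm\pi}^{(G)})\leq\sum_{j=1}^J\max_{g}\bar{\bf H}_{j,n}(\sigma^{(G)}_g).
\end{align*}

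\emph{Upper bound.} Each $\bar{\bf H}_{j,n}$ is maximized over $(0,\infty)$ at $\hat\Sigma_{j,n}$. Hence $\max_g\bar{\bf H}_{j,n}(\sigma^{(G)}_g)\le\bar{\bf H}_{j,n}(\hat\Sigma_{j,n})$, and therefore
\begin{align*}
    \mathcal{S}^{(G)}_n\le\sum_{j=1}^J\bar{\bf H}_{j,n}(\hat\Sigma_{j,n})=\mathcal{M}_n+O_p(1)
\end{align*}
by Lemma \ref{HbarOp1}.

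\emph{Lower bound for $G\ge G^*$.} I construct a point of $\overline{\Sigma}^{(G)}$ whose components include $\sigma^*_1,\ldots,\sigma^*_{G^*}$. A concrete choice is ${\bm\sigma}^{(G)}=(\sigma^*_1,\ldots,\sigma^*_{G^*},\sigma^*_{G^*},\ldots,\sigma^*_{G^*})^\top$, obtained by repeating the last component. It is nondecreasing and lies in $[1/d,d]^G$, so it belongs to $\overline{\Sigma}^{(G)}$. Since $\sigma^*_{z_j}$ is one of its components, $\max_g\bar{\bf H}_{j,n}(\sigma^{(G)}_g)\ge\bar{\bf H}_{j,n}(\sigma^*_{z_j})$. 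Consequently
\begin{align*}
    \mathcal{S}^{(G)}_n\ge J\log c^{(G)}+\mathcal{M}_n .
\end{align*}

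\emph{Conclusion.} Combining the two bounds gives $\mathcal{S}^{(G)}_n-\mathcal{M}_n=O_p(1)$ for each $G\in\{G^*,\ldots,M\}$. Subtracting the cases $G=K$ and $G=L$ yields the claim, because a difference of two $O_p(1)$ terms is $O_p(1)$.

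\emph{Main obstacle.} The only delicate point is the upper bound. Here I must use the global maximizer $\hat\Sigma_{j,n}$ over $(0,\infty)$ rather than over $[1/d,d]$, which is legitimate because enlarging the domain only increases the supremum. The lower bound depends crucially on $G\ge G^*$, since it requires room in the parameter vector to place every true value; this is exactly why the statement is restricted to $K,L\ge G^*$.
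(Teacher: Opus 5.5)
Your proof is correct and follows the paper's argument essentially step for step. It uses the same reference quantity $\sum_{j=1}^J\bar{\bf H}_{j,n}(\sigma^*_{z_j})$, the same upper bound via $\hat\Sigma_{j,n}$ together with Lemma \ref{HbarOp1}, and a lower bound at a point whose first $G^*$ components are the true values. The only differences are cosmetic: the paper uses a strictly increasing point in $\Sigma^{(G)}$ (spreading the extra components between $\sigma^*_{G^*}$ and $d$) and the bound $\sum_j\log\pi^{(G)}_{z_j}$, whereas you use a tied point in $\overline{\Sigma}^{(G)}$ and $J\log c^{(G)}$; neither choice matters, since the supremum is taken over the closure.
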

\begin{proof}
It is enough to show that, for each $G=G^*,\ldots,M$ and every ${\bm{\pi}}^{(G)}\in\Pi^{(G)}$, under ${\rm{P}}_{\bm{z}}$, 
\begin{align}
    \sup_{{\bm{\sigma}}^{(G)}\in\overline{\Sigma}^{(G)}}{\bf{H}}^{(G)}_{n}({\bm{\sigma}}^{(G)};{\bm{\pi}}^{(G)})=\sum_{j=1}^J\bar{\bf{H}}_{j,n}(\sigma_{z_j}^*)+O_p(1) \label{supOp1}
\end{align}
holds. Here, we note that the leading term does not depend on the number of clusters $G$. Indeed, if \eqref{supOp1} holds, then, for every $K,L\in\{G^*,\ldots,M\}$, ${\bm{\pi}}^{(K)}\in\Pi^{(K)}$ and ${\bm{\pi}}^{(L)}\in\Pi^{(L)}$,
\begin{align*}
    &\quad\ \sup_{{\bm{\sigma}}^{(K)}\in\overline{\Sigma}^{(K)}}{\bf{H}}^{(K)}_{n}({\bm{\sigma}}^{(K)};{\bm{\pi}}^{(K)})-\sup_{{\bm{\sigma}}^{(L)}\in\overline{\Sigma}^{(L)}}{\bf{H}}^{(L)}_{n}({\bm{\sigma}}^{(L)};{\bm{\pi}}^{(L)})\\
    &=\Biggl\{\sup_{{\bm{\sigma}}^{(K)}\in\overline{\Sigma}^{(K)}}{\bf{H}}^{(K)}_{n}({\bm{\sigma}}^{(K)};{\bm{\pi}}^{(K)})-\sum_{j=1}^J\bar{\bf{H}}_{j,n}(\sigma_{z_j}^*)\Biggr\}\\
    &\qquad\qquad-\Biggl\{\sup_{{\bm{\sigma}}^{(L)}\in\overline{\Sigma}^{(L)}}{\bf{H}}^{(L)}_{n}({\bm{\sigma}}^{(L)};{\bm{\pi}}^{(L)})-\sum_{j=1}^J\bar{\bf{H}}_{j,n}(\sigma_{z_j}^*)\Biggr\}=O_p(1)
\end{align*}
under ${\rm{P}}_{\bm{z}}$. It remains to prove \eqref{supOp1}. Fix $G\in\{G^*,\ldots,M\}$ and ${\bm{\pi}}^{(G)}\in\Pi^{(G)}$ arbitrarily. For every ${\bm{\sigma}}^{(G)}\in\overline{\Sigma}^{(G)}$, in a similar way to the proof of Proposition \ref{unifprop}, we have
\begin{align*}
    {\bf{H}}^{(G)}_n({\bm{\sigma}}^{(G)};{\bm{\pi}}^{(G)})
    &\leq\sum_{j=1}^J\max_{g\in\{1,\ldots,G\}}\bar{\bf H}_{j,n}(\sigma^{(G)}_g).
\end{align*}
Moreover, for each $j=1,\ldots,J$, since
\begin{align*}
    \bar{\bf{H}}_{j,n}(\sigma)\leq\bar{\bf{H}}_{j,n}(\hat{\Sigma}_{j,n})
\end{align*}
for every $\sigma>0$, we obtain
\begin{align*}
    \max_{g\in\{1,\ldots,G\}}\bar{\bf{H}}_{j,n}(\sigma^{(G)}_g)\leq\bar{\bf{H}}_{j,n}(\hat{\Sigma}_{j,n})
\end{align*}
for every ${\bm{\sigma}}^{(G)}\in\overline{\Sigma}^{(G)}$. Therefore, for every ${\bm{\sigma}}^{(G)}\in\overline{\Sigma}^{(G)}$, it holds that
\begin{align*}
    {\bf{H}}^{(G)}_n({\bm{\sigma}}^{(G)};{\bm{\pi}}^{(G)})
    &\leq\sum_{j=1}^J\bar{\bf{H}}_{j,n}(\hat{\Sigma}_{j,n}),
\end{align*}
which implies
\begin{align}
    \sup_{{\bm{\sigma}}^{(G)}\in\overline{\Sigma}^{(G)}}{\bf{H}}^{(G)}_n({\bm{\sigma}}^{(G)};{\bm{\pi}}^{(G)})
    \leq\sum_{j=1}^J\bar{\bf{H}}_{j,n}(\hat{\Sigma}_{j,n}). \label{upper}
\end{align}
On the other hand, noting that $z_j\in\{1,\ldots,G^*\}$ and $G^*\leq G$, for every ${\bm{\sigma}}^{(G)}\in\overline{\Sigma}^{(G)}$, we see
\begin{align*}
    {\bf{H}}^{(G)}_{n}({\bm{\sigma}}^{(G)};{\bm{\pi}}^{(G)}) &=\sum_{j=1}^J\log\Biggl(\sum_{g=1}^{G}\pi^{(G)}_g\exp\Bigl\{\bar{\bf H}_{j,n}(\sigma^{(G)}_g)\Bigr\}\Biggr)\\
    &\geq \sum_{j=1}^J\log\Biggl(\pi^{(G)}_{z_j}\exp\Bigl\{\bar{\bf{H}}_{j,n}(\sigma^{(G)}_{z_j})\Bigr\}\Biggr)\\
    &=\sum_{j=1}^J\log\pi^{(G)}_{z_j}+\sum_{j=1}^J{\bar{\bf{H}}}_{j,n}(\sigma^{(G)}_{z_j}).
\end{align*}
Define $\tilde{\bm{\sigma}}^{(G)*}$ by
\begin{align*}
    \tilde{\bm{\sigma}}^{(G)*}
    =\Bigl(\tilde{\sigma}^{(G)*}_{1},\ldots,
    \tilde{\sigma}^{(G)*}_{G}\Bigr)^{\top},
\end{align*}
where $\tilde{\bm{\sigma}}^{(G^*)*}={\bm{\sigma}}^*$, and if $G>G^*$,
\begin{align*}
    \tilde{\sigma}^{(G)*}_{g}=\begin{cases}
    \displaystyle \sigma^*_g & \bigl(g\in\{1,\ldots,G^*\}\bigr),\\
    \displaystyle\sigma^*_{G^*}+\dfrac{d-\sigma^*_{G^*}}{G-G^*+1}(g-G^*) 
    & \bigl(g\in\{G^*+1,\ldots,G\}\bigr)
    \end{cases}
\end{align*}
for $g=1,\ldots,G$. By construction, we have
\begin{align*}
    \frac{1}{d}<\tilde{\sigma}^{(G)*}_{1}<\cdots<\tilde{\sigma}^{(G)*}_{G}<d,
\end{align*}
which yields $\tilde{\bm{\sigma}}^{(G)*}\in\Sigma^{(G)}$. Noting that for each $j=1,\ldots,J$, $\tilde{\sigma}^{(G)*}_{z_j}=\sigma^*_{z_j}$ since $z_j\in\{1,\ldots,G^*\}$, we have
\begin{align*}
     \sup_{{\bm{\sigma}}^{(G)}\in\overline{\Sigma}^{(G)}}{\bf{H}}^{(G)}_{n}({\bm{\sigma}}^{(G)};{\bm{\pi}}^{(G)})
     &\geq{\bf{H}}^{(G)}_{n}(\tilde{\bm{\sigma}}^{(G)*};{\bm{\pi}}^{(G)})\\
     &\geq\sum_{j=1}^J\log\pi^{(G)}_{z_j}+\sum_{j=1}^J{\bar{\bf{H}}}_{j,n}(\tilde{\sigma}^{(G)*}_{z_j})\\
     &=\sum_{j=1}^J\log\pi^{(G)}_{z_j}+\sum_{j=1}^J{\bar{\bf{H}}}_{j,n}(\sigma^*_{z_j}).
\end{align*}
Combining this with \eqref{upper}, we obtain
\begin{align*}
    \sum_{j=1}^J\log\pi^{(G)}_{z_j}
    &\leq\sup_{{\bm{\sigma}}^{(G)}\in\overline{\Sigma}^{(G)}}{\bf{H}}^{(G)}_{n}({\bm{\sigma}}^{(G)};{\bm{\pi}}^{(G)})-\sum_{j=1}^J\bar{\bf{H}}_{j,n}(\sigma_{z_j}^*)\\
    &\leq\sum_{j=1}^J\Bigl\{{\bar{\bf{H}}}_{j,n}(\hat{\Sigma}_{j,n})-{\bar{\bf{H}}}_{j,n}(\sigma^{*}_{z_j})\Bigr\}.
\end{align*}
Consequently, Lemma \ref{HbarOp1} implies
\begin{align*}
    \sup_{{\bm{\sigma}}^{(G)}\in\overline{\Sigma}^{(G)}}{\bf{H}}^{(G)}_{n}({\bm{\sigma}}^{(G)};{\bm{\pi}}^{(G)})-\sum_{j=1}^J\bar{\bf{H}}_{j,n}(\sigma_{z_j}^*)=O_p(1)
\end{align*}
under ${\rm{P}}_{\bm{z}}$, which yields \eqref{supOp1}. This completes the proof.
\end{proof}
\begin{lemma}\label{maxlemma}
Let $G\in\{1,\ldots,G^*-1\}$. For every ${\bm{\sigma}}^{(G)}\in \overline{\Sigma}^{(G)}$, there exists $j\in\{1,\ldots,J\}$ such that
\begin{align*}
    \max_{g\in\{1,\ldots,G\}}\bar{\mathbb{H}}_{z_j}(\sigma^{(G)}_g)\neq \bar{\mathbb{H}}_{z_j}(\sigma^{*}_{z_j}). 
\end{align*}
\end{lemma}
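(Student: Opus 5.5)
The argument is a pigeonhole argument combined with the fact that $\bar{\mathbb H}_k$ is uniquely maximized at $\sigma^*_k$. First I record that for each $k\in\{1,\ldots,G^*\}$,
\begin{align*}
    \bar{\mathbb H}_k'(\sigma)=-\frac{1}{\sigma}+\frac{\sigma_k^{*2}}{\sigma^3}=\frac{\sigma_k^{*2}-\sigma^2}{\sigma^3},
\end{align*}
so $\bar{\mathbb H}_k$ is strictly increasing on $(0,\sigma^*_k]$ and strictly decreasing on $[\sigma^*_k,\infty)$. Hence $\sigma^*_k$ is its unique maximizer on $(0,\infty)$, and for every $\sigma>0$,
\begin{align*}
    \bar{\mathbb H}_k(\sigma)\leq\bar{\mathbb H}_k(\sigma^*_k),
\end{align*}
with equality if and only if $\sigma=\sigma^*_k$. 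Consequently, for each $j$ we always have $\max_{g}\bar{\mathbb H}_{z_j}(\sigma^{(G)}_g)\leq\bar{\mathbb H}_{z_j}(\sigma^*_{z_j})$. Equality holds exactly when $\sigma^*_{z_j}\in\{\sigma^{(G)}_1,\ldots,\sigma^{(G)}_G\}$, because the maximum over the finite set is attained at some index $g$, and then $\bar{\mathbb H}_{z_j}(\sigma^{(G)}_g)=\bar{\mathbb H}_{z_j}(\sigma^*_{z_j})$ forces $\sigma^{(G)}_g=\sigma^*_{z_j}$.

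I then argue by contradiction. Fix $G\leq G^*-1$ and $\bm{\sigma}^{(G)}\in\overline{\Sigma}^{(G)}$, and suppose equality holds for every $j=1,\ldots,J$. By the previous step, $\sigma^*_{z_j}\in\{\sigma^{(G)}_1,\ldots,\sigma^{(G)}_G\}$ for all $j$. Since $S_k\neq\emptyset$ for each $k=1,\ldots,G^*$, every $k$ equals $z_j$ for some $j$. It follows that
\begin{align*}
    \{\sigma^*_1,\ldots,\sigma^*_{G^*}\}\subset\{\sigma^{(G)}_1,\ldots,\sigma^{(G)}_G\}.
\end{align*}
The left-hand set has exactly $G^*$ elements because $\sigma^*_1<\cdots<\sigma^*_{G^*}$, while the right-hand set has at most $G<G^*$ elements. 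This is a contradiction, so there is some $j$ for which the maximum differs from (indeed is strictly smaller than) $\bar{\mathbb H}_{z_j}(\sigma^*_{z_j})$.

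I do not expect a real obstacle. The two points to handle carefully are the strict unimodality of $\bar{\mathbb H}_k$, which gives the uniqueness of the maximizer, and the use of the nonemptiness of the clusters $S_k$ together with the distinctness of the $\sigma^*_k$. These are the facts that make the counting argument work. Note that elements of $\overline{\Sigma}^{(G)}$ may have tied entries, but ties only shrink the right-hand set, so they do not affect the argument.
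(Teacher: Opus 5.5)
Your proof is correct and follows essentially the same route as the paper: a contradiction argument using that each $\bar{\mathbb H}_k$ is uniquely maximized at $\sigma^*_k$, that every $S_k$ is nonempty, and that $G<G^*$ components cannot contain the $G^*$ pairwise distinct values $\sigma^*_1,\ldots,\sigma^*_{G^*}$. Your derivative computation, which justifies the unique maximizer explicitly, is a small addition; the paper takes that fact for granted.
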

\begin{proof}
We prove this by contradiction. Fix $G\in\{1,\ldots,G^*-1\}$ arbitrarily. Suppose that there exists ${\bm{\sigma}}^{(G)}\in\overline{\Sigma}^{(G)}$ such that, for each $j\in\{1,\ldots,J\}$,
\begin{align*}
    \max_{g\in\{1,\ldots,G\}}\bar{\mathbb{H}}_{z_j}(\sigma^{(G)}_g)=\bar{\mathbb{H}}_{z_j}(\sigma^{*}_{z_j}).
\end{align*}
Since 
\begin{align*}
    S_g=\Bigl\{j\in\{1,\ldots,J\}\ \big|\ z_j=g\Bigr\}\neq \emptyset\quad (g=1,\ldots,G^*),
\end{align*}
for each $g=1,\ldots,G^*$, there exists some $j\in\{1,\ldots,J\}$ such that $z_j=g$. Therefore, the above assumption implies that
\begin{align}
\begin{split}
    \max_{g\in\{1,\ldots,G\}}\bar{\mathbb{H}}_{1}(\sigma^{(G)}_g)&=\bar{\mathbb{H}}_{1}(\sigma^{*}_{1}),\\
    &\vdots\\
    \max_{g\in\{1,\ldots,G\}}\bar{\mathbb{H}}_{G}(\sigma^{(G)}_g)&=\bar{\mathbb{H}}_{G}(\sigma^{*}_{G}),\\
    &\vdots\\
    \max_{g\in\{1,\ldots,G\}}\bar{\mathbb{H}}_{G^*}(\sigma^{(G)}_g)&=\bar{\mathbb{H}}_{G^*}(\sigma^{*}_{G^*}).
\end{split}\label{maxeq}
\end{align}
Since $\bar{\mathbb{H}}_{1}(\sigma)$ attains its maximum only at $\sigma=\sigma^{*}_{1}$, it follows from \eqref{maxeq} that $\sigma^{*}_{1}$ must be contained among the components of ${\bm{\sigma}}^{(G)}$. Similarly, we see from \eqref{maxeq} that ${\bm{\sigma}}^{(G)}$ must contain all of $\sigma_1^*,\ldots,\sigma_{G^*}^*$. However, this is impossible because ${\bm{\sigma}}^{(G)}$ has only $G\, (<G^*)$ components whereas $\sigma_1^*,\ldots,\sigma_{G^*}^*$ are pairwise distinct. Hence, this contradicts the assumption. This completes the proof.
\end{proof}
\begin{proposition}\label{Hlemma}
For each $G=1,\ldots,G^*-1$,
\begin{align*}
    \sup_{{\bm{\sigma}}^{(G)}\in\overline{\Sigma}^{(G)}}\mathbb{H}_{\bm{z}}^{(G)}({\bm{\sigma}}^{(G)})<\mathbb{H}_{\bm{z}}^{(G^*)}({\bm{\sigma}}^{*}).
\end{align*}
\end{proposition}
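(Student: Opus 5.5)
Fix $G\in\{1,\ldots,G^*-1\}$. The plan is to combine a pointwise strict inequality with compactness of $\overline{\Sigma}^{(G)}$.

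First, the pointwise inequality. For each $k$, $\bar{\mathbb H}_k(\sigma)=-\log\sigma-\sigma_k^{*2}/(2\sigma^2)$ is maximized on $(0,\infty)$ uniquely at $\sigma=\sigma^*_k$, since $\bar{\mathbb H}_k'(\sigma)=(\sigma_k^{*2}-\sigma^2)/\sigma^3$. Hence for every $\bm{\sigma}^{(G)}\in\overline{\Sigma}^{(G)}$ and every $j$,
\begin{align*}
\max_{g\in\{1,\ldots,G\}}\bar{\mathbb H}_{z_j}(\sigma^{(G)}_g)\leq\bar{\mathbb H}_{z_j}(\sigma^*_{z_j}).
\end{align*}
Lemma \ref{maxlemma} provides some $j$ for which this inequality is strict. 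Summing over $j$ gives
\begin{align*}
\mathbb H^{(G)}_{\bm z}(\bm{\sigma}^{(G)})<\sum_{j=1}^J\bar{\mathbb H}_{z_j}(\sigma^*_{z_j})=\mathbb H^{(G^*)}_{\bm z}(\bm{\sigma}^*).
\end{align*}
The last equality holds because, by unique maximization, $\max_{g\le G^*}\bar{\mathbb H}_{z_j}(\sigma^*_g)=\bar{\mathbb H}_{z_j}(\sigma^*_{z_j})$.

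Second, we upgrade this pointwise strict inequality to a strict inequality for the supremum. Since $\bar{\mathbb H}_k$ is continuous on $[1/d,d]$ and a maximum of finitely many continuous functions is continuous, $\mathbb H^{(G)}_{\bm z}$ is continuous on $\overline{\Sigma}^{(G)}$. This set is closed and bounded in $\mathbb R^G$, hence compact. The supremum is therefore attained at some $\bm{\sigma}^{(G)}_\dagger\in\overline{\Sigma}^{(G)}$, and applying the first step at $\bm{\sigma}^{(G)}_\dagger$ gives the claim.

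The only delicate point is this passage from the pointwise to the uniform statement. It is handled by working on the closure $\overline{\Sigma}^{(G)}$ rather than on $\Sigma^{(G)}$, and Lemma \ref{maxlemma} is stated on the closure precisely for this purpose. Without attainment of the supremum, the gap could shrink to zero.
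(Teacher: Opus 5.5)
Your proof is correct and follows the paper's argument: unique maximization of $\bar{\mathbb H}_k$ gives the pointwise bound, Lemma \ref{maxlemma} makes it strict for some $j$, and compactness of $\overline{\Sigma}^{(G)}$ with continuity of $\mathbb H^{(G)}_{\bm z}$ passes the strict inequality to the supremum. You also justify the identity $\mathbb H^{(G^*)}_{\bm z}(\bm\sigma^*)=\sum_{j=1}^J\bar{\mathbb H}_{z_j}(\sigma^*_{z_j})$, which the paper uses without comment.
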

\begin{proof}
Fix an arbitrary $G\in\{1,\ldots,G^*-1\}$. For each $j=1,\ldots,J$, $\bar{\mathbb{H}}_{z_j}(\sigma)$ attains its unique maximum at $\sigma=\sigma^{*}_{z_j}$, so that
\begin{align*}
    \bar{\mathbb{H}}_{z_j}(\sigma)\leq\bar{\mathbb{H}}_{z_j}(\sigma_{z_j}^{*})
\end{align*}
for every $\sigma>0$. Hence, for every ${\bm{\sigma}}^{(G)}\in\overline{\Sigma}^{(G)}$, we have
\begin{align*}
    \mathbb{H}_{\bm{z}}^{(G)}\bigl({\bm{\sigma}}^{(G)}\bigr)
    &=\sum_{j=1}^J\max_{g\in\{1,\ldots,G\}}\bar{\mathbb{H}}_{z_j}(\sigma^{(G)}_g)\\
    &\leq\sum_{j=1}^J \bar{\mathbb{H}}_{z_j}(\sigma^{*}_{z_j})
    =\mathbb{H}_{\bm{z}}^{(G^*)}({\bm{\sigma}}^{*}).
\end{align*}
By Lemma \ref{maxlemma}, for every ${\bm{\sigma}}^{(G)}\in\overline{\Sigma}^{(G)}$, there exists $j\in\{1,\ldots,J\}$ such that
\begin{align*}
    \max_{g\in\{1,\ldots,G\}}\bar{\mathbb{H}}_{z_j}(\sigma^{(G)}_g)\neq \bar{\mathbb{H}}_{z_j}(\sigma^{*}_{z_j}), 
\end{align*}
so that 
\begin{align*}
    \mathbb{H}_{\bm{z}}^{(G)}\bigl({\bm{\sigma}}^{(G)}\bigr)<\mathbb{H}_{\bm{z}}^{(G^*)}({\bm{\sigma}}^{*}).
\end{align*}
Consequently, by the compactness of $\overline{\Sigma}^{(G)}$ and the continuity of $\mathbb{H}_{\bm{z}}^{(G)}$, we have
\begin{align*}
    \sup_{{\bm{\sigma}}^{(G)}\in\overline{\Sigma}^{(G)}}\mathbb{H}_{\bm{z}}^{(G)}({\bm{\sigma}}^{(G)})<\mathbb{H}_{\bm{z}}^{(G^*)}({\bm{\sigma}}^{*}).
\end{align*}
This completes the proof.
\end{proof}
\begin{proof}[\textbf{Proof of Theorem \ref{Gconsis}}]
The proof is based on the approach used in the proof of Theorem $1$ in Nguyen \cite{Nguyen(2024)}. Fix an arbitrary ${\bm{\pi}}_M\in\Pi_M$. First, we show that
\begin{align}
    {\rm{P}}_{\bm{z}}\biggl(\hat{G}_n(\bm{\pi}_M)<G^*\biggr)\longrightarrow 0\label{<G}
\end{align}
as $n\longrightarrow\infty$. Since the definition of $\hat{G}_n(\bm{\pi}_M)$ implies
\begin{align*}
    {\rm{P}}_{\bm{z}}\biggl(\hat{G}_n(\bm{\pi}_M)<G^*\biggr)=0
\end{align*}
when $G^*=1$, it is enough to consider the case $G^*\geq 2$. By Corollary \ref{supGprob}, for each $G=1,\ldots,M$, we have
\begin{align}
    \sup_{{\bm{\sigma}}^{(G)}\in\overline{\Sigma}^{(G)}}\frac{1}{n}{\bf{H}}^{(G)}_{n}({\bm{\sigma}}^{(G)};{\bm{\pi}}^{(G)})\xlongrightarrow{p}\sup_{{\bm{\sigma}}^{(G)}\in\overline{\Sigma}^{(G)}}\mathbb{H}_{\bm{z}}^{(G)}({\bm{\sigma}}^{(G)})\label{supH}
\end{align}
under ${\rm{P}}_{\bm{z}}$. In particular, it holds that
\begin{align}
    \sup_{{\bm{\sigma}}^{(G^*)}\in\overline{\Sigma}^{(G^*)}}\frac{1}{n}{\bf{H}}^{(G^*)}_{n}({\bm{\sigma}}^{(G^*)};{\bm{\pi}}^{(G^*)})\xlongrightarrow{p}\mathbb{H}_{\bm{z}}^{(G^*)}({\bm{\sigma}}^{*})\label{Hstar}
\end{align}
under ${\rm{P}}_{\bm{z}}$. By Proposition \ref{Hlemma}, for each $G=1,\ldots,G^*-1$,
\begin{align*}
    \sup_{{\bm{\sigma}}^{(G)}\in\overline{\Sigma}^{(G)}}\mathbb{H}_{\bm{z}}^{(G)}({\bm{\sigma}}^{(G)})<\mathbb{H}_{\bm{z}}^{(G^*)}({\bm{\sigma}}^{*}).
\end{align*}
Thus, we set
\begin{align*}
    r^{(G)}=-2\Biggl\{\sup_{{\bm{\sigma}}^{(G)}\in\overline{\Sigma}^{(G)}}\mathbb{H}_{\bm{z}}^{(G)}({\bm{\sigma}}^{(G)})-\mathbb{H}_{\bm{z}}^{(G^*)}({\bm{\sigma}}^{*})\Biggr\}>0.
\end{align*}
For each $G=1,\ldots,G^*-1$, it follows from \eqref{supH}, \eqref{Hstar} and [{\bf{B}}] (a) that
\begin{align*}
    &\quad\ \frac{1}{n}{\bf{F}}_n(G^*;{\bm{\pi}}^{(G^*)})-\frac{1}{n}{\bf{F}}_n(G;{\bm{\pi}}^{(G)})\\
    &=-2\sup_{{\bm{\sigma}}^{(G^*)}\in\overline{\Sigma}^{(G^*)}}\frac{1}{n}{\bf{H}}^{(G^*)}_{n}({\bm{\sigma}}^{(G^*)};{\bm{\pi}}^{(G^*)})+\frac{P^{(G^*)}_{n}}{n}\\
    &\qquad\qquad\quad+2\sup_{{\bm{\sigma}}^{(G)}\in\overline{\Sigma}^{(G)}}\frac{1}{n}{\bf{H}}^{(G)}_{n}({\bm{\sigma}}^{(G)};{\bm{\pi}}^{(G)})-\frac{P_n^{(G)}}{n}\xlongrightarrow{p}-r^{(G)}
\end{align*}
under ${\rm{P}}_{\bm{z}}$, so that 
\begin{align*}
    &\quad\ {\rm{P}}_{\bm{z}}\biggl({\bf{F}}_n(G^*;{\bm{\pi}}^{(G^*)})\geq{\bf{F}}_n(G;{\bm{\pi}}^{(G)})\biggr)\\
    &={\rm{P}}_{\bm{z}}\Biggl(\frac{1}{n}{\bf{F}}_n(G^*;{\bm{\pi}}^{(G^*)})-\frac{1}{n}{\bf{F}}_n(G;{\bm{\pi}}^{(G)})+r^{(G)}\geq r^{(G)}\Biggr)\\
    &\leq {\rm{P}}_{\bm{z}}\Biggl(\biggl|\frac{1}{n}{\bf{F}}_n(G^*;{\bm{\pi}}^{(G^*)})-\frac{1}{n}{\bf{F}}_n(G;{\bm{\pi}}^{(G)})+r^{(G)}\biggr|\geq r^{(G)}\Biggr)\longrightarrow 0
\end{align*}
as $n\longrightarrow\infty$. Therefore, we obtain
\begin{align*}
    {\rm{P}}_{\bm{z}}\biggl(\hat{G}_n({\bm{\pi}}_M)<G^*\biggr)
    &\leq {\rm{P}}_{\bm{z}}\Biggl(\bigcup_{G=1}^{G^*-1}\Bigl\{{\bf{F}}_n(G^*;{\bm{\pi}}^{(G^*)})\geq {\bf{F}}_n(G;{\bm{\pi}}^{(G)})\Bigr\}\Biggr)\\
    &\leq\sum_{G=1}^{G^*-1}{\rm{P}}_{\bm{z}}\biggl({\bf{F}}_n(G^*;{\bm{\pi}}^{(G^*)})\geq{\bf{F}}_n(G;{\bm{\pi}}^{(G)})\biggr)\longrightarrow 0
\end{align*}
as $n\longrightarrow\infty$, which yields \eqref{<G}. Next, we show that
\begin{align}
    {\rm{P}}_{\bm{z}}\biggl(\hat{G}_n({\bm{\pi}}_M)>G^*\biggr)\longrightarrow 0\label{>G}
\end{align}
as $n\longrightarrow\infty$. Since the definition of $\hat{G}_n(\bm{\pi}_M)$ implies
\begin{align*}
    {\rm{P}}_{\bm{z}}\biggl(\hat{G}_n(\bm{\pi}_M)>G^*\biggr)=0
\end{align*}
when $G^*=M$, it is enough to consider the case $G^*\leq M-1$. For all sufficiently large $n$, by [{\bf{B}}] (b), it follows that
\begin{align*}
    &\quad\ {\rm{P}}_{\bm{z}}\biggl(\hat{G}_n({\bm{\pi}}_M)>G^*\biggr)\\
    &\leq {\rm{P}}_{\bm{z}}\Biggl(\bigcup_{G=G^*+1}^{M}\Bigl\{{\bf{F}}_n(G^*;{\bm{\pi}}^{(G^*)})\geq {\bf{F}}_n(G;{\bm{\pi}}^{(G)})\Bigr\}\Biggr)\\
    &\leq \sum_{G=G^*+1}^{M}{\rm{P}}_{\bm{z}}\Biggl(-2\sup_{{\bm{\sigma}}^{(G^*)}\in\overline{\Sigma}^{(G^*)}}{\bf{H}}^{(G^*)}_{n}({\bm{\sigma}}^{(G^*)};{\bm{\pi}}^{(G^*)})+P^{(G^*)}_{n}\\
    &\qquad\qquad\qquad\qquad\qquad\quad\geq
    -2\sup_{{\bm{\sigma}}^{(G)}\in\overline{\Sigma}^{(G)}}{\bf{H}}^{(G)}_{n}({\bm{\sigma}}^{(G)};{\bm{\pi}}^{(G)})+P^{(G)}_{n}\Biggr)\\
    &\leq\sum_{G=G^*+1}^{M}{\rm{P}}_{\bm{z}}\Biggl(-\frac{2}{P_n^{(G)}-P^{(G^*)}_{n}}\biggl\{
    \sup_{{\bm{\sigma}}^{(G^*)}\in\overline{\Sigma}^{(G^*)}}{\bf{H}}^{(G^*)}_{n}({\bm{\sigma}}^{(G^*)};{\bm{\pi}}^{(G^*)})\\
    &\qquad\qquad\qquad\qquad\qquad\qquad\qquad-\sup_{{\bm{\sigma}}^{(G)}\in\overline{\Sigma}^{(G)}}{\bf{H}}^{(G)}_{n}({\bm{\sigma}}^{(G)};{\bm{\pi}}^{(G)})\biggr\}\geq 1\Biggr).
\end{align*}
By Proposition \ref{supOp} and [{\bf{B}}] (b), for each $G=G^*+1,\ldots,M$, we have
\begin{align*}
    &-\frac{2}{P_n^{(G)}-P^{(G^*)}_{n}}\Biggl\{\sup_{{\bm{\sigma}}^{(G^*)}\in\overline{\Sigma}^{(G^*)}}{\bf{H}}^{(G^*)}_{n}({\bm{\sigma}}^{(G^*)};{\bm{\pi}}^{(G^*)})\\
    &\qquad\qquad\qquad\qquad\qquad\qquad-\sup_{{\bm{\sigma}}^{(G)}\in\overline{\Sigma}^{(G)}}{\bf{H}}^{(G)}_{n}({\bm{\sigma}}^{(G)};{\bm{\pi}}^{(G)})\Biggr\}=o_p(1)
\end{align*}
under ${\rm{P}}_{\bm{z}}$, which implies \eqref{>G}. Therefore, it holds from $\eqref{<G}$ and $\eqref{>G}$ that
\begin{align*}
    {\rm{P}}_{\bm{z}}\biggl(\hat{G}_n({\bm{\pi}}_M)\neq G^*\biggr)
    &={\rm{P}}_{\bm{z}}\biggl(\hat{G}_n({\bm{\pi}}_M)<G^*\biggr)\\
    &\qquad+{\rm{P}}_{\bm{z}}\biggl(\hat{G}_n({\bm{\pi}}_M)>G^*\biggr)\longrightarrow 0
\end{align*}
as $n\longrightarrow\infty$, which completes the proof.
\end{proof}
\noindent Next, we prove Theorem \ref{consis}. For this purpose, we first consider the case where the true number of clusters $G^*$ is known, and define an estimator of ${\bm{\sigma}}^*$ by
\begin{align*}
    {\hat{\bm{\sigma}}}^*_{n}({\bm{\pi}}^{(G^*)})
    \in\argsup_{{\bm{\sigma}}^{(G^*)}\in\overline{\Sigma}^{(G^*)}}{\bf{L}}^{(G^*)}_{n}({\bm{\sigma}}^{(G^*)};{\bm{\pi}}^{(G^*)}).
\end{align*}
For notational simplicity, in the proofs of Lemmas \ref{idenlemma}--\ref{sigmaunif} and Propositions \ref{consisprop}--\ref{asymprop}, we may suppress the model index $G^*$. 
\begin{lemma}\label{idenlemma}
For every ${\bm{\sigma}}^{(G^*)}\in\overline{\Sigma}^{(G^*)}$, if
\begin{align*}
   \max_{g\in\{1,\ldots,G^*\}}
   \bar{\mathbb H}_{z_j}(\sigma^{(G^*)}_g)
   =\bar{\mathbb H}_{z_j}(\sigma_{z_j}^*)
\end{align*}
for each $j=1,\ldots,J$, then
${\bm{\sigma}}^{(G^*)}={\bm{\sigma}}^*$.
\end{lemma}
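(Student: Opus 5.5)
The plan is to exploit two facts. First, each function $\bar{\mathbb H}_k$ has a unique maximizer $\sigma^*_k$. Second, the components of a point in $\overline{\Sigma}^{(G^*)}$ are ordered while $\sigma^*_1<\cdots<\sigma^*_{G^*}$ are pairwise distinct. This parallels the argument of Lemma \ref{maxlemma}, but now with $G=G^*$, so we argue by a pigeonhole/sorting step rather than by contradiction on cardinality.

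First, the hypothesis is transferred from observations to clusters. Fix ${\bm{\sigma}}^{(G^*)}\in\overline{\Sigma}^{(G^*)}$ satisfying the hypothesis. Since $S_k\neq\emptyset$ for each $k=1,\ldots,G^*$, we may pick $j\in S_k$, which gives
\begin{align*}
\max_{g\in\{1,\ldots,G^*\}}\bar{\mathbb H}_{k}(\sigma^{(G^*)}_g)=\bar{\mathbb H}_{k}(\sigma^*_k)\quad(k=1,\ldots,G^*).
\end{align*}
The maximum over the finite set $\{1,\ldots,G^*\}$ is attained at some index $g_k$.

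Next, the unique-maximizer property is invoked. The derivative is $\bar{\mathbb H}_k'(\sigma)=-1/\sigma+\sigma_k^{*2}/\sigma^3$, which is positive for $\sigma<\sigma^*_k$ and negative for $\sigma>\sigma^*_k$. Hence $\sigma^*_k$ is the unique maximizer of $\bar{\mathbb H}_k$ on $(0,\infty)$. It follows that $\bar{\mathbb H}_k(\sigma^{(G^*)}_{g_k})=\bar{\mathbb H}_k(\sigma^*_k)$ forces $\sigma^{(G^*)}_{g_k}=\sigma^*_k$. Consequently every $\sigma^*_k$ appears among the components of ${\bm{\sigma}}^{(G^*)}$.

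Then the counting and sorting step is carried out. Because the $\sigma^*_k$ are pairwise distinct, the map $k\mapsto g_k$ is injective from $\{1,\ldots,G^*\}$ into itself, and therefore it is a bijection. Thus the multiset of components of ${\bm{\sigma}}^{(G^*)}$ equals $\{\sigma^*_1,\ldots,\sigma^*_{G^*}\}$, and in particular these components are pairwise distinct. Since ${\bm{\sigma}}^{(G^*)}\in\overline{\Sigma}^{(G^*)}$ has nondecreasing components, they are in fact strictly increasing. A sorted vector with a given set of distinct entries is unique, so $\sigma^{(G^*)}_g=\sigma^*_g$ for all $g$.

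The only delicate point is this last step. The closure $\overline{\Sigma}^{(G^*)}$ allows ties among components, so the bijectivity argument is needed to rule out repeated entries before the ordering can be used to identify the vector. Once bijectivity is established, the rest of the argument is immediate.
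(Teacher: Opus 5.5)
Your proof is correct and follows essentially the same route as the paper. Both pick $j_k\in S_k$, use the unique maximizer of $\bar{\mathbb H}_k$ to get $\sigma^{(G^*)}_{g_k}=\sigma^*_k$, and finish with a combinatorial argument on the index map $k\mapsto g_k$. The differences are cosmetic: the paper reads off $g_1<\cdots<g_{G^*}$ directly from the two orderings and concludes $g_k=k$, while you pass through injectivity, bijectivity and uniqueness of the sorted arrangement, and you also check the unique-maximizer property explicitly via $\bar{\mathbb H}_k'$.
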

\begin{proof}
Fix ${\bm{\sigma}}\in\overline{\Sigma}$ arbitrarily and suppose that
\begin{align}
   \max_{g\in\{1,\ldots,G^*\}}\bar{\mathbb{H}}_{z_j}(\sigma_g)=\bar{\mathbb{H}}_{z_j}(\sigma_{z_j}^*)\label{idenassum}
\end{align}
for each $j=1,\ldots,J$. For each $k=1,\ldots,G^*$, since
\begin{align*}
    S_k=\Bigl\{j\in\{1,\ldots,J\}\mid z_j=k\Bigr\}\neq\emptyset,
\end{align*}
we can choose an index $j_k\in S_k$. Hence, for each $k=1,\ldots,G^*$, by \eqref{idenassum}, we have
\begin{align*}
   \max_{g\in\{1,\ldots,G^*\}}\bar{\mathbb{H}}_{z_{j_k}}(\sigma_g)=\bar{\mathbb{H}}_{z_{j_k}}(\sigma_{z_{j_k}}^*),
\end{align*}
so that $z_{j_k}=k$ yields
\begin{align}
   \max_{g\in\{1,\ldots,G^*\}}
   \bar{\mathbb{H}}_{k}(\sigma_g)
   =\bar{\mathbb{H}}_{k}(\sigma_k^*). \label{maxk}
\end{align}
For each $k=1,\ldots,G^*$, by \eqref{maxk}, there exists an index $r_k\in\{1,\ldots,G^*\}$ such that
\begin{align*}
   \bar{\mathbb H}_{k}(\sigma_{r_k})
   =\bar{\mathbb H}_{k}(\sigma_k^*).
\end{align*}
Consequently, for each $k=1,\ldots,G^*$, since $\bar{\mathbb{H}}_{k}(\sigma)$ attains its unique maximum at $\sigma=\sigma_k^*$, this equality implies
\begin{align}
    \sigma_{r_k}=\sigma_k^*. \label{sigmark}
\end{align}
Recalling that $\sigma_1^*<\sigma_2^*<\cdots<\sigma_{G^*}^*$, it follows from \eqref{sigmark} that
\begin{align*}
    \sigma_{r_1}<\sigma_{r_2}<\cdots<\sigma_{r_{G^*}}.
\end{align*}
On the other hand, since ${\bm{\sigma}}\in\overline{\Sigma}$, we see
\begin{align*}
    \sigma_1\leq\sigma_2\leq\cdots\leq\sigma_{G^*},
\end{align*}
which implies
\begin{align*}
    r_1<r_2<\cdots<r_{G^*}.
\end{align*}
Since $r_k\in\{1,\ldots,G^*\}$ for each $k=1,\ldots,G^*$, one gets
\begin{align}
    r_k=k \label{rkk}
\end{align}
for each $k=1,\ldots,G^*$. Therefore, by \eqref{sigmark} and \eqref{rkk}, we obtain
\begin{align*}
    \sigma_k=\sigma_k^*
\end{align*}
for each $k=1,\ldots,G^*$, which yields ${\bm{\sigma}}={\bm{\sigma}}^*$. This completes the proof.
\end{proof}
\begin{lemma}\label{sigmaiden}
For every ${\bm{\sigma}}^{(G^*)}\in\overline{\Sigma}^{(G^*)}$, if
\begin{align*}
    \mathbb{H}^{(G^*)}_{\bm{z}}({\bm{\sigma}}^{(G^*)})=\mathbb{H}^{(G^*)}_{\bm{z}}({\bm{\sigma}}^*),
\end{align*}
then ${\bm{\sigma}}^{(G^*)}={\bm{\sigma}}^*$.
\end{lemma}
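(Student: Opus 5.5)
The plan is to reduce Lemma \ref{sigmaiden} to Lemma \ref{idenlemma} by showing that equality of the limit contrasts forces equality in each summand.

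First, fix ${\bm{\sigma}}\in\overline{\Sigma}^{(G^*)}$ with $\mathbb{H}^{(G^*)}_{\bm{z}}({\bm{\sigma}})=\mathbb{H}^{(G^*)}_{\bm{z}}({\bm{\sigma}}^*)$. Next, compute the right-hand side. Since $\bar{\mathbb{H}}_{k}$ attains its unique maximum over $\sigma>0$ at $\sigma=\sigma^*_k$, for each $j$ we have
\begin{align*}
\max_{g\in\{1,\ldots,G^*\}}\bar{\mathbb{H}}_{z_j}(\sigma^*_g)=\bar{\mathbb{H}}_{z_j}(\sigma^*_{z_j}),
\end{align*}
because the index $g=z_j$ is available and no value can exceed the global maximum. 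Hence
\begin{align*}
\mathbb{H}^{(G^*)}_{\bm{z}}({\bm{\sigma}}^*)=\sum_{j=1}^J\bar{\mathbb{H}}_{z_j}(\sigma^*_{z_j}).
\end{align*}

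By the same unique-maximum property, which was already used in the proof of Proposition \ref{Hlemma}, each summand of $\mathbb{H}^{(G^*)}_{\bm{z}}({\bm{\sigma}})$ satisfies
\begin{align*}
\max_{g}\bar{\mathbb{H}}_{z_j}(\sigma_g)\leq\bar{\mathbb{H}}_{z_j}(\sigma^*_{z_j}).
\end{align*}
The sum of the nonnegative differences $\bar{\mathbb{H}}_{z_j}(\sigma^*_{z_j})-\max_{g}\bar{\mathbb{H}}_{z_j}(\sigma_g)$ is zero by assumption, so every difference vanishes. That is, $\max_{g}\bar{\mathbb{H}}_{z_j}(\sigma_g)=\bar{\mathbb{H}}_{z_j}(\sigma^*_{z_j})$ for each $j=1,\ldots,J$. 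Lemma \ref{idenlemma} then yields ${\bm{\sigma}}={\bm{\sigma}}^*$.

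There is no real obstacle here. The only points needing care are justifying the evaluation of $\mathbb{H}^{(G^*)}_{\bm{z}}$ at ${\bm{\sigma}}^*$, and noting that the sum-of-nonnegative-terms argument yields componentwise equality.
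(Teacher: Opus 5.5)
Your proposal is correct and matches the paper's proof: both write $\mathbb{H}^{(G^*)}_{\bm{z}}({\bm{\sigma}})-\mathbb{H}^{(G^*)}_{\bm{z}}({\bm{\sigma}}^*)$ as a sum of non-positive per-$j$ terms, conclude each term is zero, and invoke Lemma \ref{idenlemma}. Your explicit check that $\mathbb{H}^{(G^*)}_{\bm{z}}({\bm{\sigma}}^*)=\sum_{j}\bar{\mathbb{H}}_{z_j}(\sigma^*_{z_j})$ fills in a step the paper uses without comment.
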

\begin{proof}
Fix an arbitrary ${\bm{\sigma}}\in\overline{\Sigma}$ and suppose that
\begin{align*}
    \mathbb{H}_{\bm{z}}({\bm{\sigma}})=\mathbb{H}_{\bm{z}}({\bm{\sigma}}^*).
\end{align*}
Then, we have
\begin{align}
    0=\mathbb{H}_{\bm{z}}({\bm{\sigma}})-\mathbb{H}_{\bm{z}}({\bm{\sigma}}^*)
    &=\sum_{j=1}^J\biggl\{
    \max_{g\in\{1,\ldots,G^*\}}\bar{\mathbb H}_{z_j}(\sigma_g)
    -\bar{\mathbb H}_{z_j}(\sigma_{z_j}^*)\biggr\}. \label{sum}
\end{align}
For each $j=1,\ldots,J$, since $\bar{\mathbb H}_{z_j}(\sigma)$ is maximized at $\sigma=\sigma_{z_j}^*$, we have
\begin{align*}
    \max_{g\in\{1,\ldots,G^*\}}\bar{\mathbb H}_{z_j}(\sigma_g)\leq\bar{\mathbb H}_{z_j}(\sigma_{z_j}^*), 
\end{align*}
so that each term in the sum in \eqref{sum} is non-positive. Since the sum is zero, each term must be zero. In other words, for each $j=1,\ldots,J$, it follows that
\begin{align*}
    \max_{g\in\{1,\ldots,G^*\}}
    \bar{\mathbb H}_{z_j}(\sigma_g)
    -\bar{\mathbb H}_{z_j}(\sigma_{z_j}^*)=0.
\end{align*}
Therefore, by Lemma \ref{idenlemma}, we obtain ${\bm{\sigma}}={\bm{\sigma}}^*$.
\end{proof}
\begin{proposition}\label{consisprop}
Suppose that ${[\bf A1]}$ and ${[\bf A2]}$ hold. Then, for every ${\bm{\pi}}^{(G^*)}\in\Pi^{(G^*)}$,
\begin{align*}
    \hat{\bm{\sigma}}^{*}_n({\bm{\pi}}^{(G^*)})\xlongrightarrow{p}{\bm{\sigma}}^*
\end{align*}
under ${\rm{P}}_{\bm{z}}$ as $n\longrightarrow\infty$.
\end{proposition}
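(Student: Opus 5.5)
This is the standard argmax-consistency argument: a uniform law of large numbers for the contrast, a well-separated unique maximiser of the limit, and the near-maximising property of the estimator. Fix ${\bm{\pi}}^{(G^*)}\in\Pi^{(G^*)}$ and write $\hat{\bm{\sigma}}_n=\hat{\bm{\sigma}}^*_n({\bm{\pi}}^{(G^*)})$. Since maximising ${\bf L}^{(G^*)}_n$ is equivalent to maximising ${\bf H}^{(G^*)}_n$, we have
\begin{align*}
{\bf H}^{(G^*)}_n(\hat{\bm{\sigma}}_n;{\bm{\pi}}^{(G^*)})\geq {\bf H}^{(G^*)}_n({\bm{\sigma}}^*;{\bm{\pi}}^{(G^*)}).
\end{align*}
By Proposition \ref{unifprop}, $n^{-1}{\bf H}^{(G^*)}_n$ converges to $\mathbb{H}^{(G^*)}_{\bm{z}}$ uniformly on $\overline{\Sigma}^{(G^*)}$ in probability. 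Combining this with the previous display gives
\begin{align*}
\mathbb{H}^{(G^*)}_{\bm{z}}({\bm{\sigma}}^*)-\mathbb{H}^{(G^*)}_{\bm{z}}(\hat{\bm{\sigma}}_n)\leq 2\sup_{{\bm{\sigma}}\in\overline{\Sigma}^{(G^*)}}\Bigl|\tfrac{1}{n}{\bf H}^{(G^*)}_n({\bm{\sigma}};{\bm{\pi}}^{(G^*)})-\mathbb{H}^{(G^*)}_{\bm{z}}({\bm{\sigma}})\Bigr|\xlongrightarrow{p}0 .
\end{align*}
By the proof of Proposition \ref{Hlemma} (applied with $G^*$), the left-hand side is nonnegative.

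The key step is identifiability with separation. For $\varepsilon>0$, define
\begin{align*}
\delta_\varepsilon=\mathbb{H}^{(G^*)}_{\bm{z}}({\bm{\sigma}}^*)-\sup\Bigl\{\mathbb{H}^{(G^*)}_{\bm{z}}({\bm{\sigma}})\ \Big|\ {\bm{\sigma}}\in\overline{\Sigma}^{(G^*)},\ |{\bm{\sigma}}-{\bm{\sigma}}^*|\geq\varepsilon\Bigr\}.
\end{align*}
The set $\{{\bm{\sigma}}\in\overline{\Sigma}^{(G^*)} : |{\bm{\sigma}}-{\bm{\sigma}}^*|\geq\varepsilon\}$ is compact, since $\overline{\Sigma}^{(G^*)}$ is a closed subset of $[1/d,d]^{G^*}$. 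The function $\mathbb{H}^{(G^*)}_{\bm{z}}$ is continuous there, being a finite sum of maxima of continuous functions. So the supremum is attained, unless the set is empty, in which case the conclusion is trivial. At the maximiser the value is strictly below $\mathbb{H}^{(G^*)}_{\bm{z}}({\bm{\sigma}}^*)$ by Lemma \ref{sigmaiden}, because that maximiser differs from ${\bm{\sigma}}^*$. Hence $\delta_\varepsilon>0$.

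It then follows that
\begin{align*}
{\rm P}_{\bm{z}}\bigl(|\hat{\bm{\sigma}}_n-{\bm{\sigma}}^*|\geq\varepsilon\bigr)\leq {\rm P}_{\bm{z}}\Bigl(\mathbb{H}^{(G^*)}_{\bm{z}}({\bm{\sigma}}^*)-\mathbb{H}^{(G^*)}_{\bm{z}}(\hat{\bm{\sigma}}_n)\geq\delta_\varepsilon\Bigr)\longrightarrow 0,
\end{align*}
which is the claim.

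The only delicate point is the well-separatedness, that is, $\delta_\varepsilon>0$. It relies on compactness of the closed ordered parameter set and on Lemma \ref{sigmaiden}. The ordering constraint in $\overline{\Sigma}^{(G^*)}$ is exactly what rules out permuted or duplicated components, and Lemma \ref{sigmaiden} already encodes this. Measurability of the argsup can be handled in the usual way, either by taking a measurable selection or by using outer probability.
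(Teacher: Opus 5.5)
Your proposal is correct and follows the paper's own proof. Both use uniform convergence from Proposition \ref{unifprop}, well-separatedness of $\bm{\sigma}^*$ from Lemma \ref{sigmaiden} together with continuity and compactness, and the maximizing property of the estimator; the paper merely splits $\mathbb{H}_{\bm{z}}(\bm{\sigma}^*)-\mathbb{H}_{\bm{z}}(\hat{\bm{\sigma}}^*_n)$ into three $\delta/3$ pieces, where you bound it directly by twice the uniform deviation.
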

\begin{proof}
Fix ${\bm{\pi}}\in\Pi$ arbitrarily. By Lemma \ref{sigmaiden} and the continuity of $\mathbb{H}_{\bm{z}}$, for every $\varepsilon>0$, we have
\begin{align*}
    \sup_{{\bm{\sigma}}\in\bigl\{{\bm{\sigma}}\in\overline{\Sigma}\, ;\, |{\bm{\sigma}}-{\bm{\sigma}}^*|\geq\varepsilon\bigr\}}\mathbb{H}_{\bm{z}}({\bm{\sigma}})<\mathbb{H}_{\bm{z}}({\bm{\sigma}}^*).
\end{align*}
Consequently, there exists $\delta>0$ such that for every ${\bm{\sigma}}\in\overline{\Sigma}$,
\begin{align*}
    |{\bm{\sigma}}-{\bm{\sigma}}^*|>\varepsilon
    \Longrightarrow
    \mathbb{H}_{\bm{z}}({\bm{\sigma}}^*)-\mathbb{H}_{\bm{z}}({\bm{\sigma}})>\delta.
\end{align*}
Hence, it follows from Proposition \ref{unifprop} and the definition of $\hat{\bm{\sigma}}^*_n({\bm{\pi}})$ that
\begin{align*}
    {\rm{P}}_{\bm{z}}\biggl(|\hat{\bm{\sigma}}^*_n({\bm{\pi}})-{\bm{\sigma}}^*|>\varepsilon\biggr)
    &\leq{\rm{P}}_{\bm{z}}\biggl(\mathbb{H}_{\bm{z}}({\bm{\sigma}}^*)-\mathbb{H}_{\bm{z}}(\hat{\bm{\sigma}}^*_n({\bm{\pi}}))>\delta
    \biggr)\\
    &\leq{\rm{P}}_{\bm{z}}\biggl(\mathbb{H}_{\bm{z}}({\bm{\sigma}}^*)-\frac{1}{n}{\bf H}_n({\bm{\sigma}}^*;{\bm{\pi}})>\frac{\delta}{3}\biggr)\\
    &\quad+{\rm{P}}_{\bm{z}}\Biggl(\frac{1}{n}{\bf H}_n({\bm{\sigma}}^*;{\bm{\pi}})
    -\frac{1}{n}{\bf H}_n(\hat{\bm{\sigma}}^*_n({\bm{\pi}});{\bm{\pi}})>\frac{\delta}{3}
    \Biggr)\\
    &\quad+{\rm{P}}_{\bm{z}}\Biggl(\frac{1}{n}{\bf H}_n(\hat{\bm{\sigma}}^*_n({\bm{\pi}});{\bm{\pi}})-\mathbb{H}_{\bm{z}}(\hat{\bm{\sigma}}^*_n({\bm{\pi}}))>\frac{\delta}{3}
    \Biggr)\\
    &\leq 2{\rm{P}}_{\bm{z}}\Biggl(\sup_{{\bm{\sigma}}\in\overline{\Sigma}}\biggl|\frac{1}{n}{\bf H}_n({\bm{\sigma}};{\bm{\pi}})-\mathbb{H}_{\bm{z}}({\bm{\sigma}})\biggr|>\frac{\delta}{3}
    \Biggr)\longrightarrow 0
\end{align*}
as $n\longrightarrow\infty$, which completes the proof.
\end{proof}
\noindent For $\rho>0$, we set 
\begin{align*}
    B^*_g(\rho)=\Bigl\{\sigma\in[1/d,d]\ \big|\ \bigl|\sigma-\sigma^*_{g}\bigr|\leq\rho\Bigr\}
\end{align*}
for $g=1,\ldots,G^*$ and
\begin{align*}
    B^*(\rho)=\Bigl\{{\bm{\sigma}}^{(G^*)}\in\overline{\Sigma}^{(G^*)}\, \big|\ \bigl|{\bm{\sigma}}^{(G^*)}-{\bm{\sigma}}^*\bigr|\leq\rho\Bigr\}.
\end{align*}
\begin{lemma}\label{infsuplemma}
Suppose that ${[\bf{A1}]}$ and ${[\bf{A2}]}$ hold and $G^*\geq 2$. Then, there exist $\rho>0$ and $K>0$ such that for each $j=1,\ldots,J$ and each $g\in\{1,\ldots,G^*\}\setminus\{z_j\}$, 
\begin{align*}
    {\rm{P}}_{\bm{z}}\Biggl(\inf_{\sigma\in B^*_{z_j}(\rho)}\frac{1}{n}\bar{\bf{H}}_{j,n}(\sigma)-\sup_{\sigma\in B_g^*(\rho)}\frac{1}{n}\bar{\bf{H}}_{j,n}(\sigma)\geq K\Biggr)\longrightarrow 1
\end{align*}
as $n\longrightarrow\infty$.
\end{lemma}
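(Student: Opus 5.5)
The plan is to reduce the statement to a deterministic separation property of the limit functions $\bar{\mathbb{H}}_k$, and then transfer it to $n^{-1}\bar{\bf H}_{j,n}$ via the uniform convergence in Lemma \ref{Hbarlemma}.

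\textbf{Step 1: choose $\rho$.} Since $\sigma_1^*<\cdots<\sigma_{G^*}^*$ are pairwise distinct, set
\begin{align*}
    \rho=\frac{1}{4}\min_{k\neq l}\bigl|\sigma_k^*-\sigma_l^*\bigr|>0.
\end{align*}
Then the intervals $B^*_k(\rho)$, $k=1,\ldots,G^*$, are pairwise disjoint, and any two of them are at distance at least $2\rho$.

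\textbf{Step 2: the deterministic gap.} Fix $k\in\{1,\ldots,G^*\}$ and $g\neq k$. The function
\begin{align*}
    \bar{\mathbb{H}}_k(\sigma)=-\log\sigma-\frac{\sigma_k^{*2}}{2\sigma^2}
\end{align*}
is continuous on $[1/d,d]$ and has its unique maximum at $\sigma_k^*$. It is increasing on $(0,\sigma_k^*]$ and decreasing on $[\sigma_k^*,\infty)$, since $\bar{\mathbb H}_k'(\sigma)=(\sigma_k^{*2}-\sigma^2)/\sigma^3$. By compactness of $B^*_k(\rho)$ and $B^*_g(\rho)$ we have
\begin{align*}
    \min_{B^*_k(\rho)}\bar{\mathbb H}_k=\min\bigl\{\bar{\mathbb H}_k(\sigma_k^*\pm\rho)\bigr\}
\end{align*}
(intersected with $[1/d,d]$). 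On the other hand, $\max_{B^*_g(\rho)}\bar{\mathbb H}_k$ is attained at the endpoint of $B^*_g(\rho)$ nearest to $\sigma_k^*$, which lies at distance at least $3\rho$ from $\sigma_k^*$. By the strict unimodality it is therefore at most $\max\{\bar{\mathbb H}_k(\sigma_k^*\pm 3\rho)\}$, taking only those points inside $[1/d,d]$. Strict monotonicity on each side of $\sigma_k^*$ gives
\begin{align*}
    2K_{k,g}:=\min_{B^*_k(\rho)}\bar{\mathbb H}_k-\max_{B^*_g(\rho)}\bar{\mathbb H}_k>0.
\end{align*}
One could also argue more crudely: the inf over one compact set exceeds the sup over a disjoint compact set because the sup is attained at a point $\sigma'$ with $\bar{\mathbb H}_k(\sigma')<\bar{\mathbb H}_k(\sigma_k^*\pm\rho)$ by monotonicity. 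Set $K=\min_{k\neq g}K_{k,g}>0$. This is a finite minimum, so $\rho$ and $K$ do not depend on $j$.

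\textbf{Step 3: transfer to the random functions.} For fixed $j$ and $g\neq z_j$, write
\begin{align*}
    \Delta_n=\sup_{\sigma\in[1/d,d]}\Bigl|\tfrac1n\bar{\bf H}_{j,n}(\sigma)-\bar{\mathbb H}_{z_j}(\sigma)\Bigr|.
\end{align*}
Then
\begin{align*}
    \inf_{B^*_{z_j}(\rho)}\tfrac1n\bar{\bf H}_{j,n}-\sup_{B^*_g(\rho)}\tfrac1n\bar{\bf H}_{j,n}\ \geq\ 2K_{z_j,g}-2\Delta_n\ \geq\ 2K-2\Delta_n.
\end{align*}
The probability in the statement is therefore at least ${\rm P}_{\bm z}(\Delta_n\leq K/2)$, which tends to $1$ by Lemma \ref{Hbarlemma}.

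\textbf{Main obstacle.} The only delicate point is Step 2. Some of the intervals $B^*_g(\rho)$ may be truncated by $[1/d,d]$, but truncation only shrinks the sets, so the gap persists. The step rests on the strict unimodality of $\bar{\mathbb H}_k$, which follows from the sign of its derivative. Everything else is a direct application of the uniform convergence in Lemma \ref{Hbarlemma}.
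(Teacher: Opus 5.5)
Your overall architecture matches the paper's: a deterministic gap for the limit functions, transferred to $n^{-1}\bar{\bf H}_{j,n}$ by the uniform convergence of Lemma \ref{Hbarlemma}. Step 3 is fine. The gap is in Step 2: with $\rho=\frac14\min_{k\neq l}|\sigma_k^*-\sigma_l^*|$, the quantity $K_{k,g}$ need not be positive.

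Strict monotonicity on each side of $\sigma_k^*$ only compares points on the same side of the mode. The infimum over $B^*_k(\rho)$ may be attained at the endpoint on the opposite side from $B^*_g(\rho)$. Moreover, $\bar{\mathbb H}_k$ is far from symmetric: it falls off like $-\sigma_k^{*2}/(2\sigma^2)$ to the left but only logarithmically to the right. Concretely, take $G^*=2$, $\sigma_1^*=1$, $\sigma_2^*=3$, $d=10$. Then $\rho=1/2$, $B^*_1(\rho)=[1/2,3/2]$, $B^*_2(\rho)=[5/2,7/2]$, and
\[
\inf_{\sigma\in B^*_1(\rho)}\bar{\mathbb H}_1(\sigma)=\bar{\mathbb H}_1(1/2)=\log 2-2\approx-1.307
<-0.996\approx-\log\tfrac{5}{2}-\tfrac{2}{25}=\bar{\mathbb H}_1(5/2)=\sup_{\sigma\in B^*_2(\rho)}\bar{\mathbb H}_1(\sigma).
\]
So $K_{1,2}<0$. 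For $j\in S_1$ and $g=2$, the random difference in the statement converges in probability to this negative number, and the probability tends to $0$ for every $K>0$. Your ``cruder'' argument fails for the same reason: $\bar{\mathbb H}_k(\sigma')<\bar{\mathbb H}_k(\sigma_k^*\pm\rho)$ holds only for the sign on the same side of $\sigma_k^*$ as $\sigma'$.

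The missing idea is that $\rho$ must be chosen using the shape of $\bar{\mathbb H}_k$, not just the spacing of the $\sigma^*$'s. The paper does this by continuity. Since $\bar{\mathbb H}_k(\sigma_k^*)>\bar{\mathbb H}_k(\sigma_g^*)$, there is $\rho_{k,g}>0$ with $\inf_{B^*_k(\rho_{k,g})}\bar{\mathbb H}_k>\sup_{B^*_g(\rho_{k,g})}\bar{\mathbb H}_k$. Taking $\rho$ as the minimum of the finitely many $\rho_{k,g}$ preserves all these strict inequalities, because the intervals only shrink. Then $K$ is half the smallest resulting gap.

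Equivalently, within your unimodal framework, take $\rho$ so small that the intervals are untruncated and disjoint and $\min\{\bar{\mathbb H}_k(\sigma_k^*\pm\rho)\}>\max\{\bar{\mathbb H}_k(\sigma_g^*\pm\rho)\}$ for all $k\neq g$. This holds for all sufficiently small $\rho$, since both sides are continuous in $\rho$ and the inequality is strict at $\rho=0$. With that choice, your Step 3 goes through unchanged.
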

\begin{proof}
For each $j=1,\ldots,J$ and $g\in\{1,\ldots,G^*\}\setminus\{z_j\}$, by
\begin{align*}
    \bar{\mathbb{H}}_{z_j}(\sigma^*_{z_j})>\bar{\mathbb{H}}_{z_j}(\sigma^*_g)
\end{align*}
and the continuity of $\bar{\mathbb{H}}_{z_j}$, there exists $\rho_{j,g}>0$ such that
\begin{align*}
    \inf_{\sigma\in B_{z_j}^*(\rho_{j,g})}\bar{\mathbb{H}}_{z_j}(\sigma)>\sup_{\sigma\in B_g^*(\rho_{j,g})}\bar{\mathbb{H}}_{z_j}(\sigma).
\end{align*}
Let 
\begin{align*}
    \rho=\min_{j\in\{1,\ldots,J\}}\min_{g\in\{1,\ldots,G^*\}\setminus\{z_j\}}\rho_{j,g}
\end{align*}
and 
\begin{align*}
    K=\frac{1}{2}\min_{j\in\{1,\ldots,J\}}\min_{g\in\{1,\ldots,G^*\}\setminus\{z_j\}}K_{j,g},
\end{align*}
where
\begin{align*}
    K_{j,g}=\inf_{\sigma\in B_{z_j}^*(\rho)}\bar{\mathbb{H}}_{z_j}(\sigma)-\sup_{\sigma\in B_g^*(\rho)}\bar{\mathbb{H}}_{z_j}(\sigma)
\end{align*}
for $j\in\{1,\ldots,J\}$ and $g\in\{1,\ldots,G^*\}\setminus\{z_j\}$. Fix $j\in\{1,\ldots,J\}$ and $g\in\{1,\ldots,G^*\}\setminus\{z_j\}$ arbitrarily. Since
\begin{align*}
    \inf_{\sigma\in B_{z_j}^*(\rho)}\bar{\mathbb{H}}_{z_j}(\sigma)>\sup_{\sigma\in B_g^*(\rho)}\bar{\mathbb{H}}_{z_j}(\sigma),
\end{align*}
we note  that $K>0$ and $K_{j,g}\geq 2K$. By Lemma \ref{Hbarlemma}, we have
\begin{align*}
    &\inf_{\sigma\in B^*_{z_j}(\rho)}\frac{1}{n}\bar{\bf{H}}_{j,n}(\sigma)-\sup_{\sigma\in B_g^*(\rho)}\frac{1}{n}\bar{\bf{H}}_{j,n}(\sigma)\\
    &\qquad\qquad\quad\xlongrightarrow{p}\inf_{\sigma\in B_{z_j}^*(\rho)}\bar{\mathbb{H}}_{z_j}(\sigma)-\sup_{\sigma\in B_g^*(\rho)}\bar{\mathbb{H}}_{z_j}(\sigma)=K_{j,g}
\end{align*}
under ${\rm{P}}_{\bm{z}}$, so that 
\begin{align*}
    &\quad\ {\rm{P}}_{\bm{z}}\Biggl(\inf_{\sigma\in B^*_{z_j}(\rho)}\frac{1}{n}\bar{\bf{H}}_{j,n}(\sigma)-\sup_{\sigma\in B_g^*(\rho)}\frac{1}{n}\bar{\bf{H}}_{j,n}(\sigma)<K\Biggr)\\
    &={\rm{P}}_{\bm{z}}\Biggl(\inf_{\sigma\in B^*_{z_j}(\rho)}\frac{1}{n}\bar{\bf{H}}_{j,n}(\sigma)-\sup_{\sigma\in B_g^*(\rho)}\frac{1}{n}\bar{\bf{H}}_{j,n}(\sigma)-K_{j,g}<K-K_{j,g}\Biggr)\\
    &\leq{\rm{P}}_{\bm{z}}\Biggl(\inf_{\sigma\in B^*_{z_j}(\rho)}\frac{1}{n}\bar{\bf{H}}_{j,n}(\sigma)-\sup_{\sigma\in B_g^*(\rho)}\frac{1}{n}\bar{\bf{H}}_{j,n}(\sigma)-K_{j,g}<-K\Biggr)\\
    &\leq{\rm{P}}_{\bm{z}}\Biggl(\biggl|\inf_{\sigma\in B^*_{z_j}(\rho)}\frac{1}{n}\bar{\bf{H}}_{j,n}(\sigma)-\sup_{\sigma\in B_g^*(\rho)}\frac{1}{n}\bar{\bf{H}}_{j,n}(\sigma)-K_{j,g}\biggr|>K\Biggr)\longrightarrow 0
\end{align*}
as $n\longrightarrow\infty$. This completes the proof.
\end{proof}
\begin{lemma}\label{elemma}
Suppose that ${[\bf{A1}]}$ and ${[\bf{A2}]}$ hold and $G^*\geq 2$. Then, for every ${\bm{\pi}}^{(G^*)}\in\Pi^{(G^*)}$, there exist $\rho>0$, $K_1>0$ and $K_2>0$ such that for each $j=1,\ldots,J$ and each
$g\in\{1,\ldots,G^*\}\setminus\{z_j\}$,
\begin{align*}
    {\rm{P}}_{\bm{z}}\Biggl(\sup_{{\bm{\sigma}}^{(G^*)}\in B^*(\rho)}{\bf{Q}}^{(G^*)}_{j,n}(g;\, {\bm{\sigma}}^{(G^*)},{\bm{\pi}}^{(G^*)})\leq K_1e^{-K_2n}\Biggr)\longrightarrow 1
\end{align*}
as $n\longrightarrow\infty$.
\end{lemma}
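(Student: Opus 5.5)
We will derive this directly from Lemma \ref{infsuplemma}, using the same $\rho$ and choosing $K_2$ equal to the constant $K$ given there. Fix ${\bm{\pi}}={\bm{\pi}}^{(G^*)}\in\Pi^{(G^*)}$, $j\in\{1,\ldots,J\}$ and $g\neq z_j$. The first step is a deterministic upper bound. Since the denominator of ${\bf Q}_{j,n}$ contains the term with index $k=z_j$, and all terms are positive, we get
\begin{align*}
    {\bf{Q}}_{j,n}(g;{\bm{\sigma}},{\bm{\pi}})\leq\frac{\pi_g\exp\{\bar{\bf H}_{j,n}(\sigma_g)\}}{\pi_{z_j}\exp\{\bar{\bf H}_{j,n}(\sigma_{z_j})\}}
    \leq\frac{1}{c}\exp\Bigl\{-\bigl(\bar{\bf H}_{j,n}(\sigma_{z_j})-\bar{\bf H}_{j,n}(\sigma_g)\bigr)\Bigr\},
\end{align*}
where $c=\min_k\pi_k>0$. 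This holds for every ${\bm{\sigma}}\in\overline{\Sigma}$.

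Next we restrict to ${\bm{\sigma}}\in B^*(\rho)$. There $|\sigma_k-\sigma_k^*|\leq|{\bm{\sigma}}-{\bm{\sigma}}^*|\leq\rho$ for every $k$, and $\sigma_k\in[1/d,d]$. Hence $\sigma_{z_j}\in B^*_{z_j}(\rho)$ and $\sigma_g\in B^*_g(\rho)$. It follows that
\begin{align*}
    \bar{\bf H}_{j,n}(\sigma_{z_j})-\bar{\bf H}_{j,n}(\sigma_g)\geq n\Biggl(\inf_{\sigma\in B^*_{z_j}(\rho)}\frac{1}{n}\bar{\bf{H}}_{j,n}(\sigma)-\sup_{\sigma\in B_g^*(\rho)}\frac{1}{n}\bar{\bf{H}}_{j,n}(\sigma)\Biggr).
\end{align*}
The right-hand side does not depend on ${\bm{\sigma}}$. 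Taking the supremum over $B^*(\rho)$ therefore gives
\begin{align*}
    \sup_{{\bm{\sigma}}\in B^*(\rho)}{\bf Q}_{j,n}(g;{\bm{\sigma}},{\bm{\pi}})\leq c^{-1}e^{-nD_{j,g,n}},
\end{align*}
where $D_{j,g,n}$ denotes the bracketed difference above.

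On the event $\{D_{j,g,n}\geq K\}$, the supremum is therefore at most $c^{-1}e^{-Kn}$. We set $K_1=1/c$ and $K_2=K$. Lemma \ref{infsuplemma} gives ${\rm P}_{\bm z}(D_{j,g,n}\geq K)\to1$, and event inclusion then yields the claim. The constants $\rho$ and $K$ from Lemma \ref{infsuplemma} are uniform in $j$ and $g$, so the same $\rho,K_1,K_2$ work for all pairs $(j,g)$, as the statement requires. Only $K_1$ depends on ${\bm\pi}$.

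The only point needing care is the inclusion $\sigma_k\in B^*_k(\rho)$ for ${\bm\sigma}\in B^*(\rho)$. This follows because the Euclidean norm dominates each coordinate and $B^*(\rho)\subset\overline{\Sigma}\subset[1/d,d]^{G^*}$. There is no real obstacle beyond this: all of the probabilistic content sits in Lemma \ref{infsuplemma}.
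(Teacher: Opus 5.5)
Your proposal is correct and follows essentially the same route as the paper. It uses the same constants ($K_1=(\min_k\pi_k)^{-1}$, with $\rho$ and $K_2$ taken from Lemma \ref{infsuplemma}), the same coordinatewise inclusion $\sigma_k\in B^*_k(\rho)$, and the same event-inclusion conclusion; keeping only the $k=z_j$ term in the denominator of ${\bf Q}_{j,n}$ is equivalent to the paper's lower bound on $1+\sum_{k\neq g}(\pi_k/\pi_g)\exp\{\bar{\bf H}_{j,n}(\sigma_k)-\bar{\bf H}_{j,n}(\sigma_g)\}$.
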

\begin{proof}
Fix an arbitrary ${\bm{\pi}}\in\Pi$. Let
\begin{align*}
    K_1=\biggl(\min_{k\in\{1,\ldots,G^*\}}\pi_{k}\biggr)^{-1}.
\end{align*}
By Lemma \ref{infsuplemma}, there exist $\rho>0$ and $K_2>0$ such that for each $j=1,\ldots,J$ and $g\in\{1,\ldots,G^*\}\setminus\{z_j\}$,
\begin{align}
    {\rm{P}}_{\bm{z}}\Bigl(B_{g,j,n}\Bigr)\longrightarrow 1 \label{probB}
\end{align}
as $n\longrightarrow\infty$, where
\begin{align*}
    B_{g,j,n}=\Biggl\{\inf_{\sigma\in B^*_{z_j}(\rho)}\frac{1}{n}\bar{\bf{H}}_{j,n}(\sigma)-\sup_{\sigma\in B_g^*(\rho)}\frac{1}{n}\bar{\bf{H}}_{j,n}(\sigma)\geq K_2\Biggr\}.
\end{align*}
Fix $j\in\{1,\ldots,J\}$ and $g\in\{1,\ldots,G^*\}\setminus
\{z_j\}$ arbitrarily. Note that
\begin{align*}
    {\bf{Q}}_{j,n}(g;\, {\bm{\sigma}},{\bm{\pi}})&=\frac{\pi_{g}\exp\Bigl\{\bar{\bf{H}}_{j,n}(\sigma_g)\Bigr\}
    }{\displaystyle\sum_{k=1}^{G^*}\pi_{k}\exp\Bigl\{\bar{\bf{H}}_{j,n}(\sigma_k)\Bigr\}}\\
    &=\frac{1
    }{1+\displaystyle\sum_{k\neq g}\frac{\pi_k}{\pi_g}\exp\Bigl\{\bar{\bf{H}}_{j,n}(\sigma_k)-\bar{\bf{H}}_{j,n}(\sigma_g)\Bigr\}}.
\end{align*}
Moreover, ${\bm{\sigma}}\in B^*(\rho)$ implies that $\sigma_{g}\in B^*_{g}(\rho)$ and 
$\sigma_{z_j}\in B^*_{z_j}(\rho)$ since 
\begin{align*}
    \bigl|\sigma_g-\sigma_g^*\bigr|\leq\bigl|{\bm{\sigma}}-{\bm{\sigma}}^*\bigr|\leq\rho,\quad \bigl|\sigma_{z_j}-\sigma_{z_j}^*\bigr|\leq\bigl|{\bm{\sigma}}-{\bm{\sigma}}^*\bigr|\leq\rho.
\end{align*}
Recalling that $0<\pi_g\leq 1$ and $z_j\neq g$, on $B_{g,j,n}$, for every ${\bm{\sigma}}\in B^*(\rho)$, we have
\begin{align*}
     &\quad\ 1+\displaystyle\sum_{k\neq g}\frac{\pi_k}{\pi_g}\exp\Bigl\{\bar{\bf{H}}_{j,n}(\sigma_k)-\bar{\bf{H}}_{j,n}(\sigma_g)\Bigr\}\\
    &\geq 1+\displaystyle\frac{\pi_{z_j}}{\pi_g}\exp\Bigl\{\bar{\bf{H}}_{j,n}(\sigma_{z_j})-\bar{\bf{H}}_{j,n}(\sigma_g)\Bigr\}\\
    &\geq 1+\frac{1}{\pi_g}\biggl(\min_{k\in\{1,\ldots,G^*\}}\pi_{k}\biggr)\exp\Biggl\{n\Biggl(\inf_{\sigma\in B^*_{z_j}(\rho)}\frac{1}{n}\bar{\bf{H}}_{j,n}(\sigma)-\sup_{\sigma\in B_g^*(\rho)}\frac{1}{n}\bar{\bf{H}}_{j,n}(\sigma)\Biggr)\Biggr\}\\
    &\geq 1+\biggl(\min_{k\in\{1,\ldots,G^*\}}\pi_{k}\biggr)e^{K_2n}\geq\biggl(\min_{k\in\{1,\ldots,G^*\}}\pi_{k}\biggr)e^{K_2n},
\end{align*}
so that
\begin{align*}
     {\bf{Q}}_{j,n}(g;\, {\bm{\sigma}},{\bm{\pi}})\leq K_1e^{-K_2n},
\end{align*}
which implies
\begin{align*}
    \sup_{{\bm{\sigma}}\in B^*(\rho)}{\bf{Q}}_{j,n}(g;\, {\bm{\sigma}},{\bm{\pi}})\leq K_1e^{-K_2n}.
\end{align*}
Therefore, since 
\begin{align*}
    {\rm{P}}_{\bm{z}}
    \Bigl(B_{g,j,n}\Bigr)\leq{\rm{P}}_{\bm{z}}\Biggl(\sup_{{\bm{\sigma}}\in B^*(\rho)}{\bf{Q}}_{j,n}(g;\, {\bm{\sigma}},{\bm{\pi}})\leq K_1e^{-K_2n}\Biggr),
\end{align*}
it holds from \eqref{probB} that
\begin{align*}
    {\rm{P}}_{\bm{z}}\Biggl(\sup_{{\bm{\sigma}}\in B^*(\rho)}{\bf{Q}}_{j,n}(g;\, {\bm{\sigma}},{\bm{\pi}})\leq K_1e^{-K_2n}\Biggr)\longrightarrow 1
\end{align*}
as $n\longrightarrow\infty$. This completes the proof.
\end{proof}
\begin{lemma}\label{Qsuplemma}
Suppose that ${[\bf{A1}]}$ and ${[\bf{A2}]}$ hold. Then, under ${\rm{P}}_{\bm{z}}$, for every ${\bm{\pi}}^{(G^*)}\in\Pi^{(G^*)}$ and $L\geq 0$, there exists $\rho>0$ such that for each $j=1,\ldots,J$ and $g=1,\ldots,G^*$,
\begin{align*}
    n^L\sup_{{\bm{\sigma}}^{(G^*)}\in B^*(\rho)}\Bigl|{\bf{Q}}^{(G^*)}_{j,n}(g;\, {\bm{\sigma}}^{(G^*)},{\bm{\pi}}^{(G^*)})-\mathbb{Q}_{z_j}(g)\Bigr|\xlongrightarrow{p}0
\end{align*}
as $n\longrightarrow\infty$.
\end{lemma}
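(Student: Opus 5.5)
We split into the cases $G^*=1$ and $G^*\geq 2$. If $G^*=1$, then ${\bf{Q}}^{(1)}_{j,n}(1;{\bm{\sigma}},{\bm{\pi}})=1=\mathbb{Q}_{z_j}(1)$ identically. The supremum is therefore zero for any $\rho>0$, and the claim is trivial.

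Assume $G^*\geq 2$ and fix ${\bm{\pi}}\in\Pi^{(G^*)}$ and $L\geq 0$. Let $\rho>0$, $K_1>0$, $K_2>0$ be the constants provided by Lemma \ref{elemma}. These do not depend on $L$, so one $\rho$ works for all $L$. Fix $j$ and consider two cases for $g$.

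\emph{Case $g\neq z_j$.} Here $\mathbb{Q}_{z_j}(g)=0$. On the event $E_{g,j,n}$ of Lemma \ref{elemma}, whose probability tends to one, we have
\begin{align*}
    n^L\sup_{{\bm{\sigma}}\in B^*(\rho)}\bigl|{\bf{Q}}_{j,n}(g;{\bm{\sigma}},{\bm{\pi}})-\mathbb{Q}_{z_j}(g)\bigr|\leq K_1n^Le^{-K_2n}.
\end{align*}
The right-hand side is deterministic and tends to $0$. Hence for every $\varepsilon>0$ the probability that the left-hand side exceeds $\varepsilon$ is at most ${\rm P}_{\bm{z}}(E^c_{g,j,n})$ for large $n$, which tends to $0$.

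\emph{Case $g=z_j$.} Here $\mathbb{Q}_{z_j}(z_j)=1$. Since the posterior weights sum to one,
\begin{align*}
    1-{\bf{Q}}_{j,n}(z_j;{\bm{\sigma}},{\bm{\pi}})=\sum_{k\neq z_j}{\bf{Q}}_{j,n}(k;{\bm{\sigma}},{\bm{\pi}}).
\end{align*}
Taking the supremum over $B^*(\rho)$ gives
\begin{align*}
    \sup_{{\bm{\sigma}}\in B^*(\rho)}\bigl|{\bf{Q}}_{j,n}(z_j;{\bm{\sigma}},{\bm{\pi}})-1\bigr|\leq\sum_{k\neq z_j}\sup_{{\bm{\sigma}}\in B^*(\rho)}{\bf{Q}}_{j,n}(k;{\bm{\sigma}},{\bm{\pi}}).
\end{align*}
On the finite intersection $\bigcap_{k\neq z_j}E_{k,j,n}$, which still has probability tending to one, the right-hand side is at most $(G^*-1)K_1e^{-K_2n}$. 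Multiplying by $n^L$ gives convergence to zero in probability as before.

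The only real content is the exponential bound of Lemma \ref{elemma}. Given that lemma, the main points to check are that $\rho$ can be chosen uniformly over $j$ and $g$, which Lemma \ref{elemma} already provides, and that only finitely many events are intersected.
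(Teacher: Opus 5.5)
Your proposal is correct and follows essentially the same route as the paper. It treats the $G^*=1$ case as trivial, uses the exponential bound from Lemma~\ref{elemma} for $g\neq z_j$, and uses the identity $1-{\bf{Q}}_{j,n}(z_j;{\bm{\sigma}},{\bm{\pi}})=\sum_{k\neq z_j}{\bf{Q}}_{j,n}(k;{\bm{\sigma}},{\bm{\pi}})$ for $g=z_j$. The only cosmetic difference is that you intersect the finitely many events to get the deterministic bound $(G^*-1)K_1n^Le^{-K_2n}$, whereas the paper adds up the individual convergences in probability.
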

\begin{proof}
Fix ${\bm{\pi}}\in\Pi$ and $L\geq 0$ arbitrarily. When $G^*=1$, since
\begin{align*}
    {\bf{Q}}_{j,n}(1;\, {\bm{\sigma}},{\bm{\pi}})=1,\quad \mathbb{Q}_{z_j}(1)=1
\end{align*}
for each $j=1,\ldots,J$, this is trivial. Hence, it suffices to consider the case where $G^*\geq 2$. By Lemma \ref{elemma}, there exist $\rho>0$, $K_1>0$ and $K_2>0$ such that for each $j=1,\ldots,J$ and each $g\in\{1,\ldots,G^*\}\setminus
\{z_j\}$,
\begin{align}
    {\rm{P}}_{\bm{z}}\Bigl(C_{g,j,n}({\bm{\pi}})\Bigr)\longrightarrow 1 \label{Cprob}
\end{align}
as $n\longrightarrow\infty$, where 
\begin{align*}
    C_{g,j,n}({\bm{\pi}})=\Biggl\{\sup_{{\bm{\sigma}}\in B^*(\rho)}{\bf{Q}}_{j,n}(g;\, {\bm{\sigma}},{\bm{\pi}})\leq K_1e^{-K_2n}\Biggr\}.
\end{align*}
Fix an arbitrary $j\in\{1,\ldots,J\}$. For every $\varepsilon>0$ and each $g\in\{1,\ldots,G^*\}\setminus\{z_j\}$, by \eqref{Cprob}, we have
\begin{align*}
    &\quad\ {\rm{P}}_{\bm{z}}\Biggl(n^L\sup_{{\bm{\sigma}}\in B^*(\rho)}{\bf{Q}}_{j,n}(g;\, {\bm{\sigma}},{\bm{\pi}})>\varepsilon\Biggr)\\
    &={\rm{P}}_{\bm{z}}\Biggl(\biggl\{n^L\sup_{{\bm{\sigma}}\in B^*(\rho)}{\bf{Q}}_{j,n}(g;\, {\bm{\sigma}},{\bm{\pi}})>\varepsilon\biggr\}\cap C_{g,j,n}({\bm{\pi}})\Biggr)\\
    &\qquad+{\rm{P}}_{\bm{z}}\Biggl(\biggl\{n^L\sup_{{\bm{\sigma}}\in B^*(\rho)}{\bf{Q}}_{j,n}(g;\, {\bm{\sigma}},{\bm{\pi}})>\varepsilon\biggr\}\cap C^c_{g,j,n}({\bm{\pi}})\Biggr)\\
    &\leq {\rm{P}}_{\bm{z}}\biggl(K_1n^Le^{-K_2n}>\varepsilon\biggr)+{\rm{P}}_{\bm{z}}\Bigl(C^c_{g,j,n}({\bm{\pi}})\Bigr)\longrightarrow 0
\end{align*}
as $n\longrightarrow\infty$, which yields
\begin{align}
    n^L\sup_{{\bm{\sigma}}\in B^*(\rho)}{\bf{Q}}_{j,n}(g;\, {\bm{\sigma}},{\bm{\pi}})\xlongrightarrow{p}0\label{nLQ1}
\end{align}
under ${\rm{P}}_{\bm{z}}$. Moreover, since
\begin{align*}
    0&\leq 1-{\bf{Q}}_{j,n}(z_j;\, {\bm{\sigma}},{\bm{\pi}})\\
    &=\sum_{g\neq z_j}\left(\frac{\pi_{g}\exp\Bigl\{\bar{\bf{H}}_{j,n}(\sigma_g)\Bigr\}
    }{\displaystyle\sum_{k=1}^{G^*}\pi_{k}\exp\Bigl\{\bar{\bf{H}}_{j,n}(\sigma_k)\Bigr\}}\right)=\sum_{g\neq z_j}{\bf{Q}}_{j,n}(g;\, {\bm{\sigma}},{\bm{\pi}})
\end{align*}
and $\mathbb{Q}_{z_j}(z_j)=1$, under ${\rm{P}}_{\bm{z}}$, it follows from \eqref{nLQ1} that
\begin{align*}
    0&\leq n^L\sup_{{\bm{\sigma}}\in B^*(\rho)}\Bigl|{\bf{Q}}_{j,n}(z_j;\, {\bm{\sigma}},{\bm{\pi}})-\mathbb{Q}_{z_j}(z_j)\Bigr|\\
    &=n^L\sup_{{\bm{\sigma}}\in B^*(\rho)}\Bigl\{1-{\bf{Q}}_{j,n}(z_j;\, {\bm{\sigma}},{\bm{\pi}})\Bigr\}\\
    &\leq \sum_{g\neq z_j}n^L\sup_{{\bm{\sigma}}\in B^*(\rho)}{\bf{Q}}_{j,n}(g;\, {\bm{\sigma}},{\bm{\pi}})\xlongrightarrow{p}0,
\end{align*}
which gives
\begin{align}
    n^L\sup_{{\bm{\sigma}}\in B^*(\rho)}\Bigl|{\bf{Q}}_{j,n}(z_j;\, {\bm{\sigma}},{\bm{\pi}})-\mathbb{Q}_{z_j}(z_j)\Bigr|\xlongrightarrow{p}0.\label{nLQ2}
\end{align}
By noting that $\mathbb{Q}_{z_j}(g)=0$ for each $g\in\{1,\ldots,G^*\}\setminus
\{z_j\}$, \eqref{nLQ1} and \eqref{nLQ2} complete the proof.
\end{proof}
\begin{corollary}\label{Qcor}
Suppose that ${[\bf{A1}]}$ and ${[\bf{A2}]}$ hold. Then, under ${\rm{P}}_{\bm{z}}$, for every ${\bm{\pi}}^{(G^*)}\in\Pi^{(G^*)}$ and $L\geq 0$ and each $j=1,\ldots,J$ and $g=1,\ldots,G^*$,
\begin{align*}
    n^L\Bigl|{\bf{Q}}^{(G^*)}_{j,n}(g;\, {\bm{\sigma}}^{*},{\bm{\pi}}^{(G^*)})-\mathbb{Q}_{z_j}(g)\Bigr|\xlongrightarrow{p}0
\end{align*}
as $n\longrightarrow\infty$.
\end{corollary}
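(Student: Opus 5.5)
This corollary follows directly from Lemma \ref{Qsuplemma} by specializing the supremum to the single point ${\bm{\sigma}}^*$. Fix ${\bm{\pi}}^{(G^*)}\in\Pi^{(G^*)}$ and $L\geq 0$. Lemma \ref{Qsuplemma} provides a radius $\rho>0$ such that, for each $j=1,\ldots,J$ and $g=1,\ldots,G^*$,
\begin{align*}
    n^L\sup_{{\bm{\sigma}}^{(G^*)}\in B^*(\rho)}\Bigl|{\bf{Q}}^{(G^*)}_{j,n}(g;\, {\bm{\sigma}}^{(G^*)},{\bm{\pi}}^{(G^*)})-\mathbb{Q}_{z_j}(g)\Bigr|\xlongrightarrow{p}0
\end{align*}
under ${\rm{P}}_{\bm{z}}$.

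The only thing to check is that ${\bm{\sigma}}^*\in B^*(\rho)$. This holds for every $\rho>0$: we have ${\bm{\sigma}}^*\in\Sigma^{(G^*)}\subset\overline{\Sigma}^{(G^*)}$, and $|{\bm{\sigma}}^*-{\bm{\sigma}}^*|=0\leq\rho$. Hence, for all $n$,
\begin{align*}
    0\leq n^L\Bigl|{\bf{Q}}^{(G^*)}_{j,n}(g;\, {\bm{\sigma}}^{*},{\bm{\pi}}^{(G^*)})-\mathbb{Q}_{z_j}(g)\Bigr|
    \leq n^L\sup_{{\bm{\sigma}}^{(G^*)}\in B^*(\rho)}\Bigl|{\bf{Q}}^{(G^*)}_{j,n}(g;\, {\bm{\sigma}}^{(G^*)},{\bm{\pi}}^{(G^*)})-\mathbb{Q}_{z_j}(g)\Bigr|.
\end{align*}
Since the right-hand side tends to $0$ in probability, the left-hand side does as well: for every $\varepsilon>0$, the probability that the left-hand side exceeds $\varepsilon$ is bounded by the probability that the right-hand side exceeds $\varepsilon$, which tends to $0$.

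The case $G^*=1$ is already covered by Lemma \ref{Qsuplemma}; alternatively, it is immediate, since both ${\bf{Q}}$ and $\mathbb{Q}$ equal $1$. There is no genuine obstacle here. The substantive work, namely the exponential separation between clusters in Lemma \ref{infsuplemma} and Lemma \ref{elemma}, has already been carried out, and the corollary amounts to evaluating the uniform bound at the point ${\bm{\sigma}}^*$.
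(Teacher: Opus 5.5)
Your proposal is correct and is the same argument as the paper's: the paper also notes that ${\bm{\sigma}}^*\in B^*(\rho)$ for every $\rho>0$ and bounds the pointwise quantity by the supremum in Lemma \ref{Qsuplemma}. Your check that ${\bm{\sigma}}^*\in\overline{\Sigma}^{(G^*)}$ and your domination argument for convergence in probability spell out what the paper leaves implicit.
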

\begin{proof}
Since ${\bm{\sigma}}^*\in B^*(\rho)$ for every $\rho>0$, by Lemma \ref{Qsuplemma}, we can easily obtain the result.
\end{proof}
\begin{lemma}\label{H1lemma}
Suppose that ${[\bf{A1}]}$ and ${[\bf{A2}]}$ hold. Then, under ${\rm{P}}_{\bm{z}}$, for each $j=1,\ldots,J$,
\begin{align*}
   \frac{1}{\sqrt{n}}\bar{\bf{H}}'_{j,n}(\sigma^*_{z_j})\xlongrightarrow{d}N\Biggl(0,\frac{2}{\sigma^{*2}_{z_j}}\Biggr)
\end{align*}
as $n\longrightarrow\infty$.
\end{lemma}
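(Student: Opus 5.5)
We compute the score explicitly and reduce to Lemma \ref{Sigmalemma}. Differentiating $\bar{\bf H}_{j,n}(\sigma)=-\frac{n}{2}\log\sigma^2-\frac{n\hat{\Sigma}^2_{j,n}}{2\sigma^2}$ gives
\begin{align*}
    \bar{\bf H}'_{j,n}(\sigma)=-\frac{n}{\sigma}+\frac{n\hat{\Sigma}^2_{j,n}}{\sigma^3}=\frac{n}{\sigma^3}\Bigl(\hat{\Sigma}^2_{j,n}-\sigma^2\Bigr)
\end{align*}
for $\sigma>0$. Evaluating at $\sigma=\sigma^*_{z_j}$, we obtain the exact identity
\begin{align*}
    \frac{1}{\sqrt{n}}\bar{\bf H}'_{j,n}(\sigma^*_{z_j})=\frac{1}{\sigma^{*3}_{z_j}}\sqrt{n}\Bigl(\hat{\Sigma}^2_{j,n}-\sigma^{*2}_{z_j}\Bigr).
\end{align*}

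Fix $j$. By \eqref{Sigmaasym} in Lemma \ref{Sigmalemma}, $\sqrt{n}(\hat{\Sigma}^2_{j,n}-\sigma^{*2}_{z_j})\xlongrightarrow{d}N(0,2\sigma^{*4}_{z_j})$ under ${\rm P}_{\bm z}$. The factor $\sigma^{*-3}_{z_j}$ is a deterministic constant, so the continuous mapping theorem (here just linear scaling) gives convergence in distribution to a centered normal law with variance
\begin{align*}
    \frac{2\sigma^{*4}_{z_j}}{\sigma^{*6}_{z_j}}=\frac{2}{\sigma^{*2}_{z_j}},
\end{align*}
which is the claim.

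There is no real obstacle. Unlike Lemma \ref{HbarOp1}, no Taylor expansion or localization event is needed, because the score is an exact affine function of $\hat{\Sigma}^2_{j,n}$. All the probabilistic content — the discretization error from the drift and the martingale CLT — is already contained in Lemma \ref{Sigmalemma}. The only care required is getting the derivative and the variance constant right.
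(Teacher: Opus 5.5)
Your proposal is correct and follows the paper's proof: the paper likewise writes the score as the exact affine function $\frac{1}{\sqrt{n}}\bar{\bf H}'_{j,n}(\sigma)=-\frac{\sqrt{n}}{\sigma}+\frac{\sqrt{n}\hat{\Sigma}^2_{j,n}}{\sigma^3}$, evaluates it at $\sigma^*_{z_j}$, and applies \eqref{Sigmaasym} with the constant scaling $\sigma^{*-3}_{z_j}$. Your derivative and variance computations match.
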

\begin{proof}
For each $j=1,\ldots,J$, since 
\begin{align*}
    \frac{1}{\sqrt{n}}\bar{\bf H}'_{j,n}(\sigma)=-\frac{\sqrt{n}}{\sigma}
    +\frac{\sqrt{n}\hat{\Sigma}^2_{j,n}}{\sigma^3}
\end{align*}
for $\sigma>0$, Lemma \ref{Sigmalemma} implies
\begin{align*}
    \frac{1}{\sqrt{n}}\bar{\bf{H}}'_{j,n}(\sigma^*_{z_j})=\frac{1}{\sigma_{z_j}^{*3}}\sqrt{n}\Bigl(
    \hat{\Sigma}^2_{j,n}-\sigma^{*2}_{z_j}\Bigr)\xlongrightarrow{d}N\Biggl(0,\frac{2}{\sigma^{*2}_{z_j}}\Biggr)
\end{align*}
under ${\rm{P}}_{\bm{z}}$, which completes the proof.
\end{proof}
\begin{lemma}\label{difflemma}
For every ${\bm{\pi}}^{(G^*)}\in\Pi^{(G^*)}$ and ${\bm{\sigma}}^{(G^*)}\in\overline{\Sigma}^{(G^*)}$,
\begin{align}
    \partial_{\sigma_g^{(G^*)}}{\bf{H}}^{(G^*)}_n({\bm{\sigma}}^{(G^*)};{\bm{\pi}}^{(G^*)})=\sum_{j=1}^J{\bf{Q}}^{(G^*)}_{j,n}(g;\, {\bm{\sigma}}^{(G^*)},{\bm{\pi}}^{(G^*)})\bar{\bf{H}}'_{j,n}(\sigma_g^{(G^*)}) \label{d1}
\end{align}
and 
\begin{align}
\begin{split}
    &\quad\ \partial^2_{\sigma_g^{(G^*)}}{\bf{H}}^{(G^*)}_n({\bm{\sigma}}^{(G^*)};{\bm{\pi}}^{(G^*)})\\
    &=\sum_{j=1}^J{\bf{Q}}^{(G^*)}_{j,n}(g;\, {\bm{\sigma}}^{(G^*)},{\bm{\pi}}^{(G^*)})\bar{\bf{H}}''_{j,n}(\sigma_g^{(G^*)})\\
    &+\sum_{j=1}^J{\bf{Q}}^{(G^*)}_{j,n}(g;\, {\bm{\sigma}}^{(G^*)},{\bm{\pi}}^{(G^*)})\Bigl\{1-{\bf{Q}}^{(G^*)}_{j,n}(g;\, {\bm{\sigma}}^{(G^*)},{\bm{\pi}}^{(G^*)})\Bigr\}\Bigl\{\bar{\bf{H}}'_{j,n}(\sigma_g^{(G^*)})\Bigr\}^2 \label{d2}
\end{split}
\end{align}
for each $g=1,\ldots,G^*$, and 
\begin{align}
\begin{split}
    &\quad\ \partial_{\sigma_{g_1}^{(G^*)}}\partial_{\sigma_{g_2}^{(G^*)}}{\bf{H}}^{(G^*)}_n({\bm{\sigma}}^{(G^*)};{\bm{\pi}}^{(G^*)})\\
    &=-\sum_{j=1}^J{\bf{Q}}^{(G^*)}_{j,n}(g_1;\, {\bm{\sigma}}^{(G^*)},{\bm{\pi}}^{(G^*)}){\bf{Q}}^{(G^*)}_{j,n}(g_2;\, {\bm{\sigma}}^{(G^*)},{\bm{\pi}}^{(G^*)})\bar{\bf{H}}'_{j,n}(\sigma_{g_1}^{(G^*)})\bar{\bf{H}}'_{j,n}(\sigma_{g_2}^{(G^*)}) \label{d2d}
\end{split}
\end{align}
for each $g_1,g_2\in\{1,\ldots,G^*\}$ with $g_1\neq g_2$.
\end{lemma}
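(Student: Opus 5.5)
This is a direct computation, so the plan is to differentiate the log-sum-exp form of ${\bf H}^{(G^*)}_n$ term by term. Suppress $G^*$ and ${\bm\pi}$ as in the surrounding proofs, and write
\begin{align*}
    D_{j,n}({\bm{\sigma}})=\sum_{k=1}^{G^*}\pi_k\exp\bigl\{\bar{\bf H}_{j,n}(\sigma_k)\bigr\},
\end{align*}
so that ${\bf H}_n({\bm\sigma})=\sum_{j=1}^J\log D_{j,n}({\bm\sigma})$. Each $\bar{\bf H}_{j,n}$ is smooth on $(0,\infty)$. Hence $D_{j,n}$ is smooth and strictly positive in ${\bm\sigma}$, and all derivatives exist, also at boundary points of $\overline{\Sigma}$ viewed as points of $(0,\infty)^{G^*}$. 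The key structural fact is that $\sigma_g$ enters $D_{j,n}$ only through the $g$-th summand.

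\textbf{First derivative.} We have $\partial_{\sigma_g}D_{j,n}=\pi_g\exp\{\bar{\bf H}_{j,n}(\sigma_g)\}\bar{\bf H}'_{j,n}(\sigma_g)$. Therefore
\begin{align*}
    \partial_{\sigma_g}\log D_{j,n}={\bf Q}_{j,n}(g;{\bm\sigma},{\bm\pi})\,\bar{\bf H}'_{j,n}(\sigma_g),
\end{align*}
and summing over $j$ gives \eqref{d1}.

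\textbf{Derivatives of ${\bf Q}_{j,n}$.} Next compute the derivatives of ${\bf Q}_{j,n}(g)=\pi_g e^{\bar{\bf H}_{j,n}(\sigma_g)}/D_{j,n}$ by the quotient rule. Differentiating in $\sigma_g$ itself gives
\begin{align*}
    \partial_{\sigma_g}{\bf Q}_{j,n}(g)={\bf Q}_{j,n}(g)\bar{\bf H}'_{j,n}(\sigma_g)-{\bf Q}_{j,n}(g)^2\bar{\bf H}'_{j,n}(\sigma_g)={\bf Q}_{j,n}(g)\{1-{\bf Q}_{j,n}(g)\}\bar{\bf H}'_{j,n}(\sigma_g).
\end{align*}
For $g_2\neq g_1$, the numerator of ${\bf Q}_{j,n}(g_1)$ does not depend on $\sigma_{g_2}$, so only the denominator contributes:
\begin{align*}
    \partial_{\sigma_{g_2}}{\bf Q}_{j,n}(g_1)=-{\bf Q}_{j,n}(g_1){\bf Q}_{j,n}(g_2)\bar{\bf H}'_{j,n}(\sigma_{g_2}).
\end{align*}

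\textbf{Second derivatives.} Apply the product rule to \eqref{d1}.
\begin{itemize}
    \item For $g_1=g_2=g$: one term is ${\bf Q}_{j,n}(g)\bar{\bf H}''_{j,n}(\sigma_g)$; the other is $\partial_{\sigma_g}{\bf Q}_{j,n}(g)$ times $\bar{\bf H}'_{j,n}(\sigma_g)$. Together they give \eqref{d2}.
    \item For $g_1\neq g_2$: $\bar{\bf H}'_{j,n}(\sigma_{g_1})$ does not depend on $\sigma_{g_2}$, so only the derivative of ${\bf Q}_{j,n}(g_1)$ survives. This gives \eqref{d2d}.
\end{itemize}

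There is no real obstacle here. The only points needing care are the bookkeeping of which factors depend on which coordinate, and noting that the formulas hold on all of $\overline{\Sigma}$ (including its boundary) because the functions are smooth on the open set $(0,\infty)^{G^*}$.
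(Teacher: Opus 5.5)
Your proposal is correct and follows essentially the same route as the paper: write ${\bf H}_n$ as a sum of log-sum-exp terms, differentiate ${\bf Q}_{j,n}$ by the quotient rule using that $\sigma_g$ enters only the $g$-th summand, and apply the product rule to \eqref{d1}. The only cosmetic difference is that for \eqref{d2d} you differentiate ${\bf Q}_{j,n}(g_1)$ in $\sigma_{g_2}$, whereas the paper differentiates ${\bf Q}_{j,n}(g_2)$ in $\sigma_{g_1}$; the two computations give the same result.
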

\begin{proof}
Fix an arbitrary ${\bm{\pi}}\in\Pi$. For each $g=1,\ldots,G^*$, since 
\begin{align*}
    \partial_{\sigma_g}\Biggl(\sum_{k=1}^{G^*}\pi_{k}\exp\Bigl\{\bar{\bf{H}}_{j,n}(\sigma_k)\Bigr\}\Biggr)&=\pi_g\partial_{\sigma_g}\exp\Bigl\{\bar{\bf{H}}_{j,n}(\sigma_g)\Bigr\}\\
    &=\pi_g\exp\Bigl\{\bar{\bf{H}}_{j,n}(\sigma_g)\Bigr\}\bar{\bf{H}}'_{j,n}(\sigma_g),
\end{align*}
we have
\begin{align*}
    \partial_{\sigma_g}{\bf{H}}_n({\bm{\sigma}};{\bm{\pi}})
    &=\sum_{j=1}^J\partial_{\sigma_g}\log\Biggl(\sum_{k=1}^{G^*}\pi_{k}\exp\Bigl\{\bar{\bf{H}}_{j,n}(\sigma_k)\Bigr\}\Biggr)\\
    &=\sum_{j=1}^J\left(\frac{1}{\displaystyle\sum_{k=1}^{G^*}\pi_{k}\exp\Bigl\{\bar{\bf{H}}_{j,n}(\sigma_k)\Bigr\}}\right)\partial_{\sigma_g}\Biggl(\sum_{k=1}^{G^*}\pi_{k}\exp\Bigl\{\bar{\bf{H}}_{j,n}(\sigma_k)\Bigr\}\Biggr)\\
    &=\sum_{j=1}^J{\bf{Q}}_{j,n}(g;\, {\bm{\sigma}},{\bm{\pi}})\bar{\bf{H}}'_{j,n}(\sigma_g),
\end{align*}
which yields (\ref{d1}). For each $g=1,\ldots,G^*$, we see
\begin{align*}
    \partial_{\sigma_g}\left(\frac{1}{\displaystyle\sum_{k=1}^{G^*}\pi_{k}\exp\Bigl\{\bar{\bf{H}}_{j,n}(\sigma_k)\Bigr\}}\right)&=-\frac{\displaystyle\partial_{\sigma_g}\Biggl(\sum_{k=1}^{G^*}\pi_{k}\exp\Bigl\{\bar{\bf{H}}_{j,n}(\sigma_k)\Bigr\}\Biggr)}{\displaystyle\Biggl(\sum_{k=1}^{G^*}\pi_{k}\exp\Bigl\{\bar{\bf{H}}_{j,n}(\sigma_k)\Bigr\}\Biggr)^2}\\
    &=-\frac{\pi_g\exp\Bigl\{\bar{\bf{H}}_{j,n}(\sigma_g)\Bigr\}}{\displaystyle\Biggl(\sum_{k=1}^{G^*}\pi_{k}\exp\Bigl\{\bar{\bf{H}}_{j,n}(\sigma_k)\Bigr\}\Biggr)^2}\bar{\bf{H}}'_{j,n}(\sigma_g),
\end{align*}
which gives 
\begin{align*}
    \partial_{\sigma_{g}}{\bf{Q}}_{j,n}(g;\, {\bm{\sigma}},{\bm{\pi}})&=\partial_{\sigma_g}\left(\frac{1}{\displaystyle\sum_{k=1}^{G^*}\pi_{k}\exp\Bigl\{\bar{\bf{H}}_{j,n}(\sigma_k)\Bigr\}}\right)\pi_g\exp\Bigl\{\bar{\bf{H}}_{j,n}(\sigma_g)\Bigr\}\\
    &\qquad+\left(\frac{1}{\displaystyle\sum_{k=1}^{G^*}\pi_{k}\exp\Bigl\{\bar{\bf{H}}_{j,n}(\sigma_k)\Bigr\}}\right)\pi_g\partial_{\sigma_g}\exp\Bigl\{\bar{\bf{H}}_{j,n}(\sigma_g)\Bigr\}\\
    &=-\Bigl\{{\bf{Q}}_{j,n}(g;\, {\bm{\sigma}},{\bm{\pi}})\Bigr\}^2\bar{\bf{H}}'_{j,n}(\sigma_g)+{\bf{Q}}_{j,n}(g;\, {\bm{\sigma}},{\bm{\pi}})\bar{\bf{H}}'_{j,n}(\sigma_g).
\end{align*}
Hence, for each $g=1,\ldots,G^*$, it follows that
\begin{align*}
    \partial^2_{\sigma_g}{\bf{H}}_n({\bm{\sigma}};{\bm{\pi}})
    &=\sum_{j=1}^J\partial_{\sigma_g}\Bigl\{{\bf{Q}}_{j,n}(g;\, {\bm{\sigma}},{\bm{\pi}})\bar{\bf{H}}'_{j,n}(\sigma_g)\Bigr\}\\
    &=\sum_{j=1}^J\Bigl\{\partial_{\sigma_g}{\bf{Q}}_{j,n}(g;\, {\bm{\sigma}},{\bm{\pi}})\Bigr\}\bar{\bf{H}}'_{j,n}(\sigma_g)+\sum_{j=1}^J{\bf{Q}}_{j,n}(g;\, {\bm{\sigma}},{\bm{\pi}})\bar{\bf{H}}''_{j,n}(\sigma_g)\\
    &=\sum_{j=1}^J{\bf{Q}}_{j,n}(g;\, {\bm{\sigma}},{\bm{\pi}})\bar{\bf{H}}''_{j,n}(\sigma_g)\\
    &\quad+\sum_{j=1}^J{\bf{Q}}_{j,n}(g;\, {\bm{\sigma}},{\bm{\pi}})\Bigl\{1-{\bf{Q}}_{j,n}(g;\, {\bm{\sigma}},{\bm{\pi}})\Bigr\}\Bigl\{\bar{\bf{H}}'_{j,n}(\sigma_g)\Bigr\}^2,
\end{align*}
so that (\ref{d2}) holds. Furthermore, for each $g_1,g_2\in\{1,\ldots,G^*\}$ with $g_1\neq g_2$, since
\begin{align*}
    \partial_{\sigma_{g_1}}{\bf{Q}}_{j,n}(g_2;\, {\bm{\sigma}},{\bm{\pi}})
    &=\pi_{g_2}\exp\Bigl\{\bar{\bf{H}}_{j,n}(\sigma_{g_2})\Bigr\}\partial_{\sigma_{g_1}}\left(\frac{1}{\displaystyle\sum_{k=1}^{G^*}\pi_{k}\exp\Bigl\{\bar{\bf{H}}_{j,n}(\sigma_k)\Bigr\}}\right)\\
    &=-{\bf{Q}}_{j,n}(g_1;\, {\bm{\sigma}},{\bm{\pi}}){\bf{Q}}_{j,n}(g_2;\, {\bm{\sigma}},{\bm{\pi}})\bar{\bf{H}}'_{j,n}(\sigma_{g_1}),
\end{align*}
we obtain
\begin{align*}
    \partial_{\sigma_{g_1}}\partial_{\sigma_{g_2}}
    {\bf{H}}_n({\bm{\sigma}};{\bm{\pi}})
    &=\sum_{j=1}^J \partial_{\sigma_{g_1}}\Bigl\{{\bf{Q}}_{j,n}(g_2;\, {\bm{\sigma}},{\bm{\pi}})\bar{\bf{H}}'_{j,n}(\sigma_{g_2})\Bigr\}\\
    &=-\sum_{j=1}^J{\bf{Q}}_{j,n}(g_1;\, {\bm{\sigma}},{\bm{\pi}}){\bf{Q}}_{j,n}(g_2;\, {\bm{\sigma}},{\bm{\pi}})\bar{\bf{H}}'_{j,n}(\sigma_{g_1})\bar{\bf{H}}'_{j,n}(\sigma_{g_2}).
\end{align*}
Therefore, $(\ref{d2d})$ holds. This completes the proof.
\end{proof}
\begin{lemma}\label{Hnormal}
Suppose that ${[\bf{A1}]}$ and ${[\bf{A2}]}$ hold. Then, under ${\rm{P}}_{\bm{z}}$, for every ${\bm{\pi}}^{(G^*)}\in\Pi^{(G^*)}$,
\begin{align*}
    \frac{1}{\sqrt{n}}\partial_{\bm{\sigma}^{(G^*)}}{\bf{H}}^{(G^*)}_n({\bm{\sigma}}^*;{\bm{\pi}}^{(G^*)})\xlongrightarrow{d}N_{G^*}\Bigl({\bm{0}}_{G^*},I({\bm{\sigma}}^*)\Bigr)
\end{align*}
as $n\longrightarrow\infty$.
\end{lemma}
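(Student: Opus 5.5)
By Lemma \ref{difflemma}, for each $g=1,\ldots,G^*$ the $g$-th component of the normalized score is
\begin{align*}
    \frac{1}{\sqrt{n}}\partial_{\sigma_g}{\bf{H}}_n({\bm{\sigma}}^*;{\bm{\pi}})=\sum_{j=1}^J{\bf{Q}}_{j,n}(g;\,{\bm{\sigma}}^*,{\bm{\pi}})\frac{1}{\sqrt{n}}\bar{\bf{H}}'_{j,n}(\sigma^*_g).
\end{align*}
The plan is to replace each ${\bf{Q}}_{j,n}(g;{\bm{\sigma}}^*,{\bm{\pi}})$ by its limit $\mathbb{Q}_{z_j}(g)$. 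This leaves the main term
\begin{align*}
    \sum_{j\in S_g}\frac{1}{\sqrt{n}}\bar{\bf{H}}'_{j,n}(\sigma^*_g),
\end{align*}
plus a remainder that we show is $o_p(1)$.

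For the remainder, write it as $\sum_{j=1}^J\{{\bf{Q}}_{j,n}(g;{\bm{\sigma}}^*,{\bm{\pi}})-\mathbb{Q}_{z_j}(g)\}n^{-1/2}\bar{\bf{H}}'_{j,n}(\sigma^*_g)$. Note that $n^{-1/2}\bar{\bf{H}}'_{j,n}(\sigma)=\sqrt{n}\,\sigma^{-3}(\hat{\Sigma}^2_{j,n}-\sigma^2)$. For $j\notin S_g$, i.e. $\sigma^*_g\neq\sigma^*_{z_j}$, this quantity is of order $\sqrt{n}$, since $\hat{\Sigma}^2_{j,n}\to\sigma^{*2}_{z_j}$ by Lemma \ref{Sigmalemma}. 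We therefore use Corollary \ref{Qcor} with $L=1/2$, which gives $\sqrt{n}\,|{\bf{Q}}_{j,n}-\mathbb{Q}_{z_j}|=o_p(1)$. Multiplying by $n^{-1}\bar{\bf{H}}'_{j,n}(\sigma^*_g)=O_p(1)$ (Lemma \ref{Hbarlemma}) makes each remainder term $o_p(1)$. This uniform treatment handles both $j\in S_g$ and $j\notin S_g$, so no case split is needed. Handling this potentially growing score factor is the only delicate step, and the polynomial-rate statement of Corollary \ref{Qcor} is exactly what absorbs it.

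For the main term, for $j\in S_g$ we have $\sigma^*_g=\sigma^*_{z_j}$, so Lemma \ref{H1lemma} gives $n^{-1/2}\bar{\bf{H}}'_{j,n}(\sigma^*_{z_j})\xrightarrow{d}N(0,2/\sigma^{*2}_{z_j})$. We need joint convergence over $j=1,\ldots,J$. Since the $J$ processes (their Wiener processes and initial values) are independent under ${\rm P}_{\bm z}$, the statistics $\hat{\Sigma}^2_{j,n}$, $j=1,\ldots,J$, are independent. Marginal convergence therefore upgrades to joint convergence to independent normals, for example via characteristic functions.

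The vector of main terms is a fixed linear map of this $J$-vector: component $g$ sums the coordinates indexed by $S_g$, and the $S_g$ are disjoint. Hence the limit is $N_{G^*}$ with diagonal covariance whose $g$-th entry is $|S_g|\cdot 2/\sigma^{*2}_g$. By the Remark following Theorem \ref{consis}, this matrix is exactly $I({\bm{\sigma}}^*)$. Slutsky's lemma then combines the main term with the $o_p(1)$ remainder and completes the proof.
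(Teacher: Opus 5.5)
Your proposal is correct and follows essentially the same route as the paper: both rest on Lemma \ref{difflemma}, on Corollary \ref{Qcor} (whose $\sqrt{n}$ rate absorbs the order-$\sqrt{n}$ score factor for $j\notin S_g$), on Lemmas \ref{Hbarlemma} and \ref{H1lemma}, and on independence across $j$. The only difference is bookkeeping: the paper keeps the ${\bf Q}$ factors inside per-process vectors ${\bf M}_{j,n}({\bm\pi})$, gets each vector's limit by Slutsky and then sums independent vectors, whereas you first replace the ${\bf Q}$ factors by $\mathbb{Q}_{z_j}(g)$ for all $j$ at once with $L=1/2$, and then apply independence to the scalar scores.
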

\begin{proof}
Fix an arbitrary ${\bm{\pi}}\in\Pi$. By Lemma \ref{difflemma}, we have
\begin{align*}
    \frac{1}{\sqrt{n}}\partial_{\bm{\sigma}}{\bf{H}}_n({\bm{\sigma}}^*;{\bm{\pi}})
    =\sum_{j=1}^J\begin{bmatrix}
        M_{1,j,n}({\bm{\pi}})\\
        M_{2,j,n}({\bm{\pi}})\\
        \vdots\\
        M_{G^*,j,n}({\bm{\pi}})
    \end{bmatrix},
\end{align*}
where
\begin{align*}
    M_{g,j,n}({\bm{\pi}})={\bf{Q}}_{j,n}(g;\, {\bm{\sigma}}^*,{\bm{\pi}})\frac{1}{\sqrt{n}}\bar{\bf{H}}'_{j,n}(\sigma^*_g)
\end{align*}
for $g=1,\ldots,G^*$ and $j=1,\ldots,J$. For each $g=1,\ldots,G^*$, under ${\rm{P}}_{\bm{z}}$, it follows from Lemma \ref{Hbarlemma}, Corollary \ref{Qcor} and Lemma \ref{H1lemma} that
\begin{align*}
    M_{g,j,n}({\bm{\pi}})\xlongrightarrow{d}N\Biggl(0,\frac{2}{\sigma^{*2}_{g}}\Biggr)
\end{align*}
for each $j\in S_g$ and
\begin{align*}
    M_{g,j,n}({\bm{\pi}})=\sqrt{n}{\bf{Q}}_{j,n}(g;\, {\bm{\sigma}}^*,{\bm{\pi}})\biggl\{\frac{1}{n}\bar{\bf{H}}'_{j,n}(\sigma^*_g)\biggr\}\xlongrightarrow{p}0
\end{align*}
for each $j\notin S_g$. Consequently, by Slutsky's theorem, for each $g=1,\ldots,G^*$ and each $j\in S_g$, we have
\begin{align*}
    {\bf{M}}_{j,n}({\bm{\pi}})=\begin{bmatrix}
        M_{1,j,n}({\bm{\pi}})\\
        M_{2,j,n}({\bm{\pi}})\\
        \vdots\\
        M_{G^*,j,n}({\bm{\pi}})
    \end{bmatrix}\xlongrightarrow{d}N_{G^*}\Biggl({\bm{0}}_{G^*},\, \frac{2}{\sigma^{*2}_{g}}{\bm{e}}_g{\bm{e}}_g^{\top}\Biggr)
\end{align*}
under ${\rm{P}}_{\bm{z}}$, where ${\bm e}_g$ is the $g$-th unit vector in $\mathbb{R}^{G^*}$. Noting that $S_g\neq\emptyset$ for each $g=1,\ldots,G^*$ and
\begin{align*}
    \{1,\ldots,J\}=\bigcup_{g=1}^{G^*}S_g,\quad S_{g_1}\cap S_{g_2}=\emptyset\quad(g_1\neq g_2),
\end{align*}
we consider a decomposition:
\begin{align*}
    &\quad\ \frac{1}{\sqrt{n}}\partial_{\bm{\sigma}}{\bf{H}}_n({\bm{\sigma}}^*;{\bm{\pi}})\\
    &=\sum_{j\in S_1}{\bf{M}}_{j,n}({\bm{\pi}})+\sum_{j\in S_2}{\bf{M}}_{j,n}({\bm{\pi}})+\cdots+\sum_{j\in S_{G^*}}{\bf{M}}_{j,n}({\bm{\pi}}).
\end{align*}
Since ${\bf{M}}_{j,n}({\bm{\pi}})\ (j=1,\ldots,J)$ are mutually independent, under ${\rm{P}}_{\bm{z}}$, it holds that
\begin{align*}
    \frac{1}{\sqrt{n}}\partial_{\bm{\sigma}}{\bf{H}}_n({\bm{\sigma}}^*;{\bm{\pi}})
    &\xlongrightarrow{d}N_{G^*}\left({\bm{0}}_{G^*},\ 
    2\diag\Biggl[\frac{|S_1|}{\sigma_1^{*2}},\ldots,\frac{|S_{G^*}|}{\sigma_{G^*}^{*2}}\Biggr]
    \right),
\end{align*}
which completes the proof.
\end{proof}
\begin{lemma}\label{sigmaunif}
Suppose that ${[\bf{A1}]}$ and ${[\bf{A2}]}$ hold. Then, under ${\rm{P}}_{\bm{z}}$, for every ${\bm{\pi}}^{(G^*)}\in\Pi^{(G^*)}$, there exists $\rho>0$ such that 
\begin{align*}
    \sup_{{\bm{\sigma}}^{(G^*)}\in B^*(\rho)}\biggl|\frac{1}{n}\partial^2_{{\bm{\sigma}}^{(G^*)}}{\bf{H}}^{(G^*)}_n({\bm{\sigma}}^{(G^*)};{\bm{\pi}}^{(G^*)})+I({\bm{\sigma}}^{(G^*)})\biggr|\xlongrightarrow{p}0
\end{align*}
as $n\longrightarrow\infty$.
\end{lemma}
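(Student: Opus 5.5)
The plan is to use the explicit formulas of Lemma \ref{difflemma} for the second derivatives, and to control the posterior weights ${\bf Q}_{j,n}$ uniformly on a small ball via Lemma \ref{Qsuplemma}. First note that $I({\bm{\sigma}})=-\diag[|S_1|\bar{\mathbb H}''_1(\sigma_1),\ldots,|S_{G^*}|\bar{\mathbb H}''_{G^*}(\sigma_{G^*})]$. We take $\rho>0$ as given by Lemma \ref{Qsuplemma} with $L=1$; shrinking $\rho$ is harmless, since the conclusion of Lemma \ref{Qsuplemma} is inherited by subsets. Since the matrix norm is bounded by the sum of the absolute values of the entries, it suffices to treat the diagonal and the off-diagonal entries separately.

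\emph{Diagonal entries.} By \eqref{d2}, for each $g$ we have
\begin{align*}
\frac{1}{n}\partial^2_{\sigma_g}{\bf H}_n-|S_g|\bar{\mathbb H}''_g(\sigma_g)
&=\sum_{j=1}^J\Bigl\{{\bf Q}_{j,n}(g)\frac{1}{n}\bar{\bf H}''_{j,n}(\sigma_g)-\mathbb{Q}_{z_j}(g)\bar{\mathbb H}''_{z_j}(\sigma_g)\Bigr\}\\
&\quad+\sum_{j=1}^J{\bf Q}_{j,n}(g)\bigl\{1-{\bf Q}_{j,n}(g)\bigr\}\frac{1}{n}\bigl\{\bar{\bf H}'_{j,n}(\sigma_g)\bigr\}^2,
\end{align*}
using $\sum_j\mathbb{Q}_{z_j}(g)\bar{\mathbb H}''_{z_j}=|S_g|\bar{\mathbb H}''_g$. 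For the first sum, split each term as
\begin{align*}
({\bf Q}_{j,n}-\mathbb Q_{z_j})\frac{1}{n}\bar{\bf H}''_{j,n}+\mathbb Q_{z_j}\Bigl(\frac{1}{n}\bar{\bf H}''_{j,n}-\bar{\mathbb H}''_{z_j}\Bigr).
\end{align*}
The first piece tends to $0$ uniformly by Lemma \ref{Qsuplemma} with $L=0$, together with $\sup_{[1/d,d]}|n^{-1}\bar{\bf H}''_{j,n}|=O_p(1)$ from Lemma \ref{Hbarlemma}. The second piece tends to $0$ uniformly by Lemma \ref{Hbarlemma}.

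\emph{The second sum is the main obstacle,} because it carries an extra factor of $n$: $n^{-1}(\bar{\bf H}'_{j,n})^2=n\,(n^{-1}\bar{\bf H}'_{j,n})^2$, and $n^{-1}\bar{\bf H}'_{j,n}$ is only $O_p(1)$ uniformly on $[1/d,d]$ (Lemma \ref{Hbarlemma}). The remedy is that the prefactor ${\bf Q}_{j,n}(g)\{1-{\bf Q}_{j,n}(g)\}$ is bounded by $|{\bf Q}_{j,n}(g)-\mathbb{Q}_{z_j}(g)|$. Indeed, since $\mathbb Q_{z_j}(g)\in\{0,1\}$, one of the two factors is at most this distance and the other is at most $1$. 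Hence each term is at most
\begin{align*}
n\sup_{B^*(\rho)}\bigl|{\bf Q}_{j,n}(g)-\mathbb Q_{z_j}(g)\bigr|\cdot\sup_{[1/d,d]}\bigl(n^{-1}\bar{\bf H}'_{j,n}\bigr)^2,
\end{align*}
which tends to $0$ in probability by Lemma \ref{Qsuplemma} with $L=1$. This is exactly where the polynomial factor $n^L$ of that lemma is needed.

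\emph{Off-diagonal entries} ($g_1\neq g_2$). The target entry of $-I$ is $0$, and by \eqref{d2d},
\begin{align*}
\Bigl|\frac{1}{n}\partial_{\sigma_{g_1}}\partial_{\sigma_{g_2}}{\bf H}_n\Bigr|\le\sum_{j=1}^J n\,{\bf Q}_{j,n}(g_1){\bf Q}_{j,n}(g_2)\sup_{[1/d,d]}\bigl(n^{-1}\bar{\bf H}'_{j,n}\bigr)^2.
\end{align*}
For each $j$, at least one of $g_1,g_2$ differs from $z_j$, say $g_1$. Then ${\bf Q}_{j,n}(g_1){\bf Q}_{j,n}(g_2)\le{\bf Q}_{j,n}(g_1)=|{\bf Q}_{j,n}(g_1)-\mathbb Q_{z_j}(g_1)|$, and Lemma \ref{Qsuplemma} with $L=1$ again gives convergence to $0$ uniformly on $B^*(\rho)$.

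\emph{Edge cases.} When $G^*=1$ there are no off-diagonal entries and ${\bf Q}\equiv1$, so the second sum vanishes and only the first sum matters, which is handled as above. Summing the finitely many entries completes the argument.
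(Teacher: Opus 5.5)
Your proposal is correct and follows essentially the same route as the paper: take $\rho$ from Lemma \ref{Qsuplemma} with $L=1$, expand the Hessian entries via Lemma \ref{difflemma}, and control the $n$-inflated $\{\bar{\bf H}'_{j,n}\}^2$ terms with the rate $n\sup_{B^*(\rho)}|{\bf Q}_{j,n}(g)-\mathbb{Q}_{z_j}(g)|\to 0$. The only difference is bookkeeping. The paper splits the sums into $j\in S_g$ versus $j\notin S_g$ for diagonal entries, and into $S_{g_1}$, $S_{g_2}$ and the remaining indices for off-diagonal entries. You instead unify these cases through the bound ${\bf Q}_{j,n}(g)\{1-{\bf Q}_{j,n}(g)\}\le|{\bf Q}_{j,n}(g)-\mathbb{Q}_{z_j}(g)|$.
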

\begin{proof}
Fix an arbitrary $\bm{\pi}\in\Pi$. By Lemma \ref{Qsuplemma}, there exists $\rho>0$ such that for each $g=1,\ldots,G^*$,
\begin{align}
    n\sup_{{\bm{\sigma}}\in B^*(\rho)}\Bigl|{\bf{Q}}_{j,n}(g;\, {\bm{\sigma}},{\bm{\pi}})-1\Bigr|\xlongrightarrow{p}0\label{Qj1}
\end{align}
for each $j\in S_g$ and 
\begin{align}
    n\sup_{{\bm{\sigma}}\in B^*(\rho)}{\bf{Q}}_{j,n}(g;\, {\bm{\sigma}},{\bm{\pi}})\xlongrightarrow{p}0\label{Qj2}
\end{align}
for each $j\notin S_g$. Since
\begin{align*}
    &\quad\ \sup_{{\bm{\sigma}}\in B^*(\rho)}\biggl|\frac{1}{n}\partial^2_{\bm{\sigma}}{\bf{H}}_n({\bm{\sigma}};{\bm{\pi}})+I({\bm{\sigma}})\biggr|\\
    &\leq\sum_{g_1=1}^{G^*}
    \sum_{g_2=1}^{G^*}\sup_{{\bm{\sigma}}\in B^*(\rho)}\biggl|\frac{1}{n}\partial_{\sigma_{g_1}}\partial_{\sigma_{g_2}}{\bf{H}}_n({\bm{\sigma}};{\bm{\pi}})+I({\bm{\sigma}})_{g_1g_2}\biggr|
\end{align*}
and $I({\bm{\sigma}})$ is diagonal, under ${\rm{P}}_{\bm{z}}$, it suffices to prove that
\begin{align}
    \sup_{{\bm{\sigma}}\in B^*(\rho)}\biggl|\frac{1}{n}\partial^2_{\sigma_{g}}{\bf{H}}_n({\bm{\sigma}};{\bm{\pi}})+I({\bm{\sigma}})_{gg}\biggr|\xlongrightarrow{p}0\label{K1}
\end{align}
for each $g=1,\ldots,G^*$, and 
\begin{align}
    \sup_{{\bm{\sigma}}\in B^*(\rho)}\biggl|\frac{1}{n}\partial_{\sigma_{g_1}}\partial_{\sigma_{g_2}}{\bf{H}}_n({\bm{\sigma}};{\bm{\pi}})\biggr|\xlongrightarrow{p}0\label{K2}
\end{align}
for each $g_1,g_2\in\{1,\ldots,G^*\}$ with $g_1\neq g_2$. First, we prove \eqref{K1}. For each $g=1,\ldots,G^*$, a decomposition is given by
\begin{align*}
    \frac{1}{n}\partial^2_{\sigma_g}{\bf{H}}_n({\bm{\sigma}};{\bm{\pi}})
    &=\frac{1}{n}\sum_{j\in S_g}\partial^2_{\sigma_g}\log\Biggl(\sum_{k=1}^{G^*}\pi_k\exp\Bigl\{\bar{\bf{H}}_{j,n}(\sigma_k)\Bigr\}\Biggr)\\
    &\qquad+\frac{1}{n}
    \sum_{j\notin S_g}
    \partial^2_{\sigma_g}
    \log\Biggl(\sum_{k=1}^{G^*}
    \pi_k\exp\Bigl\{\bar{\bf{H}}_{j,n}(\sigma_k)\Bigr\}
    \Biggr).
\end{align*}
For each $g=1,\ldots,G^*$, since
\begin{align*}
    I({\bm{\sigma}})_{gg}=-\sum_{j\in S_g}\bar{\mathbb{H}}''_{z_j}(\sigma_g),
\end{align*}
we see from Lemma \ref{difflemma} that
\begin{align*}
    &\quad\ \frac{1}{n}\sum_{j\in S_g}\partial^2_{\sigma_g}\log\Biggl(\sum_{k=1}^{G^*}\pi_k\exp\Bigl\{\bar{\bf{H}}_{j,n}(\sigma_k)\Bigr\}\Biggr)+I({\bm{\sigma}})_{gg}\\
    &=\sum_{j\in S_g}{\bf{Q}}_{j,n}(g;\, {\bm{\sigma}},{\bm{\pi}})\biggl\{\frac{1}{n}\bar{\bf{H}}''_{j,n}(\sigma_g)-\bar{\mathbb{H}}''_{z_j}(\sigma_g)\biggr\}\\
    &\quad-\sum_{j\in S_g}\Bigl\{1-{\bf{Q}}_{j,n}(g;\, {\bm{\sigma}},{\bm{\pi}})\Bigr\}\bar{\mathbb{H}}''_{z_j}(\sigma_g)\\
    &\quad+\sum_{j\in S_g}{\bf{Q}}_{j,n}(g;\, {\bm{\sigma}},{\bm{\pi}})n\Bigl\{1-{\bf{Q}}_{j,n}(g;\, {\bm{\sigma}},{\bm{\pi}})\Bigr\}\biggl\{\frac{1}{n}\bar{\bf{H}}'_{j,n}(\sigma_{g})\biggr\}^2.
\end{align*}
Hence, for each $g=1,\ldots,G^*$, we have
\begin{align*}
     &\quad\ \left|\frac{1}{n}\sum_{j\in S_g}\partial^2_{\sigma_g}\log\Biggl(\sum_{k=1}^{G^*}\pi_k\exp\Bigl\{\bar{\bf{H}}_{j,n}(\sigma_k)\Bigr\}\Biggr)+I({\bm{\sigma}})_{gg}\right|\\
     &\leq \sum_{j\in S_g}\sup_{{\bm{\sigma}}\in B^*(\rho)}{\bf{Q}}_{j,n}(g;\, {\bm{\sigma}},{\bm{\pi}})\sup_{\sigma\in[1/d,d]}\biggl|\frac{1}{n}\bar{\bf{H}}''_{j,n}(\sigma)-\bar{\mathbb{H}}''_{z_j}(\sigma)\biggr|\\
     &+\sum_{j\in S_g}\sup_{{\bm{\sigma}}\in B^*(\rho)}\Bigl|{\bf{Q}}_{j,n}(g;\, {\bm{\sigma}},{\bm{\pi}})-1\Bigr|\sup_{\sigma\in[1/d,d]}
     \Bigl|\bar{\mathbb{H}}''_{z_j}(\sigma)\Bigr|\\
     &+\sum_{j\in S_g}\sup_{{\bm{\sigma}}\in B^*(\rho)}{\bf{Q}}_{j,n}(g;\, {\bm{\sigma}},{\bm{\pi}})\sup_{{\bm{\sigma}}\in B^*(\rho)}n\Bigl|{\bf{Q}}_{j,n}(g;\, {\bm{\sigma}},{\bm{\pi}})-1\Bigr|
     \sup_{\sigma\in[1/d,d]}\biggl|\frac{1}{n}\bar{\bf{H}}'_{j,n}(\sigma)\biggr|^2
\end{align*}
for every ${\bm{\sigma}}\in B^*(\rho)$, so that Lemma \ref{Hbarlemma} and \eqref{Qj1} imply
\begin{align}
    \sup_{{\bm{\sigma}}\in B^*(\rho)}\left|\frac{1}{n}\sum_{j\in S_g}\partial^2_{\sigma_g}\log\Biggl(\sum_{k=1}^{G^*}\pi_k\exp\Bigl\{\bar{\bf{H}}_{j,n}(\sigma_k)\Bigr\}\Biggr)+I({\bm{\sigma}})_{gg}\right|\xlongrightarrow{p}0\label{supH2-1}
\end{align}
under ${\rm{P}}_{\bm{z}}$. Similarly, for each $g=1,\ldots,G^*$, as it holds from Lemma \ref{difflemma} that
\begin{align*}
     &\quad\left|\frac{1}{n}\sum_{j\notin S_g}\partial^2_{\sigma_g}\log\Biggl(\sum_{k=1}^{G^*}\pi_k\exp\Bigl\{\bar{\bf{H}}_{j,n}(\sigma_k)\Bigr\}\Biggr)\right|\\
     &\leq\sum_{j\notin S_g}\sup_{{\bm{\sigma}}\in B^*(\rho)}{\bf{Q}}_{j,n}(g;\, {\bm{\sigma}},{\bm{\pi}})\sup_{\sigma\in[1/d,d]}\biggl|\frac{1}{n}\bar{\bf{H}}''_{j,n}(\sigma)\biggr|\\
     &+\sum_{j\notin S_g}n\sup_{{\bm{\sigma}}\in B^*(\rho)}{\bf{Q}}_{j,n}(g;\, {\bm{\sigma}},{\bm{\pi}})\sup_{{\bm{\sigma}}\in B^*(\rho)}\Bigl\{1-{\bf{Q}}_{j,n}(g;\, {\bm{\sigma}},{\bm{\pi}})\Bigr\}
     \sup_{\sigma\in[1/d,d]}\biggl|\frac{1}{n}\bar{\bf{H}}'_{j,n}(\sigma)\biggr|^2
\end{align*}
for every ${\bm{\sigma}}\in B^*(\rho)$, by Lemma \ref{Hbarlemma} and \eqref{Qj2}, it is shown that
\begin{align}
    \sup_{{\bm{\sigma}}\in B^*(\rho)}\left|\frac{1}{n}\sum_{j\notin S_g}\partial^2_{\sigma_g}\log\Biggl(\sum_{k=1}^{G^*}\pi_k\exp\Bigl\{\bar{\bf{H}}_{j,n}(\sigma_k)\Bigr\}
    \Biggr)\right|\xlongrightarrow{p}0\label{supH2-2}
\end{align}
under ${\rm{P}}_{\bm{z}}$. Therefore, by \eqref{supH2-1} and \eqref{supH2-2}, we obtain
\begin{align*}
    &\quad\ \sup_{{\bm{\sigma}}\in B^*(\rho)}\biggl|\frac{1}{n}\partial^2_{\sigma_{g}}{\bf{H}}_n({\bm{\sigma}};{\bm{\pi}})+I({\bm{\sigma}})_{gg}\biggr|\\
    &\leq\sup_{{\bm{\sigma}}\in B^*(\rho)}\left|\frac{1}{n}\sum_{j\in S_g}\partial^2_{\sigma_g}\log\Biggl(\sum_{k=1}^{G^*}\pi_k\exp\Bigl\{\bar{\bf{H}}_{j,n}(\sigma_k)\Bigr\}\Biggr)+I({\bm{\sigma}})_{gg}\right|\\
    &\qquad\quad+\sup_{{\bm{\sigma}}\in B^*(\rho)}\left|\frac{1}{n}\sum_{j\notin S_g}\partial^2_{\sigma_g}\log\Biggl(\sum_{k=1}^{G^*}\pi_k\exp\Bigl\{\bar{\bf{H}}_{j,n}(\sigma_k)\Bigr\}
    \Biggr)\right|\xlongrightarrow{p}0
\end{align*}
under ${\rm{P}}_{\bm{z}}$, which yields \eqref{K1}. Next, we prove \eqref{K2}. For each $g_1,g_2\in\{1,\ldots,G^*\}$ with $g_1\neq g_2$, we consider a decomposition:
\begin{align*}
    &\quad\
    \frac{1}{n}
    \sum_{j=1}^J
    \partial_{\sigma_{g_1}}\partial_{\sigma_{g_2}}
    \log\Biggl(
    \sum_{k=1}^{G^*}
    \pi_k
    \exp\Bigl\{\bar{\bf{H}}_{j,n}(\sigma_k)\Bigr\}
    \Biggr)\\
    &=
    \frac{1}{n}
    \sum_{j\in S_{g_1}}
    \partial_{\sigma_{g_1}}\partial_{\sigma_{g_2}}
    \log\Biggl(
    \sum_{k=1}^{G^*}
    \pi_k
    \exp\Bigl\{\bar{\bf{H}}_{j,n}(\sigma_k)\Bigr\}
    \Biggr)\\
    &\qquad+
    \frac{1}{n}
    \sum_{j\in S_{g_2}}
    \partial_{\sigma_{g_1}}\partial_{\sigma_{g_2}}
    \log\Biggl(
    \sum_{k=1}^{G^*}
    \pi_k
    \exp\Bigl\{\bar{\bf{H}}_{j,n}(\sigma_k)\Bigr\}
    \Biggr)\\
    &\qquad+
    \frac{1}{n}
    \sum_{j\notin S_{g_1}\cup S_{g_2}}
    \partial_{\sigma_{g_1}}\partial_{\sigma_{g_2}}
    \log\Biggl(
    \sum_{k=1}^{G^*}
    \pi_k
    \exp\Bigl\{\bar{\bf{H}}_{j,n}(\sigma_k)\Bigr\}
    \Biggr).
\end{align*}
For each $g_1,g_2\in\{1,\ldots,G^*\}$ with $g_1\neq g_2$, since it follows from Lemma \ref{difflemma} that
\begin{align*}
    &\quad\ \left|
    \frac{1}{n}
    \sum_{j\in S_{g_1}}
    \partial_{\sigma_{g_1}}\partial_{\sigma_{g_2}}
    \log\Biggl(
    \sum_{k=1}^{G^*}
    \pi_k
    \exp\Bigl\{\bar{\bf{H}}_{j,n}(\sigma_k)\Bigr\}
    \Biggr)
    \right|\\
    &\leq\sum_{j\in S_{g_1}}\sup_{{\bm{\sigma}}\in B^*(\rho)}{\bf{Q}}_{j,n}(g_1;\, {\bm{\sigma}},{\bm{\pi}})\\
    &\qquad\qquad\qquad\times n\sup_{{\bm{\sigma}}\in B^*(\rho)}{\bf{Q}}_{j,n}(g_2;\, {\bm{\sigma}},{\bm{\pi}})\sup_{\sigma\in[1/d,d]}
    \biggl|\frac{1}{n}\bar{\bf{H}}^{\prime}_{j,n}(\sigma)\biggr|^2,
\end{align*}
it holds from Lemma \ref{Hbarlemma} and \eqref{Qj1}--\eqref{Qj2} that
\begin{align}
    \sup_{{\bm{\sigma}}\in B^*(\rho)}
    \left|\frac{1}{n}\sum_{j\in S_{g_1}}\partial_{\sigma_{g_1}}\partial_{\sigma_{g_2}}
    \log\Biggl(\sum_{k=1}^{G^*}\pi_k\exp\Bigl\{\bar{\bf{H}}_{j,n}(\sigma_k)\Bigr\}
    \Biggr)\right|\xlongrightarrow{p}0 \label{supH2d-1}
\end{align}
under ${\rm{P}}_{\bm{z}}$. Similarly, for each $g_1,g_2\in\{1,\ldots,G^*\}$ with $g_1\neq g_2$, we have
\begin{align}
    \sup_{{\bm{\sigma}}\in B^*(\rho)}
    \left|
    \frac{1}{n}
    \sum_{j\in S_{g_2}}
    \partial_{\sigma_{g_1}}\partial_{\sigma_{g_2}}
    \log\Biggl(
    \sum_{k=1}^{G^*}
    \pi_k
    \exp\Bigl\{\bar{\bf{H}}_{j,n}(\sigma_k)\Bigr\}
    \Biggr)
    \right|
    \xlongrightarrow{p}0
    \label{supH2d-2}
\end{align}
and
\begin{align}
    \sup_{{\bm{\sigma}}\in B^*(\rho)}
    \left|
    \frac{1}{n}
    \sum_{j\notin S_{g_1}\cup S_{g_2}}
    \partial_{\sigma_{g_1}}\partial_{\sigma_{g_2}}
    \log\Biggl(
    \sum_{k=1}^{G^*}
    \pi_k
    \exp\Bigl\{\bar{\bf{H}}_{j,n}(\sigma_k)\Bigr\}
    \Biggr)
    \right|
    \xlongrightarrow{p}0
    \label{supH2d-3}
\end{align}
under ${\rm{P}}_{\bm{z}}$. Therefore, by \eqref{supH2d-1}--\eqref{supH2d-3}, for each $g_1\neq g_2$, we obtain
\begin{align*}
     &\quad\ \sup_{{\bm{\sigma}}\in B^*(\rho)}
     \left|
     \frac{1}{n}
     \sum_{j=1}^J
     \partial_{\sigma_{g_1}}\partial_{\sigma_{g_2}}
     \log\Biggl(
     \sum_{k=1}^{G^*}
     \pi_k
     \exp\Bigl\{\bar{\bf{H}}_{j,n}(\sigma_k)\Bigr\}
     \Biggr)
     \right|\\
     &\leq\sup_{{\bm{\sigma}}\in B^*(\rho)}
     \left|
     \frac{1}{n}
     \sum_{j\in S_{g_1}}
     \partial_{\sigma_{g_1}}\partial_{\sigma_{g_2}}
     \log\Biggl(
     \sum_{k=1}^{G^*}
     \pi_k
     \exp\Bigl\{\bar{\bf{H}}_{j,n}(\sigma_k)\Bigr\}
     \Biggr)
     \right|\\
     &\quad+\sup_{{\bm{\sigma}}\in B^*(\rho)}
     \left|
     \frac{1}{n}
     \sum_{j\in S_{g_2}}
     \partial_{\sigma_{g_1}}\partial_{\sigma_{g_2}}
     \log\Biggl(
     \sum_{k=1}^{G^*}
     \pi_k
     \exp\Bigl\{\bar{\bf{H}}_{j,n}(\sigma_k)\Bigr\}
     \Biggr)
     \right|\\
     &\quad+\sup_{{\bm{\sigma}}\in B^*(\rho)}
     \left|
     \frac{1}{n}
     \sum_{j\notin S_{g_1}\cup S_{g_2}}
     \partial_{\sigma_{g_1}}\partial_{\sigma_{g_2}}
     \log\Biggl(
     \sum_{k=1}^{G^*}
     \pi_k
     \exp\Bigl\{\bar{\bf{H}}_{j,n}(\sigma_k)\Bigr\}
     \Biggr)
     \right|\xlongrightarrow{p}0
\end{align*}
under ${\rm{P}}_{\bm{z}}$, which gives \eqref{K2}. This completes the proof.
\end{proof}
\begin{proposition}\label{asymprop}
Suppose that ${[\bf A1]}$ and ${[\bf A2]}$ hold. Then, under ${\rm{P}}_{\bm{z}}$, for every ${\bm{\pi}}^{(G^*)}\in\Pi^{(G^*)}$,
\begin{align*}
    \sqrt{n}\Bigl(\hat{\bm{\sigma}}^{*}_n({\bm{\pi}^{(G^*)}})-{\bm{\sigma}}^*\Bigr)\xlongrightarrow{d}N_{G^*}\Bigl({\bm{0}}_{G^*},I({\bm{\sigma}}^*)^{-1}\Bigr)
\end{align*}
as $n\longrightarrow\infty$.
\end{proposition}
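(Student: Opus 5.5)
The plan is the standard Taylor expansion of the score around ${\bm{\sigma}}^*$, combining Proposition \ref{consisprop}, Lemma \ref{Hnormal} and Lemma \ref{sigmaunif}. Fix ${\bm{\pi}}\in\Pi$ and write $\hat{\bm{\sigma}}=\hat{\bm{\sigma}}^*_n({\bm{\pi}})$. Take $\rho>0$ from Lemma \ref{sigmaunif}. Shrink it if necessary so that $B^*(\rho)$ lies in the interior of $\overline{\Sigma}$; this is possible because ${\bm{\sigma}}^*\in\Sigma$ has strictly ordered components inside $(1/d,d)$. Let
\begin{align*}
    D_n=\bigl\{|\hat{\bm{\sigma}}-{\bm{\sigma}}^*|<\rho\bigr\}.
\end{align*}
Proposition \ref{consisprop} gives ${\rm P}_{\bm{z}}(D_n)\to1$. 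On $D_n$, $\hat{\bm{\sigma}}$ is an interior maximizer of the smooth function ${\bf H}_n(\cdot;{\bm{\pi}})$, so $\partial_{\bm{\sigma}}{\bf H}_n(\hat{\bm{\sigma}};{\bm{\pi}})={\bm 0}$.

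On $D_n$, expand by the mean-value theorem in integral form:
\begin{align*}
    {\bm 0}=\frac{1}{\sqrt n}\partial_{\bm{\sigma}}{\bf H}_n({\bm{\sigma}}^*;{\bm{\pi}})+\biggl(\int_0^1\frac{1}{n}\partial^2_{\bm{\sigma}}{\bf H}_n\bigl({\bm{\sigma}}^*+\lambda(\hat{\bm{\sigma}}-{\bm{\sigma}}^*);{\bm{\pi}}\bigr)d\lambda\biggr)\sqrt n(\hat{\bm{\sigma}}-{\bm{\sigma}}^*).
\end{align*}
Every point on the segment lies in $B^*(\rho)$, because $B^*(\rho)$ is convex (a ball intersected with the convex set $\overline{\Sigma}$). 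Denote the integral by $\Gamma_n$. Then
\begin{align*}
    |\Gamma_n+I({\bm{\sigma}}^*)|\le \sup_{{\bm{\sigma}}\in B^*(\rho)}\Bigl|\tfrac1n\partial^2_{\bm{\sigma}}{\bf H}_n({\bm{\sigma}};{\bm{\pi}})+I({\bm{\sigma}})\Bigr|+\sup_{\lambda\in[0,1]}\bigl|I({\bm{\sigma}}^*+\lambda(\hat{\bm{\sigma}}-{\bm{\sigma}}^*))-I({\bm{\sigma}}^*)\bigr|.
\end{align*}
The first term tends to $0$ in probability by Lemma \ref{sigmaunif}. The second tends to $0$ by continuity of $I$ together with Proposition \ref{consisprop}. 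Hence $\Gamma_n\xrightarrow{p}-I({\bm{\sigma}}^*)$, and this limit is nonsingular by the earlier remark.

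Define $\Gamma_n^\dagger=\Gamma_n$ on $D_n\cap\{\Gamma_n \text{ invertible}\}$ and $\Gamma_n^\dagger=-\mathbb{I}_{G^*}$ otherwise; the event has probability tending to one. On that event,
\begin{align*}
    \sqrt n(\hat{\bm{\sigma}}-{\bm{\sigma}}^*)=-\Gamma_n^{-1}\frac{1}{\sqrt n}\partial_{\bm{\sigma}}{\bf H}_n({\bm{\sigma}}^*;{\bm{\pi}}).
\end{align*}
By Lemma \ref{Hnormal} and Slutsky's theorem, the right-hand side converges in distribution to
\begin{align*}
    I({\bm{\sigma}}^*)^{-1}N_{G^*}({\bm 0},I({\bm{\sigma}}^*))=N_{G^*}({\bm 0},I({\bm{\sigma}}^*)^{-1}).
\end{align*}
Since the two sides agree with probability tending to one, the conclusion follows.

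The main point needing care is the interior/differentiability step. ${\bf H}_n$ is smooth on all of $(0,\infty)^{G^*}$, but the maximizer is taken over the closed ordered set, so the first-order condition only holds when $\hat{\bm{\sigma}}$ is interior. Choosing $\rho$ below half of $\min_g(\sigma^*_{g+1}-\sigma^*_g)$ and below $\min\{\sigma^*_1-1/d,\ d-\sigma^*_{G^*}\}$ ensures $B^*(\rho)$ lies in the open set $\Sigma$, so the first-order condition holds on $D_n$. The remaining work is routine bookkeeping of events with probability tending to one.
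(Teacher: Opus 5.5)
Your proposal is correct and follows essentially the same route as the paper. The argument is a Taylor expansion of the score around ${\bm{\sigma}}^*$ on a high-probability event where the estimator lies in a neighbourhood $B^*(\rho)\subset\Sigma^{(G^*)}$, with Lemma \ref{sigmaunif} controlling the averaged Hessian and Lemma \ref{Hnormal} with Slutsky's theorem giving the limit; the differences (a fixed $\rho$ with continuity of $I$ and consistency instead of shrinking radii $\rho_n$, and truncating on invertibility rather than positive definiteness) are cosmetic.
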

\begin{proof}
Fix an arbitrary ${\bm{\pi}}\in\Pi$. By Lemma \ref{sigmaunif}, there exists $\rho>0$ such that $B^*(\rho)\subset\Sigma^{(G^*)}$ and
\begin{align}
    \sup_{{\bm{\sigma}}\in B^*(\rho)}\biggl|\frac{1}{n}\partial^2_{\bm{\sigma}}{\bf{H}}_n({\bm{\sigma}};{\bm{\pi}})+I({\bm{\sigma}})\biggr|\xlongrightarrow{p}0 \label{delta1}
\end{align}
under ${\rm{P}}_{\bm{z}}$. Proposition \ref{consisprop} implies that there exists a positive sequence $\{\rho_n\}_{n\in\mathbb{N}}$ such that $\rho_n<\rho$ for every $n\in\mathbb{N}$, $\rho_n\longrightarrow 0$ as $n\longrightarrow\infty$ and 
\begin{align}
    {\rm{P}}_{\bm{z}}\Bigl(D_{1,n}({\bm{\pi}})\Bigr)\longrightarrow 1 \label{D1}
\end{align}
as $n\longrightarrow\infty$, where
\begin{align*}
    D_{1,n}({\bm{\pi}})=\Bigl\{\hat{\bm{\sigma}}^{*}_n({\bm{\pi}})\in B^*(\rho_n)\Bigr\}.
\end{align*}
By Taylor's theorem, it follows that
\begin{align*}
    0&=\frac{1}{\sqrt{n}}\partial_{\bm{\sigma}}{\bf{H}}_n(\hat{\bm{\sigma}}_n^*({\bm{\pi}});{\bm{\pi}})\\
    &=\frac{1}{\sqrt{n}}\partial_{\bm{\sigma}}{\bf{H}}_n({\bm{\sigma}}^*;{\bm{\pi}})-J_n({\bm{\pi}})\sqrt{n}\bigl(\hat{\bm{\sigma}}^{*}_n({\bm{\pi}})-{\bm{\sigma}}^*\bigr)
\end{align*}
on $D_{1,n}({\bm{\pi}})$
and hence
\begin{align}
    J_n({\bm{\pi}})\sqrt{n}\bigl(\hat{\bm{\sigma}}^{*}_n({\bm{\pi}})-{\bm{\sigma}}^*\bigr)=\frac{1}{\sqrt{n}}\partial_{\bm{\sigma}}{\bf{H}}_n({\bm{\sigma}}^*;{\bm{\pi}}) \label{Jneq}
\end{align}
on $D_{1,n}({\bm{\pi}})$, where
\begin{align*}
    \tilde{\bm{\sigma}}^*_{\lambda,n}({\bm{\pi}})
    ={\bm{\sigma}}^*+\lambda\bigl(\hat{\bm{\sigma}}^{*}_n({\bm{\pi}})-{\bm{\sigma}}^*\bigr)
\end{align*}
for $\lambda\in [0,1]$, and
\begin{align*}
    J_n({\bm{\pi}})=\begin{cases}
    \displaystyle
    -\frac{1}{n}\int_0^1
    \partial^2_{\bm{\sigma}}{\bf{H}}_{n}
    \bigl(\tilde{\bm{\sigma}}^*_{\lambda,n}({\bm{\pi}});{\bm{\pi}}\bigr)d\lambda
    & \text{on } D_{1,n}({\bm{\pi}}),\\[2mm]
    \displaystyle \mathbb{I}_{G^*}
    & \text{on } D_{1,n}({\bm{\pi}})^c.
\end{cases}
\end{align*}
For every $\lambda\in[0,1]$, on $D_{1,n}({\bm{\pi}})$, since 
\begin{align*}
    \bigl|\tilde{\bm{\sigma}}^*_{\lambda,n}({\bm{\pi}})-{\bm{\sigma}}^*\bigr|
    =\lambda\bigl|\hat{\bm{\sigma}}^{*}_n({\bm{\pi}})-{\bm{\sigma}}^*\bigr|
    \leq\bigl|\hat{\bm{\sigma}}^{*}_n({\bm{\pi}})-{\bm{\sigma}}^*\bigr|\leq\rho_n,
\end{align*}
we have $\tilde{\bm{\sigma}}^*_{\lambda,n}({\bm{\pi}})\in B^*(\rho_n)$, so that
\begin{align*}
    \bigl|J_n({\bm{\pi}})-I({\bm{\sigma}}^*)\bigr|&=\Biggl|\frac{1}{n}\int_0^1\partial^2_{\bm{\sigma}}{\bf{H}}_{n}(\tilde{\bm{\sigma}}^*_{\lambda,n}({\bm{\pi}});{\bm{\pi}})d\lambda
    +I({\bm{\sigma}}^*)\Biggr|\\
    &\leq\sup_{{\bm{\sigma}}\in B^*(\rho_n)}\biggl|\frac{1}{n}\partial^2_{\bm{\sigma}}{\bf{H}}_n({\bm{\sigma}};{\bm{\pi}})+I({\bm{\sigma}}^*)\biggr|\\
    &\leq\sup_{{\bm{\sigma}}\in B^*(\rho)}\biggl|\frac{1}{n}\partial^2_{\bm{\sigma}}{\bf{H}}_n({\bm{\sigma}};{\bm{\pi}})+I({\bm{\sigma}})\biggr|+\sup_{{\bm{\sigma}}\in B^*(\rho_n)}\bigl|I({\bm{\sigma}})-I({\bm{\sigma}}^*)\bigr|
\end{align*}
on $D_{1,n}({\bm{\pi}})$. Hence, for every $\varepsilon>0$, it holds from \eqref{delta1}--\eqref{D1} and the continuity of $I$ that
\begin{align*}
    &\quad\ {\rm{P}}_{\bm{z}}\biggl(\bigl|J_n({\bm{\pi}})-I({\bm{\sigma}}^*)\bigr|>\varepsilon\biggr)\\
    &\leq {\rm{P}}_{\bm{z}}\biggl(
    \Bigl\{\bigl|J_n({\bm{\pi}})-I({\bm{\sigma}}^*)\bigr|>\varepsilon\Bigr\}
    \cap D_{1,n}({\bm{\pi}})\biggr)\\
    &\qquad+{\rm{P}}_{\bm{z}}\biggl(
    \Bigl\{\bigl|J_n({\bm{\pi}})-I({\bm{\sigma}}^*)\bigr|>\varepsilon\Bigr\}
    \cap D^c_{1,n}({\bm{\pi}})\biggr)\\
    &\leq{\rm{P}}_{\bm{z}}\Biggl(\sup_{{\bm{\sigma}}\in B^*(\rho)}\biggl|\frac{1}{n}\partial^2_{\bm{\sigma}}{\bf{H}}_n({\bm{\sigma}};{\bm{\pi}})+I({\bm{\sigma}})\biggr|>\frac{\varepsilon}{2}\Biggr)\\
    &\qquad+{\rm{P}}_{\bm{z}}\Biggl(\sup_{{\bm{\sigma}}\in B^*(\rho_n)}\bigl|I({\bm{\sigma}})-I({\bm{\sigma}}^*)\bigr|>\frac{\varepsilon}{2}\Biggr)+{\rm{P}}_{\bm{z}}\Bigl(D^c_{1,n}({\bm{\pi}})\Bigr)\longrightarrow 0
\end{align*}
as $n\longrightarrow\infty$, which yields
\begin{align*}
    J_n({\bm{\pi}})\xlongrightarrow{p}I({\bm{\sigma}}^*)
\end{align*}
under ${\rm{P}}_{\bm{z}}$. Define
\begin{align*}
    D_{2,n}({\bm{\pi}})=\Bigl\{J_n({\bm{\pi}})>0\Bigr\},\quad \tilde{J}_n({\bm{\pi}})=\begin{cases}
        J_n({\bm{\pi}}) & \text{on } D_{2,n}({\bm{\pi}}),\\[2mm]
        \displaystyle \mathbb{I}_{G^*} & \text{on } D_{2,n}({\bm{\pi}})^c.
    \end{cases}
\end{align*}
By the positive definiteness of $I({\bm{\sigma}}^*)$, it is shown that
\begin{align}
    {\rm{P}}_{\bm{z}}\Bigl(D_{2,n}({\bm{\pi}})\Bigr)\longrightarrow 1 \label{D2}
\end{align}
as $n\longrightarrow\infty$, and 
\begin{align*}
    \tilde{J}_n({\bm{\pi}})\xlongrightarrow{p}I({\bm{\sigma}}^*)
\end{align*}
under ${\rm{P}}_{\bm{z}}$. Combining these results with Lemma \ref{Hnormal}, we obtain
\begin{align*}
    \tilde{J}_n({\bm{\pi}})^{-1}\frac{1}{\sqrt{n}}\partial_{\bm{\sigma}}{\bf{H}}_n({\bm{\sigma}}^*;{\bm{\pi}})\xlongrightarrow{d}\mathcal{Z}_1\sim N_{G^*}\Bigl({\bm{0}}_{G^*},I({\bm{\sigma}}^*)^{-1}\Bigr)
\end{align*}
under ${\rm{P}}_{\bm{z}}$. Let $\mathcal{C}\subset\mathbb{R}^{G^*}$ be an arbitrary closed set. By \eqref{Jneq}, we have
\begin{align*}
    &\quad\ {\rm{P}}_{\bm{z}}\biggl(\sqrt{n}\bigl(\hat{\bm{\sigma}}^{*}_n({\bm{\pi}})-{\bm{\sigma}}^*\bigr)\in\mathcal{C}\biggr)\\
    &={\rm{P}}_{\bm{z}}\Biggl(\biggl\{\tilde{J}_n({\bm{\pi}})^{-1}\frac{1}{\sqrt{n}}\partial_{\bm{\sigma}}{\bf{H}}_n({\bm{\sigma}}^*;{\bm{\pi}})\in\mathcal{C}\biggr\}\cap \bigl(D_{1,n}({\bm{\pi}})\cap D_{2,n}({\bm{\pi}})\bigr)\Biggr)\\
    &\qquad+{\rm{P}}_{\bm{z}}\Biggl(\biggl\{\sqrt{n}\bigl(\hat{\bm{\sigma}}^{*}_n({\bm{\pi}})-{\bm{\sigma}}^*\bigr)\in\mathcal{C}\biggr\}
    \cap\bigl(D_{1,n}({\bm{\pi}})\cap D_{2,n}({\bm{\pi}})\bigr)^c\Biggr)\\
    &\leq{\rm{P}}_{\bm{z}}\biggl(\tilde{J}_n({\bm{\pi}})^{-1}\frac{1}{\sqrt{n}}\partial_{\bm{\sigma}}{\bf{H}}_n({\bm{\sigma}}^*;{\bm{\pi}})\in\mathcal{C}\biggr)+{\rm{P}}_{\bm{z}}\Bigl(\bigl(D_{1,n}({\bm{\pi}})\cap D_{2,n}({\bm{\pi}})\bigr)^c\Bigr),
\end{align*}
so that \eqref{D1} and \eqref{D2} give
\begin{align*}
    \limsup_{n\longrightarrow\infty}{\rm{P}}_{\bm{z}}\biggl(\sqrt{n}\bigl(\hat{\bm{\sigma}}^{*}_n({\bm{\pi}})-{\bm{\sigma}}^*\bigr)\in\mathcal{C}\biggr)\leq{\rm{P}}_{\bm{z}}\Bigl(\mathcal{Z}_1\in\mathcal{C}\Bigr).
\end{align*}
Hence, it holds that
\begin{align*}
    \sqrt{n}\bigl(\hat{\bm{\sigma}}^{*}_n({\bm{\pi}})-{\bm{\sigma}}^*\bigr)\xlongrightarrow{d}\mathcal{Z}_1,
\end{align*}
which completes the proof.
\end{proof}
\begin{proof}[\textbf{Proof of Theorem \ref{consis}}]
Fix an arbitrary ${\bm{\pi}}_M\in\Pi_M$. On the event
\begin{align*}
    E_n({\bm\pi}_M)=\Bigl\{\hat{G}_n({\bm{\pi}}_M)=G^*\Bigr\},
\end{align*}
since
\begin{align*}
    {\bf{L}}^{(\hat{G}_n)}_{n}({\bm{\sigma}}^{(\hat{G}_n)};{\bm{\pi}}^{(\hat{G}_n)})={\bf{L}}^{(G^*)}_{n}({\bm{\sigma}}^{(G^*)};{\bm{\pi}}^{(G^*)}),
\end{align*}
$\hat{\bm\sigma}_n({\bm\pi}_M)$ is a maximizer of ${\bf L}_n^{(G^*)}$. Hence, we can choose $\hat{\bm{\sigma}}_n^{*}({\bm{\pi}}^{(G^*)})$ among maximizers of ${\bf L}_n^{(G^*)}$ such that
\begin{align*}
    \hat{\bm{\sigma}}_n({\bm{\pi}}_M)=\hat{\bm{\sigma}}_n^{*}({\bm{\pi}}^{(G^*)})
\end{align*}
on $E_n({\bm\pi}_M)$. By Theorem \ref{Gconsis} and Proposition \ref{consisprop}, for all $\varepsilon>0$, we have
\begin{align*}
    &\quad\  {\rm{P}}_{\bm{z}}\left(\, 
    \left|\begin{bmatrix}
    \hat{\bm{\sigma}}_n({\bm{\pi}}_M)\\
    {\bm{0}}_{M-\hat{G}_n({\bm{\pi}}_M)}
    \end{bmatrix}-
    \begin{bmatrix}
    {\bm{\sigma}}^*\\
    {\bm{0}}_{M-G^*}
    \end{bmatrix}
    \right|>\varepsilon
    \right)\\
    &={\rm{P}}_{\bm{z}}\left(
    \left\{\left|\begin{bmatrix}
    \hat{\bm{\sigma}}^*_n({\bm{\pi}}^{(G^*)})\\
    {\bm{0}}_{M-G^*}
    \end{bmatrix}-
    \begin{bmatrix}
    {\bm{\sigma}}^*\\
    {\bm{0}}_{M-G^*}
    \end{bmatrix}
    \right|>\varepsilon
    \right\}
    \cap E_n({\bm\pi}_M)\right)\\
    &\qquad+{\rm{P}}_{\bm{z}}\left(
    \left\{\left|\begin{bmatrix}
    \hat{\bm{\sigma}}_n({\bm{\pi}}_M)\\
    {\bm{0}}_{M-\hat{G}_n({\bm{\pi}}_M)}
    \end{bmatrix}-
    \begin{bmatrix}
    {\bm{\sigma}}^*\\
    {\bm{0}}_{M-G^*}
    \end{bmatrix}
    \right|>\varepsilon
    \right\}\cap E_n({\bm\pi}_M)^c\right)\\
    &\leq{\rm{P}}_{\bm{z}}\biggl(
    \bigl|\hat{\bm{\sigma}}^*_n({\bm{\pi}}^{(G^*)})-{\bm{\sigma}}^*\bigr|>\varepsilon
    \biggr)+{\rm{P}}_{\bm{z}}\Bigl(E_n({\bm\pi}_M)^c\Bigr)\longrightarrow 0
\end{align*}
as $n\longrightarrow\infty$, which yields
\begin{align*}
    \begin{bmatrix}
    \hat{\bm{\sigma}}_n({\bm{\pi}}_M)\\
    {\bm{0}}_{M-\hat{G}_n({\bm{\pi}}_M)}
    \end{bmatrix}
    \xlongrightarrow{p}
    \begin{bmatrix}
    {\bm{\sigma}}^*\\
    {\bm{0}}_{M-G^*}
    \end{bmatrix}
\end{align*}
under ${\rm{P}}_{\bm{z}}$. Proposition \ref{asymprop} implies
\begin{align}
    \sqrt{n}\Biggl(\,
    \begin{bmatrix}
    \hat{\bm{\sigma}}^*_n({\bm{\pi}}^{(G^*)})\\
    {\bm{0}}_{M-G^*}
    \end{bmatrix}
    -\begin{bmatrix}
    {\bm{\sigma}}^*\\
    {\bm{0}}_{M-G^*}
    \end{bmatrix}\, \Biggr)\xlongrightarrow{d}\mathcal{Z}_2\label{asymprop2}
\end{align}
under ${\rm{P}}_{\bm{z}}$, where
\begin{align*}
    \mathcal{Z}_2\sim N_{M}\left(
    {\bm{0}}_M,\,
    \begin{bmatrix}
    I({\bm{\sigma}}^*)^{-1} & O_{G^*,M-G^*}\\
    O_{M-G^*,G^*} & O_{M-G^*,M-G^*}
    \end{bmatrix}
    \right).
\end{align*}
Let $\mathcal{C}\subset\mathbb{R}^M$ be an arbitrary closed set. Since 
\begin{align*}
    &\quad\ {\rm{P}}_{\bm{z}}\Biggl(\sqrt{n}\Biggl(\,
    \begin{bmatrix}
    \hat{\bm{\sigma}}_n({\bm{\pi}}_M)\\
    {\bm{0}}_{M-\hat{G}_n({\bm{\pi}}_M)}
    \end{bmatrix}
    -\begin{bmatrix}
    {\bm{\sigma}}^*\\
    {\bm{0}}_{M-G^*}
    \end{bmatrix}\, \Biggr)\in\mathcal{C}\Biggr)\\
    &={\rm{P}}_{\bm{z}}\Biggl(\Biggl\{\sqrt{n}\Biggl(\,
    \begin{bmatrix}
    \hat{\bm{\sigma}}^*_n({\bm{\pi}}^{(G^*)})\\
    {\bm{0}}_{M-G^*}
    \end{bmatrix}
    -\begin{bmatrix}
    {\bm{\sigma}}^*\\
    {\bm{0}}_{M-G^*}
    \end{bmatrix}\, \Biggr)\in \mathcal{C}\Biggr\}\cap E_n({\bm\pi}_M)\Biggr)\\
    &\qquad+{\rm{P}}_{\bm{z}}\Biggl(\Biggl\{\sqrt{n}\Biggl(\,
    \begin{bmatrix}
    \hat{\bm{\sigma}}_n({\bm{\pi}}_M)\\
    {\bm{0}}_{M-\hat{G}_n({\bm{\pi}}_M)}
    \end{bmatrix}
    -\begin{bmatrix}
    {\bm{\sigma}}^*\\
    {\bm{0}}_{M-G^*}
    \end{bmatrix}\, \Biggr)\in\mathcal{C}\Biggr\}\cap E_n({\bm\pi}_M)^c\Biggr)\\
    &\leq{\rm{P}}_{\bm{z}}\Biggl(\sqrt{n}\Biggl(\,
    \begin{bmatrix}
    \hat{\bm{\sigma}}^*_n({\bm{\pi}}^{(G^*)})\\
    {\bm{0}}_{M-G^*}
    \end{bmatrix}
    -\begin{bmatrix}
    {\bm{\sigma}}^*\\
    {\bm{0}}_{M-G^*}
    \end{bmatrix}\, \Biggr)\in\mathcal{C}\Biggr)
    +{\rm{P}}_{\bm{z}}\Bigl(E_n({\bm\pi}_M)^c\Bigr),
\end{align*}
Theorem \ref{Gconsis} and (\ref{asymprop2}) imply
\begin{align*}
    &\quad\ \limsup_{n\longrightarrow\infty}{\rm{P}}_{\bm{z}}\Biggl(\sqrt{n}\Biggl(\,
    \begin{bmatrix}
    \hat{\bm{\sigma}}_n({\bm{\pi}}_M)\\
    {\bm{0}}_{M-\hat{G}_n({\bm{\pi}}_M)}
    \end{bmatrix}
    -\begin{bmatrix}
    {\bm{\sigma}}^*\\
    {\bm{0}}_{M-G^*}
    \end{bmatrix}\, \Biggr)\in\mathcal{C}\Biggr)\leq{\rm{P}}_{\bm{z}}\Bigl(\mathcal{Z}_2\in\mathcal{C}\Bigr).
\end{align*}
Therefore, we obtain
\begin{align*}
    \sqrt{n}\Biggl(\,
    \begin{bmatrix}
    \hat{\bm{\sigma}}_n({\bm{\pi}}_M)\\
    {\bm{0}}_{M-\hat{G}_n({\bm{\pi}}_M)}
    \end{bmatrix}
    -\begin{bmatrix}
    {\bm{\sigma}}^*\\
    {\bm{0}}_{M-G^*}
    \end{bmatrix}\, \Biggr)\xlongrightarrow{d}\mathcal{Z}_2
\end{align*}
under ${\rm{P}}_{\bm{z}}$. This completes the proof.
\end{proof}
\noindent Finally, we show Theorem \ref{zconsis}. As in the proof of Theorem \ref{consis}, we first consider the case where the true number of clusters $G^*$ is known. For each $j=1,\ldots,J$, we set
\begin{align*}
    \hat{\bf{Q}}^{*}_{j,n}(g;{\bm{\pi}}^{(G^*)})={\bf{Q}}^{(G^*)}_{j,n}\bigl(g;\hat{\bm{\sigma}}^*_n({\bm{\pi}}^{(G^*)}),{\bm{\pi}}^{(G^*)}\bigr)
\end{align*}
for $g=1,\ldots,G^*$ and 
\begin{align*}
    \hat{Z}^{*}_{j,n}({\bm{\pi}}^{(G^*)})\in\argmax_{g\in\{1,\ldots,G^*\}}\hat{\bf{Q}}^{*}_{j,n}(g;{\bm{\pi}}^{(G^*)}).
\end{align*}
If the argmax contains more than one index, we select the smallest index. For notational simplicity, in the proofs of Lemma \ref{Qlemma} and Proposition \ref{Zstarconsis}, we may suppress the model index $G^*$. 
\begin{lemma}\label{Qlemma}
Suppose that ${[\bf{A1}]}$ and ${[\bf{A2}]}$ hold. Then, under ${\rm{P}}_{\bm{z}}$, for every ${\bm{\pi}}^{(G^*)}\in\Pi^{(G^*)}$, each $j=1,\ldots,J$, and each $g=1,\ldots,G^*$,
\begin{align*}
    \hat{\bf{Q}}^{*}_{j,n}(g;{\bm{\pi}}^{(G^*)})\xlongrightarrow{p}\mathbb{Q}_{z_j}(g)
\end{align*}
as $n\longrightarrow\infty$.
\end{lemma}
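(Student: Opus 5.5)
The plan is to combine the consistency of $\hat{\bm{\sigma}}^*_n({\bm{\pi}}^{(G^*)})$ from Proposition \ref{consisprop} with the uniform approximation of Lemma \ref{Qsuplemma}, applied with $L=0$. Fix ${\bm{\pi}}\in\Pi$, $j\in\{1,\ldots,J\}$ and $g\in\{1,\ldots,G^*\}$. By Lemma \ref{Qsuplemma} with $L=0$ there is $\rho>0$ such that
\begin{align*}
    \sup_{{\bm{\sigma}}\in B^*(\rho)}\Bigl|{\bf{Q}}_{j,n}(g;\,{\bm{\sigma}},{\bm{\pi}})-\mathbb{Q}_{z_j}(g)\Bigr|\xlongrightarrow{p}0
\end{align*}
under ${\rm{P}}_{\bm{z}}$. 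The case $G^*=1$ is trivial, since both quantities equal $1$, and Lemma \ref{Qsuplemma} already covers it.

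Next, I would introduce the event $D_n({\bm{\pi}})=\{\hat{\bm{\sigma}}^*_n({\bm{\pi}})\in B^*(\rho)\}$. Since $\hat{\bm{\sigma}}^*_n({\bm{\pi}})\in\overline{\Sigma}$ by construction, $D_n({\bm{\pi}})=\{|\hat{\bm{\sigma}}^*_n({\bm{\pi}})-{\bm{\sigma}}^*|\leq\rho\}$, and Proposition \ref{consisprop} gives ${\rm{P}}_{\bm{z}}(D_n({\bm{\pi}}))\longrightarrow 1$. On $D_n({\bm{\pi}})$ we have
\begin{align*}
    \bigl|\hat{\bf{Q}}^*_{j,n}(g;{\bm{\pi}})-\mathbb{Q}_{z_j}(g)\bigr|\leq\sup_{{\bm{\sigma}}\in B^*(\rho)}\Bigl|{\bf{Q}}_{j,n}(g;\,{\bm{\sigma}},{\bm{\pi}})-\mathbb{Q}_{z_j}(g)\Bigr|.
\end{align*}
Hence, for every $\varepsilon>0$, the probability ${\rm{P}}_{\bm{z}}(|\hat{\bf{Q}}^*_{j,n}(g;{\bm{\pi}})-\mathbb{Q}_{z_j}(g)|>\varepsilon)$ is bounded by the probability that the supremum exceeds $\varepsilon$, plus ${\rm{P}}_{\bm{z}}(D_n({\bm{\pi}})^c)$. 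Both terms tend to zero, which proves the claim.

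The only subtle point is that $\hat{\bm{\sigma}}^*_n$ is random, so it cannot be substituted directly into a pointwise limit. The uniform-in-$B^*(\rho)$ statement of Lemma \ref{Qsuplemma}, combined with the localization event $D_n({\bm{\pi}})$, handles this. No measurability issue arises beyond those already implicit in Lemma \ref{Qsuplemma}, because ${\bf{Q}}_{j,n}$ is continuous in ${\bm{\sigma}}$ and the supremum can be taken over a countable dense subset.
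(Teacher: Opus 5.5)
Your proof is correct and follows essentially the same route as the paper: both use Proposition \ref{consisprop} to localize $\hat{\bm{\sigma}}^*_n({\bm{\pi}})$ in $B^*(\rho)$ and then dominate $\hat{\bf{Q}}^*_{j,n}$ by the supremum from Lemma \ref{Qsuplemma}. The only difference is cosmetic. The paper applies that bound only for $g\neq z_j$ and obtains $g=z_j$ from $\sum_{g}\hat{\bf{Q}}^*_{j,n}(g;{\bm{\pi}})=1$, whereas you apply Lemma \ref{Qsuplemma} (with $L=0$) directly to every $g$, including $g=z_j$, which that lemma explicitly covers.
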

\begin{proof}
Fix ${\bm{\pi}}\in\Pi$ and $j\in\{1,\ldots,J\}$ arbitrarily. Under ${\rm{P}}_{\bm{z}}$, if
\begin{align}
    \hat{\bf{Q}}^*_{j,n}(g;{\bm{\pi}})\xlongrightarrow{p}\mathbb{Q}_{z_j}(g)=0\label{probQ}
\end{align}
for each $g\in\{1,\ldots,G^*\}\setminus
\{z_j\}$, then
\begin{align*}
    \Bigl|\hat{\bf{Q}}^*_{j,n}(z_j;{\bm{\pi}})-1\Bigr|
    &=\left|-\sum_{g\neq z_j}\frac{\pi_{g}\exp\Bigl\{\bar{\bf{H}}_{j,n}(\hat{\sigma}^*_{g,n})\Bigr\}}{\displaystyle\sum_{k=1}^{G^*}\pi_{k}\exp\Bigl\{\bar{\bf{H}}_{j,n}(\hat{\sigma}^*_{k,n})\Bigr\}}\right|\\
    &=\Biggl|\sum_{g\neq z_j}\hat{\bf{Q}}^*_{j,n}(g;{\bm{\pi}})\Biggr|\xlongrightarrow{p}0,
\end{align*}
which yields
\begin{align*}
    \hat{\bf{Q}}^*_{j,n}(z_j;{\bm{\pi}})\xlongrightarrow{p}\mathbb{Q}_{z_j}(z_j)=1,
\end{align*}
where $\hat{\sigma}^*_{g,n}$ is the $g$-th component of $\hat{\bm{\sigma}}_n^*({\bm{\pi}})$. Hence, it suffices to prove \eqref{probQ}. By Lemma \ref{Qsuplemma}, there exists $\rho>0$ such that for each $g\in\{1,\ldots,G^*\}\setminus
\{z_j\}$, 
\begin{align}
    \sup_{{\bm{\sigma}}\in B^*(\rho)}{\bf{Q}}_{j,n}(g;{\bm{\sigma}},{\bm{\pi}})\xlongrightarrow{p}0 \label{prob01}
\end{align}
under ${\rm{P}}_{\bm{z}}$. On
\begin{align*}
   \Bigl\{|\hat{\bm{\sigma}}^*_n({\bm{\pi}})-{\bm{\sigma}}^*|\leq\rho\Bigr\},
\end{align*}
$\hat{\bm{\sigma}}^*_n({\bm{\pi}})\in B^*(\rho)$, so that for each $g\in\{1,\ldots,G^*\}\setminus
\{z_j\}$, 
\begin{align*}
    \hat{{\bf{Q}}}^*_{j,n}(g;{\bm{\pi}})={\bf{Q}}_{j,n}\bigl(g;\hat{\bm{\sigma}}^*_n({\bm{\pi}}),{\bm{\pi}}\bigr)\leq \sup_{{\bm{\sigma}}\in B^*(\rho)}{\bf{Q}}_{j,n}(g;{\bm{\sigma}},{\bm{\pi}}).
\end{align*}
Therefore, by Proposition \ref{consisprop} and \eqref{prob01}, for all $\varepsilon>0$ and each $g\in\{1,\ldots,G^*\}\setminus
\{z_j\}$, we obtain
\begin{align*}
    &\quad\ {\rm{P}}_{\bm{z}}\biggl(\hat{\bf{Q}}^*_{j,n}(g;{\bm{\pi}})>\varepsilon\biggr)\\
    &={\rm{P}}_{\bm{z}}\biggl(\Bigl\{\hat{\bf{Q}}^*_{j,n}(g;{\bm{\pi}})>\varepsilon\Bigr\}\cap\Bigl\{|\hat{\bm{\sigma}}^*_n({\bm{\pi}})-{\bm{\sigma}}^*|\leq\rho\Bigr\}\biggr)\\
    &\qquad+{\rm{P}}_{\bm{z}}\biggl(\Bigl\{\hat{\bf{Q}}^*_{j,n}(g;{\bm{\pi}})>\varepsilon\Bigr\}\cap\Bigl\{|\hat{\bm{\sigma}}^*_n({\bm{\pi}})-{\bm{\sigma}}^*|>\rho\Bigr\}\biggr)\\
    &\leq{\rm{P}}_{\bm{z}}\biggl(\sup_{{\bm{\sigma}}\in B^*(\rho)}{\bf{Q}}_{j,n}(g;{\bm{\sigma}},{\bm{\pi}})>\varepsilon\biggr)+{\rm{P}}_{\bm{z}}\biggl(|\hat{\bm{\sigma}}^*_n({\bm{\pi}})-{\bm{\sigma}}^*|>\rho\biggr)\longrightarrow 0
\end{align*}
as $n\longrightarrow\infty$, which yields \eqref{probQ}. This completes the proof.
\end{proof}
\begin{proposition}\label{Zstarconsis}
Suppose that ${[\bf{A1}]}$ and ${[\bf{A2}]}$ hold. Then, for every ${\bm{\pi}}^{(G^*)}\in\Pi^{(G^*)}$ and each $j=1,\ldots,J$,
\begin{align*}
   {\rm{P}}_{\bm{z}} \biggl(\hat{Z}^*_{j,n}({\bm{\pi}}^{(G^*)})=z_j\biggr)\longrightarrow 1
\end{align*}
as $n\longrightarrow\infty$.
\end{proposition}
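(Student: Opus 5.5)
The plan is to reduce the statement to Lemma \ref{Qlemma}. By that lemma, under ${\rm{P}}_{\bm{z}}$ we have $\hat{\bf{Q}}^*_{j,n}(z_j;{\bm{\pi}})\xlongrightarrow{p}1$ and $\hat{\bf{Q}}^*_{j,n}(g;{\bm{\pi}})\xlongrightarrow{p}0$ for every $g\neq z_j$. Fix ${\bm{\pi}}\in\Pi^{(G^*)}$ and $j\in\{1,\ldots,J\}$. If $G^*=1$, then $\hat{Z}^*_{j,n}=1=z_j$ identically, so there is nothing to prove. We therefore assume $G^*\geq 2$.

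Introduce the event
\begin{align*}
    F_{j,n}=\Bigl\{\hat{\bf{Q}}^*_{j,n}(z_j;{\bm{\pi}})>\tfrac{1}{2}\Bigr\}.
\end{align*}
Since $\sum_{g=1}^{G^*}\hat{\bf{Q}}^*_{j,n}(g;{\bm{\pi}})=1$, on $F_{j,n}$ every $g\neq z_j$ satisfies $\hat{\bf{Q}}^*_{j,n}(g;{\bm{\pi}})<\tfrac{1}{2}<\hat{\bf{Q}}^*_{j,n}(z_j;{\bm{\pi}})$. Hence the argmax is the singleton $\{z_j\}$, and the smallest-index tie-breaking rule is irrelevant. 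This gives $F_{j,n}\subset\{\hat{Z}^*_{j,n}({\bm{\pi}})=z_j\}$.

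It remains to bound the complement:
\begin{align*}
    {\rm{P}}_{\bm{z}}\bigl(F_{j,n}^c\bigr)\leq{\rm{P}}_{\bm{z}}\Bigl(\bigl|\hat{\bf{Q}}^*_{j,n}(z_j;{\bm{\pi}})-1\bigr|\geq\tfrac{1}{2}\Bigr)\longrightarrow 0
\end{align*}
by Lemma \ref{Qlemma}. Therefore ${\rm{P}}_{\bm{z}}(\hat{Z}^*_{j,n}({\bm{\pi}})=z_j)\geq {\rm{P}}_{\bm{z}}(F_{j,n})\to 1$.

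No step presents a real obstacle. The only care needed is to use the normalisation of $\hat{\bf{Q}}^*_{j,n}$ so that a single threshold $1/2$ on the $z_j$-th component yields strict dominance, which makes the tie-breaking rule harmless.
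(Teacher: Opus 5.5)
Your proof is correct and takes essentially the same route as the paper: both reduce to Lemma \ref{Qlemma} and show that the argmax is the singleton $\{z_j\}$ on an event of probability tending to one where $\hat{\bf{Q}}^*_{j,n}(z_j;{\bm{\pi}})>1/2$. The only cosmetic difference is in how the other components are bounded below $1/2$. You use the normalisation $\sum_{g}\hat{\bf{Q}}^*_{j,n}(g;{\bm{\pi}})=1$ together with the $z_j$-th component alone, whereas the paper intersects the events $\{|\hat{\bf{Q}}^*_{j,n}(g;{\bm{\pi}})-\mathbb{Q}_{z_j}(g)|<1/2\}$ over all $g$.
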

\begin{proof}
Fix ${\bm{\pi}}\in\Pi$ and $j\in\{1,\ldots,J\}$ arbitrarily. Recall that $\mathbb{Q}_{z_j}(z_j)=1$ and $\mathbb{Q}_{z_j}(g)=0$ for each $g\in\{1,\ldots,G^*\}\setminus\{z_j\}$. On 
\begin{align*}
    \bigcap_{g=1}^{G^*}\biggl\{\bigl|\hat{\bf{Q}}^*_{j,n}(g;{\bm{\pi}})-\mathbb{Q}_{z_j}(g)\bigr|<\frac{1}{2}\biggr\},
\end{align*}
for each $g\in\{1,\ldots,G^*\}\setminus
\{z_j\}$, we have
\begin{align*}
    \hat{\bf{Q}}^*_{j,n}(z_j;{\bm{\pi}})&>\mathbb{Q}_{z_j}(z_j)-\frac{1}{2}=\mathbb{Q}_{z_j}(g)+\frac{1}{2}>\hat{\bf{Q}}^*_{j,n}(g;{\bm{\pi}}),
\end{align*}
so that
\begin{align*}
    \argmax_{g\in\{1,\ldots,G^*\}}\hat{\bf{Q}}^*_{j,n}(g;{\bm{\pi}})=\bigl\{z_j\bigr\},
\end{align*}
which yields
\begin{align*}
    \hat{Z}^*_{j,n}({\bm{\pi}})=z_j.
\end{align*}
Lemma \ref{Qlemma} implies 
\begin{align*}
    {\rm{P}}_{\bm{z}}\Biggl(\bigcap_{g=1}^{G^*}\biggl\{\bigl|\hat{\bf{Q}}^*_{j,n}(g;{\bm{\pi}})-\mathbb{Q}_{z_j}(g)\bigr|<\frac{1}{2}\biggr\}\Biggr)\longrightarrow 1
\end{align*}
as $n\longrightarrow\infty$. Therefore, together with 
\begin{align*}
    \bigcap_{g=1}^{G^*}\biggl\{\bigl|\hat{\bf{Q}}^*_{j,n}(g;{\bm{\pi}})-\mathbb{Q}_{z_j}(g)\bigr|<\frac{1}{2}\biggr\}\subset\biggl\{\hat{Z}^*_{j,n}({\bm{\pi}})=z_j\biggr\},
\end{align*}
we have
\begin{align*}
    {\rm{P}}_{\bm{z}}\biggl(\hat{Z}^*_{j,n}({\bm{\pi}})=z_j\biggr)\longrightarrow 1
\end{align*}
as $n\longrightarrow\infty$.
\end{proof}
\begin{proof}[\textbf{Proof of Theorem \ref{zconsis}}]
Fix an arbitrary ${\bm{\pi}}_M\in\Pi_M$. For each $j=1,\ldots,J$, we have
\begin{align*}
    \hat{Z}_{j,n}({\bm{\pi}}_M)=\hat{Z}^*_{j,n}({\bm{\pi}}^{(G^*)})
\end{align*}
on $E_n({\bm{\pi}}_M)$. Hence, for each $j=1,\ldots,J$, it holds from Theorem \ref{Gconsis} and Proposition \ref{Zstarconsis} that 
\begin{align*}
    {\rm{P}}_{\bm{z}}\biggl(\hat{Z}_{j,n}({\bm{\pi}}_M)\neq z_j\biggr)
    &={\rm{P}}_{\bm{z}}\biggl(\Bigl\{\hat{Z}_{j,n}({\bm{\pi}}_M)\neq z_j\Bigr\}
    \cap E_n({\bm{\pi}}_M)\biggr)\\
    &\qquad+{\rm{P}}_{\bm{z}}\biggl(\Bigl\{\hat{Z}_{j,n}({\bm{\pi}}_M)\neq z_j\Bigr\}\cap E_n({\bm{\pi}}_M)^c\biggr)\\
    &\leq{\rm{P}}_{\bm{z}}\biggl(\hat{Z}^*_{j,n}({\bm{\pi}}^{(G^*)})\neq z_j\biggr)
    +{\rm{P}}_{\bm{z}}\Bigl(E_n({\bm{\pi}}_M)^c\Bigr)\longrightarrow 0
\end{align*}
as $n\longrightarrow\infty$, which yields
\begin{align*}
    {\rm{P}}_{\bm{z}}\biggl(\hat{Z}_{j,n}({\bm{\pi}}_M)=z_j\biggr)\longrightarrow 1
\end{align*}
as $n\longrightarrow\infty$. This completes the proof.
\end{proof}

\section{Appendix}
\subsection{On the quasi-likelihood}\label{likelihood}
The quasi-likelihood for ${\bm{\sigma}}^{(G)}$ in \eqref{quasi} is constructed 
by using the Euler--Maruyama approximation as follows. Fix an arbitrary $G\in\{1,\ldots,M\}$. For each $j=1,\ldots,J$, since
\begin{align*}
    \Omega=\bigcup_{g=1}^{G}\bigl\{Z^{(G)}_j=g\bigr\}
\end{align*}
and
\begin{align*}
    \bigl\{Z^{(G)}_j=g_1\bigr\}\cap\bigl\{Z^{(G)}_j=g_2\bigr\}=\emptyset
    \quad (g_1\neq g_2),
\end{align*}
for $B_j\in\mathcal{B}_{n+1}$, we have
\begin{align*}
    \mathbb{P}\Bigl(\bigl\{X^{(G)}_{j,t_i^n}\bigr\}_{i=0}^n\in B_j\Bigr)
    &=\mathbb{P}\biggl(\Bigl\{\bigl\{X^{(G)}_{j,t_i^n}\bigr\}_{i=0}^n\in B_j\Bigr\}\cap\Omega\biggr)\\
    &=\mathbb{P}\Biggl(
    \bigcup_{g=1}^{G}
    \biggl[\Bigl\{\bigl\{X^{(G)}_{j,t_i^n}\bigr\}_{i=0}^n\in B_j\Bigr\}\cap\Bigl\{Z^{(G)}_j=g\Bigr\}
    \biggr]\Biggr)\\
    &=\sum_{g=1}^{G}\mathbb{P}\biggl(\Bigl\{\bigl\{X^{(G)}_{j,t_i^n}\bigr\}_{i=0}^n\in B_j\Bigr\}\cap\Bigl\{Z^{(G)}_j=g\Bigr\}\biggr)\\
    &=\sum_{g=1}^{G}\mathbb{P}\Bigl(\bigl\{X^{(G)}_{j,t_i^n}\bigr\}_{i=0}^n\in B_j\big| Z^{(G)}_j=g\Bigr)
    \mathbb{P}\Bigl(Z^{(G)}_j=g\Bigr)\\
    &=\sum_{g=1}^{G}\pi^{(G)}_g\mathbb{P}\Bigl(\bigl\{X^{(G)}_{j,t_i^n}\bigr\}_{i=0}^n\in B_j\big| Z^{(G)}_j=g\Bigr),
\end{align*}
where $\mathcal{B}_{n+1}$ is the Borel $\sigma$-field on $\mathbb{R}^{n+1}$. Therefore, the independence of $\{X^{(G)}_{j,t}\}_{t\geq 0}\ (j=1,\ldots,J)$ implies that
\begin{align*}
    &\quad\ \mathbb{P}\Bigl(
    \bigl\{X^{(G)}_{1,t_i^n}\bigr\}_{i=0}^n\in B_1,
    \bigl\{X^{(G)}_{2,t_i^n}\bigr\}_{i=0}^n\in B_2,\ldots,
    \bigl\{X^{(G)}_{J,t_i^n}\bigr\}_{i=0}^n\in B_J\Bigr)\\
    &=\prod_{j=1}^J
    \mathbb{P}\Bigl(\bigl\{X^{(G)}_{j,t_i^n}\bigr\}_{i=0}^n\in B_j\Bigr)\\
    &=\prod_{j=1}^J\sum_{g=1}^{G}\pi^{(G)}_g\mathbb{P}\Bigl(\bigl\{X^{(G)}_{j,t_i^n}\bigr\}_{i=0}^n\in B_j\big| Z^{(G)}_j=g\Bigr).
\end{align*}
Moreover, for each $j=1,\ldots,J$ and $g=1,\ldots,G$, since the conditional law of $\{X^{(G)}_{j,t}\}_{t\geq 0}$ given $Z^{(G)}_j=g$ is specified as the law of the diffusion process satisfying
\begin{align*}
    dY^{(G)}_{g,t}=
    a^{(G)}_g(Y^{(G)}_{g,t})dt+\sigma^{(G)}_g dW_t,\quad Y^{(G)}_{g,0}=\xi^{(G)}_{g},
\end{align*}
we have
\begin{align*}
   \mathbb{P}\Bigl(\bigl\{X^{(G)}_{j,t_i^n}\bigr\}_{i=0}^n\in B_j\big| Z^{(G)}_j=g
   \Bigr)=\mathbb{P}\Bigl(
   \bigl\{Y^{(G)}_{g,t_i^n}\bigr\}_{i=0}^n\in B_j
   \Bigr).
\end{align*}
By the Euler--Maruyama approximation, the conditional distribution of
\begin{align*}
    Y^{(G)}_{g,t_i^n}-Y^{(G)}_{g,t_{i-1}^n}
    \big| Y^{(G)}_{g,t_{i-1}^n}=y^{(G)}_{g,t_{i-1}^n}
\end{align*}
can be approximated by
\begin{align*}
    N\Bigl(h_na^{(G)}_g\bigl(y^{(G)}_{g,t_{i-1}^n}\bigr),h_n\sigma_g^{(G)2}\Bigr).
\end{align*}
For inference on the diffusion coefficients, we omit the drift term and approximate 
\begin{align*}
    \mathbb{P}\Bigl(\bigl\{Y^{(G)}_{g,t_i^n}\bigr\}_{i=0}^n\in B_j\Bigr)
\end{align*}
by 
\begin{align*}
    \int_{B_j}\prod_{i=1}^n\frac{1}{\sqrt{2\pi h_{n}\sigma_g^{(G)2}}}
    \exp\Biggl\{-\frac{(y_i-y_{i-1})^2}
    {2h_{n}\sigma_g^{(G)2}}\Biggr\}\mu_g^{(G)}(dy_0)dy_1\cdots dy_n,
\end{align*}
where $\mu_g^{(G)}$ denotes the distribution of $\xi^{(G)}_{g}$.
Hence, the approximation of the joint probability
\begin{align*}
    \mathbb{P}\Bigl(
    \bigl\{X^{(G)}_{1,t_i^n}\bigr\}_{i=0}^n\in B_1,
    \bigl\{X^{(G)}_{2,t_i^n}\bigr\}_{i=0}^n\in B_2,\ldots,
    \bigl\{X^{(G)}_{J,t_i^n}\bigr\}_{i=0}^n\in B_J\Bigr)
\end{align*}
is given by
\begin{align*}
    \prod_{j=1}^J\sum_{g=1}^{G}\pi^{(G)}_g\int_{B_j}\prod_{i=1}^n
    \frac{1}{\sqrt{2\pi h_{n}\sigma_g^{(G)2}}}
    \exp\Biggl\{-\frac{(y_i-y_{i-1})^2}
    {2h_{n}\sigma_g^{(G)2}}\Biggr\}\mu_g^{(G)}(dy_0)dy_1\cdots dy_n.
\end{align*}
Using this approximation, we define the quasi-likelihood for ${\bm{\sigma}}^{(G)}$ as follows:
\begin{align*}
    \prod_{j=1}^J\sum_{g=1}^{G}\pi^{(G)}_g\prod_{i=1}^n\frac{1}{\sqrt{2\pi h_{n}\sigma_g^{(G)2}}}
    \exp\Biggl\{-\frac{(X_{j,t_i^n}-X_{j,t_{i-1}^n})^2}
    {2h_{n}\sigma_g^{(G)2}}\Biggr\}.
\end{align*}
\subsection{Proof of Lemma \ref{Sigmalemma}}\label{Lemma}
For a stochastic process $\{S_t\}_{t\geq0}$, we set
\begin{align*}
    \Delta_i^n S=S_{t_i^n}-S_{t_{i-1}^n}
\end{align*}
for $i=1,\ldots,n$. For notational simplicity, we write $a^{(G^*)}_{g}$ and $\sigma_g^{(G^*)}$
as $a_{g}$ and $\sigma_g$ respectively for $g=1,\ldots,G^*$. 
$C_L$ denotes a generic positive constant that depends on $L$ and whose value may vary depending on the context without being specifically mentioned. For a positive sequence $u_{n}$, $R:[0,\infty)^2\times\mathbb{R}\rightarrow \mathbb{R}$ denotes a generic function satisfying $|R({u_{n}},\sigma,x)|\leq u_{n}C(1+|x|)^C$ for some $C>0$ independent of $\sigma$, $n$, $i$ and $j$.
\begin{lemma}\label{ELlemma}
Suppose that ${[\bf{A1}]}$ and ${[\bf{A2}]}$ hold. Then, for every $L\geq 1$, each $i=1,\ldots,n$ and each $j=1,\ldots,J$, 
\begin{align*}
    {\rm{E}}_{\bm{z}}\Bigl[\bigl|X_{j,t}-X_{j,t_{i-1}^n}\bigr|^L\big|\mathcal{F}_{t_{i-1}^n}\Bigr]\leq C_L\Bigl(1+\bigl|X_{j,t_{i-1}^n}\bigr|^L\Bigr)\bigl|t-t_{i-1}^n\bigr|^{L/2}
\end{align*}
for every $t\in(t_{i-1}^n,t_i^n]$.
\end{lemma}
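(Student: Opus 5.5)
We will prove the bound by writing the increment through the SDE, bounding the drift part and the martingale part separately, and then invoking the linear growth of the drift together with a Gronwall argument. Under ${\rm P}_{\bm z}$, for $t\in(t_{i-1}^n,t_i^n]$ we have
\begin{align*}
    X_{j,t}-X_{j,t_{i-1}^n}=\int_{t_{i-1}^n}^t a_{z_j}(X_{j,s})ds+\sigma^*_{z_j}\bigl(W_{j,t}-W_{j,t_{i-1}^n}\bigr).
\end{align*}
By the elementary inequality $|x+y|^L\leq 2^{L-1}(|x|^L+|y|^L)$, it suffices to bound the conditional $L$-th moments of the two terms separately.

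\emph{Martingale term.} Since $W_j$ is an $(\mathcal F_t)$-Wiener process, the increment $W_{j,t}-W_{j,t_{i-1}^n}$ is independent of $\mathcal F_{t_{i-1}^n}$. Hence its conditional $L$-th absolute moment equals $C_L|t-t_{i-1}^n|^{L/2}$, and multiplying by $\sigma^{*L}_{z_j}\leq d^L$ gives the required form.

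\emph{Drift term.} By [A1], $|a_{z_j}(x)|\leq C(1+|x|)$ with $C=C_{z_j}\vee|a_{z_j}(0)|$. Write $\Delta_s=|X_{j,s}-X_{j,t_{i-1}^n}|$. Then $|a_{z_j}(X_{j,s})|\leq C(1+|X_{j,t_{i-1}^n}|+\Delta_s)$, and Hölder's inequality gives
\begin{align*}
    \Bigl|\int_{t_{i-1}^n}^t a_{z_j}(X_{j,s})ds\Bigr|^L\leq (t-t_{i-1}^n)^{L-1}\int_{t_{i-1}^n}^t C_L\bigl(1+|X_{j,t_{i-1}^n}|^L+\Delta_s^L\bigr)ds.
\end{align*}

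\emph{Gronwall step (main obstacle).} Set $f(t)={\rm E}_{\bm z}[\Delta_t^L\mid\mathcal F_{t_{i-1}^n}]$. Combining the two bounds yields
\begin{align*}
    f(t)\leq C_L\bigl(1+|X_{j,t_{i-1}^n}|^L\bigr)\bigl\{(t-t_{i-1}^n)^{L}+(t-t_{i-1}^n)^{L/2}\bigr\}+C_L(t-t_{i-1}^n)^{L-1}\int_{t_{i-1}^n}^t f(s)ds.
\end{align*}
Two points must be handled for Gronwall to apply.
\begin{itemize}
    \item[(i)] $f$ must be finite and measurable. Finiteness follows from the uniform moment bound in the Remark after [A2]; to avoid issues with the conditional version, one can alternatively work with a localization or with regular conditional expectations.
    \item[(ii)] The factor $(t-t_{i-1}^n)^{L-1}$ is at most $h_n^{L-1}\leq T^{L-1}$, since $t-t_{i-1}^n\leq h_n\leq T$.
\end{itemize}
With these, Gronwall's inequality gives $f(t)\leq C_L(1+|X_{j,t_{i-1}^n}|^L)\{(t-t_{i-1}^n)^L+(t-t_{i-1}^n)^{L/2}\}e^{C_LT^{L}}$. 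Finally, since $t-t_{i-1}^n\leq T$, we have $(t-t_{i-1}^n)^L\leq T^{L/2}(t-t_{i-1}^n)^{L/2}$, so both powers are absorbed into a single multiple of $(t-t_{i-1}^n)^{L/2}$. This yields the claim with a constant $C_L$ independent of $n$, $i$ and $j$; it depends on $T$ and $d$ only, both of which are fixed.
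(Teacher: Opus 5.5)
Your proposal is correct and follows essentially the same route as the paper: the SDE representation of the increment, H\"older's inequality for the drift integral, the Gaussian moment bound for $W_{j,t}-W_{j,t_{i-1}^n}$, and Gronwall's inequality after bounding $(t-t_{i-1}^n)^{L-1}$ by a constant. The only differences are that you apply the linear-growth bound to $a_{z_j}(X_{j,s})$ directly, whereas the paper splits off $a_{z_j}(X_{j,t_{i-1}^n})$ and uses the Lipschitz bound on the difference, and that your final constant also depends on the drift through $\max_{g}\bigl(C_g\vee|a^{(G^*)}_g(0)|\bigr)$, not only on $T$ and $d$; taking this maximum over $g$ is what makes the constant uniform in $j$.
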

\begin{proof}
Fix arbitrary $L\geq 1$, $i\in\{1,\ldots,n\}$, $j\in\{1,\ldots,J\}$ and $t\in(t_{i-1}^n,t_i^n]$. First, we note that
\begin{align*}
    X_{j,t}-X_{j,t_{i-1}^n}&=\int_{t_{i-1}^n}^t a_{z_j}(X_{j,s})ds
    +\sigma^*_{z_j}\bigl(W_{j,t}-W_{j,t_{i-1}^n}\bigr)\\
    &=\int_{t_{i-1}^n}^t\bigl\{a_{z_j}(X_{j,s})-a_{z_j}(X_{j,t_{i-1}^n})\bigr\}ds\\
    &\qquad+a_{z_j}(X_{j,t_{i-1}^n})\bigl(t-t_{i-1}^n\bigr)+\sigma^*_{z_j}\bigl(W_{j,t}-W_{j,t_{i-1}^n}\bigr)
\end{align*}
under ${\rm{P}}_{\bm{z}}$, so that 
\begin{align*}
    &\quad\ {\rm{E}}_{\bm{z}}\Bigl[\bigl|X_{j,t}-X_{j,t_{i-1}^n}\bigr|^L\big|\mathcal{F}_{t_{i-1}^n}\Bigr]\\
    &\leq C_L{\rm{E}}_{\bm{z}}\Biggl[
    \biggl|\int_{t_{i-1}^n}^{t}\bigl|a_{z_j}(X_{j,s})-a_{z_j}(X_{j,t_{i-1}^n})\bigr|ds\biggr|^L
    \bigg|\mathcal{F}_{t_{i-1}^n}\Biggr]\\
    &\qquad+C_L\bigl|a_{z_j}(X_{j,t_{i-1}^n})\bigr|^L\bigl|t-t_{i-1}^n\bigr|^L+C_L{\rm{E}}_{\bm{z}}\Bigl[\bigl|W_{j,t}-W_{j,t_{i-1}^n}\bigr|^L\big|\mathcal{F}_{t_{i-1}^n}\Bigr].
\end{align*}
By H\"older's inequality, we have
\begin{align*}
    &\quad\ \int_{t_{i-1}^n}^{t}\bigl|a_{z_j}(X_{j,s})-a_{z_j}(X_{j,t_{i-1}^n})\bigr|ds\\
    &\leq\Biggl(\int_{t_{i-1}^n}^{t}\bigl|a_{z_j}(X_{j,s})-a_{z_j}(X_{j,t_{i-1}^n})\bigr|^Lds\Biggr)^{1/L}\Biggl(\int_{t_{i-1}^n}^{t}ds\Biggr)^{(L-1)/L}.
\end{align*}
By Tonelli's theorem and {\bf{[A1]}}, it holds that
\begin{align*}
    &\quad\ {\rm{E}}_{\bm{z}}\Biggl[
    \biggl|\int_{t_{i-1}^n}^{t}\bigl|a_{z_j}(X_{j,s})-a_{z_j}(X_{j,t_{i-1}^n})\bigr|ds\biggr|^L
    \bigg|\mathcal{F}_{t_{i-1}^n}\Biggr]\\
    &\leq\bigl|t-t_{i-1}^n\bigr|^{L-1}{\rm{E}}_{\bm{z}}\Biggl[\int_{t_{i-1}^n}^{t}\bigl|a_{z_j}(X_{j,s})-a_{z_j}(X_{j,t_{i-1}^n})\bigr|^Lds\bigg|\mathcal{F}_{t_{i-1}^n}\Biggr]\\
    &=\bigl|t-t_{i-1}^n\bigr|^{L-1}\int_{t_{i-1}^n}^{t}{\rm{E}}_{\bm{z}}\Bigl[\bigl|a_{z_j}(X_{j,s})-a_{z_j}(X_{j,t_{i-1}^n})\bigr|^L\big|\mathcal{F}_{t_{i-1}^n}\Bigr]ds\\
    &\leq C_L\bigl|t-t_{i-1}^n\bigr|^{L-1}\int_{t_{i-1}^n}^{t}{\rm{E}}_{\bm{z}}\Bigl[\bigl|X_{j,s}-X_{j,t_{i-1}^n}\bigr|^L\big|\mathcal{F}_{t_{i-1}^n}\Bigr]ds.
\end{align*}
Moreover, under ${\rm P}_{\bm z}$, $W_{j,t}-W_{j,t_{i-1}^n}$ is independent of $\mathcal F_{t_{i-1}^n}$ and follows the normal distribution with mean $0$ and variance $t-t_{i-1}^n$. Hence, we obtain 
\begin{align}
   {\rm{E}}_{\bm{z}}\Bigl[\bigl|W_{j,t}-W_{j,t_{i-1}^n}\bigr|^L\big|\mathcal{F}_{t_{i-1}^n}\Bigr]\leq C_L\bigl|t-t_{i-1}^n\bigr|^{L/2}.\label{WL}
\end{align}
Consequently, 
\begin{align*}
    &\quad\ {\rm{E}}_{\bm{z}}\Bigl[\bigl|X_{j,t}-X_{j,t_{i-1}^n}\bigr|^L\big|\mathcal{F}_{t_{i-1}^n}\Bigr]\\
    &\leq C_L\bigl|a_{z_j}(X_{j,t_{i-1}^n})\bigr|^L\bigl|t-t_{i-1}^n\bigr|^L+C_L\bigl|t-t_{i-1}^n\bigr|^{L/2}\\
    &\qquad+C_L\bigl|t-t_{i-1}^n\bigr|^{L-1}\int_{t_{i-1}^n}^{t}{\rm{E}}_{\bm{z}}\Bigl[\bigl|X_{j,s}-X_{j,t_{i-1}^n}\bigr|^L\big|\mathcal{F}_{t_{i-1}^n}\Bigr]ds\\
    &\leq C_L\Bigl(\bigl|a_{z_j}(X_{j,t_{i-1}^n})\bigr|^L+1\Bigr)\bigl|t-t_{i-1}^n\bigr|^{L/2}\\
    &\qquad+C_L\int_{t_{i-1}^n}^{t}{\rm{E}}_{\bm{z}}\Bigl[\bigl|X_{j,s}-X_{j,t_{i-1}^n}\bigr|^L\big|\mathcal{F}_{t_{i-1}^n}\Bigr]ds.
\end{align*}
By Gronwall's inequality and {\bf [A1]}, we obtain
\begin{align*}
    &\quad\ {\rm{E}}_{\bm{z}}\Bigl[\bigl|X_{j,t}-X_{j,t_{i-1}^n}\bigr|^L\big|\mathcal{F}_{t_{i-1}^n}\Bigr]\\
    &\leq C_L\Bigl(1+\bigl|X_{j,t_{i-1}^n}\bigr|^L\Bigr)\bigl|t-t_{i-1}^n\bigr|^{L/2}\exp\Biggl(C_L\int_{t_{i-1}^n}^{t}ds\Biggr)\\
    &\leq C_L\Bigl(1+\bigl|X_{j,t_{i-1}^n}\bigr|^L\Bigr)\bigl|t-t_{i-1}^n\bigr|^{L/2},
\end{align*}
which completes the proof. 
\end{proof}
\begin{lemma}\label{EX24lemma}
Suppose that ${[\bf{A1}]}$ and ${[\bf{A2}]}$ hold. Then, for each $i=1,\ldots,n$ and $j=1,\ldots,J$,
\begin{align}
    {\rm{E}}_{\bm{z}}\Bigl[(X_{j,t_i^n}-X_{j,t_{i-1}^n})^2\big|\mathcal{F}_{t_{i-1}^n}\Bigr]=h_n\sigma^{*2}_{z_j}+R\bigl(h^2_n,\sigma_{z_j},X_{j,t_{i-1}^n}\bigr) \label{EX2}
\end{align}
and
\begin{align}
    {\rm{E}}_{\bm{z}}\Bigl[(X_{j,t_i^n}-X_{j,t_{i-1}^n})^4\big|\mathcal{F}_{t_{i-1}^n}\Bigr]=3h^2_n\sigma^{*4}_{z_j}+R\bigl(h^3_n,\sigma_{z_j},X_{j,t_{i-1}^n}\bigr). \label{EX4}
\end{align}
\end{lemma}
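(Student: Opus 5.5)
We decompose the increment under ${\rm P}_{\bm z}$ as
\begin{align*}
    \Delta_i^nX_j=A_{i}+B_i,\quad A_i=\int_{t_{i-1}^n}^{t_i^n}a_{z_j}(X_{j,s})ds,\quad B_i=\sigma^*_{z_j}\Delta_i^nW_j,
\end{align*}
and expand $(A_i+B_i)^2$ and $(A_i+B_i)^4$ binomially. The pure Brownian terms give the leading contributions exactly, since $\Delta_i^nW_j$ is independent of $\mathcal F_{t_{i-1}^n}$ and $N(0,h_n)$. Hence ${\rm E}_{\bm z}[B_i^2|\mathcal F_{t_{i-1}^n}]=h_n\sigma^{*2}_{z_j}$ and ${\rm E}_{\bm z}[B_i^4|\mathcal F_{t_{i-1}^n}]=3h_n^2\sigma^{*4}_{z_j}$. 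The work is to show that every remaining term is of the form $R(h_n^2,\cdot,X_{j,t_{i-1}^n})$, respectively $R(h_n^3,\cdot,X_{j,t_{i-1}^n})$.

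For $A_i$, we split $A_i=a_{z_j}(X_{j,t_{i-1}^n})h_n+\tilde A_i$ with $\tilde A_i=\int_{t_{i-1}^n}^{t_i^n}\{a_{z_j}(X_{j,s})-a_{z_j}(X_{j,t_{i-1}^n})\}ds$. By {\bf[A1]}, $|a_{z_j}(x)|\le C(1+|x|)$. Combining Hölder's inequality with Lemma \ref{ELlemma} gives
\begin{align*}
    {\rm E}_{\bm z}\bigl[|\tilde A_i|^L\big|\mathcal F_{t_{i-1}^n}\bigr]\le C_L(1+|X_{j,t_{i-1}^n}|^L)h_n^{3L/2}.
\end{align*}
It follows that ${\rm E}_{\bm z}[|A_i|^L|\mathcal F_{t_{i-1}^n}]\le C_L(1+|X_{j,t_{i-1}^n}|)^{L}h_n^{L}$.

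\textbf{Second moment.} The $A_i^2$ term is $O(h_n^2)$ with polynomial weight. The main obstacle is the cross term $2{\rm E}_{\bm z}[A_iB_i|\mathcal F_{t_{i-1}^n}]$, because Cauchy--Schwarz alone only gives $h_n\cdot h_n^{1/2}=h_n^{3/2}$. We use the split of $A_i$.
\begin{itemize}
    \item The part $a_{z_j}(X_{j,t_{i-1}^n})h_nB_i$ has zero conditional mean, since $a_{z_j}(X_{j,t_{i-1}^n})h_n$ is $\mathcal F_{t_{i-1}^n}$-measurable and $\Delta_i^nW_j$ is centered and independent of $\mathcal F_{t_{i-1}^n}$.
    \item The part $\tilde A_iB_i$ is handled by conditional Cauchy--Schwarz, which gives $O(h_n^{3/2}\cdot h_n^{1/2})=O(h_n^2)$ times $C(1+|X_{j,t_{i-1}^n}|)^C$.
\end{itemize}
This yields \eqref{EX2}.

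\textbf{Fourth moment.} We expand $\sum_{k=0}^4\binom4k A_i^kB_i^{4-k}$.
\begin{itemize}
    \item The terms with $k\ge2$ are bounded by Hölder as $h_n^{k}h_n^{(4-k)/2}=h_n^{2+k/2}\le h_n^3$.
    \item The $k=1$ term, $4{\rm E}_{\bm z}[A_iB_i^3|\mathcal F_{t_{i-1}^n}]$, is again delicate. The frozen-drift piece $a_{z_j}(X_{j,t_{i-1}^n})h_nB_i^3$ has zero conditional mean because odd moments of the Gaussian vanish. The $\tilde A_iB_i^3$ piece is $O(h_n^{3/2}h_n^{3/2})=O(h_n^3)$ by Cauchy--Schwarz.
\end{itemize}
This yields \eqref{EX4}.

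Throughout, all constants come from Lemma \ref{ELlemma} and {\bf[A1]} and are uniform in $i,j,n$. Polynomial factors of the form $(1+|X_{j,t_{i-1}^n}|)^L$ are absorbed into the generic function $R$.
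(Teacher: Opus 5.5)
Your proposal is correct and is essentially the paper's argument. The paper writes the increment as $\sigma^*_{z_j}\Delta_i^nW_j+h_na_{z_j}(X_{j,t_{i-1}^n})+U_{i,j}$, with $U_{i,j}$ equal to your $\tilde A_i$, and proves the same bound ${\rm E}_{\bm z}[|U_{i,j}|^L|\mathcal F_{t_{i-1}^n}]=R(h_n^{3L/2},\sigma_{z_j},X_{j,t_{i-1}^n})$ via H\"older and Lemma \ref{ELlemma}. It then expands the full trinomial, using vanishing odd Gaussian moments for the frozen-drift terms and Cauchy--Schwarz for the rest; grouping the drift into $A_i$ and splitting only in the $k=1$ terms, as you do, is tidier bookkeeping of the same idea (note only that $h_n^{2+k/2}\le h_n^3$ holds up to the constant $T^{k/2-1}$, since $h_n\le T$).
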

\begin{proof}
Fix arbitrary $i\in\{1,\ldots,n\}$ and $j\in\{1,\ldots,J\}$. Set
\begin{align*}
    U_{i,j}=\int_{t_{i-1}^n}^{t_i^n}\bigl\{a_{z_j}(X_{j,t})-a_{z_j}(X_{j,t_{i-1}^n})\bigr\}dt.
\end{align*}
Under ${\rm{P}}_{\bm{z}}$, we note that
\begin{align*}
    X_{j,t_i^n}-X_{j,t_{i-1}^n}=\sigma_{z_j}^*\Delta_i^n W_{j}+h_na_{z_j}(X_{j,t_{i-1}^n})+U_{i,j}.
\end{align*}
In an analogous manner to the proof of Lemma \ref{ELlemma}, for each $L\geq 1$, it is shown that
\begin{align*}
    {\rm{E}}_{\bm{z}}\Bigl[|U_{i,j}|^L\big|\mathcal{F}_{t_{i-1}^n}\Bigr]&\leq C_Lh_n^{L-1}\int_{t_{i-1}^n}^{t_i^n}{\rm{E}}_{\bm{z}}\Bigl[\bigl|X_{j,t}-X_{j,t_{i-1}^n}\bigr|^L\big|\mathcal{F}_{t_{i-1}^n}\Bigr]dt
\end{align*}
and Lemma \ref{ELlemma} gives that
\begin{align*}
    \int_{t_{i-1}^n}^{t_i^n}{\rm{E}}_{\bm{z}}\Bigl[\bigl|X_{j,t}-X_{j,t_{i-1}^n}\bigr|^L\big|\mathcal{F}_{t_{i-1}^n}\Bigr]dt\leq h_n\times C_Lh_n^{L/2}\Bigl(1+\bigl|X_{j,t_{i-1}^n}\bigr|^L\Bigr),
\end{align*}
which yields
\begin{align}
    {\rm{E}}_{\bm{z}}\Bigl[|U_{i,j}|^L\big|\mathcal{F}_{t_{i-1}^n}\Bigr]=R\bigl(h^{3L/2}_n,\sigma_{z_j},X_{j,t_{i-1}^n}\bigr). \label{UL}
\end{align}
Furthermore, for each $M,N\in\mathbb{N}$, by the Cauchy--Schwarz inequality, 
\begin{align*}
    {\rm{E}}_{\bm{z}}\Bigl[|\Delta_i^n W_{j}|^M|U_{i,j}|^N\big|\mathcal{F}_{t_{i-1}^n}\Bigr]&\leq {\rm{E}}_{\bm{z}}\Bigl[|\Delta_i^n W_{j}|^{2M}\big|\mathcal{F}_{t_{i-1}^n}\Bigr]^{1/2}{\rm{E}}_{\bm{z}}\Bigl[|U_{i,j}|^{2N}\big|\mathcal{F}_{t_{i-1}^n}\Bigr]^{1/2},
\end{align*}
so that \eqref{WL} and \eqref{UL} imply
\begin{align}
    {\rm{E}}_{\bm{z}}\Bigl[|\Delta_i^n W_{j}|^M|U_{i,j}|^N\big|\mathcal{F}_{t_{i-1}^n}\Bigr]=R\bigl(h^{(M+3N)/2}_n,\sigma_{z_j},X_{j,t_{i-1}^n}\bigr).
    \label{WU}
\end{align}
Hence, by noting that
\begin{align}
     {\rm{E}}_{\bm{z}}\Bigl[\Delta_i^n W_{j}\big|\mathcal{F}_{t_{i-1}^n}\Bigr]=0,\quad
    {\rm{E}}_{\bm{z}}\Bigl[(\Delta_i^n W_{j})^2\big|\mathcal{F}_{t_{i-1}^n}\Bigr]=h_n, \label{W2}
\end{align}
it follows from \eqref{UL} and \eqref{WU} that
\begin{align*}
    &\quad\ {\rm{E}}_{\bm{z}}\Bigl[(X_{j,t_i^n}-X_{j,t_{i-1}^n})^2\big|\mathcal{F}_{t_{i-1}^n}\Bigr]\\
    &=\sigma_{z_j}^{*2}{\rm{E}}_{\bm{z}}\Bigl[(\Delta_i^n W_{j})^2\big|\mathcal{F}_{t_{i-1}^n}\Bigr]+h^2_na_{z_j}(X_{j,t_{i-1}^n})^2+{\rm{E}}_{\bm{z}}\Bigl[(U_{i,j})^2\big|\mathcal{F}_{t_{i-1}^n}\Bigr]\\
    &\quad+2h_na_{z_j}(X_{j,t_{i-1}^n})\sigma_{z_j}^{*}{\rm{E}}_{\bm{z}}\Bigl[\Delta_i^n W_{j}\big|\mathcal{F}_{t_{i-1}^n}\Bigr]+2\sigma_{z_j}^{*}{\rm{E}}_{\bm{z}}\Bigl[(\Delta_i^n W_{j})(U_{i,j})\big|\mathcal{F}_{t_{i-1}^n}\Bigr]\\
    &\quad+2h_na_{z_j}(X_{j,t_{i-1}^n}){\rm{E}}_{\bm{z}}\Bigl[U_{i,j}\big|\mathcal{F}_{t_{i-1}^n}\Bigr]
    =h_n\sigma^{*2}_{z_j}+R(h^{2}_n,\sigma_{z_j},X_{j,t_{i-1}^n}),
\end{align*}
which yields \eqref{EX2}. Moreover, since
\begin{align*}
    {\rm{E}}_{\bm{z}}\Bigl[(\Delta_i^n W_{j})^3\big|\mathcal{F}_{t_{i-1}^n}\Bigr]=0,\quad
    {\rm{E}}_{\bm{z}}\Bigl[(\Delta_i^n W_{j})^4\big|\mathcal{F}_{t_{i-1}^n}\Bigr]=3h_n^2,
\end{align*}
it holds from \eqref{UL}--\eqref{W2} that
\begin{align*}
    &\quad\ {\rm{E}}_{\bm{z}}\Bigl[(X_{j,t_i^n}-X_{j,t_{i-1}^n})^4\big|\mathcal{F}_{t_{i-1}^n}\Bigr]\\
    &=\sigma_{z_j}^{*4}{\rm{E}}_{\bm{z}}\Bigl[(\Delta_i^n W_{j})^4\big|\mathcal{F}_{t_{i-1}^n}\Bigr]+h^4_na_{z_j}(X_{j,t_{i-1}^n})^4+{\rm{E}}_{\bm{z}}\Bigl[(U_{i,j})^4\big|\mathcal{F}_{t_{i-1}^n}\Bigr]\\
    &\quad+4h_na_{z_j}(X_{j,t_{i-1}^n})\sigma_{z_j}^{*3}{\rm{E}}_{\bm{z}}\Bigl[(\Delta_i^n W_{j})^3\big|\mathcal{F}_{t_{i-1}^n}\Bigr]+4\sigma_{z_j}^{*3}{\rm{E}}_{\bm{z}}\Bigl[(\Delta_i^n W_{j})^3(U_{i,j})\big|\mathcal{F}_{t_{i-1}^n}\Bigr]\\
    &\quad+4h^3_na_{z_j}(X_{j,t_{i-1}^n})^3\sigma_{z_j}^{*}{\rm{E}}_{\bm{z}}\Bigl[
    \Delta_i^n W_{j}\big|\mathcal{F}_{t_{i-1}^n}\Bigr]+4\sigma_{z_j}^{*}{\rm{E}}_{\bm{z}}\Bigl[(\Delta_i^n W_{j})(U_{i,j})^3\big|\mathcal{F}_{t_{i-1}^n}\Bigr]\\
    &\quad+4h^3_na_{z_j}(X_{j,t_{i-1}^n})^3{\rm{E}}_{\bm{z}}\Bigl[U_{i,j}\big|\mathcal{F}_{t_{i-1}^n}\Bigr]+4h_na_{z_j}(X_{j,t_{i-1}^n}){\rm{E}}_{\bm{z}}\Bigl[(U_{i,j})^3\big|\mathcal{F}_{t_{i-1}^n}\Bigr]\\
    &\quad+6h^2_na_{z_j}(X_{j,t_{i-1}^n})^2\sigma_{z_j}^{*2}{\rm{E}}_{\bm{z}}\Bigl[(\Delta_i^n W_{j})^2\big|\mathcal{F}_{t_{i-1}^n}\Bigr]+6\sigma_{z_j}^{*2}
    {\rm{E}}_{\bm{z}}\Bigl[(\Delta_i^n W_{j})^2(U_{i,j})^2\big|\mathcal{F}_{t_{i-1}^n}\Bigr]\\
    &\quad+6h^2_na_{z_j}(X_{j,t_{i-1}^n})^2{\rm{E}}_{\bm{z}}\Bigl[(U_{i,j})^2\big|\mathcal{F}_{t_{i-1}^n}\Bigr]\\
    &\quad+12h_na_{z_j}(X_{j,t_{i-1}^n})\sigma_{z_j}^{*2}{\rm{E}}_{\bm{z}}\Bigl[(\Delta_i^n W_{j})^2U_{i,j}\big|\mathcal{F}_{t_{i-1}^n}\Bigr]\\
    &\quad+12h^2_na_{z_j}(X_{j,t_{i-1}^n})^2\sigma_{z_j}^{*}{\rm{E}}_{\bm{z}}\Bigl[(\Delta_i^n W_{j})(U_{i,j})\big|\mathcal{F}_{t_{i-1}^n}\Bigr]\\
    &\quad+12h_na_{z_j}(X_{j,t_{i-1}^n})\sigma_{z_j}^{*}{\rm{E}}_{\bm{z}}\Bigl[(\Delta_i^n W_{j})(U_{i,j})^2\big|\mathcal{F}_{t_{i-1}^n}\Bigr]=3h^2_n\sigma^{*4}_{z_j}+R(h^{3}_n,\sigma_{z_j},X_{j,t_{i-1}^n}),
\end{align*}
so that \eqref{EX4} holds. This completes the proof.
\end{proof}
\begin{proof}[Proof of Lemma \ref{Sigmalemma}]
Fix an arbitrary $j\in\{1,\ldots,J\}$. We use the following decomposition:
\begin{align*}
    \hat{\Sigma}^2_{j,n}-\sigma_{z_j}^{*2}=\frac{1}{nh_n}
    \sum_{i=1}^n\Bigl\{(\Delta_i^nX_j)^2-h_n\sigma_{z_j}^{*2}\Bigr\}=M_{j,n}+R_{j,n},
\end{align*}
where
\begin{align*}
    M_{j,n}=\frac{1}{nh_n}
    \sum_{i=1}^n\biggl\{
    (\Delta_i^nX_j)^2-{\rm E}_{\bm{z}}\Bigl[(\Delta_i^nX_j)^2\big|\mathcal{F}_{t_{i-1}^n}\Bigr]\biggr\}
\end{align*}
and
\begin{align*}
    R_{j,n}=\frac{1}{nh_n}\sum_{i=1}^n\biggl\{{\rm E}_{\bm{z}}\Bigl[(\Delta_i^nX_j)^2\big|\mathcal{F}_{t_{i-1}^n}\Bigr]-h_n\sigma_{z_j}^{*2}\biggr\}.
\end{align*}
First, we show \eqref{Sigmacons}. By Lemma \ref{EX24lemma}, it is shown that
\begin{align*}
    &\quad\ {\rm E}_{\bm{z}}\biggl[\Bigl\{(\Delta_i^nX_j)^2-{\rm E}_{\bm{z}}\Bigl[(\Delta_i^nX_j)^2\big|\mathcal{F}_{t_{i-1}^n}\Bigr]\Bigr\}^2\Big|\mathcal{F}_{t_{i-1}^n}\biggr]\\
    &={\rm E}_{\bm{z}}\Bigl[(\Delta_i^nX_j)^4\big|\mathcal{F}_{t_{i-1}^n}\Bigr]-{\rm E}_{\bm{z}}\Bigl[(\Delta_i^nX_j)^2\big|\mathcal{F}_{t_{i-1}^n}\Bigr]^2=R(h_n^2,\sigma_{z_j},X_{j,t_{i-1}^n}).
\end{align*}
Therefore, noting that $M_{j,n}$ is a martingale sum and
\begin{align*}
    \sup_{t\in[0,T]}{\rm{E}}_{{\bm{z}}}
    \Bigl[\bigl|X_{j,t}\bigr|^{L}\Bigr]<\infty
\end{align*}
for every $L>0$, we have
\begin{align*}
    {\rm E}_{\bm{z}}\Bigl[M_{j,n}^2\Bigr]
    &=\frac{1}{n^2h_n^2}\sum_{i=1}^n{\rm E}_{\bm{z}}\biggl[\Bigl\{(\Delta_i^nX_j)^2-{\rm E}_{\bm{z}}\Bigl[(\Delta_i^nX_j)^2\big|\mathcal{F}_{t_{i-1}^n}\Bigr]\Bigr\}^2\biggr]\\
    &\leq\frac{C}{n^2}
    \sum_{i=1}^n\biggl\{1+{\rm E}_{\bm{z}}\Bigl[\bigl|X_{j,t_{i-1}^n}\bigr|^C\Bigr]\biggr\}\leq\frac{C}{n},
\end{align*}
which implies
\begin{align}
    M_{j,n}\xlongrightarrow{p}0\label{Mjnp}
\end{align}
under ${\rm{P}}_{\bm{z}}$. Since it follows from Lemma \ref{EX24lemma} that
\begin{align*}
    R_{j,n}&=\frac{h_n}{n}\sum_{i=1}^n R(1,\sigma_{z_j},X_{j,t_{i-1}^n}),
\end{align*}
under ${\rm{P}}_{\bm{z}}$, we have 
\begin{align}
    R_{j,n}\xlongrightarrow{p}0.
    \label{Rjnp}
\end{align}
Therefore, together with \eqref{Mjnp} and \eqref{Rjnp}, by Slutsky’s theorem, we obtain 
\begin{align*}
    \hat{\Sigma}^2_{j,n}\xlongrightarrow{p}\sigma_{z_j}^{*2},
\end{align*}
which completes the proof of \eqref{Sigmacons}. Next, we prove \eqref{Sigmaasym}. Note that
\begin{align*}
    \sqrt{n}\Bigl(\hat{\Sigma}^2_{j,n}-\sigma_{z_j}^{*2}\Bigr)
    =\sum_{i=1}^n \eta_{i,j,n}+\sqrt{n}R_{j,n},
\end{align*}
where
\begin{align*}
    \eta_{i,j,n}
    =\frac{1}{\sqrt{n}h_n}\biggl\{(\Delta_i^nX_j)^2
    -{\rm E}_{\bm{z}}\Bigl[(\Delta_i^n X_j)^2\big|\mathcal{F}_{t_{i-1}^n}\Bigr]\biggr\}.
\end{align*}
As it holds from Lemma \ref{EX24lemma} that
\begin{align*}
    \sum_{i=1}^n{\rm E}_{\bm{z}}\Bigl[\eta_{i,j,n}^2\big|\mathcal{F}_{t_{i-1}^n}\Bigr]
    &=\frac{1}{nh_n^2}\sum_{i=1}^n\biggl\{
    {\rm E}_{\bm{z}}\Bigl[(\Delta_i^nX_j)^4\big|\mathcal{F}_{t_{i-1}^n}\Bigr]-{\rm E}_{\bm{z}}\Bigl[(\Delta_i^nX_j)^2\big|\mathcal{F}_{t_{i-1}^n}\Bigr]^2\biggr\}\\
    &=\frac{1}{nh_n^2}\sum_{i=1}^n\biggl\{2h^2_n\sigma^{*4}_{z_j}+R\bigl(h^{3}_n,\sigma_{z_j},X_{j,t_{i-1}^n}\bigr)\biggr\}\\
    &=2\sigma^{*4}_{z_j}+\frac{h_n}{n}\sum_{i=1}^n R\bigl(1,\sigma_{z_j},X_{j,t_{i-1}^n}\bigr),
\end{align*}
we have 
\begin{align}
    \sum_{i=1}^n{\rm E}_{\bm{z}}\Bigl[\eta_{i,j,n}^2\big|\mathcal{F}_{t_{i-1}^n}\Bigr]\xlongrightarrow{p}2\sigma^{*4}_{z_j} \label{var}
\end{align}
under ${\rm{P}}_{\bm{z}}$. Moreover, it follows from Lemmas \ref{ELlemma} and \ref{EX24lemma} that
\begin{align*}
    {\rm E}_{\bm{z}}\Bigl[\eta_{i,j,n}^4\big|\mathcal{F}_{t_{i-1}^n}\Bigr]&\leq\frac{C}{n^2h_n^4}{\rm E}_{\bm{z}}
    \Bigl[(\Delta_i^nX_j)^8\big|\mathcal{F}_{t_{i-1}^n}\Bigr]+\frac{C}{n^2h_n^4}{\rm E}_{\bm{z}}\Bigl[(\Delta_i^nX_j)^2\big|\mathcal{F}_{t_{i-1}^n}\Bigr]^4\\
    &\leq\frac{1}{n^2}R\bigl(1,\sigma_{z_j},X_{j,t_{i-1}^n}\bigr),
\end{align*}
so that 
\begin{align*}
    \sum_{i=1}^n{\rm E}_{\bm{z}}\Bigl[\eta_{i,j,n}^2{\bf 1}_{\{|\eta_{i,j,n}|>\varepsilon\}}\big|\mathcal{F}_{t_{i-1}^n}\Bigr]
    &\leq\frac{1}{\varepsilon^2}
    \sum_{i=1}^n{\rm E}_{\bm{z}}\Bigl[\eta_{i,j,n}^4\big|\mathcal{F}_{t_{i-1}^n}\Bigr]\\
    &\leq\frac{C}{\varepsilon^2}\frac{1}{n^2}
    \sum_{i=1}^n R\bigl(1,\sigma_{z_j},X_{j,t_{i-1}^n}\bigr)
\end{align*}
for every $\varepsilon>0$. Hence, for every $\varepsilon>0$, we obtain
\begin{align}
    \sum_{i=1}^n{\rm E}_{\bm{z}}\Bigl[\eta_{i,j,n}^2{\bf 1}_{\{|\eta_{i,j,n}|>\varepsilon\}}\big|\mathcal{F}_{t_{i-1}^n}\Bigr]\xlongrightarrow{p}0 \label{lindb}
\end{align}
under ${\rm{P}}_{\bm{z}}$. Consequently, by \eqref{var} and \eqref{lindb}, the martingale central limit theorem (Theorems 3.2 and 3.4 in Hall and Heyde \cite{Hall(1980)}) yields 
\begin{align*}
    \sum_{i=1}^n \eta_{i,j,n}\xlongrightarrow{d}N\Bigl(0,2\sigma_{z_j}^{*4}\Bigr)
\end{align*}
under ${\rm{P}}_{\bm{z}}$. Furthermore, it holds from Lemma \ref{EX24lemma} that
\begin{align*}
    \sqrt{n}R_{j,n}&=\frac{1}{\sqrt{n}h_n}\sum_{i=1}^n\biggl\{{\rm E}_{\bm{z}}\Bigl[(\Delta_i^nX_j)^2\big|\mathcal{F}_{t_{i-1}^n}\Bigr]-h_n\sigma_{z_j}^{*2}\biggr\}\\
    &=\sqrt{nh_n^2}\times\frac{1}{n}\sum_{i=1}^n R(1,\sigma_{z_j},X_{j,t_{i-1}^n})\xlongrightarrow{p}0
\end{align*}
under ${\rm{P}}_{\bm{z}}$. Therefore, by Slutsky's theorem, we get
\begin{align*}
    \sqrt{n}\Bigl(\hat{\Sigma}^2_{j,n}-\sigma_{z_j}^{*2}\Bigr)\xlongrightarrow{d}N\Bigl(0,2\sigma_{z_j}^{*4}\Bigr)
\end{align*}
under ${\rm{P}}_{\bm{z}}$. This completes the proof.
\end{proof}

\begin{thebibliography}{10}
\fontsize{10pt}{16pt}\selectfont
\bibitem{Aghabozorgi(2015)} Aghabozorgi, S., Shirkhorshidi, A. S. and Wah, T. Y. (2015). Time-series clustering-a decade review. \textit{Information Systems}, {\bf{53}}, 16-38.
\bibitem{Boyko(2022)} Boyko, V., Krumscheid, S. and Vercauteren, N. (2022). Statistical learning of nonlinear stochastic differential equations from nonstationary time series using variational clustering. \textit{Multiscale Modeling \& Simulation}, {\bf{20}}(4), 1251-1283.
\bibitem{Cuesta-Albertos(2023)} Cuesta-Albertos, J. A. and Dutta, S. (2023). On perfect clustering for Gaussian processes. \textit{Transactions on Machine Learning Research}.
\bibitem{Delattre(2016)} Delattre, M., Genon-Catalot, V. and Samson, A. (2016). Mixtures of stochastic differential equations with random effects: application to data clustering. \textit{Journal of Statistical Planning and Inference}, {\bf{173}}, 109-124.
\bibitem{De Gregorio(2010)} De Gregorio, A. and Iacus, S. M. (2010). Clustering of discretely observed diffusion processes. \textit{Computational Statistics \& Data Analysis}, {\bf{54}}(2), 598-606.
\bibitem{Eguchi(2018)} Eguchi, S. and Masuda, H. (2018). Schwarz type model comparison for LAQ models. \textit{Bernoulli}, {\bf{24}(3)}, 2278-2327.
\bibitem{Genon(1993)}  Genon-Catalot, V. and Jacod, J. (1993). On the estimation of the diffusion coefficient for multidimensional diffusion processes. \textit{Annales de l'Institut Henri Poincar{\'e} (B) Probabilit{\'e}s et Statistiques}, \textbf{29}, 119-151. 
\bibitem{Hall(1980)} Hall, P. and Heyde, C. C. (1980). \textit{Martingale limit theory and its application.} \textit{Academic Press}.
\bibitem{Kalpakis(2001)} Kalpakis, K., Gada, D. and Puttagunta, V. (2001). Distance measures for effective clustering of ARIMA time-series. \textit{In Proceedings 2001 IEEE International Conference on Data Mining (pp. 273-280)}. IEEE.
\bibitem{kessler(1997)} Kessler, M. (1997). Estimation of an ergodic diffusion from discrete observations. \textit{Scandinavian Journal of Statistics}, \textbf{24(2)}, 211-229.
\bibitem{Kutoyants(2004)} Kutoyants, Y. A. (2004). Statistical inference for ergodic diffusion processes. \textit{Springer Science \& Business Media}.
\bibitem{Li(1999)} Li, C. and Biswas, G. (1999). Temporal pattern generation using hidden Markov model based unsupervised classification. \textit{In International Symposium on Intelligent Data Analysis (pp. 245-256)}. Berlin, Heidelberg: Springer Berlin Heidelberg.
\bibitem{Liao(2005)} Liao, T. W. (2005). Clustering of time series data—a survey. \textit{Pattern Recognition}, {\bf{38}}(11), 1857-1874.
\bibitem{Nguyen(2024)} Nguyen, H. D. (2024). PanIC: Consistent information criteria for general model selection problems. \textit{Australian \& New Zealand Journal of Statistics}, \textbf{66}(4), 441-466.
\bibitem{Nguyen(2026)} Nguyen, H. D. and Nguyen, T. (2026). Modifications of the BIC for order selection in finite mixture models. \textit{Annals of the Institute of Statistical Mathematics}. 
https://doi.org/10.1007/s10463-026-00999-4
\bibitem{Panuccio(2002)} Panuccio, A., Bicego, M. and Murino, V. (2002). A Hidden Markov Model-based approach to sequential data clustering. \textit{In Joint IAPR international workshops on statistical techniques in pattern recognition (SPR) and structural and syntactic pattern recognition (SSPR) (pp. 734-743)}. Berlin, Heidelberg: Springer Berlin Heidelberg.
\bibitem{Piccolo(1990)} Piccolo, D. (1990). A distance measure for classifying ARIMA models. \textit{Journal of Time Series Analysis}, \textbf{11}(2), 153-164.
\bibitem{Ramoni(2000)} Ramoni, M., Sebastiani, P. and Cohen, P. (2000). Multivariate clustering by dynamics. \textit{In AAAI/IAAI (pp. 633-638)}.
\bibitem{Ren(2022)} Ren, B. and Barnett, I. (2022). Autoregressive mixture models for clustering time series. \textit{Journal of Time Series Analysis}, {\bf{43}}(6), 918-937.
\bibitem{Uchi-Yoshi(2012)} 
Uchida, M. and Yoshida, N. (2012). 
Adaptive estimation of an ergodic diffusion process based on sampled data. 
\textit{Stochastic Processes and their Applications}, 
\textbf{122(8)}, 2885-2924. 
\bibitem{Xiong(2002)} Xiong, Y. and Yeung, D. Y. (2002). Mixtures of ARMA models for model-based time series clustering. \textit{In 2002 IEEE International Conference on Data Mining, 2002. Proceedings. (pp. 717-720)}. IEEE.
\bibitem{Xiong(2004)} Xiong, Y. and Yeung, D. Y. (2004). Time series clustering with ARMA mixtures. \textit{Pattern Recognition}, {\bf{37}}(8), 1675-1689.
\bibitem{Yoshida(1992)} Yoshida, N. (1992). Estimation for diffusion processes from discrete observation. \textit{Journal of Multivariate Analysis}, {\bf 41}, 220--242. 
\end{thebibliography}
\end{document}